\numberwithin{equation}{section}
\definecolor{labelkey}{gray}{.65}
\titleformat{\section}{\large\bfseries\filcenter}{\thesection}{1em}{}
\titleformat{\subsection}{\bfseries}{\thesubsection}{1em}{}
\let\oldbibliography\thebibliography
\renewcommand{\thebibliography}[1]{\oldbibliography{#1}
\setlength{\itemsep}{0\baselineskip}}
\renewcommand\@makefntext[1]{\leftskip=2em\hskip-0.5em\@makefnmark#1}
\newtheorem{Def}{Definition}[section]
\newtheorem{Thm}[Def]{Theorem}
\newtheorem{Prp}[Def]{Proposition}
\newtheorem{Lemma}[Def]{Lemma}
\newtheorem{Remark}[Def]{Remark}
\newtheorem{Corollary}[Def]{Corollary}
\newtheorem{Question}[Def]{Question}
\newtheorem{Example}[Def]{Example}
\newtheorem{Setup}[Def]{Set-up}
\newcommand{\beq}{\begin{equation}}
\newcommand{\eeq}{\end{equation}}
\newcommand{\Proof}{\begin{proof}}
	\newcommand{\QED}{\end{proof} \noindent}
\newcommand{\la}{\langle}
\newcommand{\ra}{\rangle}
\newcommand{\bra}{\mathopen{<}}
\newcommand{\ket}{\mathclose{>}}
\newcommand{\Sl}{\mbox{$\prec \!\!$ \nolinebreak}}
\newcommand{\Sr}{\mbox{\nolinebreak $\succ$}}
\newcommand{\C}{\mathbb{C}}
\newcommand{\R}{\mathbb{R}}
\renewcommand{\1}{\mbox{\rm 1 \hspace{-1.05 em} 1}}
\newcommand{\N}{\mathbb{N}}
\newcommand{\V}{\mathcal{V}}
\newcommand{\Sact}{{\mathcal{S}}}
\newcommand\B{{\mathscr{B}}}
\newcommand{\Dir}{{\mathcal{D}}}
\renewcommand{\H}{\mathscr{H}}
\newcommand{\F}{{\mathscr{F}}}
\newcommand{\scrA}{\mycal A}
\newcommand{\scrB}{\mycal B}
\newcommand{\scrM}{\mycal M}
\newcommand{\scrS}{\mycal S}
\newcommand{\scrE}{\mycal E}
\newcommand{\scrF}{\mycal F}
\newcommand{\scrN}{\mycal N}
\newcommand{\bitem}{\begin{itemize}[leftmargin=2.5em]}
\newcommand{\eitem}{\end{itemize}}
\newcommand{\macro}{{\mathrm{\tiny{macro}}}}
\DeclareFontFamily{OT1}{rsfso}{}
\DeclareFontShape{OT1}{rsfso}{m}{n}{ <-7> rsfso5 <7-10> rsfso7 <10-> rsfso10}{}
\DeclareMathAlphabet{\mycal}{OT1}{rsfso}{m}{n}
\begin{document}
%
%
%
%
%Titlepage:
\begin{center}
\vspace*{-1cm}

{\Large\bf  The Cauchy Problem for Symmetric Hyperbolic Systems\\[3mm] with Nonlocal Potentials} 

\vspace{5mm}

{\bf by}

\vspace{3mm}
\noindent
{  \bf Felix Finster$^1$, Simone Murro$^2$ and Gabriel Schmid$^2$}\\[2mm]

\noindent  {\it $^1$ Fakult\"at f\"ur Mathematik, Universit\"at Regensburg, Regensburg, Germany}\\[1mm]

\noindent  {\it $^2$Dipartimento di Matematica, Universit\`a di Genova \& GNFM-INdAM \& INFN, Italy}\\[2mm]

Emails: \ {\tt  finster@ur.de, simone.murro@unige.it, gabriel.schmid@edu.unige.it}
\\[10mm]
\end{center}
\begin{abstract}
In this paper, we investigate the initial value problem for symmetric hyperbolic systems on globally hyperbolic Lorentzian manifolds with potentials that are both nonlocal in time and space.  When the potential is retarded and uniformly bounded in time, we establish well-posedness of the Cauchy problem on a time strip, proving existence, uniqueness, and regularity of solutions. If the potential is not retarded but has only short time range, we show that strong solutions still exist, under the additional assumptions that the uniform bound in time is sufficiently small compared to the range in time and that its kernel decays sufficiently fast in time with respect to the zero-order terms of the system. Furthermore, we present a counterexample demonstrating that when the uniform bound is too large compared to the time range, solutions may fail to exist. As an application, we discuss Maxwell's equations in linear dispersive media on ultrastatic spacetimes, as well as the Dirac equation with nonlocal potential naturally arising in the theory of causal fermion systems. Our paper aims to represent the starting point for a rigorous study for the Cauchy problem for the semiclassical Einstein equations.
\end{abstract}
\paragraph*{Keywords:}symmetric hyperbolic systems, nonlocal potentials with time kernel, globally hyperbolic manifolds, Dirac equation, Maxwell's equations in dispersive media.
\paragraph*{MSC 2020:}Primary: 35L03, 58J45; Secondary: 35Q61, 35Q41.
\tableofcontents

\section{Introduction} \label{secintro}

\medskip

Symmetric hyperbolic systems are a fundamental class of partial differential equations that are widely employed in mathematical physics due to their well-posedness properties and structural compatibility with physical conservation laws. Traditionally studied in the context of local interactions, it would be highly desirable to extend these systems to incorporate nonlocal interactions, thereby providing a more flexible and encompassing framework for modelling complex phenomena where long-range correlations or memory effects play a significant role. 

Incorporating nonlocality opens up a broad spectrum of applications across both classical and quantum theories. Viscoelastic materials, for instance, exhibit memory-dependent stress-strain relations, which naturally lead to equations with nonlocal temporal kernels (see e.g.~\cite{viscoelastic1,viscoelastic2}). In  semiclassical response theory, nonlocalities  emerge due to the interaction of classical matter with quantum fields. For example, in semiclassical Maxwell equations (see e.g.~\cite{Galanda, semiMax1,semiMax2,semiMax3}) and Einstein equations (see e.g.~\cite{FlanaganWald,MedaPinamonti1, MedaPinamonti2}) the nonlocality arises from the fact that the expectation values of quantum observables are obtained using correlation functions (that are nonlocal in time and space) of a (given) quantum state. Collapse models in quantum mechanics often involve stochastic nonlocal modifications to Schr\"odinger dynamics to resolve the measurement problem, see e.g.~\cite{collapse,heatcoll}.  Notably, in the context of quantum spacetimes, the dynamics of classical fields acquire an inherently nonlocal character in both time and space. This nonlocality stems from the non-commutative structure of the underlying coordinate algebra: when the coordinate ring is no longer abelian, standard notions of locality break down. As a result, differential operators acting on fields must be reinterpreted in terms of non-commutative geometry, often leading to modified dispersion relations and integral-type evolution equations. These features reflect the fact that field values at a given point are influenced by extended regions of spacetime, as encoded by the non-commuting coordinates, see e.g.~\cite{Fewster,LechnerVerch,Verch}. Last but not least, nonlocal potentials play a key role in the theory of causal fermion systems, a non-perturbative approach to fundamental physics that aims to unify general relativity and quantum field theory within a single mathematical structure (see e.g.~\cite{intro,cfsweblink}). More precisely, the nonlocal potential arises in the analysis of the causal action principle for causal fermion systems in Minkowski space~\cite{nonlocal}, and they are also crucial for the quantum field theory limit of causal fermion systems~\cite{fockdynamics}. Let us remark that, beyond these physical applications, nonlocal PDEs have also become a vibrant area of research in mathematical analysis, offering new challenges and insights. Notable examples include nonlocal conservation laws (see e.g.~\cite{Colombo,Radici,DiFrancesco,Betancourt}), nonlocal diffusion-reaction equations (see e.g.~\cite{PalMelnik,Tarasov,Mezzanotte}), and aggregation-diffusion models (see e.g.~\cite{Fagioli,Daneri}), which describe systems ranging from population dynamics to granular flows and self-organizing agents. These models often feature integral operators, fractional derivatives, or memory terms, and they illustrate the mathematical richness and physical relevance of nonlocal phenomena.

Despite the diversity of these systems, a unifying mathematical treatment based on symmetric hyperbolic systems with nonlocal potentials that provides a coherent and powerful language to investigate their dynamics is still missing. In the present paper we aim to provide a systematic study of the Cauchy problem for a symmetric hyperbolic system which involves a nonlocal
potential of the form
\begin{align}\label{nonlocintro}
	\big( \B\psi \big)(x)=\int_{\scrM}k_{\B}(x,y)\psi(y)\,d\mu_{\scrM}(y)
\end{align}
with a distributional integral kernel~$k_{\B}(x,y)$ defined on a globally hyperbolic manifold~$(\scrM, g)$ (the general setup will be introduced in Section~\ref{secprelim}). As we shall see, the nonlocal potential typically leads to a breakdown of the finite propagation speed and, as a consequence, Einstein causality as well. This is reflected on the mathematical side by the fact that the standard energy estimates are no longer available and they must be adapted to include the effect of nonlocal interactions. We shall show that the Cauchy problem is well-posed, provided that the integral kernel is sufficiently small and its time range is sufficiently short. It is the main purpose of this paper to make this statement mathematically precise and to quantify it by suitable energy estimates. One example are time-delay systems, in which case the derivative of the unknown function at a certain time is given in terms of the values of the function at previous times. In the relativistic setting, such time delays can be described by the case of
\begin{itemize}
\item[] {\em{retarded kernels}}: $k_{\B}(x,y)=0$ unless~$y$ lies in the causal past of~$x$.
\end{itemize}
This case is of particular interest for the semiclassical linear response theory. It turns out that causality and finite propagation speed are respected. But our results also apply to the case that the kernels are not retarded. In this case, we still obtain solutions to the Cauchy problem, provided that the nonlocal potential has a short time range and is sufficiently small. However, it is unknown whether the solutions are unique. A particular case where we do get uniqueness is the case of
\begin{itemize}
\item[] {\em{symmetric kernels}}: $k_{\B}(x,y)^\dagger = k_{\B}(y,x)$ for all~$x,y \in \scrM$
\end{itemize}
This case is of particular interest for the Dirac equation, where the symmetry of the potential is related to the probabilistic interpretation. This is why the uniqueness statement is worked out only for the Dirac equation in Section~\ref{sec:Dirac}.

\subsection{Summary of Main Results}
The basic set-up considered in this paper (cf.~Set-up~\ref{Setup}) is as follows: We consider a globally hyperbolic Lorentzian manifold 
\begin{align*}
	\scrM=\R\times\scrN,\qquad g=-\beta^2 dt\otimes dt+h_{t}\, ,
\end{align*}
where $\scrN$ is a smooth and spacelike Cauchy surface, $\beta\in C^{\infty}(\scrM,(0,\infty))$ a lapse function and $(h_{t})_{t\in\R}$ a one-parameter family of Riemannian metrics on $\scrN$. Furthermore, let
\begin{align*}
	\scrS\colon C^{\infty}(\scrM,\scrE)\to C^{\infty}(\scrM,\scrE)\, ,
\end{align*}
be a \textit{symmetric hyperbolic system} (see~Definition~\ref{Def:SHS}), that is, a first-order linear differential operator acting on smooth sections of a Hermitian vector bundle $(\scrE\xrightarrow{\pi}\scrM,\Sl\cdot|\cdot\Sr_{\scrE})$ whose principal symbol $\sigma_{\scrS}\colon T^{\ast}\scrM\to\mathrm{End}(\scrE)$ is Hermitian with respect to $\Sl\cdot|\cdot\Sr_{\scrE}$ and which is hyperbolic in the sense that $\sigma_{\scrS}(\tau)$ is positive-definite with respect to $\Sl\cdot|\cdot\Sr_{\scrE}$ for future-directed timelike covectors $\tau$. Considering the foliation $(\scrN_{t}:=\{t\}\times\scrN)_{t\in\R}$ of $\scrM$, there is a natural Hilbert space $\H_t$ associated to each spatial slice $\scrN_{t}$, which is obtained as the completion of the space of initial data $C^{\infty}_{\mathrm{c}}(\scrN_{t},\scrE_{t}\vert_{\scrN_{t}})$ with respect to the positive-definite inner product
\begin{align*}
	(\psi|\varphi)_{t}=\int_{\scrN_{t}}\,\Sl\sigma_{\scrS}(\eta_{t})\psi|\varphi\Sr_{\scrE}\,d\mu_{\scrN_{t}}
\end{align*}
with $\eta_{t}=\beta dt\vert_{\scrN_{t}}$ denoting the future-directed timelike normal covector of $\scrN_{t}$ in $\scrM$. On a time strip $\overline{\scrM}_T = [0,T]\times\scrN$ with $T>0$, we obtain correspondingly a natural Hilbert space $\H_{\overline{\scrM}_{T}}$ as the corresponding direct integral.

In addition, we consider a \textit{nonlocal potential}, that is, a linear and continuous operator 
\begin{align*}
	\B\colon C^{\infty}_{\mathrm{c}}(\scrM,\scrE)\to C^{\infty}(\scrM,\scrE)
\end{align*}
of the type~\eqref{nonlocintro} with distributional Schwartz kernel $k_{\B}\in\mathcal{D}^{\prime}(\scrM\times\scrM,\scrE\boxtimes\scrE^{\ast})$. Under some additional assumptions on $\B$, we can associate to it a \textit{time kernel}, namely a two-parameter family of operators $\B_{t,\tau}\colon C^{\infty}_{\mathrm{c}}(\scrN_{\tau},\scrE\vert_{\scrN_{\tau}})\to C^{\infty}(\scrN_{t},\scrE\vert_{\scrN_{t}})$ such that formally
\begin{align*}
	(\B\psi)(t,\vec{x})=\int_{\R}(\B_{t,\tau}\psi_{\tau})(\vec{x})\,dt\, ,
\end{align*}
where $\psi_{\tau}:=\psi(\tau,\cdot)$, as we explain in detail in Section~\ref{subsec:TimeKernel}. Now, as mentioned in the introduction, we consider the following two important special cases of nonlocal potentials in this article (see~Definitions~\ref{Def:NonLocPot} and~\ref{Def:Potentials}):
\begin{itemize}
	\item[(R)]\quad$\B_{t,\tau}=0$ for $\tau>t$.\hfill (\textit{retarded case})
	\item[(S)]\quad$\B_{t,\tau}=0$ for $\vert t-\tau\vert>\delta$ for some fixed $\delta>0$.\hfill (\textit{short time range case})
\end{itemize}
Our main results can be summarized as follows: Let $\scrS$ be a symmetric hyperbolic system on a Hermitian vector bundle $\scrE$ over a globally hyperbolic spacetime $(\scrM, g)$ and $\B$ a nonlocal potential with time kernel.  We consider the Cauchy problem
\begin{align} \label{eq:Caucy}
\begin{cases}
(\scrS - \B)\psi &= \phi \\
\psi|_{\scrN_0} &= \mathfrak{f}
\end{cases}
\end{align}
on the time strip $\overline{\scrM}_T$, where $\phi \in C^\infty_{\mathrm{c}}(\overline{\scrM}_T, \scrE)$ is a compactly supported smooth source and $\mathfrak{f}\in C^{\infty}_{\mathrm{c}}(\scrN_{0},\scrE\vert_{\scrN_{0}})$ some initial datum. Furthermore, define the modified potential $\V:=\beta\sigma_{\scrS}(\eta)^{-1}\B$.
\begin{itemize}
    \item[(i)]Suppose $\B$ is a \emph{retarded} nonlocal potential (as in (R) above) whose time kernel is uniformly bounded in time on $\scrM_T$ in the sense that
	\begin{align*}
		\exists C_{T}>0:\quad \Vert\V_{t,\tau}\psi_{\tau}\Vert_{t}\leq C_{T}\Vert\psi_{\tau}\Vert_{\tau}\,\qquad\forall \psi\in C^{\infty}_{\mathrm{sc}}(\scrM,\scrE),\, t,\tau\in [0,T]\, .
	\end{align*}
	Furthermore, we assume that $\B$ has \emph{past compact support} in the sense that $\B_{t,\tau}=0$ for $\tau<0$. Then, the Cauchy problem~\eqref{eq:Caucy} admits a strong solution on $\overline{\scrM}_{T}$ (see Definition~\ref{Def:StrongSol}). If, in addition, the covariant derivatives of $\B$ are also uniformly bounded in time in a suitable sense (explained in details in Section~\ref{Subsec:ExtSHS}), then the solution $\psi$ is smooth and hence in particular a classical solution on $\overline{\scrM}_{T}$ (see Definition~\ref{Def:ClassSol}). Moreover, the solution is unique and propagates at most with the speed of light, i.e.
    \begin{align*}
    \mathrm{supp}(\psi)\cap J^{+}(\scrN_{0}) \subset J^+(\mathrm{supp}(\phi)\cup\mathrm{supp}(\mathfrak{f}))\,.
    \end{align*}
    This is the content of Theorem~\ref{Thm:Ret}.
    \item[(ii)]Suppose that $\B$ has \emph{short time range} $\delta > 0$ (as in (S) above) and suppose that the zero-order terms of $\scrS$ are uniformly bounded in time in the sense that 
    \begin{align*}
    		\exists D>0:\quad \Vert(\mathcal{Z}_{\scrS}\psi)_{t}\Vert_{t}\leq D\Vert\psi_{t}\Vert_{t}\,\qquad\forall \psi\in C^{\infty}_{\mathrm{sc}}(\scrM,\scrE),\, t\in\R\, ,
    \end{align*}
    where $\mathcal{Z}_{\scrS}:=\beta\sigma_{\scrS}(\eta)^{-1}(\scrS+\scrS^{\dagger})$ is a zero-order operator with $\scrS^{\dagger}$ being the formal adjoint of $\scrS$ with respect to the sesquilinear form obtained by integrating over the bundle metric. Furthermore, suppose that $\B$ is uniformly bounded on all of $\R$ with a bound that is ``small'' compared to $\delta$ in the sense that
    \begin{align*}
		\exists C\,\,\text{ with }\,\,0<C<(8e\delta^{2})^{-1}:\quad \Vert\V_{t,\tau}\psi_{\tau}\Vert_{t}\leq Ce^{-\frac{1}{2}D\vert\tau\vert}\Vert\psi_{\tau}\Vert_{\tau}\,\qquad\forall \psi\in C^{\infty}_{\mathrm{sc}}(\scrM,\scrE),\, t,\tau\in\R\, .
	\end{align*}
    Then the Cauchy problem~\eqref{eq:Caucy} admits a strong solution on $\overline{\scrM}_{T}$. This is the content of Theorem~\ref{Thm:Small}.
    \item[(iii)]There exist nonlocal potentials $\B$ with short time range $\delta$ that are uniformly bounded in time, but with constant $C > (8e\delta^2)^{-1}$, for which the Cauchy problem~\eqref{eq:Caucy} admits \emph{no strong solution}, even in the case in which $\scrS+\scrS^{\dagger}=0$ (and hence $\mathcal{Z}_{\scrS}=0$). The corresponding counterexample is discussed in Example~\ref{Counterexample}.
\end{itemize}

As an application, the Cauchy problem for symmetric hyperbolic systems with a retarded nonlocal potential is illustrated using Maxwell's equations on ultrastatic manifolds in linear dispersive media (see Proposition~\ref{Thm:Maxwell}). In addition, the case of nonlocal potentials with short time range is applied to the Dirac operator (see Corollary~\ref{Cor.Dirac}).

\subsection{Structure of the Paper} 
In Section~\ref{secprelim}, we review the necessary geometric and analytic preliminaries on globally hyperbolic Lorentzian manifolds as well as symmetric hyperbolic systems in Hermitian vector bundles defined thereon.

Section~\ref{Sec:NonLocPot} begins by recalling the framework of distributions in vector bundles, after which we introduce the class of nonlocal potentials considered in this work. In particular, we define the notion of a \textit{time kernel} for general bi-distributional kernels, which enables us to interpret nonlocal potentials as linear operators acting on Hilbert spaces associated to fixed Cauchy time slices. 

After defining the notion of \emph{strong} and \emph{classical solutions} in the nonlocal context and after a discussion of key energy estimates and regularity techniques, the main results are presented in Section~\ref{Sec:SHSNonLoc}, where we study the Cauchy problem on a time strip for symmetric hyperbolic systems with nonlocal potentials that are uniformly bounded in time. We address both the case of retarded nonlocal potentials as well as those with small range in time. Additionally, we present a counterexample demonstrating that when the uniform bound is too large compared to the range in time, solutions may fail to exist at all.

In Section~\ref{sec:Maxwell} we discuss an example of a symmetric hyperbolic system with nonlocal retarded potential, namely Maxwell's equations on ultrastatic globally hyperbolic manifolds in the presence of a linear dispersive medium.

Finally, in Section~\ref{sec:Dirac}, we apply our results to the case of the Dirac operator coupled to a nonlocal potential with short time range and illustrate some recent applications to causal fermion systems.
In this section, we also give the uniqueness proof
for symmetric potentials, in the sense that, given Cauchy data, there exists a unique solution
in the Hilbert space endowed with a scalar product which is defined nonlocally by integrating
over a surface layer.

\paragraph*{Acknowledgments:}
We are grateful to  Simone Di Marino, Stefano Galanda, Christian Gérard, Paolo Meda and Nicola Pinamonti for helpful discussions. GS gratefully acknowledges the hospitality of the University of Regensburg, where part of this work was carried out. We also gratefully acknowledge the MathOverflow community for valuable discussions related to the content of this article. This work has been produced within the activities of the INdAM-GNFM.

\paragraph*{Funding:}
 The research was supported in part by the MIUR Excellence Department Project 2023–2027 awarded to the Department of Mathematics of the University of Genoa. We are also grateful for support from the ``Universit\"atsstiftung Hans Vielberth'' in Regensburg.

\section{Geometric and Analytic Preliminaries} \label{secprelim}

The aim of this section is to recall basic definitions and results concerning symmetric hyperbolic systems on globally hyperbolic Lorentzian manifolds.

\subsection{Globally Hyperbolic Manifolds}
Let $\scrM$ be a connected, oriented and smooth manifold (without boundary) of dimension~$k+1$ with $k\geq 1$. We assume that $\scrM$ is either non-compact or compact with Euler characteristic zero, so that it can be equipped with a smooth Lorentzian metric $g$ of signature~$(- ,+, \ldots, +)$. In the class of Lorentzian manifolds, those called \textit{globally hyperbolic} provide a suitable background for analyzing the Cauchy problem for hyperbolic equations.
 
\begin{Def}[Globally Hyperbolic Manifolds]
	A connected, oriented, time-oriented and smooth Lorentzian manifold $(\scrM,g)$ is called \emph{globally hyperbolic} if the following holds true:
	\begin{itemize}
		\item[(i)] $(\scrM,g)$ is \emph{causal}, i.e.~there are no closed causal curves;
		\item[(ii)] for every $p, q \in \scrM$, the set $J^+ (p) \cap J^{-}(q)$ is compact, where $J^+(\cdot)$ (\textit{resp.}~$J^-(\cdot)$) denotes the causal future (\textit{resp.}~past) in $\scrM$.
	\end{itemize}
\end{Def}
 
In the seminal paper~\cite{Geroch}, Geroch showed that global hyperbolicity of a Lorentzian manifold is equivalent to the existence of a \textit{Cauchy hypersurface}. 

\begin{Def}[Cauchy Hypersurfaces]
	A subset $\scrN\subset\scrM$ of a spacetime $(\scrM,g)$ is a {\em Cauchy hypersurface}, if it is intersected exactly once by any inextendible future-directed smooth timelike curve.
\end{Def}

Let us remind the reader that a smooth curve $\gamma \colon I \to \scrM$, where $I \subset \mathbb{R}$ is an open interval, is said to be {\em inextendible}, if there does not exist any {\em continuous} curve $\gamma'\colon J \to \scrM$ defined on  an open interval  $J \subset \mathbb{R}$ such that $I \subsetneq J$ and $\gamma'|_I =\gamma$.

\begin{Thm}[\protect{\cite[Thm.~11]{Geroch}}]\label{thm:Geroch} 
	A Lorentzian manifold $(\scrM,g)$ is globally hyperbolic if and only if it admits a Cauchy hypersurface.
\end{Thm}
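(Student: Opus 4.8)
The statement is an equivalence, and the plan is to prove the two implications separately; essentially all of the work lies in ``globally hyperbolic $\Rightarrow$ a Cauchy hypersurface exists'', which is Geroch's volume-function construction. For the easy direction, let $\scrN$ be a Cauchy hypersurface. I would first record that $\scrN$ is closed and achronal: two points of $\scrN$ joined by a future-directed timelike curve would, after prolonging that curve to an inextendible one, yield at least two intersections with $\scrN$; and a sequence in $\scrN$ converging to a point $p\notin\scrN$ would lie, together with $p$, on a short inextendible timelike curve hitting $\scrN$ infinitely often near $p$. Granting this, the two defining properties of global hyperbolicity follow from standard causality theory, which I would quote from O'Neill or Wald rather than reprove: one has $\scrM=D(\scrN)$, the total Cauchy development of a closed achronal set is strongly causal (hence causal, so there are no closed causal curves), and inside $D(\scrN)$ every diamond $J^+(p)\cap J^-(q)$ is compact by a limit-curve argument --- each causal segment from $p$ to $q$ meets $\scrN$ exactly once, and limit curves of such segments are again causal segments from $p$ to $q$ contained in $D(\scrN)$.

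For the converse, assume $(\scrM,g)$ is globally hyperbolic. Fix a smooth positive density on $\scrM$ whose associated measure $\mu$ is finite (possible since $\scrM$ is $\sigma$-compact), put $V^-(p)=\mu(J^-(p))$, $V^+(p)=\mu(J^+(p))$, and define
\[
  \tau(p)=\log\frac{V^-(p)}{V^+(p)}\,.
\]
The argument then rests on three properties of $\tau$. (1) \emph{Monotonicity}: $V^-$ is nondecreasing and $V^+$ nonincreasing along future-directed causal curves, strictly so along timelike ones; monotonicity is inclusion of cones, and strictness follows because $q\in I^+(p)$ forces $\mu\big(I^-(q)\setminus J^-(p)\big)>0$, a point just to the future of $p$ and to the past of $q$ being unable to lie in the closed set $J^-(p)$ without creating a closed timelike curve. (2) \emph{Continuity} of $V^\pm$, hence of $\tau$: here global hyperbolicity is essential. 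Upper semicontinuity of $p\mapsto V^-(p)$ uses closedness of the causal relation (causal simplicity) and finiteness of $\mu$; lower semicontinuity uses $I^-(p)\subseteq\liminf_{p_n\to p}J^-(p_n)$ with Fatou's lemma, together with $\mu(J^-(p))=\mu(I^-(p))$, valid since $\partial I^-(p)$ is achronal, hence locally a Lipschitz graph and $\mu$-null. (3) \emph{Exhaustion}: along any inextendible future-directed causal curve $\gamma$, $V^-(\gamma(s))\to 0$ as $s\to-\infty$ and $V^+(\gamma(s))\to 0$ as $s\to+\infty$; if $V^-(\gamma(s))$ did not tend to $0$, then $\bigcap_s J^-(\gamma(s))$ would have positive measure and so contain a point $q$, whereupon $\gamma|_{(-\infty,s_0]}$ would be a past-inextendible causal curve confined to the compact set $J^+(q)\cap J^-(\gamma(s_0))$, contradicting the absence of imprisoned inextendible causal curves in a strongly causal spacetime (global hyperbolicity as defined here implies strong causality). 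Combining (1)--(3), $\tau$ is continuous and, restricted to each inextendible timelike curve, a strictly increasing surjection onto $\R$; hence $\scrN_c:=\tau^{-1}(c)$ is met exactly once by every inextendible timelike curve. Strict monotonicity of $\tau$ along timelike directions also makes $\scrN_c$ closed, achronal and edgeless, hence a $C^0$-hypersurface, and by the exhaustion property it is Cauchy.

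The delicate content is steps (2) and (3): these are precisely where global hyperbolicity is used irreducibly --- closedness of the causal cones and $\mu$-nullity of their boundaries prevent $V^\pm$ from jumping, while compactness of the diamonds (and the resulting absence of imprisoned inextendible causal curves) forces $\tau$ to sweep out all of $\R$ --- and both properties fail for spacetimes that are merely causal, or strongly causal but not globally hyperbolic. One further point worth noting: this construction produces only a closed achronal \emph{topological} Cauchy hypersurface, which is all the stated definition asks for; promoting it to a smooth spacelike one (Bernal--Sánchez) is not needed here.
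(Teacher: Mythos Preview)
The paper does not provide its own proof of this theorem; it is stated as a cited result from Geroch's original paper. Your proposal is a correct and careful sketch of precisely Geroch's classical volume-function argument, which is the proof in the cited reference, so in that sense you have reproduced what the citation points to.

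One minor caveat worth flagging: the paper's Definition~2.1 uses the modern formulation with condition~(i) being merely \emph{causal} (no closed causal curves), not \emph{strongly causal}. Your step~(3) appeals to strong causality to rule out imprisoned inextendible causal curves. This is ultimately fine --- the compactness of diamonds together with causality already yields strong causality (Bernal--S\'anchez), and in fact the no-imprisonment property can be obtained directly from the compactness of $J^+(q)\cap J^-(\gamma(s_0))$ and the causality condition via a limit-curve argument --- but strictly speaking you should either invoke that equivalence or phrase the argument so that only the weaker hypothesis is used.
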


It turns out that the Cauchy hypersurfaces of a globally hyperbolic manifold $(\scrM,g)$ are co-dimension-one topologically embedded submanifolds of $\scrM$ that are homeomorphic to each other. As a by-product of Geroch’s theorem, it also follows that a globally hyperbolic manifold $(\scrM,g)$ admits a continuous foliation by Cauchy hypersurfaces, namely $\scrM$ is homeomorphic to a product $\R\times \scrN$ for some Cauchy hypersurface $\scrN$. This can be established by finding a {\em Cauchy time function}, i.e.~a continuous function $t\colon\scrM\rightarrow\mathbb{R}$ which is strictly increasing on any future-directed timelike curve and whose level sets $t^{-1}(t_0)$, $t_0\in \mathbb{R}$, are Cauchy hypersurfaces homeomorphic to $\scrN$. Geroch's splitting is topological in nature and the fact that it can be done in a smooth way has been part of the mathematical folklore whilst remaining an open problem for many years. Only recently Bernal and S\'anchez \cite{bernal+sanchez2} obtained a smooth version of the result of Geroch by introducing the notion of a \emph{Cauchy temporal function}.

\begin{Def}[Cauchy Temporal Functions] \label{def:Cauchy temporal}
	Given a connected and time-oriented smooth Lorentzian manifold $(\scrM,g)$,  we say that a smooth function  $t\colon\scrM \to \R$ is a {\em Cauchy temporal function}  if the embedded submanifolds of co-dimension one given by its level sets are smooth Cauchy hypersurfaces and if $(dt)^\sharp$ is past-directed timelike. 
\end{Def} 

\begin{Thm}[\protect{\cite[Thm.~1.1~and~1.2]{bernal+sanchez2}}, \protect{\cite[Thm.~1.2]{bernal+sanchez_further_results}}]\label{thm:Sanchez}
	For any globally hyperbolic manifold $(\scrM,g)$  there exists a Cauchy temporal function. In particular, it is (smoothly) isometric to the (globally orthogonal) product 
	\begin{align*} 
		(\R\times\scrN, - \beta^2 dt\otimes dt + h_t)\, ,
	\end{align*}
	where $t:\R\times \scrN \to \R$ is a Cauchy temporal function acting as a projection onto the first factor, $\scrN$ is a smooth, spacelike Cauchy hypersurface, $\beta\in C^{\infty}(\R\times\scrN,(0,\infty))$ is called the {\em lapse function}, and $h_t$ is a one-parameter family of Riemannian metrics on $\scrN$ depending smoothly on $t$.

	Moreover, if $\scrN \subset \scrM$ is any spacelike Cauchy hypersurface, then there exists a Cauchy temporal function $t$ such that $\scrN$ belongs to the foliation $t^{-1}(\R)$.
\end{Thm}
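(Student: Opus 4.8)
The following is essentially the argument of Bernal and Sánchez; I sketch how it proceeds, taking Geroch's topological theorem (Theorem~\ref{thm:Geroch}) as the input. By that theorem together with Geroch's construction of a continuous Cauchy time function, $\scrM$ is homeomorphic to $\R\times\scrN$ with the slices Cauchy hypersurfaces, so the entire problem reduces to producing a \emph{Cauchy temporal function} $t$ in the sense of Definition~\ref{def:Cauchy temporal}: a smooth function whose differential $dt$ is everywhere past-directed timelike and whose level sets are (automatically spacelike, and one must still verify Cauchy) hypersurfaces. The reason this is delicate, and why it remained open after Geroch, is that a naive mollification of a topological Cauchy time function yields at best a function with past-directed \emph{causal} differential; one needs strict timelikeness, and one needs to preserve the global Cauchy property of the level sets.

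The device that localizes the construction is a convexity principle: if $(f_i)_{i\in I}$ is a locally finite family of smooth real functions on $\scrM$ such that at every point each $df_i$ is either zero or past-directed timelike, and at every point of $\scrM$ at least one of the $df_i$ is past-directed timelike, then $f:=\sum_{i\in I}f_i$ is a temporal function. This holds because the past-directed timelike covectors in each cotangent fibre, together with the origin, generate a convex cone, and a (pointwise finite) sum of elements of a convex cone with at least one in its interior lies in the interior. Thus it suffices to build suitable local pieces and add them.

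For the local pieces one works in convex normal neighbourhoods and, crucially, with ``slabs'' $\tau^{-1}([a,b])$ of a background Cauchy time function $\tau$. The technical heart of Bernal--Sánchez is the construction, given two of its level sets $\scrN_a,\scrN_b$ with $\scrN_a$ in the timelike past of $\scrN_b$, of a smooth function that is temporal on the slab between them and whose level sets inside the slab are Cauchy. Summing countably many such slab functions via the convexity principle yields a global Cauchy temporal function $t$ with $t^{-1}(\R)=\scrM$. To pass to the orthogonal product one introduces the vector field $\partial_t:=\nabla t/g(\nabla t,\nabla t)$, which satisfies $dt(\partial_t)=1$ and is $g$-orthogonal to each level set $\scrN_t:=t^{-1}(t)$; since the $\scrN_t$ are Cauchy, a maximal integral curve of $\partial_t$ escapes every compact set at a finite parameter value only if it fails to cross some Cauchy hypersurface, which is impossible, so the flow is complete and produces a diffeomorphism $\R\times\scrN\to\scrM$ (with $\scrN=\scrN_0$) under which $g$ takes the form $-\beta^2\,dt\otimes dt+h_t$, where $\beta:=(-g(\nabla t,\nabla t))^{-1/2}\in C^\infty(\scrM,(0,\infty))$ and $h_t$ is the induced family of Riemannian metrics on $\scrN$, depending smoothly on $t$.

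For the ``moreover'' part one must arrange a prescribed spacelike Cauchy hypersurface $\scrN$ to be a level set. Following \cite{bernal+sanchez_further_results}, one regards $J^+(\scrN)$ and $J^-(\scrN)$ as globally hyperbolic manifolds with common boundary $\scrN$, runs the construction above on each half so that the resulting temporal functions both vanish on $\scrN$ and have matching transverse jet there, and glues them; the convexity principle again shows the glued function is temporal across $\scrN$, and that $\scrN=t^{-1}(0)$ is Cauchy. I expect the genuine obstacle to be precisely the slab-level construction — producing a temporal function whose level sets are Cauchy rather than merely spacelike, with enough flexibility to prescribe a given $\scrN$ — which is the quantitative core of the Bernal--Sánchez papers; by comparison, the convexity principle and the completeness of the gradient flow are soft once that input is in hand. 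In the present paper this theorem is of course invoked rather than reproved.
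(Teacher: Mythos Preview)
Your observation at the end is exactly right: the paper does not prove this theorem at all. It is stated with the citations to Bernal--S\'anchez and then used as a black box; there is no accompanying \texttt{proof} environment in the paper. So there is nothing to compare your sketch against on the paper's side.

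That said, your outline of the Bernal--S\'anchez argument is accurate in its broad strokes: the convexity principle for sums of temporal functions, the slab-by-slab construction relative to a background Cauchy time function, the passage to the orthogonal splitting via the complete flow of $\nabla t/g(\nabla t,\nabla t)$, and the two-sided gluing across a prescribed spacelike Cauchy hypersurface for the ``moreover'' clause. You also correctly identify where the real work lies (the slab construction ensuring the level sets remain Cauchy). For the purposes of this paper, however, simply citing \cite{bernal+sanchez2,bernal+sanchez_further_results} is all that is expected.
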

 
The class of globally hyperbolic manifolds is non-empty and besides Minkowski spacetime includes many significant spacetimes in general relativity and cosmology. Notable examples include \textit{black hole spacetimes} such as Schwarzschild and Kerr, as well as \textit{de Sitter spacetime}, a maximally symmetric solution of Einstein's equations with a positive cosmological constant. Of course, given any $n$-dimensional compact manifold $\scrN$ and a smooth one-parameter family of Riemannian metrics $\{h_t\}_{t\in\R}$, the Lorentzian manifold $(\R\times\scrN, g=-dt\otimes dt+h_t)$ provides an example of a globally hyperbolic manifold.

\subsection{Sobolev Spaces}\label{SobolevSpaces}
Let $(\scrM,g)$ be a Riemannian manifold with Levi-Civita connection $\nabla$ and $\scrE\xrightarrow{\pi}\scrM$ be a finite-rank $\C$-vector bundle over $(\scrM,g)$ equipped with a \textit{positive-definite} fibre metric 
\begin{align*}
	\Sl\cdot|\cdot\Sr_{\scrE}\in C^{\infty}(\scrM,(\overline{\scrE}\otimes\scrE)^{\ast}),\qquad \Sl\cdot\vert\cdot\Sr_{\scrE_{x}}\colon\scrE_{x}\times\scrE_{x}\to\C,\qquad x\in\scrM\, .
\end{align*}
We denote the space of smooth sections by $C^{\infty}(\scrM,\scrE)$ and the subspace of sections with compact support by $C^{\infty}_{\mathrm{c}}(\scrM,\scrE)$. Furthermore, we denote the natural $L^{2}$-Hilbert space by
\begin{align*}
	L^{2}(\scrM,\scrE):=\overline{C^{\infty}_{\mathrm{c}}(\scrM,\scrE)}^{\Vert\cdot\Vert_{L^{2}}},\qquad \bra\psi\vert\varphi\ket_{L^{2}}:=\int_{\scrM}\Sl\psi(x)\vert\varphi(x)\Sr_{\scrE_{x}}\,d\mu_{\scrM}(x)\, ,
\end{align*}
noting that it depends on both the Riemannian and bundle metric. Let us further choose a (metric compatible) connection 
\begin{align*}
	\nabla^{\scrE}\colon C^{\infty}(\scrM,\scrE)\to C^{\infty}(\scrM,\scrE\otimes T^{\ast}\scrM)
\end{align*} 
of $\scrE$. Writing $\scrE_{i}:=\scrE\otimes T^{\ast}\scrM^{\otimes i}$ for $i\in\N$, we denote the \textit{$i$th covariant derivative} by\footnote{By $\nabla^{\scrE}\otimes\nabla$ we mean the tensor product connection, i.e.~$\nabla^{\scrE}\otimes\nabla:=\mathrm{id}_{\scrE}\otimes\nabla+\nabla^{\scrE}\otimes\mathrm{id}_{T^{\ast}\scrM}$.}
\begin{align*}
	\nabla^{\scrE,i}\colon C^{\infty}(\scrM,\scrE)\xrightarrow{\nabla^{\scrE}}C^{\infty}(\scrM,\scrE_{1})\xrightarrow{\nabla^{\scrE}\otimes\nabla}C^{\infty}(\scrM,\scrE_{2})\xrightarrow{}\dots\xrightarrow{}C^{\infty}(\scrM,\scrE_{i})\, ,
\end{align*}
see e.g.~\cite[Sec.~12.8]{LeeBookManifolds}. We equip the bundles $\scrE_{i}$ with the obvious bundle metric induced by $\Sl\cdot|\cdot\Sr_{\scrE}$ and $g$, which we denote by $\Sl\cdot|\cdot\Sr_{\scrE_{i}}$. With this notation, we define the \textit{Sobolev space} of order $i\in\N$ by
\begin{align*}
	W^{2,i}(\scrM,\scrE)&:=\overline{\{\psi\in C^{\infty}(\scrM,\scrE)\mid \Vert\psi\Vert_{W^{2,i}}<\infty\}}^{\Vert\cdot\Vert_{W^{2,i}}},\\ \bra\psi\vert\varphi\ket_{W^{2,i}}&:=\sum_{j=0}^{i}\bra\nabla^{\scrE,j}\psi(x)\vert\nabla^{\scrE,j}\varphi(x)\ket_{L^{2}}\, .
\end{align*}
We stress that this definition strictly depends on the choice of bundle metric and connection. Furthermore, we remind the reader that for non-compact $\scrM$, this definition does in general neither coincide with the definition obtained by using distributional (weak) covariant derivatives nor with the space $W^{2,i}_{0}(\scrM,\scrE)$ obtained by taking the completion of $C_{\mathrm{c}}^{\infty}(\scrM,\scrE)$ with respect to $\Vert\cdot\Vert_{W^{2,i}}$. We refer to \cite{HebeyBook} as well as \cite[Chap.~2]{AubinBook} and \cite[Sec.~1.3]{Eichhorn} for further details.

\subsection{Symmetric Hyperbolic Systems}\label{subsec:SHS}
Consider a \textit{Hermitian vector bundle} over a globally hyperbolic manifold $(\scrM,g)$, that is, a $\mathbb{C}$-vector bundle $\scrE\xrightarrow{\pi}\scrM$ of finite rank $N\in\N$ together with a non-degenerate Hermitian fibre metric 
\begin{align*}
	\Sl\cdot|\cdot\Sr_{\scrE}\in C^{\infty}(\scrM,(\overline{\scrE}\otimes\scrE)^{\ast}),\qquad \Sl\cdot\vert\cdot\Sr_{\scrE_{x}}\colon\scrE_{x}\times\scrE_{x}\to\C,\qquad x\in\scrM\, .
\end{align*}
The bundle metric $\Sl\cdot\vert\cdot\Sr_{\scrE}$ induces a non-degenerate Hermitian sesquilinear form on the level of sections, which we shall denote by
\begin{align}\label{eq:SectionMetric}
	\bra\psi\vert\varphi\ket_{\scrE}:=\int_{\scrM}\Sl\psi(x)\vert\varphi(x)\Sr_{\scrE_{x}}\,d\mu_{\scrM}(x)
\end{align}
for all $\psi,\varphi\in C^{\infty}(\scrM,\scrE)$ with compactly overlapping support, where $d\mu_{\scrM}$ denotes the Lorentzian volume measure on $(\scrM,g)$. The \textit{formal adjoint} of a linear differential operator $\scrS\colon C^{\infty}(\scrM,\scrE)\to C^{\infty}(\scrM,\scrE)$ with respect to $\bra\cdot\vert\cdot\ket_{\scrE}$ is denoted by $\scrS^{\dagger}$.

To clarify our convention, we recall that the \textit{principal symbol} $\sigma_{\scrS}\colon T^{\ast}\scrM\to\mathrm{End}(\scrE)$ of a linear first-order differential operator $\scrS\colon C^{\infty}(\scrM,\scrE)\to C^{\infty}(\scrM,\scrE)$ can be defined in accordance to the Leibniz rule
\begin{align*}\scrS(f\psi)=f\scrS\psi+\sigma_{\scrS}(df)\psi
\end{align*}
for all $f\in C^{\infty}(\scrM)$ and $\psi\in C^{\infty}(\scrM,\scrE)$, as in \cite[Sec.~5]{baergreen}. Since $\scrS$ is a \textit{first-order} differential operator, we note that $\sigma_{\scrS}(\xi)\in\mathrm{End}(\scrE_{x})$ is also $\R$-linear in $\xi\in T^{\ast}_{x}\scrM$ for all $x\in\scrM$, i.e.~$\sigma_{\scrS}$ defines a vector bundle morphism from $T^{\ast}\scrM$ to $\mathrm{End}(\scrE)$ and hence can be viewed as a section of $\mathrm{End}(\scrE)\otimes T\scrM$. 

Furthermore, let us choose a (metric) connection $\nabla^{\scrE}$ on $(\scrE,\Sl\cdot|\cdot\Sr_{\scrE})$ and a local coordinate chart $(x^{\mu})_{\mu=0,\dots,k}$ on some open subset of $\scrM$. Then, using the simplified notation $\nabla^{\scrE}_{\mu}:=\nabla^{\scrE}_{\partial_{\mu}}$, we can write $\scrS$ locally in the form
\begin{align*}
	\scrS=\sigma_{\scrS}(d x^{\mu})\nabla^{\scrE}_{\mu}-\scrS_{0}\, ,
\end{align*}
where we used the \textit{Einstein summation convention} and where $\scrS_{0}\in C^{\infty}(\scrM,\mathrm{End}(\scrE))$ is a zero-order operator, which only depends on the choice of connection on $\scrE$. 

In this article, we are mainly interested in studying linear first-order differential operators of the following type:

\begin{Def}[Symmetric Hyperbolic Systems]\label{Def:SHS}
	 A \emph{symmetric hyperbolic system} is a linear first-order differential operator $\scrS\colon C^{\infty}(\scrM,\scrE)\to C^{\infty}(\scrM,\scrE)$ whose principal symbol $\sigma_{\scrS}\colon T^{\ast}\scrM\to\mathrm{End}(\scrE)$ has the following two properties:
	\begin{itemize}
		\item[(S)]$\sigma_{\scrS}(\xi)\in\mathrm{End}(\scrE_{x})$ is Hermitian with respect to $\Sl\cdot|\cdot\Sr_{\scrE_{x}}$ for all $\xi\in T^{\ast}_{x}\scrM,\,x\in\scrM$;
		\item[(H)]The sesquilinear form $\Sl\sigma_{\scrS}(\tau)\cdot\vert\cdot\Sr_{\scrE_{x}}$ is positive definite for every future-directed\footnote{A timelike covector $\xi\in T^{\ast}\scrM$ is \textit{future/past-directed} if $\xi(v)>0$ for all future/past-directed $v\in T\scrM$. In particular, the musical isomorphism map future-directed into past-directed objects and vice versa.} timelike $\tau\in T_{x}^{\ast}\scrM$ and all $x\in\scrM$.
	\end{itemize}
\end{Def}

\begin{Remark}
	More generally, one may consider \emph{weakly symmetric hyperbolic systems}, in which condition (H) in Definition~\ref{Def:SHS} is relaxed, see e.g.~\cite[Sec.~2.5]{DiracMIT}.
\end{Remark}

The following proposition shows how the formal adjoint $\scrS^{\dagger}$ of a first-order operator $\scrS$ can be written in terms the principal symbol and zero-order part of $\scrS$.

\begin{Prp}[Formal Adjoint of a Symmetric Hyperbolic System]\label{Prop:AdjointSHS} 
	Consider a symmetric hyperbolic system 
	\begin{align*}
		\scrS=\sigma_{\scrS}(d x^{\mu})\nabla^{\scrE}_{\mu}-\scrS_{0}
	\end{align*}		
	on a Hermitian bundle $(\scrE,\Sl|\cdot\Sr_{\scrE})$ with metric connection $\nabla^{\scrE}$. Let $\nabla^{\mathrm{End}(\scrE)}$ be the connection on $\mathrm{End}(\scrE)$ induced by $\nabla^{\scrE}$ and denote by $\mathscr{D}$ the divergence-like operator defined as 
	\begin{align*}
		\mathscr{D}\colon C^{\infty}(\scrM,\mathrm{End}(\scrE)\otimes T\scrM)\xrightarrow{\nabla^{\mathrm{End}(\scrE)}\otimes\nabla} C^{\infty}(\scrM,\mathrm{End}(\scrE)\otimes T\scrM\otimes T^{\ast}\scrM)\xrightarrow{c}C^{\infty}(\scrM,\mathrm{End}(\scrE))\, ,
	\end{align*}
where $c\colon T\scrM\otimes T^{\ast}\scrM\ni v\otimes \xi\mapsto \xi(v)=v^{\mu}\xi_{\mu}$ denotes the dual pairing. Then, the formal adjoint $\scrS^{\dagger}$ of $\scrS$ with respect to $\bra\cdot\vert\cdot\ket_{\scrE}$ is given by
	\begin{align*}
		\scrS^{\dagger}=-\sigma_{\scrS}(d x^{\mu})\nabla^{\scrE}_{\mu}-\scrS_{0}^{\prime}-\scrS_{0}^{\dagger}\, ,
	\end{align*}
	where $\scrS_{0}^{\prime}:=\mathscr{D}\sigma_{\scrS}$ is a zero-order differential operator.
\end{Prp}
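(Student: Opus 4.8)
The plan is to verify the characterizing integral identity
\[
\bra\scrS\psi\vert\varphi\ket_{\scrE}=\bra\psi\vert\scrS^{\dagger}\varphi\ket_{\scrE}\qquad\text{for all }\psi,\varphi\in C^{\infty}(\scrM,\scrE)\text{ with compactly overlapping support,}
\]
with $\scrS^{\dagger}$ the operator on the right-hand side of the claim; by non-degeneracy of $\Sl\cdot\vert\cdot\Sr_{\scrE}$ this pins down the formal adjoint. Both $\scrS$ (with principal part $\psi\mapsto\sigma_{\scrS}(dx^{\mu})\nabla^{\scrE}_{\mu}\psi$, the coordinate form of the contraction $c(\sigma_{\scrS}\otimes\nabla^{\scrE}\psi)$ of the $T\scrM$-leg of $\sigma_{\scrS}$ against $\nabla^{\scrE}\psi$) and the candidate $\scrS^{\dagger}$ are globally well-defined differential operators, so only the defining identity must be checked. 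The zero-order part is immediate: $\Sl\scrS_{0}\psi\vert\varphi\Sr_{\scrE}=\Sl\psi\vert\scrS_{0}^{\dagger}\varphi\Sr_{\scrE}$ holds fibrewise, with $\scrS_{0}^{\dagger}$ the fibrewise adjoint endomorphism field. Hence the whole content is the integration by parts for the principal part.

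For that, given $\psi,\varphi$ as above, I would introduce the compactly supported (complex) vector field
\[
X:=\Sl\sigma_{\scrS}\psi\,\vert\,\varphi\Sr_{\scrE}\in C^{\infty}(\scrM,T\scrM),
\]
obtained by viewing $\sigma_{\scrS}$ as a section of $\mathrm{End}(\scrE)\otimes T\scrM$ (so $\sigma_{\scrS}\psi\in C^{\infty}(\scrM,\scrE\otimes T\scrM)$) and pairing the $\scrE$-legs with the fibre metric. Since $\scrM$ has no boundary and $X$ has compact support, the divergence theorem gives $\int_{\scrM}(\div X)\,d\mu_{\scrM}=0$. The key step is to expand $\div X=c(\nablaLC X)$: because $\Sl\cdot\vert\cdot\Sr_{\scrE}$ is $\nabla^{\scrE}$-compatible, contractions commute with covariant derivatives, and the evaluation map $\mathrm{End}(\scrE)\otimes\scrE\to\scrE$ is parallel, the Leibniz rule yields
\[
\div X=\Sl(\mathscr{D}\sigma_{\scrS})\psi\,\vert\,\varphi\Sr_{\scrE}+\Sl\sigma_{\scrS}(dx^{\mu})\nabla^{\scrE}_{\mu}\psi\,\vert\,\varphi\Sr_{\scrE}+\Sl\sigma_{\scrS}(dx^{\mu})\psi\,\vert\,\nabla^{\scrE}_{\mu}\varphi\Sr_{\scrE},
\]
the first term being exactly $\mathscr{D}\sigma_{\scrS}=c\big((\nabla^{\mathrm{End}(\scrE)}\otimes\nabla)\sigma_{\scrS}\big)$ applied to $\psi$. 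I expect the only point requiring care to be the bookkeeping of the Christoffel contributions: in a chart $\div X=\partial_{\mu}X^{\mu}+\Gamma^{\mu}_{\mu\nu}X^{\nu}$, and one must check that the $\Gamma^{\mu}_{\mu\nu}\sigma_{\scrS}(dx^{\nu})$-term from the divergence together with the Christoffel terms produced by differentiating the $T\scrM$-leg of $\sigma_{\scrS}$ combine with $\nabla^{\mathrm{End}(\scrE)}_{\mu}\sigma_{\scrS}(dx^{\mu})$ to reconstitute the invariant operator $\mathscr{D}\sigma_{\scrS}$; the coordinate-free derivation via the parallel evaluation map is the cleanest way to sidestep this.

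Finally I would integrate the displayed identity, use $\int_{\scrM}\div X\,d\mu_{\scrM}=0$, and rewrite the last two terms via the hypotheses. Property (S) (Hermiticity of $\sigma_{\scrS}(\xi)$) gives $\Sl\sigma_{\scrS}(dx^{\mu})\psi\,\vert\,\nabla^{\scrE}_{\mu}\varphi\Sr_{\scrE}=\Sl\psi\,\vert\,\sigma_{\scrS}(dx^{\mu})\nabla^{\scrE}_{\mu}\varphi\Sr_{\scrE}$; moreover, metricity of $\nabla^{\scrE}$ implies $\nabla^{\mathrm{End}(\scrE)}$ preserves fibrewise Hermiticity, so $\mathscr{D}\sigma_{\scrS}$ is itself Hermitian and $\Sl(\mathscr{D}\sigma_{\scrS})\psi\,\vert\,\varphi\Sr_{\scrE}=\Sl\psi\,\vert\,(\mathscr{D}\sigma_{\scrS})\varphi\Sr_{\scrE}$ — this is precisely why $\scrS_{0}'=\mathscr{D}\sigma_{\scrS}$ enters $\scrS^{\dagger}$ without a dagger. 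Combining with the zero-order identity gives
\[
\bra\scrS\psi\,\vert\,\varphi\ket_{\scrE}=\Big\langle\psi\,\Big\vert\,\big({-}\sigma_{\scrS}(dx^{\mu})\nabla^{\scrE}_{\mu}-\mathscr{D}\sigma_{\scrS}-\scrS_{0}^{\dagger}\big)\varphi\Big\rangle_{\!\scrE},
\]
which is the assertion. (Only Hermiticity of $\sigma_{\scrS}(\xi)$ and of $\mathscr{D}\sigma_{\scrS}$ and metric-compatibility are used, so the possible indefiniteness of $\Sl\cdot\vert\cdot\Sr_{\scrE}$ and the choice of sesquilinearity convention are irrelevant.)
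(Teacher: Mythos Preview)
Your proposal is correct and follows essentially the same approach as the paper: both introduce the vector field $X=\Sl\sigma_{\scrS}(dx^{\mu})\psi\vert\varphi\Sr_{\scrE}\,\partial_{\mu}$, expand its divergence via the Leibniz rule using metric compatibility of $\nabla^{\scrE}$, and integrate using Stokes' theorem (the paper phrases this through the $k$-form $X\lrcorner d\mu_{\scrM}$ and Cartan's magic formula, but the content is identical). One small difference: the paper arranges the computation so that the term $\scrS_{0}'\psi=(\mathscr{D}\sigma_{\scrS})\psi$ stays in the first slot of $\Sl\cdot\vert\cdot\Sr_{\scrE}$, reading off $\scrS^{\dagger}$ from $\bra\scrS^{\dagger}\psi\vert\varphi\ket_{\scrE}=\bra\psi\vert\scrS\varphi\ket_{\scrE}$; this bypasses your appeal to Hermiticity of $\mathscr{D}\sigma_{\scrS}$, which is a true but unnecessary extra observation.
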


\begin{proof}
	Fix $\psi,\varphi\in C^{\infty}_{\mathrm{c}}(\scrM,\scrE)$ and consider the $k$-form $\omega\in\Omega^{k}(\scrM,\C)$ defined by
	\begin{align*}
		\omega:=\Sl\sigma_{\scrS}(dx^{\mu})\psi|\varphi\Sr_{\scrE} \partial_{\mu}\lrcorner d\mu_{\scrM}\,.
	\end{align*}
	Defining the vector field $V:=\Sl\sigma_{\scrS}(dx^{\mu})\psi|\varphi\Sr_{\scrE} \partial_{\mu}\in C^{\infty}(\scrM,T\scrM)$, then $V\lrcorner d\mu_{\scrM}$ denotes the insertion of $V$ into the first slot of the volume form $d\mu_{\scrM}$. Cartan's magic formula implies 
	\begin{align*}
		d\omega=d(V\lrcorner d\mu_{\scrM})=\mathcal{L}_{V}d\mu_{\scrM}=\mathrm{div}_{g}(V)d\mu_{\scrM}\, ,
	\end{align*}	
	where $\mathcal{L}_{V}$ denotes the Lie derivative along $V$ and $\mathrm{div}_{g}(V)$ the divergence of $V$. Furthermore, using the fact that the connection $\nabla^{\scrE}$ is metric with respect to $\Sl\cdot|\cdot\Sr_{\scrE}$, a straightforward computation yields
	\begin{align*}
		\mathrm{div}_{g}(V)=&\partial_{\mu}\Sl \sigma_{\scrS}(dx^{\mu})\psi|\varphi\Sr_{\scrE}+\Gamma_{\mu\nu}^{\mu}\Sl \sigma_{\scrS}(dx^{\nu})\psi|\varphi\Sr_{\scrE}\\=&\Sl \nabla^{\scrE}_{\mu}(\sigma_{\scrS}(dx^{\mu})\psi)|\varphi\Sr_{\scrE}+\Sl \sigma_{\scrS}(dx^{\mu})\psi|\nabla^{\scrE}_{\mu}\varphi\Sr_{\scrE}+\Gamma_{\mu\nu}^{\mu}\Sl \sigma_{\scrS}(dx^{\nu})\psi|\varphi\Sr_{\scrE}\\=&\Sl \underbrace{(\nabla^{\mathrm{End}(\scrE)}_{\mu}\sigma_{\scrS}(dx^{\mu})+\Gamma_{\mu\nu}^{\mu}\sigma_{\scrS}(dx^{\nu}))\psi}_{=:\scrS_{0}^{\prime}\psi}+\sigma_{\scrS}(dx^{\mu})\nabla_{\mu}^{\scrE}\psi|\varphi\Sr_{\scrE}+\Sl \psi|\sigma_{\scrS}(dx^{\mu})\nabla^{\scrE}_{\mu}\varphi\Sr_{\scrE}\\=&\Sl\psi\vert\scrS\varphi\Sr_{\scrE}+\Sl(\sigma_{\scrS}(d x^{\mu})\nabla^{\scrE}_{\mu}+\scrS_{0}^{\prime}+\scrS_{0}^{\dagger})\psi\vert\varphi\Sr_{\scrE} \, ,
	\end{align*}
	where we added and subtracted $\Sl\psi\vert\scrS_{0}\varphi\Sr_{\scrE}$ in the last step and used the fact that the adjoint relation for $\scrS_{0}$ actually holds pointwise, i.e.~$\Sl\psi\vert\scrS_{0}\varphi\Sr_{\scrE}=\Sl\scrS_{0}^{\dagger}\psi\vert\varphi\Sr_{\scrE}$, since it is an operator of order zero. Combining everything, we have shown that
	\begin{align}\label{eq:AdjointProof}
		d\omega=\Sl\psi\vert\scrS\varphi\Sr_{\scrE}-\Sl\scrS^{\dagger}\psi\vert\varphi\Sr_{\scrE}\, .
	\end{align}
	where $\scrS^{\dagger}:=-\sigma_{\scrS}(d x^{\mu})\nabla^{\scrE}_{\mu}-\scrS_{0}^{\prime}-\scrS_{0}^{\dagger}$. Integrating equation~\eqref{eq:AdjointProof} over $\scrM$ and using Stokes' theorem, we conclude that $\scrS^{\dagger}$ is indeed the formal adjoint of $\scrS$. 
\end{proof}

\begin{Remark}\label{Remark:Zero}	
	As a by-product, Proposition~\ref{Prop:AdjointSHS} implies that $\scrS+\scrS^{\dagger}$ is an operator of order zero and
	\begin{align*}
		\Sl(\scrS+\scrS^{\dagger})\psi\vert\psi\Sr_{\scrE}=-2\Re \Sl\scrS_{0}\psi\vert\psi\Sr_{\scrE}-\Sl\scrS^{\prime}_{0}\psi\vert\psi\Sr_{\scrE}
	\end{align*}
	for all $\psi\in C^{\infty}(\scrM,\scrE)$. In particular, since $\Sl\scrS^{\prime}_{0}\psi\vert\psi\Sr_{\scrE}$ is clearly real-valued, it follows that $\Sl(\scrS+\scrS^{\dagger})\psi\vert\psi\Sr_{\scrE}$ is real-valued as well. 
\end{Remark}

The following basic examples shows how Definition~\ref{Def:SHS} is related to the notion of symmetric hyperbolic systems in the usual PDE sense (see e.g.~\cite{friedrichs, friedrichs2}).

\begin{Example}\label{Ex:MinkSHS}
	Consider $(1+k)$-dimensional Minkowski spacetime $\scrM=\mathbb{R}\times\mathbb{R}^{k}$ and let $\scrE:=\scrM\times \C^N$ be the trivial vector bundle equipped with the standard inner product on its fibres. Any linear first-order differential operator $\scrS\colon C^{\infty}(\scrM,\scrE) \to C^{\infty}(\scrM,\scrE)$ can be written as
	\begin{align*} 
		\scrS:=  A^0 \partial_t + \sum_{j=1}^k A^j\partial_{j} - C 
	\end{align*}
	with coefficients $A^0, A^j,C\in C^{\infty}(\scrM,\C^{N\times N})$. In the notation used before, it holds that $\sigma_{\scrS}(dt)=A^{0}$ and $\sigma_{\scrS}(dx^{j})=A^{j}$ as well as $\scrS_{0}=C$. In these coordinates, Condition~(S) in Definition~\ref{Def:SHS} hence reduces to 
	\begin{align*}
		\text{(S)}&\qquad A^0=(A^0)^\dagger \qquad \text{and} \qquad A^j=(A^j)^\dagger
	\end{align*}
	for $j=1,\dots, k$, where $\dagger$ is the (pointwise) adjoint matrix, while (H) can be stated as 
	\begin{align*}
		\text{(H)}\qquad A^0 + \sum_{j=1}^{k} \alpha_j A^j \quad\text{ is positive definite for \quad
$\sum_{j=1}^{k} \alpha_j^2 <1$}\, ,
	\end{align*}
	since any timelike future-directed covector is (up to multiplication by a positive function) of the form $\tau=dt+\sum_{j=1}^{k}\alpha_{j}d x^{j}$ with coefficients $\{\alpha_{j}\}_{j}$ such that $\sum_{j}\alpha_{j}^{2}<1$. The latter requirement seems a bit stronger to the one found in the classical PDE literature, in which one usually just requires $A^{0}$ to be positive-definite. However, since the principal symbol $\sigma_{\scrS}(\xi)=A^{\mu}\xi_{\mu}$ depends smoothly on $\xi$, it is clear that positive definiteness of $\sigma_{\scrS}(d t)=A^{0}$ extends to small perturbations of $dt$, i.e.~to $\sigma_{\scrS}(d t+\alpha_{i}d x^{i})=A^{0}+\alpha_{j}A^{j}$ for sufficiently small $\{\alpha_{j}\}_{j}$. 
\end{Example}

As another example, also \textit{normally hyperbolic operators}, namely second-order differential operators, which up to the addition of a zero-order term are given by the d'Alembertian of some connection, can be reduced to symmetric hyperbolic systems, see~\cite[Sec.~1.5]{baer+ginoux} or \cite[Sec.~6.3]{FriedrichsTimelikeBoundary} for details. Furthermore, also Maxwell's equations on ultrastatic manifolds provide an example of a symmetric hyperbolic system (see Section~\ref{sec:Maxwell}). Last but not least, as we shall see in Section~\ref{sec:Dirac}, a prototypical example of a symmetric hyperbolic system is the \textit{Dirac operator}. In this setting, the naturally defined fibre metric on the spinor bundle is indefinite. However, it turns out that assuming the fibre metric of a symmetric hyperbolic system to be positive-definite is not a loss of generality. 
 
\begin{Lemma}[\protect{\cite[Lemma~2.8]{FriedrichsTimelikeBoundary}}]\label{lem:scalprod}
	Let $t\colon\scrM\to\R$ be a Cauchy temporal function and $\beta\in C^{\infty}(\scrM,(0,\infty))$ the corresponding lapse function, i.e.
	\begin{align*}
		\scrM=\R\times\scrN,\qquad g=-\beta^{2}dt\otimes dt+h_{t}
	\end{align*}
	for a one-parameter family $(h_{t})_{t\in\R}$ of Riemannian metrics on $\scrN$. Furthermore, let $\scrS\colon C^{\infty}(\scrM,\scrE)\to C^{\infty}(\scrM,\scrE)$ be a symmetric hyperbolic system over $(\scrM,g)$. Then, $\scrS_\beta:=\sigma_\scrS(dt)^{-1} \scrS$ is a symmetric hyperbolic system with respect to the positive-definite Hermitian fibre metric
	\begin{align}\label{eq:scalarprod}
\Sl\cdot|\cdot\Sr_{\beta,\scrE}:=\Sl{\sigma_{\scrS}(\eta)\cdot}\vert{\cdot}\Sr_\scrE\, ,
	\end{align}
	where $\eta:=\beta dt$ is the future-directed normal covector of $\scrN$ in $\scrM$. Moreover, the Cauchy problems for $\scrS_\beta$ and $\scrS$ are equivalent.
\end{Lemma}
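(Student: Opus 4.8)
The plan is to reduce the whole statement to a single algebraic identity at the level of principal symbols; once that identity is in place, conditions (S) and (H) for $\scrS_{\beta}$ and the equivalence of the Cauchy problems follow immediately.

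First I would record that $\eta=\beta dt$ is a future-directed timelike covector: indeed $(dt)^{\sharp}$ is past-directed timelike by Definition~\ref{def:Cauchy temporal}, so $dt$, and hence $\eta=\beta dt$ with $\beta>0$, is future-directed timelike. Condition (H) for $\scrS$ then says that $\Sl\sigma_{\scrS}(\eta)\cdot\vert\cdot\Sr_{\scrE}$ is positive definite, so in particular $\sigma_{\scrS}(\eta)\in\mathrm{End}(\scrE_{x})$ is invertible at every $x$ and self-adjoint with respect to $\Sl\cdot\vert\cdot\Sr_{\scrE}$ by (S); its inverse shares both properties. Since $\sigma_{\scrS}(dt)=\beta^{-1}\sigma_{\scrS}(\eta)$, the endomorphism $\sigma_{\scrS}(dt)$ is invertible with a smooth inverse, so $\scrS_{\beta}=\sigma_{\scrS}(dt)^{-1}\scrS$ is a well-defined first-order linear differential operator on $C^{\infty}(\scrM,\scrE)$. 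Moreover $\Sl\cdot\vert\cdot\Sr_{\beta,\scrE}=\Sl\sigma_{\scrS}(\eta)\cdot\vert\cdot\Sr_{\scrE}$ is Hermitian (because $\sigma_{\scrS}(\eta)$ is self-adjoint for $\Sl\cdot\vert\cdot\Sr_{\scrE}$) and positive definite (by (H)); this settles the first assertion of the lemma.

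Next I would compute the principal symbol of $\scrS_{\beta}$ from the Leibniz characterization of $\sigma_{\scrS}$: for $f\in C^{\infty}(\scrM)$,
\begin{align*}
\scrS_{\beta}(f\psi)=\sigma_{\scrS}(dt)^{-1}\big(f\,\scrS\psi+\sigma_{\scrS}(df)\psi\big)=f\,\scrS_{\beta}\psi+\sigma_{\scrS}(dt)^{-1}\sigma_{\scrS}(df)\,\psi\, ,
\end{align*}
so that $\sigma_{\scrS_{\beta}}(\xi)=\sigma_{\scrS}(dt)^{-1}\sigma_{\scrS}(\xi)$. Using $\sigma_{\scrS}(\eta)=\beta\,\sigma_{\scrS}(dt)$ and unwinding the definition of $\Sl\cdot\vert\cdot\Sr_{\beta,\scrE}$ gives the key identity
\begin{align*}
\Sl\sigma_{\scrS_{\beta}}(\xi)\psi\vert\varphi\Sr_{\beta,\scrE}=\Sl\sigma_{\scrS}(\eta)\sigma_{\scrS}(dt)^{-1}\sigma_{\scrS}(\xi)\psi\vert\varphi\Sr_{\scrE}=\beta\,\Sl\sigma_{\scrS}(\xi)\psi\vert\varphi\Sr_{\scrE}
\end{align*}
for all $x\in\scrM$, $\xi\in T_{x}^{\ast}\scrM$ and $\psi,\varphi\in\scrE_{x}$. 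Since $\beta(x)>0$, the sesquilinear form on the right is Hermitian in $(\psi,\varphi)$ exactly when $\Sl\sigma_{\scrS}(\xi)\cdot\vert\cdot\Sr_{\scrE}$ is, and positive definite exactly when $\Sl\sigma_{\scrS}(\xi)\cdot\vert\cdot\Sr_{\scrE}$ is. Hence (S) and (H) for $\scrS$ transfer verbatim to $\scrS_{\beta}$ relative to $\Sl\cdot\vert\cdot\Sr_{\beta,\scrE}$, and $\scrS_{\beta}$ is a symmetric hyperbolic system for the new fibre metric.

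Finally, for the equivalence of Cauchy problems I would observe that $\sigma_{\scrS}(dt)^{-1}$ is a smooth bundle automorphism of $\scrE$, so $\scrS\psi=\phi$ is equivalent to $\scrS_{\beta}\psi=\sigma_{\scrS}(dt)^{-1}\phi$, with the initial condition on $\scrN_{0}$, the regularity class of $\psi$ and the support of $\psi$ all unchanged; thus $\phi\mapsto\sigma_{\scrS}(dt)^{-1}\phi$ is a bijection of source terms intertwining the two Cauchy problems. I do not expect a genuine obstacle here. The only points requiring care are the time-orientation bookkeeping that makes $\eta$ a future-directed timelike covector (so that (H) may be invoked for it), and the observation that self-adjointness and invertibility of $\sigma_{\scrS}(dt)$ pass to its inverse --- which is what keeps the displayed identity valid even when $\Sl\cdot\vert\cdot\Sr_{\scrE}$ is indefinite, the very situation that motivates the lemma (e.g.\ the Dirac operator).
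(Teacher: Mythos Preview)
Your proof is correct. The paper does not actually provide its own proof of this lemma --- it is quoted from the reference \cite{FriedrichsTimelikeBoundary} and stated without argument --- so there is nothing to compare against at the level of detail. Your reduction to the single symbol identity $\Sl\sigma_{\scrS_{\beta}}(\xi)\psi\vert\varphi\Sr_{\beta,\scrE}=\beta\,\Sl\sigma_{\scrS}(\xi)\psi\vert\varphi\Sr_{\scrE}$, obtained from $\sigma_{\scrS}(\eta)\sigma_{\scrS}(dt)^{-1}=\beta\,\mathrm{id}$, is exactly the right observation and makes (S), (H), and the Cauchy equivalence immediate.
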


It turns out that the Cauchy problem for a symmetric hyperbolic systems on a globally hyperbolic manifolds is well-posed.

\begin{Thm}[\protect{\cite[Cor.~5.4 and 5.5, Thm.~5.6 and Prop.~5.7]{baergreen}}]\label{Thm:WellPosedCauchy}
	Let $\scrS\colon C^{\infty}(\scrM,\scrE)\to C^{\infty}(\scrM,\scrE)$ be a symmetric hyperbolic system over a globally hyperbolic manifold $(\scrM,g)$ and let $t\colon\scrM\to\R$ be a Cauchy temporal function with corresponding foliation $(\scrN_{t})_{t\in\R}$. For every $t_{0}\in\R$ and every pair $(\phi,\mathfrak{f})\in C_{\mathrm{c}}^{\infty}(\scrM,\scrE)\times C_{\mathrm{c}}^{\infty}(\scrN_{t_{0}},\scrE|_{\scrN_{t_{0}}})$, there exists a unique solution $\psi\in C_{\mathrm{sc}}^{\infty}(\scrM,\scrE)$ of the Cauchy problem
	\begin{align}\label{Cauchysmooth}
		\begin{cases}{}
			{\scrS}\psi &=\phi \\
			\psi_{|_{\scrN_{t_{0}}}} &= \mathfrak{f}
		\end{cases} 
	\end{align}
	and the solution map $(\phi,\mathfrak{f})\mapsto\psi$ is continuous in the respective LF-space topologies. Furthermore, it holds that
	\begin{align*}
		\mathrm{supp}(\psi)\cap J^{\pm}(\scrN_{t_0})\subset J^{\pm}((\mathrm{supp}(\phi)\cap J^{\pm}(\scrN_{t_0}))\cup\mathrm{supp}(\mathfrak{f}))\, ,
	\end{align*}
	which in particular implies that $\psi$ propagates with at most the speed of light, i.e. 
	\begin{align*}
		\mathrm{supp}(\psi)\subset J(\mathrm{supp}(\phi)\cup\mathrm{supp}(\mathfrak{f}))\, .
	\end{align*}
\end{Thm}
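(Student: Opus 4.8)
The plan is to turn the Cauchy problem into a first-order evolution equation along the foliation and then run the classical energy-estimate argument. By Lemma~\ref{lem:scalprod} we may replace $\scrS$ by $\scrS_\beta=\sigma_{\scrS}(dt)^{-1}\scrS$, which is again a symmetric hyperbolic system but now with $\sigma_{\scrS_\beta}(dt)=\mathrm{id}_{\scrE}$ and positive-definite fibre metric $\Sl\cdot|\cdot\Sr_{\beta,\scrE}$; in the splitting $\scrM=\R\times\scrN$, $g=-\beta^{2}\,dt\otimes dt+h_{t}$, the equation $\scrS_\beta\psi=\phi$ then takes the form $\nabla^{\scrE}_{t}\psi=\sum_{j}\sigma_{\scrS_\beta}(dx^{j})\nabla^{\scrE}_{j}\psi+(\text{zero order})+\phi$, i.e.\ a genuine linear symmetric hyperbolic evolution system on the $t$-dependent bundle over $\scrN$. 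The whole argument then reduces to establishing a priori energy estimates for this evolution system together with the propagation-of-support property.

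The core is the basic energy estimate. On each slice $\scrN_{t}$ use the inner product $(\psi|\varphi)_{t}=\int_{\scrN_{t}}\Sl\sigma_{\scrS}(\eta_{t})\psi|\varphi\Sr_{\scrE}\,d\mu_{\scrN_{t}}$, which is positive-definite by hyperbolicity~(H) and, restricted to any relatively compact region, equivalent to the plain $L^{2}$-norm with constants bounded on compact time intervals. Differentiating $\|\psi(t)\|_{t}^{2}$ along the foliation and substituting the evolution equation, the only a priori top-order contribution is $2\Re\big(\textstyle\sum_{j}\sigma_{\scrS}(dx^{j})\nabla^{\scrE}_{j}\psi\,\big|\,\psi\big)_{t}$ coming from the spatial derivatives; integrating by parts on $\scrN_{t}$ and using the pointwise symmetry~(S) of the principal symbol — precisely the mechanism of Proposition~\ref{Prop:AdjointSHS} and Remark~\ref{Remark:Zero} — converts this into a zero-order expression. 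Hence $\tfrac{d}{dt}\|\psi(t)\|_{t}^{2}\le C\big(\|\psi(t)\|_{t}^{2}+\|\phi(t)\|_{t}^{2}\big)$ on the relevant region, and Gr\"onwall's lemma yields the $L^{2}$-type estimate of $\psi$ in terms of $(\phi,\mathfrak{f})$. Commuting the equation with the iterated covariant derivatives $\nabla^{\scrE,i}$, whose commutators with $\scrS_\beta$ are of lower order, gives the analogous estimate for every Sobolev norm $W^{2,i}$, and therefore control of $\psi$ in all $W^{2,i}_{\loc}$.

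Uniqueness and finite propagation speed come out of the same identity run on the truncated causal past of a point: the lateral boundary of the truncated cone is (a limit of) null hypersurfaces, on which the energy flux is non-negative by~(H) extended by continuity to null conormals, so if $\phi$ and $\mathfrak{f}$ vanish on the relevant domain of dependence then $\|\psi\|_{t}\equiv 0$ there; this simultaneously gives uniqueness and the inclusion $\mathrm{supp}(\psi)\cap J^{\pm}(\scrN_{t_{0}})\subset J^{\pm}\big((\mathrm{supp}(\phi)\cap J^{\pm}(\scrN_{t_{0}}))\cup\mathrm{supp}(\mathfrak{f})\big)$, hence spacelike compactness of $\psi$. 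For existence I would first produce a weak solution by the standard duality argument: the energy estimate applied to the formal adjoint $\scrS^{\dagger}$, which is again symmetric hyperbolic by Proposition~\ref{Prop:AdjointSHS}, shows that the linear functional $\scrS^{\dagger}\chi\mapsto\bra\phi\vert\chi\ket_{\scrE}+(\text{boundary pairing with }\mathfrak{f})$ is well-defined and bounded on an appropriate space of test sections, so the Riesz/Hahn--Banach theorem provides a $\psi$ solving the equation weakly; the higher energy estimates then bootstrap $\psi$ into $\bigcap_{i}W^{2,i}_{\loc}$, hence into $C^{\infty}$ by Sobolev embedding, and also yield continuity of $(\phi,\mathfrak{f})\mapsto\psi$ in the LF-topologies. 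In the non-compact case one runs this construction on an exhaustion of $\scrM$ by relatively compact globally hyperbolic subregions and patches the local solutions using the uniqueness/finite-speed statement just proved.

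The step I expect to be the main obstacle is the existence part in the non-compact, time-dependent setting: one has to choose the function spaces so that the adjoint energy inequality survives up to the Cauchy surface $\scrN_{t_{0}}$ (so that the boundary term carrying the initial datum $\mathfrak{f}$ is controlled), and then glue the pieces coming from the exhaustion without degrading regularity or support — which forces the causal support statement to be established in tandem with existence rather than deduced afterwards. By contrast, the energy identity, the Gr\"onwall step, and the commutation with $\nabla^{\scrE}$ are routine once the symmetry~(S) and hyperbolicity~(H) are exploited as above.
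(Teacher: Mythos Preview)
The paper does not prove this theorem at all: it is stated as a background result and attributed to the cited reference~\cite{baergreen} (Cor.~5.4, 5.5, Thm.~5.6, Prop.~5.7), with no argument given in the text. Your outline is a correct sketch of the classical energy-method proof for symmetric hyperbolic systems, and it is essentially the strategy carried out in that reference; in particular, the energy identity you describe is precisely what the paper later derives in Proposition~\ref{Prop:energy} for its own purposes.
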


As a by-product of the well-posedness of the Cauchy problem, one obtains the existence of the advanced and the retarded Green operators.

\begin{Prp}[\protect{\cite[Thm.~5.9, Cor.~3.11]{baergreen}}]\label{prop:Green}
	For any symmetric hyperbolic system $\scrS$ on a globally hyperbolic manifold $(\scrM,g)$, there exist linear maps 
	\begin{align*}
		G_{\mathrm{ret}/\mathrm{adv}}\colon C^{\infty}_{\mathrm{c}}(\scrM,\scrE)\to C^{\infty}_{\mathrm{sc}}(\scrM,\scrE)\, ,
	\end{align*}
	called \emph{retarded and advanced Green operators}, with the following properties:
	\begin{align*}
		\text{(i)}&\quad G_{\mathrm{ret}/\mathrm{adv}}\circ\scrS= \scrS\circ G_{\mathrm{ret}/\mathrm{adv}}=\mathrm{id}\quad\text{on}\quad  C^{\infty}_{\mathrm{c}}(\scrM,\scrE)\\
		\text{(ii)}&\quad \mathrm{supp}(G_{\mathrm{ret}}\varphi)\subset J^{+}(\mathrm{supp}(\varphi))\quad\text{and}\quad \mathrm{supp}(G_{\mathrm{adv}}\varphi)\subset J^{-}(\mathrm{supp}(\varphi))
	\end{align*}
	Furthermore, the retarded and advanced Green operators are unique and continuous.
\end{Prp}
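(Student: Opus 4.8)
The plan is to obtain the Green operators directly from the well-posedness of the Cauchy problem (Theorem~\ref{Thm:WellPosedCauchy}), by solving $\scrS\psi=\varphi$ with vanishing Cauchy data on a slice lying entirely below (for $G_{\mathrm{ret}}$), resp.~above (for $G_{\mathrm{adv}}$), the support of $\varphi$. Fix a Cauchy temporal function $t\colon\scrM\to\R$ with associated foliation $(\scrN_{t})_{t\in\R}$. Given $\varphi\in C^{\infty}_{\mathrm{c}}(\scrM,\scrE)$, the continuous function $t$ attains a minimum $t_{-}$ and a maximum $t_{+}$ on the compact set $\mathrm{supp}(\varphi)$; for the retarded operator I would pick any $t_{0}<t_{-}$ and define $G_{\mathrm{ret}}\varphi\in C^{\infty}_{\mathrm{sc}}(\scrM,\scrE)$ to be the unique solution of~\eqref{Cauchysmooth} with source $\phi=\varphi$ and datum $\mathfrak{f}=0$ on $\scrN_{t_{0}}$ provided by Theorem~\ref{Thm:WellPosedCauchy}; the advanced operator is defined analogously using a slice $\scrN_{t_{1}}$ with $t_{1}>t_{+}$.

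The first step is to verify property~(ii) and the consistency of this definition. Because $(dt)^{\sharp}$ is past-directed timelike, $t$ is strictly increasing along future-directed causal curves, hence $J^{-}(\scrN_{t_{0}})\subset\{t\le t_{0}\}$; since $\mathrm{supp}(\varphi)\subset\{t>t_{0}\}$ and $\scrN_{t_{0}}$ is a Cauchy hypersurface, this gives $\mathrm{supp}(\varphi)\cap J^{-}(\scrN_{t_{0}})=\emptyset$ and $\mathrm{supp}(\varphi)\subset J^{+}(\scrN_{t_{0}})$. Inserting $\mathfrak{f}=0$ into the support estimate of Theorem~\ref{Thm:WellPosedCauchy} then yields $\mathrm{supp}(G_{\mathrm{ret}}\varphi)\cap J^{-}(\scrN_{t_{0}})\subset J^{-}(\emptyset)=\emptyset$ and $\mathrm{supp}(G_{\mathrm{ret}}\varphi)\cap J^{+}(\scrN_{t_{0}})\subset J^{+}(\mathrm{supp}(\varphi))$; as $\scrM=J^{+}(\scrN_{t_{0}})\cup J^{-}(\scrN_{t_{0}})$ ($\scrN_{t_{0}}$ being Cauchy), we conclude $\mathrm{supp}(G_{\mathrm{ret}}\varphi)\subset J^{+}(\mathrm{supp}(\varphi))$, i.e.~(ii) in the retarded case. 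In particular $J^{+}(\mathrm{supp}(\varphi))\subset\{t\ge t_{-}\}$, so $G_{\mathrm{ret}}\varphi$ vanishes on $\scrN_{t_{0}'}$ whenever $t_{0}<t_{0}'<t_{-}$; it therefore also solves~\eqref{Cauchysmooth} with data $(\varphi,0)$ on $\scrN_{t_{0}'}$, and the uniqueness in Theorem~\ref{Thm:WellPosedCauchy} shows that $G_{\mathrm{ret}}\varphi$ is independent of the chosen $t_{0}<t_{-}$.

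The algebraic properties all reduce to the uniqueness part of Theorem~\ref{Thm:WellPosedCauchy}. Linearity follows from the linearity of~\eqref{Cauchysmooth} after picking one slice below the supports of all functions involved. The identity $\scrS\circ G_{\mathrm{ret}}=\mathrm{id}$ holds by construction; for $G_{\mathrm{ret}}\circ\scrS=\mathrm{id}$ one observes that, for $\chi\in C^{\infty}_{\mathrm{c}}(\scrM,\scrE)$, both $\chi$ and $G_{\mathrm{ret}}(\scrS\chi)$ solve~\eqref{Cauchysmooth} with source $\scrS\chi$ and vanishing datum on a common slice $\scrN_{t_{0}}$ with $t_{0}$ below $\mathrm{supp}(\chi)\supset\mathrm{supp}(\scrS\chi)$, hence they coincide. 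The uniqueness of $G_{\mathrm{ret}}$ among operators with the stated codomain $C^{\infty}_{\mathrm{sc}}(\scrM,\scrE)$ and support property is obtained in the same manner: any such operator produces, for each $\varphi$, a solution of $\scrS\psi=\varphi$ vanishing on $\scrN_{t_{0}}$ with $t_{0}<t_{-}$, which must equal $G_{\mathrm{ret}}\varphi$.

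It remains to address continuity and the advanced case. For continuity I would exploit $C^{\infty}_{\mathrm{c}}(\scrM,\scrE)=\varinjlim_{K}C^{\infty}_{K}(\scrM,\scrE)$ over compact $K\subset\scrM$: on each $C^{\infty}_{K}$ one may fix a single slice $\scrN_{t_{0}(K)}$ below $K$, so that $G_{\mathrm{ret}}|_{C^{\infty}_{K}}$ equals $\chi\mapsto(\chi,0)$ composed with the continuous solution map of Theorem~\ref{Thm:WellPosedCauchy}, hence is continuous; continuity of $G_{\mathrm{ret}}$ on $C^{\infty}_{\mathrm{c}}(\scrM,\scrE)$ then follows from the universal property of the inductive limit. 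The construction and proof for $G_{\mathrm{adv}}$ are the same with future and past interchanged. I expect the only points requiring real care to be the bookkeeping of causal supports — in particular checking that $\mathrm{supp}(G_{\mathrm{ret}}\varphi)\cap J^{-}(\scrN_{t_{0}})$ is empty and that the construction does not depend on $t_{0}$ — and the inductive-limit argument needed to promote slice-by-slice continuity to continuity of $G_{\mathrm{ret}}$ on all of $C^{\infty}_{\mathrm{c}}(\scrM,\scrE)$; everything else is a direct corollary of Theorem~\ref{Thm:WellPosedCauchy}.
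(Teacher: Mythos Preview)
The paper does not prove this proposition; it merely cites it from~\cite[Thm.~5.9, Cor.~3.11]{baergreen} as a preliminary result. Your argument is correct and is precisely the standard construction: solve the Cauchy problem with vanishing data on a slice in the past (resp.\ future) of the source, then read off the support, inverse, uniqueness and continuity properties from Theorem~\ref{Thm:WellPosedCauchy}. This is also the route taken in the cited reference, so there is nothing to compare.
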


Since the Cauchy problem for symmetric hyperbolic systems is well-posed, the space $\mathrm{ker}(\scrS\vert_{C^{\infty}_{\mathrm{sc}}})$ of solutions with spatially compact supports is isomorphic to the space of initial data $C^{\infty}_{\mathrm{c}}(\scrN,\scrE\vert_{\scrN})$ on a fixed smooth (spacelike) Cauchy hypersurface $\scrN$ of $\scrM$. We equip this space with the (positive-definite) inner product
\begin{align}\label{eq:InnerProduct} 
	(\psi | \phi)_{\scrN} := \int_\scrN \Sl \sigma_{\scrS}(\eta) \psi | \phi\Sr_{\scrE}\: d\mu_{\scrN}\:,
\end{align}
where $\eta$ denotes the \textit{future-directed} timelike normal covector of $\scrN$ and $d\mu_{\scrN}$ the volume measure on the Riemannian manifold $(\scrN,i^{\ast}g)$ with $i\colon\scrN\to\scrM$ denoting the natural embedding.

Now, if $t\colon\scrM\to\R$ is a Cauchy temporal function for $(\scrM,g)$ with corresponding lapse function $\beta\in C^{\infty}(\scrM,(0,\infty))$ such that
\begin{align*}
		\scrM=\R\times\scrN,\qquad g=-\beta^{2}dt\otimes dt+h_{t}
\end{align*}
for a one-parameter family $(h_{t})_{t\in\R}$ of Riemannian metrics on $\scrN$, the future-directed timelike normal vector of $\scrN_{t}:=\{t\}\times\scrN$ in $\scrM$ is $\nu_{t}=\beta^{-1}\partial_{t}\vert_{\scrN_{t}}$ and hence $\eta_{t}:=-\nu_{t}^{\flat}=\beta dt\vert_{\scrN_{t}}$ is the corresponding future-directed timelike normal covector. In particular, for any $t\in\R$, we use the inner product \eqref{eq:InnerProduct} to obtain a natural family of Hilbert spaces on the foliation $(\scrN_{t})_{t\in\R}$, i.e.
\begin{align}\label{eq:Ht}
	\H_t:=\overline{C^{\infty}_{\mathrm{c}}(\scrN_{t},\scrE\vert_{\scrN_{t}})}^{\Vert\cdot\Vert_{t}},\qquad (\psi | \phi)_t := \int_{\scrN_{t}} \Sl \psi | \phi \Sr_{\beta,\scrE}\: d\mu_{\scrN_{t}} \: ,
\end{align}
where $d\mu_{\scrN_{t}}$ is the Riemannian volume measure of $(\scrN_{t},h_{t})$ and where $\Sl\psi|\phi\Sr_{\beta,\scrE}:=\Sl \sigma_{\scrS}(\eta)\psi | \phi \Sr_{\scrE}$, as in Lemma~\ref{lem:scalprod}. In other words, using the notation of Section~\ref{SobolevSpaces}, 
\begin{align*}
	\H_{t}=L^{2}(\scrN_{t},\scrE_{\beta}\vert_{\scrN_{t}})\,,
\end{align*} 
where the subscript $\beta$ in $\scrE_{\beta}$ indicates that $\scrE$ is equipped with the positive-definite bundle metric $\Sl\cdot|\cdot\Sr_{\beta,\scrE}$. For a fixed time $T>0$, we consider the corresponding \textit{open} and \textit{closed time strip} defined by
\begin{align*}
	\scrM_{T}:=t^{-1}((0,T))\cong (0,T)\times\scrN\,,\qquad \overline{\scrM}_{T}:=t^{-1}([0,T])\cong [0,T]\times\scrN\, .
\end{align*}
On such a time strip, we introduce the Hilbert space
\begin{align}\label{eq:HMt}
	\H_{\scrM_{T}}:=\int^{\oplus}_{(0,T)}\H_t\,dt,\qquad \qquad (\psi | \phi)_{\scrM_{T}} = \int_{0}^{T} (\psi_{t} | \phi_{t})_t\, dt\, .
\end{align}
Introducing the auxiliary \textit{Riemannian} metric $g_{0}:=dt\otimes dt+h_{t}$ on $\scrM$, Fubini's theorem implies that the Hilbert space $\H_{\scrM_{T}}$ can be identified with the $L^{2}$-space
\begin{align*}
	\H_{\scrM_{T}}\cong L^{2}(\scrM_{T},\scrE_{\beta})\, ,
\end{align*}
where $\scrM_{T}$ is equipped with the auxiliary Riemannian metric $g_{0}$. We will also need the corresponding Sobolev spaces defined on the open time strip, which following the notation of Section~~\ref{SobolevSpaces}, will be denoted by $W^{2,i}(\scrM_{T},\scrE_{\beta})$ for $i\in\N$.
\section{Nonlocal Potentials}
\label{Sec:NonLocPot}
In this section, we establish the framework for nonlocal potentials on globally hyperbolic Lorentzian manifolds. We begin with some preliminaries on distributions and kernels for operators acting on vector bundles. We then introduce the concept of time kernels, allowing us to interpret a nonlocal potential as a family of linear operators acting on the Hilbert spaces associated with the Cauchy surfaces of a given foliation.

\subsection{Preliminaries on Distributions and Kernels}
Let $(\scrM,g)$ be a smooth pseudo-Riemannian manifold and $(\scrE,\Sl\cdot|\cdot\Sr_{\scrE})$ be a Hermitian vector bundle. We equip the spaces of test sections $C_{\rm{c}}^{\infty}(\scrM,\scrE)$ and $C_{\rm{c}}^{\infty}(\scrM,\scrE^{\ast})$ with their natural LF-space topologies and define the space of distributions on $\scrM$ in $\scrE$ as the \textit{topological} dual space
\begin{align*}
	\mathcal{D}^{\prime}(\scrM,\scrE):=(C^{\infty}_{\mathrm{c}}(\scrM,\scrE^{\ast}))^{\prime}\, ,\end{align*}
equipped with its strong dual topology. The space of smooth sections is embedded in the space of distributions via
\begin{align}\label{eq:regular}
	C^{\infty}(\scrM,\scrE)\hookrightarrow\mathcal{D}^{\prime}(\scrM,\scrE),\qquad \psi\mapsto\bigg(C^{\infty}_{\mathrm{c}}(\scrM,\scrE^{\ast})\ni\varphi\mapsto \int_{\scrM}\varphi(\psi)\,d\mu_{\scrM}\bigg)
\end{align}
and we shall call the distributions contained in the image of this map \textit{regular}. In the same way, also sections of lower regularity such as $C^{0}(\scrM,\scrE)$ and $L^1_{\mathrm{loc}}(\scrM,\scrE)$ can be embedded into $\mathcal{D}^{\prime}(\scrM,\scrE)$. We remark that we will always use the volume measure $d\mu_{\scrM}$ on $(\scrM,g)$ for the above embedding as opposed to some authors which allow for different measures by defining $\mathcal{D}^{\prime}(\scrM,\scrE)$ as the dual of \textit{densitized} sections.

If $\scrE$ and $\scrF$ are two $\C$-bundles over $\scrM$ and $\mathrm{pr}_{1,2}\colon\scrM\times\scrM\to\scrM$ the natural projections onto the first and second entry, respectively, we denote by
\begin{align*}
	\scrE\boxtimes\scrF:=(\mathrm{pr}_{1}^{\ast}\scrE)\otimes (\mathrm{pr}^{\ast}_{2}\scrF)
\end{align*}
the \textit{external tensor product}, i.e.~the $\C$-bundle over $\scrM\times\scrM$ with fibre $\scrE_{x}\otimes\scrF_{y}$ at $(x,y)\in\scrM\times\scrM$. The space of sections of $\scrE\boxtimes\scrF$ is locally generated by sections of the form $(\varphi\boxtimes\psi)(x,y):=\varphi(x)\otimes\psi(y)$ for $\varphi\in C^{\infty}(\scrM,\scrE)$ and $\psi\in C^{\infty}(\scrM,\scrF)$. 

For the sake of completeness and to fix the notation, let us recall the well-known \textit{kernel theorem of Schwartz} in this setting:

\begin{Thm}[\protect{\cite[Thm.~1.5.1]{Tarkhanov}, \cite[Thm.~51.7]{treves}}]\label{Thm:Schwartz} 
	A $\mathbb{C}$-linear operator of the form $\B\colon C^{\infty}_{\rm{c}}(\scrM,\scrE)\to\mathcal{D}^{\prime}(\scrM,\scrE)$ is continuous if and only if it has a kernel, i.e.~a distribution $k_{\B}\in \mathcal{D}^{\prime}(\scrM\times\scrM,\scrE\boxtimes\scrE^{\ast})$ such that
	\begin{align}\label{Eq:Schwartz} 
		\langle \B\varphi,\psi\rangle_{\scrM}=\langle k_{\B},\psi\boxtimes\varphi\rangle_{\scrM\times\scrM}
	\end{align}
	for all $\varphi\in C_{\mathrm{c}}^{\infty}(\scrM,\scrE)$ and $\psi\in C^{\infty}_{\mathrm{c}}(\scrM,\scrE^{\ast})$, where $\langle\cdot,\cdot\rangle_{\scrM}$ and $\langle\cdot,\cdot\rangle_{\scrM\times\scrM}$ denote the bilinear distributional pairings on $\scrM$ and $\scrM\times\scrM$, respectively.
\end{Thm}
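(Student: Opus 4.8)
The plan is to prove the two implications separately: the ``if'' direction is a formal consequence of the continuity of the exterior tensor product of test sections, while the ``only if'' direction is where the Schwartz--Tr\`eves nuclear-space machinery enters and where essentially all the work lies.

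First I would dispatch the easy direction. Suppose $\B$ has a kernel $k_{\B}\in\mathcal{D}^{\prime}(\scrM\times\scrM,\scrE\boxtimes\scrE^{\ast})$. Since $(\scrE\boxtimes\scrE^{\ast})^{\ast}=\scrE^{\ast}\boxtimes\scrE$, the distribution $k_{\B}$ is a continuous functional on $C^{\infty}_{\mathrm{c}}(\scrM\times\scrM,\scrE^{\ast}\boxtimes\scrE)$, and the exterior tensor product
\[
	C^{\infty}_{\mathrm{c}}(\scrM,\scrE^{\ast})\times C^{\infty}_{\mathrm{c}}(\scrM,\scrE)\longrightarrow C^{\infty}_{\mathrm{c}}(\scrM\times\scrM,\scrE^{\ast}\boxtimes\scrE),\qquad (\psi,\varphi)\mapsto \psi\boxtimes\varphi
\]
is (separately, hence jointly) continuous. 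Composing, one obtains a jointly continuous bilinear form $b(\psi,\varphi):=\langle k_{\B},\psi\boxtimes\varphi\rangle_{\scrM\times\scrM}$, and defining $\B\varphi\in\mathcal{D}^{\prime}(\scrM,\scrE)$ by $\langle\B\varphi,\psi\rangle_{\scrM}:=b(\psi,\varphi)$ yields a continuous operator satisfying \eqref{Eq:Schwartz}.

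For the converse, given a continuous $\B$, the same formula $b(\psi,\varphi):=\langle\B\varphi,\psi\rangle_{\scrM}$ defines a separately continuous bilinear form on $C^{\infty}_{\mathrm{c}}(\scrM,\scrE^{\ast})\times C^{\infty}_{\mathrm{c}}(\scrM,\scrE)$. Both factors are nuclear LF-spaces: fixing a locally finite trivializing atlas with subordinate partition of unity, every compactly supported smooth section is a finite sum of sections which, read in a local trivialization, are tuples of scalar test functions, so the space of sections supported in a fixed compact set is (a finite power of) a nuclear Fr\'echet space, and $C^{\infty}_{\mathrm{c}}(\scrM,\scrE)$ is the countable strict inductive limit of such. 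The kernel theorem for nuclear spaces \cite[Thm.~51.7]{treves}, which applies precisely to separately continuous bilinear forms on such spaces, then represents $b$ by a unique continuous linear functional on the completed projective tensor product $C^{\infty}_{\mathrm{c}}(\scrM,\scrE^{\ast})\,\widehat{\otimes}_{\pi}\,C^{\infty}_{\mathrm{c}}(\scrM,\scrE)$.

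The remaining step --- and the only genuinely technical one --- is the topological isomorphism
\[
	C^{\infty}_{\mathrm{c}}(\scrM,\scrE^{\ast})\,\widehat{\otimes}_{\pi}\,C^{\infty}_{\mathrm{c}}(\scrM,\scrE)\;\cong\;C^{\infty}_{\mathrm{c}}(\scrM\times\scrM,\scrE^{\ast}\boxtimes\scrE).
\]
On products of open subsets of $\R^{n}$ this is the classical statement $C^{\infty}_{\mathrm{c}}(\Omega_{1})\,\widehat{\otimes}\,C^{\infty}_{\mathrm{c}}(\Omega_{2})\cong C^{\infty}_{\mathrm{c}}(\Omega_{1}\times\Omega_{2})$ of \cite[Thm.~51.6]{treves}; I would globalise it by covering $\scrM$ with trivializing charts, using a partition of unity to split an arbitrary element of $C^{\infty}_{\mathrm{c}}(\scrM\times\scrM,\scrE^{\ast}\boxtimes\scrE)$ into finitely many pieces supported in products of charts, applying the Euclidean statement to each piece, and checking that the local identifications are compatible with the transition functions of $\scrE$ --- which is automatic, since the bundle transition data act linearly on the fibres and the partition-of-unity functions separate the two factors cleanly. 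Granting this, the functional from the previous paragraph is an element $k_{\B}\in\bigl(C^{\infty}_{\mathrm{c}}(\scrM\times\scrM,\scrE^{\ast}\boxtimes\scrE)\bigr)^{\prime}=\mathcal{D}^{\prime}(\scrM\times\scrM,\scrE\boxtimes\scrE^{\ast})$ satisfying \eqref{Eq:Schwartz}, and uniqueness follows because the sections $\psi\boxtimes\varphi$ span a dense subspace of $C^{\infty}_{\mathrm{c}}(\scrM\times\scrM,\scrE^{\ast}\boxtimes\scrE)$. I expect the main obstacle to be purely this bookkeeping: carrying the scalar nuclear-space kernel theorem through local trivializations and a partition of unity without losing the bundle structure; everything else is formal.
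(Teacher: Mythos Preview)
Your proof sketch is correct and follows the standard nuclear-space route to the Schwartz kernel theorem. Note, however, that the paper does \emph{not} prove this theorem at all: it is stated with attribution to \cite[Thm.~1.5.1]{Tarkhanov} and \cite[Thm.~51.7]{treves} and used as a black box in the subsequent discussion of time kernels. So there is no ``paper's own proof'' to compare against; your outline is essentially the argument one would find behind those citations, with the bundle-valued generalisation handled, as you say, by local trivialisations and a partition of unity.
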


A distributional kernel $k_{\B}\in \mathcal{D}^{\prime}(\scrM\times\scrM,\scrE\boxtimes\scrE^{\ast})$ is called \textit{semi-regular}, if it lives in the subspace 
\begin{align*}
	C^{\infty}(\scrM,\scrE)\hat{\otimes}\mathcal{D}^{\prime}(\scrM,\scrE^{\ast})\subset \mathcal{D}^{\prime}(\scrM,\scrE)\hat{\otimes}\mathcal{D}^{\prime}(\scrM,\scrE^{\ast})\cong \mathcal{D}^{\prime}(\scrM\times\scrM,\scrE\boxtimes\scrE^{\ast})\,,
\end{align*} 
where $\hat{\otimes}$ denotes the tensor product of nuclear spaces. In this case, the corresponding linear and continuous operator $\B$ is of the form $\B\colon C^{\infty}_{\rm{c}}(\scrM,\scrE)\to C^{\infty}(\scrM,\scrE)$. If the two projections $\scrM\times\scrM\supset\mathrm{supp}(k_{\B})\to\scrM$ are in addition \textit{proper} maps, i.e.~if preimages of compact sets are compact, we call $\B$ \textit{properly supported} and in this case, it holds that $\mathrm{ran}(\B)\subset C^{\infty}_{\rm{c}}(\scrM,\scrE)$. Furthermore, a properly supported operator extents continuously to an operator from $C^{\infty}(\scrM,\scrE)$ to $C^{\infty}(\scrM,\scrE)$.

If the operator $\B$ is a \textit{smoothing operator}, which means that its kernel $k_{\B}$ is a regular distribution $k_{\B}\in C^{\infty}(\scrM\times\scrM,\scrE\boxtimes\scrE^{\ast})$, we obtain $\C$-linear maps $k_{\B}(x,y)\colon\scrE_{y}\to\scrE_{x}$ for all $x,y\in\scrM$. Furthermore, using the (anti-linear) bijection $\varphi\mapsto\Sl\varphi|\cdot\Sr_{\scrE}$ to identify sections of $\scrE$ with sections of $\scrE^{\ast}$ and vice versa, the defining relation~\eqref{Eq:Schwartz} reads
\begin{align*}
	\int_{\scrM}\Sl\psi(x)|\B\varphi(x)\Sr_{\scrE_{x}}\,d\mu_{\scrM}(x)=\int_{\scrM}\int_{\scrM}\Sl\psi(x)| k_{\B}(x,y)\varphi(y)\Sr_{\scrE_{x}}\,d\mu_{\scrM}(y)\,d\mu_{\scrM}(x)
\end{align*}
for all $\varphi,\psi\in C^{\infty}_{\mathrm{c}}(\scrM,\scrE)$. We refer to \cite{Tarkhanov,treves} for more details on distributions and kernels.

\subsection{Time Kernels}\label{subsec:TimeKernel}
Let us now consider a globally hyperbolic Lorentzian manifold 
\begin{align*}
	\scrM=\R\times\scrN,\qquad g=-\beta^{2}dt\otimes dt+h_{t}
\end{align*}
with corresponding foliation $(\scrN_{t}:=\{t\}\times\scrN)_{t\in\R}$ and let us denote by $i_{t}\colon\scrN_{t}\to\scrM$ the natural embeddings. In this setting, it is convenient to introduce the concept of a \textit{time kernel}. As motivational example, let us consider the following simple case:
\begin{itemize}
	\item[$\bullet$]The trivial line bundle $\scrE=\scrM\times\mathbb{C}$ equipped with standard inner product on its fibres.
	\item[$\bullet$]A continuous kernel $k_{\B}\in C^{0}(\scrM\times\scrM)$ with corresponding integral operator 
	\begin{align*}
		\B\psi(x):=\int_{\scrM}k_{\B}(x,y)\psi(y)\,d\mu_{\scrM}(y),\qquad\forall\psi\in C^{\infty}_{\mathrm{c}}(\scrM)\, .
	\end{align*}
\end{itemize}
Now, if instead of integrating the kernel $k_{\B}(x,y)$ over \textit{spacetime} we just integrate over the \textit{spatial} variables, we obtain a two-parameter family of linear and continuous operators $\B_{t,\tau}:C^{\infty}_{\mathrm{c}}(\scrN_\tau)\to C^{\infty}(\scrN_{t})$ defined by
\begin{align*}
	(\B_{t,\tau}\varphi)(\vec{x}):=\int_{\scrN_{\tau}}k_{\B}(t,\vec{x};\tau,\vec{y})\beta(\tau,\vec{y})\varphi(\vec{y})d\mu_{\scrN_{\tau}}(\vec{y})\,,\quad\forall\varphi\in C^{\infty}_{\mathrm{c}}(\scrN_{\tau})\, .
\end{align*}
In other words, the operator $\B_{t,\tau}$ for $t,\tau\in\R$ is the integral operator with kernel 
\begin{align*}
	k_{t,\tau}:=(\mathrm{pr}_{2}^{\ast}\beta_{\tau})(i_{t}\times i_{\tau})^{\ast}k_{\B}\,,\end{align*} 
where $\beta_{\tau}(\cdot):=\beta(\tau,\cdot)$ and where $\mathrm{pr}_{2}\colon \scrN_{t}\times\scrN_{\tau}\to\scrN_{\tau}$ denotes the projection onto the second factor. With this definition, it follows from Fubini's theorem as well as from $d\mu_{\scrM}(t,\vec{x})=\beta(t,\vec{x})d\mu_{\scrN_{t}}(\vec{x})dt$ that the nonlocal potential $\B$ can be obtained from $\B_{t,\tau}$ by integrating over time, i.e.
\begin{align}\label{eq:time-kernel}
	(\B\psi)_{t}(\vec{x})=\int_{\R} (\B_{t,\tau}\psi_{\tau})(\vec{x})\, d\tau\, ,
 \end{align}		
for all $\psi\in C^{\infty}_{\mathrm{c}}(\scrM,\scrE)$, where we used the obvious notation $\psi_{\tau}(\cdot):=\psi(\tau,\cdot)$. The two-parameter family of operators $(\B_{t,\tau})_{t,\tau\in\R}$ will be referred to as the \textit{time kernel} of the nonlocal potential $\B$.

Now, the aim of the following discussion is to generalize the notion of a time kernel in a broader distributional context. If $\scrE$ is an arbitrary $\C$-bundle and we consider a kernel $k_{\B}\in\mathcal{D}^{\prime}(\scrM\times\scrM,\scrE\boxtimes\scrE^{\ast})$, we would like to define the pull-back of $k_{\B}$ to $\scrN_{t}\times\scrN_{\tau}$ for $t,\tau\in\R$ via the natural embeddings $i_{t}$. However, a priori, it is not clear whether this can be defined in a consistent way for arbitrary bi-distributional kernels. As a simple toy model, we consider a distribution $u\in\mathcal{D}^{\prime}(\scrM,\scrE)$. The guiding question in the following discussion is for which class of distribution the following can be achieved:

\begin{Question}\label{Question}
	Given a distribution $u\in\mathcal{D}^{\prime}(\scrM,\scrE)$, can we find a one-parameter family $(u_{t})_{t\in\R}$ of distributions on $\scrN_{t}$ recovering $u$ in the sense that
	\begin{align*}
		\langle u,\varphi\rangle_{\scrM}=\int_{\R}\langle u_{t},\varphi_{t}\rangle_{\scrN_{t}}\,dt
	\end{align*}
	for all test sections $\varphi\in C^{\infty}_{\mathrm{c}}(\scrM,\scrE^{\ast})$?
\end{Question}

As a first example, if $u$ is sufficiently regular, i.e.~$u\in C^{0}(\scrM,\scrE)$, then we can define \begin{align}\label{eq:Pullback}
	u_{t}:=\beta_{t}(i_{t}^{\ast}u)\in C^{0}(\scrN_{t},\scrE\vert_{\scrN_{t}})\, ,
\end{align}	
where $i_{t}^{\ast}u:=u\circ i_{t}$ is the usual pull-back of a continuous function. Then, by Fubini's theorem,
\begin{align}\label{eq:Fubini}
	\langle u,\varphi\rangle_{\scrM}=\int_{\R}\langle u_{t},\varphi_{t}\rangle_{\scrN_{t}}\,dt\, .
\end{align}
More generally, for $u\in\mathcal{D}^{\prime}(\scrM,\scrE)$, we can always find a sequence $\{u_{n}\}_{n\in\N}\subset C^{\infty}_{\mathrm{c}}(\scrM,\scrE)$ converging to $u$ in $\mathcal{D}^{\prime}(\scrM,\scrE)$ and since $C^{\infty}_{\mathrm{c}}(\scrM,\scrE)$ is a \textit{Montel space}, convergence in the strong topology is equivalent to convergence in the weak topology, which implies
\begin{align*}
	\langle u_{n},\varphi\rangle_{\scrM}\xrightarrow{n\to\infty} \langle u,\varphi\rangle_{\scrM}\qquad\forall\varphi\in C^{\infty}_{\mathrm{c}}(\scrM,\scrE^{\ast})\, .
\end{align*}
Now, suppose that we can arrange $(u_{n})_{n\in\mathbb{N}}$ in such way that also $(i^{\ast}_{t}u_{n})_{n\in\N}$ converges in $\mathcal{D}^{\prime}(\scrN_{t},\scrE\vert_{\scrN_{t}})$. Denote the corresponding limit by 
\begin{align*}
	i_{t}^{\ast}u:=\lim_{n\to\infty}i_{t}^{\ast}u_{n}\qquad\text{in}\quad\mathcal{D}^{\prime}(\scrN_{t},\scrE\vert_{\scrN_{t}})\, .
\end{align*}
Now, it is clear that Equality~\eqref{eq:Fubini} will still hold in this more general situation whenever the right-hand side is well-defined. For instance, if for some arbitrary $u\in\mathcal{D}^{\prime}(\scrM,\scrE)$, the pull-back $i_{t}^{\ast}u$ can be defined as above and the map $t\mapsto \langle u_{t},\psi_{t}\rangle_{\scrN_{t}}$ with $u_{t}:=\beta_{t}i_{t}^{\ast}u$ is locally integrable such that
\begin{align*}
	\int_{\R}\langle \beta_{t}i^{\ast}_{t}u_{n},\psi_{t}\rangle_{\scrN_{t}}\,dt\xrightarrow{n\to\infty} \int_{\R}\langle u_{t},\psi_{t}\rangle_{\scrN_{t}}\,dt\, ,
\end{align*}
then, since $u_{n}$ is smooth, the left-hand side is nothing else than 
\begin{align*}
\int_{\R}\langle \beta_{t} i^{\ast}_{t}u_{n},\psi_{t}\rangle_{\scrN_{t}}\,dt =\langle u_{n},\psi\rangle_{\scrM}\xrightarrow{n\to\infty} \langle u,\psi\rangle_{\scrM}\, ,
\end{align*}
which establishes the validity of Equation~\eqref{eq:Fubini} by uniqueness of limits. Hence, the answer to Question~\ref{Question} is positive whenever the pull-back of $u$ via $i_{t}$ can be defined by means of an approximate sequence and if $t\mapsto \langle u_{t},\psi_{t}\rangle_{\scrN_{t}}$ is at least locally integrable.

An example of a condition that guarantees that this is the case is provided by a certain condition on the \textit{wavefront set}. Let $u \in \mathcal{D}^{\prime}(\scrM, \scrE)$ be a distribution on $\scrM$ satisfying the condition
\begin{align}\label{eq:WF}
   \bigg(\bigcup_{t\in\R}N^{\ast}\scrN_t \bigg)\cap \mathrm{WF}(u) = \emptyset \, ,
\end{align}
where $N^{\ast}\scrN_t \to \scrN_t$ denotes the \textit{co-normal bundle} over $\scrN_{t}$ in $\scrM$ and $\mathrm{WF}(u)$ the \textit{wavefront set} of $u$, which for distributions in $\scrE$ is defined as the union of the wavefront sets of its components in some fixed local trivialisation (the definition is independent of this choice), see e.g.~\cite[Chap.~8]{hormanderI} and \cite{WFSet}. If this condition is satisfied, then there is a unique way to define the pull-back $i_{t}^{\ast}u\in \mathcal{D}^{\prime}(\scrN_t, \scrE\vert_{\scrN_t})$ in such a way that 
\begin{align*}
	\mathrm{WF}(i^{\ast}_{t}u)\subset i_{t}^{\ast}\mathrm{WF}(u)\,,
\end{align*}	
see e.g.~\cite[Theorem~8.2.4, Corollary 8.2.7]{hormanderI}. In fact, the way $i^{\ast}_{t}u$ is defined is as follows: If $\Gamma\subset T^{\ast}\scrM\backslash\{0\}$ is an arbitrary closed cone such that $\Gamma\cap N^{\ast}\scrN_{t}=\emptyset$, we consider the subspace 
\begin{align}\label{eq:HormanderSpace}
	\mathcal{D}_{\Gamma}^{\prime}(\scrM,\scrE)=\{u\in\mathcal{D}^{\prime}(\scrM,\scrE)\mid \mathrm{WF}(u)\subset\Gamma\}
\end{align} 
equipped with the \textit{Hörmander topology}, see e.g.~\cite[Sec.~8.2]{hormanderI}.\footnote{The Hörmander topology is finer than the weak topology on $\mathcal{D}^{\prime}(\scrM,\scrE)$. In particular, if $u_{n}\xrightarrow{n\to\infty} u$ in the Hörmander topology, then also $\langle u_{n},\varphi\rangle_{\scrM}\to\langle u,\varphi\rangle_{\scrM}$ for all $\varphi\in C^{\infty}_{\mathrm{c}}(\scrM,\scrE^{\ast})$.}  Now, if $(u_{n})_{n\in\N}$ is a sequence in $C^{\infty}_{\mathrm{c}}(\scrM,\scrE)$ converging to $u$ in $\mathcal{D}^{\prime}_{\Gamma}(\scrM,\scrE)$, then in \cite[Theorem~8.2.4]{hormanderI} it is shown that $i_{t}^{\ast}u_{n}$ has a limit in the space $\mathcal{D}^{\prime}_{i_{t}^{\ast}\Gamma}(\scrM,\scrE)$, which provides a definition of the pull-back $i_{t}^{\ast}u$.

Now, if $u$ satisfies the condition~\eqref{eq:WF}, we can define the one-parameter family of distributions as in~\eqref{eq:Pullback} as
\begin{align}\label{eq:Family}
	u_{t}:=\beta_{t}(i_t^{\ast} u) \in \mathcal{D}^{\prime}(\scrN_t, \scrE\vert_{\scrN_t})\, ,
\end{align}
where $\beta_{t}(\cdot):=\beta(t,\cdot)\in C^{\infty}(\scrN_{t},(0,\infty))$. Now, in this case, Equation~\eqref{eq:Fubini} can be obtained as follows.

\begin{Prp}\label{Prop:Distr} 
	Let $u\in\mathcal{D}^{\prime}(\scrM,\scrE)$ be such that condition~\eqref{eq:WF} is satisfied and define $(u_{t})_{t\in\R}$ as above in~\eqref{eq:Family}. Then,
	\begin{align*}
		\R\ni t\mapsto \langle u_{t},\psi_{t}\rangle_{\scrN_{t}}\in\C
	\end{align*}
	is continuous for all $\psi\in C^{\infty}_{\mathrm{c}}(\scrM,\scrE^{\ast})$ and it holds that
	\begin{align*}
		\langle u,\psi\rangle_{\scrM}=\int_{\R}\langle u_{t},\psi_{t}\rangle_{\scrN_{t}}\,dt\, .
	\end{align*}
\end{Prp}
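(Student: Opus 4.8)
The preceding discussion reduces the proposition to two points: that the pull-backs $i_{t}^{\ast}u$ exist (which is immediate from~\eqref{eq:WF} and~\cite[Thm.~8.2.4]{hormanderI}), and that $t\mapsto\langle u_{t},\psi_{t}\rangle_{\scrN_{t}}$ is locally integrable with the Fubini identity~\eqref{eq:Fubini} surviving the approximation. I would in fact produce a Fourier representation of $F(t):=\langle u_{t},\psi_{t}\rangle_{\scrN_{t}}$ from which both continuity and the identity drop out at once. The first step is to localize: since $\psi\in C^{\infty}_{\mathrm{c}}(\scrM,\scrE^{\ast})$, only $t$ in a fixed compact interval $[a,b]$ contribute; choosing a partition of unity on $\scrN$ subordinate to bundle trivialisations, replacing $u$ by $\chi u$ for a cutoff $\chi\in C^{\infty}_{\mathrm{c}}(\scrM)$ equal to $1$ near $\supp\psi$ (which affects neither $F$ on $[a,b]$ nor, since $\mathrm{WF}(\chi u)\subset\mathrm{WF}(u)$, the hypothesis~\eqref{eq:WF}), and absorbing the smooth positive weight $\beta$ and the Riemannian density $\sqrt{\det h_{t}}$ into the test section, one is left with a compactly supported $\C^{N}$-valued distribution $u$ on $\R^{1+k}=\R_{t}\times\R^{k}_{\vec{x}}$ whose wavefront set avoids every covector of the form $(t,\vec{x};\tau,0)$ with $\tau\neq0$, and one must show that $F(t)=\langle i_{t}^{\ast}u,\psi_{t}\rangle$ is continuous with $\int_{\R}F\,dt=\langle u,\psi\rangle$.

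The key estimate combines Paley--Wiener--Schwartz with~\eqref{eq:WF}. By the former, $\hat{u}$ extends to an entire function of exponential type with a polynomial bound $|\hat{u}(\tau,\xi)|\le C(1+|\tau|+|\xi|)^{M}$ on $\R^{1+k}$. By the latter, the set of frequency directions in which $\hat{u}$ fails to decay rapidly — contained in the fibre projection of $\mathrm{WF}(u)$ — is a closed cone missing the two directions $(\pm1,0)$; hence, by homogeneity and closedness, there is $c>0$ such that for every $N$ there is $C_{N}$ with
\begin{align*}
    |\hat{u}(\tau,\xi)|\le C_{N}\,(1+|\tau|+|\xi|)^{-N}\qquad\text{whenever }\ |\xi|\le c\,|\tau|\,.
\end{align*}
It is essential that this is one global estimate: precisely because~\eqref{eq:WF} constrains the union $\bigcup_{t}N^{\ast}\scrN_{t}$ at once, the decay is automatically uniform in $t$, and this is the crux of the whole statement.

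Now I would write $F(t)$ as an absolutely convergent Fourier integral. Integrating out the frequency $\tau$, the function $\widehat{i_{t}^{\ast}u}(\xi):=\tfrac{1}{2\pi}\int_{\R}\hat{u}(\tau,\xi)\,e^{it\tau}\,d\tau$ is well defined (for fixed $\xi$ the integrand is polynomially bounded, and rapidly decreasing for $|\tau|\ge|\xi|/c$), of polynomial growth in $\xi$, and smooth in $t$ by differentiation under the integral; by the uniqueness in~\cite[Thm.~8.2.4]{hormanderI} it coincides with the pull-back defined there (for smooth approximants the formula is just Fubini). Hence
\begin{align*}
    F(t)=\langle i_{t}^{\ast}u,\psi_{t}\rangle
        =\frac{1}{(2\pi)^{k+1}}\int_{\R\times\R^{k}}\hat{u}(\tau,\xi)\,e^{it\tau}\,\widehat{\psi_{t}}(-\xi)\,d\tau\,d\xi\,,
\end{align*}
and the integral converges absolutely, uniformly for $t\in[a,b]$: on the cone $|\xi|\le c|\tau|$ one uses the rapid decay of $\hat{u}$ above, and on the complementary cone $|\tau|\le|\xi|/c$ one uses that $\xi\mapsto\widehat{\psi_{t}}(-\xi)$ is Schwartz uniformly in $t\in[a,b]$ (the section $\psi_{t}$ being smooth, compactly supported in $\vec{x}$, and smooth in $t$) against the polynomial bound on $\hat{u}$. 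Since the integrand is continuous in $t$, dominated convergence shows $F$ is continuous — in fact smooth — and Fubini in $(t,\tau,\xi)$, legitimate by the same bounds, turns $\int_{\R}e^{it\tau}\widehat{\psi_{t}}(-\xi)\,dt$ into the full Fourier transform of $\psi$ at $(-\tau,-\xi)$, whence $\int_{\R}F(t)\,dt$ equals $\langle u,\psi\rangle$, and hence $\langle u,\psi\rangle_{\scrM}$ after undoing the localization and the density normalizations. This proves the claim.

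The genuine obstacle is not the existence of the individual restrictions $i_{t}^{\ast}u$ — that is handed to us by~\eqref{eq:WF} and Hörmander's pull-back theorem — but the \emph{continuous dependence} of the family $(i_{t}^{\ast}u)_{t}$ on $t$; equivalently, that restriction to a Cauchy slice commutes with moving the slice. The Fourier reformulation makes this transparent, since the uniformity in $t$ is built into~\eqref{eq:WF} and surfaces as a single conic decay estimate on $\hat{u}$. A more abstract alternative would invoke the parameter-dependent form of~\cite[Thm.~8.2.4]{hormanderI}; the hands-on argument above is, I think, cleaner, the only truly tedious part being the bookkeeping of the weight $\beta_{t}$, the bundle trivialisation, and which conic region controls which factor of the integrand.
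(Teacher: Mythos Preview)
Your proof is correct and takes a genuinely different route from the paper's. The paper does not localize or touch the Fourier transform at all: instead it introduces the surface delta distribution $\delta_{t}\in\mathcal{D}'(\scrM)$ with $\mathrm{WF}(\delta_{t})=N^{\ast}\scrN_{t}\setminus\{0\}$, invokes H\"ormander's multiplication theorem~\cite[Thm.~8.2.10]{hormanderI} to make sense of the product $\delta_{t}u$ (the wavefront condition~\eqref{eq:WF} being precisely what is needed for this), identifies $\langle u_{t},\psi_{t}\rangle_{\scrN_{t}}=\langle\beta\,\delta_{t}u,\psi\rangle_{\scrM}$, and then argues that $t\mapsto\delta_{t}$ is continuous into $\mathcal{D}'_{\Gamma}(\scrM)$ with $\Gamma=\bigcup_{t}N^{\ast}\scrN_{t}\setminus\{0\}$, so that continuity of $F$ follows from the (sequential) continuity of the product map $\mathcal{D}'_{\Gamma'}\times\mathcal{D}'_{\Gamma}\to\mathcal{D}'$.

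What each approach buys: the paper's argument is coordinate-free, short, and makes the role of the wavefront hypothesis transparent as exactly the H\"ormander product condition; on the other hand it leaves the continuity of $t\mapsto\delta_{t}$ in the H\"ormander topology as an exercise (which is not entirely trivial, since one must check the seminorm estimates uniformly in $t$). Your Fourier argument is more hands-on and carries some bookkeeping overhead in the localization step, but it is self-contained, avoids the H\"ormander topology altogether, and actually delivers more than the statement asks for --- your integral representation shows $F$ is $C^{\infty}$, not merely continuous. Your observation that the \emph{uniformity} of the conic decay estimate in $t$ is the real content of~\eqref{eq:WF} (as opposed to a $t$-by-$t$ restriction hypothesis) is exactly the right diagnosis, and is what the paper encodes implicitly in the single closed cone $\Gamma$.
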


\begin{proof}
	As a first step, let us denote by $\delta_{t}\in\mathcal{D}(\scrM)$ the $\delta$-distribution on $\scrN_{t}\subset\scrM$, that is, the distribution defined by
	\begin{align*}
		\langle\delta_{t},\varphi\rangle_{\scrM}:=\int_{\scrN_{t}}\varphi(t,\vec{x})\,d\mu_{\scrN_{t}}(\vec{x})
	\end{align*}
	for all $\varphi\in C^{\infty}_{\mathrm{c}}(\scrM)$. It follows from standard arguments that the wavefront set of $\delta_{t}$ is given by $\mathrm{WF}(\delta_{t})=N^{\ast}\scrN_{t}\backslash\{0\}=:\Gamma$. Now, let $\Gamma^{\prime}\subset T^{\ast}\scrM\backslash\{0\}$ be a closed cone such that $\Gamma^{\prime}\cap N^{\ast}\scrN_{t}=\emptyset$ for all $t\in\R$. The multiplication theorem of Hörmander implies that the point-wise multiplication of smooth function extends to a continuous bilinear map $\mathcal{D}^{\prime}_{\Gamma^{\prime}}(\scrM)\times\mathcal{D}^{\prime}_{\Gamma}(\scrM,\scrE)\to\mathcal{D}^{\prime}(\scrM,\scrE)$, see~ \cite[Theorem~8.2.10]{hormanderI}. In particular, the product $\delta_{t} u$ is well-defined and it holds that
	\begin{align*}
		\langle u_{t},\psi_{t}\rangle_{\scrN_{t}}=\langle \beta_{t}(\delta_{t} u),\psi\rangle_{\scrM}\, .
	\end{align*}
	for all $\psi\in C^{\infty}_{\mathrm{c}}(\scrM,\scrE)$, by uniqueness of the pull-back. The map $t\mapsto \delta_{t}\in\mathcal{D}^{\prime}_{\Gamma}(\scrM)$ can easily be seen to be continuous, which shows continuity of $t\mapsto \langle u_{t},\psi_{t}\rangle_{\scrN_{t}}$. The fact that the integral of $\langle u_{t},\psi_{t}\rangle_{\scrN_{t}}$ coincides with $\langle u,\psi\rangle_{\scrM}$ then follows from the same arguments as above.
\end{proof}

Hence, condition~\eqref{eq:WF} provides another class of distribution for which the answer to Question~\ref{Question} is positive. However, we stress that \eqref{eq:WF} is a rather strong and it is clear that the pull-back can be defined for much more general distributions. For instance, whenever $u\in C^{0}(\scrM,\scrE)$, there is an obvious way to define $i_{t}^{\ast}u$, even though Condition~\eqref{eq:WF} is not necessarily satisfied.

\begin{Remark}[A Measure-Theoretic Point of View]
	The previous discussion is closely related to the \emph{disintegration of measures}. Let $u\in\mathcal{D}^{\prime}(\scrM)$ be a distribution of order zero, that is, $u$ continuously extends to a linear form on $C^{0}_{\mathrm{c}}(\scrM)$. By the \emph{theorem of Riesz-Markov}, there exists a complex Radon measure $\nu$ on the Borel $\sigma$-algebra of $\scrM$ such that 
	\begin{align*}
		\langle u,\varphi\rangle_{\scrM}=\int_{\scrM}\,\varphi\,d\nu
	\end{align*}
	for all $\varphi\in C^{0}_{\mathrm{c}}(\scrM)$, see e.g.~\cite[Thm.~6.19]{Rudin}. As an example, the $\delta$-distribution centred at $p\in\scrM$, i.e.~the distribution defined by $\langle\delta_{p},\varphi\rangle_{\scrM}:=\varphi(p)$, is of order zero and induces the \emph{Dirac measure} defined by $\delta_{p}(M):=\chi_{M}(p)$ for all Borel sets $M\in\mathcal{B}(\R)$, where $\chi_{M}$ denotes the characteristic function.
	
	 Now, by the \emph{disintegration theorem} (see e.g.~\cite[Chap.~45]{FremlinMeasures} for a detailed review on that subject), one can find a (non-unique) family of measures $\nu_{t}$ on $\scrN_{t}$ such that
	\begin{align*}
		\langle u,\varphi\rangle_{\scrM}=\int_{\scrM}\,\varphi\,d\nu=\int_{\R}\int_{\scrN_{t}}\varphi(t,\vec{x})\,d\nu_{t}(\vec{x})\,d\pi(t)\, ,
	\end{align*}
	where $\pi:=(\mathrm{pr}_{\R})_{\ast}\nu$ is the complex measure on $\R$ obtained by the push-forward via the projection $\mathrm{pr}_{\R}\colon\scrM\to\mathbb{R},\,(t,\vec{x})\mapsto t$. Using the Lebesgue decomposition theorem, we can uniquely write $\pi=\pi_{0}+\pi_{\mathrm{sing}}$, where $\pi_{0}$ is absolutely continuous and $\pi_{\mathrm{sing}}$ singular with respect to the Lebesgue measure $dt$ on $\R$, i.e.~$\pi_{0}\ll dt$ and $\pi_{\mathrm{sing}}\perp dt$. In particular, by the Radon-Nikodým theorem, there is a unique $h\in L^{1}(\R,dt)$, the \emph{Radon-Nikodým derivative} of $\pi$ with respect to $dt$, such that
	\begin{align*}
		\pi(M)=\int_{M} h\,dt+\pi_{\mathrm{sing}}(M)
	\end{align*}
	for all Borel sets $M\in\mathcal{B}(\R)$, see e.g.~\cite[Thm.~6.10]{Rudin}. Hence, in this case, the question whether the answer to Question~\ref{Question} is positive for some given (zero-order) distribution $u\in\mathcal{D}^{\prime}(\scrM)$ is related to the requirement that the singular part $\pi_{\mathrm{sing}}$ vanishes.	
\end{Remark}

After this preliminary discussion, let us come back to the notion of time kernels. 

\begin{Def}[Nonlocal Potentials]\label{Def:NonLocPot}
	Let $(\scrM,g)$ be a globally hyperbolic manifold with Cauchy temporal function $t\colon\scrM\to\R$  and $\B\colon C^{\infty}_{\mathrm{c}}(\scrM,\scrE)\to C^{\infty}(\scrM,\scrE)$ be a linear, continuous and semi-regular operator with Schwarz kernel $k_{\B}$ such that there exists a two-parameter family of distributional kernels
	\begin{align*}
		k_{t,\tau}\in\mathcal{D}^{\prime}(\scrN_{t}\times\scrN_{\tau},\scrE\vert_{\scrN_{t}}\boxtimes\scrE^{\ast}\vert_{\scrN_{\tau}}),\qquad t,\tau\in\R
	\end{align*}
	with corresponding operators $B_{t,\tau}\colon C^{\infty}_{\mathrm{c}}(\scrN_{\tau},\scrE\vert_{\scrN_{\tau}})\to C^{\infty}(\scrN_{t},\scrE\vert_{\scrN_{t}})$, such that the following holds for all $\psi\in C^{\infty}_{\mathrm{c}}(\scrM,\scrE^{\ast})$ and $\varphi\in C^{\infty}_{\mathrm{c}}(\scrM,\scrE)$:
	\begin{itemize}
		\item[(i)]The map
		\begin{align*}
			\R\times\R\ni (t,\tau)\mapsto \langle k_{t,\tau},\psi_{t}\boxtimes\varphi_{\tau}\rangle_{\scrN_{t}\times\scrN_{\tau}}=\langle \B_{t,\tau}\varphi_{\tau},\psi_{t}\rangle_{\scrN_{t}}\in\C
		\end{align*}
		is locally integrable.
		\item[(ii)]$\B$ is related to $(\B_{t,\tau})_{t,\tau\in\R}$ via
		\begin{align*}
			\langle(\B\varphi)_{t},\psi_{t}\rangle_{\scrN_{t}}=\int_{\R}\langle \B_{t,\tau}\varphi_{\tau},\psi_{t}\rangle_{\scrN_{t}}\,d\tau\, .
\end{align*}
	\end{itemize}
	Then, $\B$ is called a \emph{nonlocal potential} with time kernel $(k_{t,\tau})_{t,\tau\in\R}$.
\end{Def}

We will use the term \textit{time kernel} interchangeably for both the family of kernels $(k_{t,\tau})_{t,\tau\in\R}$ and linear operators $(\B_{t,\tau})_{t,\tau\in\R}$, if there is no risk of confusion.

\begin{Example}
	If we consider a kernel $k_{\B}\in C^{0}(\scrM\times\scrM,\scrE\boxtimes\scrE^{\ast})$, then the time kernel can directly be defined by
		\begin{align*}
			\langle\B_{t,\tau}\psi_{\tau},\varphi_{t}\rangle_{\scrN_{t}}:=\int_{\scrN_{t}}\int_{\scrN_{\tau}}\Sl\varphi_{t}(\vec{x})|k_{\B}(t,\vec{x};\tau,\vec{y})\psi_{\tau}(\vec{y})\Sr_{\scrE_{(t,\vec{x})}}\beta(\tau,\vec{y})d\mu_{\scrN_{\tau}}(\vec{y})\,d\mu_{\scrN_{t}}(\vec{x})
		\end{align*}
		for all $\varphi,\psi\in C_{\mathrm{c}}^{\infty}(\scrM,\scrE)$, where we identified $\scrE$ with $\scrE^{\ast}$ using $\Sl\cdot|\cdot\Sr_{\scrE}$, as usual. This generalizes the motivating example at the beginning of this section to operators in vector bundles.
\end{Example}

\begin{Prp}
	Let $\B\colon C^{\infty}_{\mathrm{c}}(\scrM,\scrE)\to C^{\infty}(\scrM,\scrE)$ be linear and continuous. If its Schwartz kernel satisfies the condition 
	\begin{align}\label{eq:TimeKernelCondition}
   		\bigg( \bigcup_{t,\tau\in\R} N^{\ast}(\scrN_t\times \scrN_{\tau})\bigg) \cap \mathrm{WF}(k_{\B}) = \emptyset \, ,
	\end{align}
	then $\B$ is a nonlocal potential with time kernel defined by 
	\begin{align*}
		k_{t,\tau}:=(\mathrm{pr}_{2}^{\ast}\beta_{\tau})(i_{t}\times i_{\tau})^{\ast}k_{\B}\qquad t,\tau\in\R\, ,
	\end{align*}
	where $\mathrm{pr}_{2}\colon \scrN_{t}\times\scrN_{\tau}\to\scrN_{\tau}$ denotes the projection onto the second factor.
\end{Prp}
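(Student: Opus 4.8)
The plan is to verify that the two defining conditions (i) and (ii) of Definition~\ref{Def:NonLocPot} hold for the proposed family $k_{t,\tau}$, relying on the analysis carried out just before Proposition~\ref{Prop:Distr}. The condition~\eqref{eq:TimeKernelCondition} is precisely the two-variable analogue of~\eqref{eq:WF} applied on the product manifold $\scrM\times\scrM$ with its product foliation $(\scrN_t\times\scrN_\tau)_{t,\tau\in\R}$: indeed, the leaves of this foliation are $\{(t,\tau)\}\times\scrN\times\scrN$ and their conormal bundles are exactly $N^{\ast}(\scrN_t\times\scrN_\tau)$. Hence by \cite[Theorem~8.2.4, Corollary~8.2.7]{hormanderI} the pull-back $(i_t\times i_\tau)^{\ast}k_{\B}\in\mathcal{D}^{\prime}(\scrN_t\times\scrN_\tau,\scrE|_{\scrN_t}\boxtimes\scrE^{\ast}|_{\scrN_\tau})$ is well-defined for every $t,\tau\in\R$, and multiplication by the smooth positive function $\mathrm{pr}_2^{\ast}\beta_\tau$ keeps us in $\mathcal{D}^{\prime}(\scrN_t\times\scrN_\tau,\scrE|_{\scrN_t}\boxtimes\scrE^{\ast}|_{\scrN_\tau})$. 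This defines the candidate time kernel $k_{t,\tau}$ and the associated operators $\B_{t,\tau}$.

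For condition~(i), I would run the argument of Proposition~\ref{Prop:Distr} on $\scrM\times\scrM$. Fix $\psi\in C^{\infty}_{\mathrm{c}}(\scrM,\scrE^{\ast})$ and $\varphi\in C^{\infty}_{\mathrm{c}}(\scrM,\scrE)$ and let $\delta_{(t,\tau)}\in\mathcal{D}^{\prime}(\scrM\times\scrM)$ be the delta distribution supported on the leaf $\scrN_t\times\scrN_\tau$; its wavefront set is contained in $N^{\ast}(\scrN_t\times\scrN_\tau)\setminus\{0\}$, which by~\eqref{eq:TimeKernelCondition} is disjoint from $\mathrm{WF}(k_{\B})$, so the product $\delta_{(t,\tau)}k_{\B}$ exists by Hörmander's multiplication theorem \cite[Theorem~8.2.10]{hormanderI} and depends continuously on the parameters, since $(t,\tau)\mapsto\delta_{(t,\tau)}$ is continuous into the appropriate Hörmander space $\mathcal{D}^{\prime}_{\Gamma}$. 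Pairing against the smooth compactly supported test section $\mathrm{pr}_2^{\ast}\beta_\tau\cdot(\psi_t\boxtimes\varphi_\tau)$ (lifted suitably to $\scrM\times\scrM$) shows that
\begin{align*}
	(t,\tau)\mapsto\langle k_{t,\tau},\psi_t\boxtimes\varphi_\tau\rangle_{\scrN_t\times\scrN_\tau}=\langle \B_{t,\tau}\varphi_\tau,\psi_t\rangle_{\scrN_t}
\end{align*}
is continuous, hence in particular locally integrable.

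For condition~(ii), I would apply Fubini, or more precisely the identity established in the discussion preceding Proposition~\ref{Prop:Distr}, now to the distribution $k_{\B}$ on $\scrM\times\scrM$ with the splitting $\scrM\times\scrM\cong(\R\times\R)\times(\scrN\times\scrN)$. Approximating $k_{\B}$ by $k_n\in C^{\infty}_{\mathrm{c}}(\scrM\times\scrM,\scrE\boxtimes\scrE^{\ast})$ in the relevant Hörmander space $\mathcal{D}^{\prime}_{\Gamma}$, for which the pull-backs $(i_t\times i_\tau)^{\ast}k_n$ converge to $(i_t\times i_\tau)^{\ast}k_{\B}$, one has for each $n$ the elementary Fubini identity $\langle k_n,\psi\boxtimes\varphi\rangle_{\scrM\times\scrM}=\int_{\R}\int_{\R}\langle (k_n)_{t,\tau},\psi_t\boxtimes\varphi_\tau\rangle_{\scrN_t\times\scrN_\tau}\,d\tau\,dt$ using $d\mu_{\scrM}=\beta\,d\mu_{\scrN_t}\,dt$ (which is where the factor $\mathrm{pr}_2^{\ast}\beta_\tau$ comes from). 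Passing to the limit on both sides, using the Schwartz kernel relation~\eqref{Eq:Schwartz} on the left and dominated convergence together with the continuity just proved on the right, yields
\begin{align*}
	\langle(\B\varphi)_t,\psi_t\rangle_{\scrN_t}=\int_{\R}\langle\B_{t,\tau}\varphi_\tau,\psi_t\rangle_{\scrN_t}\,d\tau\,,
\end{align*}
after first integrating over $t$ and then invoking the freedom in $\psi$; alternatively one localizes by taking $\psi$ concentrated near a given slice. I expect the main obstacle to be the careful bookkeeping in the last step: one must check that the approximating sequence $k_n$ can be chosen so that \emph{simultaneously} $k_n\to k_{\B}$ in $\mathcal{D}^{\prime}_{\Gamma}(\scrM\times\scrM,\scrE\boxtimes\scrE^{\ast})$ and the restricted sequences $(i_t\times i_\tau)^{\ast}k_n$ converge for every $(t,\tau)$ with enough uniformity in $(t,\tau)$ to justify interchanging the limit with the $t,\tau$-integrations --- this is exactly the technical heart of \cite[Theorem~8.2.4]{hormanderI} and the subsequent dominated-convergence argument, and it is the one point where the strength of hypothesis~\eqref{eq:TimeKernelCondition} (disjointness from \emph{all} the conormal bundles simultaneously, giving a single closed cone $\Gamma$ avoiding $\bigcup_{t,\tau}N^{\ast}(\scrN_t\times\scrN_\tau)$) is genuinely used.
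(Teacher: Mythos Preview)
Your proposal is correct and follows essentially the same approach as the paper: define $k_{t,\tau}$ via H\"ormander's pull-back theorem under the conormal wavefront condition, and then adapt the argument of Proposition~\ref{Prop:Distr} to the product manifold $\scrM\times\scrM$ to verify the two conditions of Definition~\ref{Def:NonLocPot}. In fact you spell out in more detail what the paper only sketches (``following similar steps as in Proposition~\ref{Prop:Distr}''); the one point you might add explicitly is why $\B_{t,\tau}$ inherits semi-regularity (i.e.\ maps into $C^{\infty}(\scrN_t,\scrE|_{\scrN_t})$ rather than merely $\mathcal{D}^{\prime}$), which the paper notes in a single clause.
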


\begin{proof}
	Let $k_{\B}\in\mathcal{D}^{\prime}(\scrM\times\scrM,\scrE\boxtimes\scrE^{\ast})$ be an arbitrary bi-distributional kernel. If Condition~\eqref{eq:TimeKernelCondition} is satisfied, we can define the two-parameter family of distributions
	\begin{align*}
		k_{t,\tau}:=(\mathrm{pr}_{2}^{\ast}\beta_{\tau})(i_{t}\times i_{\tau})^{\ast}k_{\B}\in\mathcal{D}^{\prime}(\scrN_t\times \scrN_{\tau},\scrE\vert_{\scrN_{t}}\boxtimes\scrE\vert_{\scrN_{\tau}}^{\ast})
\end{align*}
	for $t,\tau\in\R$ and using the kernel theorem of Schwartz (Theorem~\ref{Thm:Schwartz}), we associate for every $(k_{\B})_{t,\tau}$ a unique linear and continuous operator
	\begin{align*}
		\B_{t,\tau}\colon C^{\infty}_{\mathrm{c}}(\scrN_{\tau},\scrE\vert_{\scrN_{\tau}})\to\mathcal{D}^{\prime}(\scrN_{t},\scrE\vert_{\scrN_{t}})
	\end{align*}	
	If $\B$ in addition semi-regular operator, then so is $\B_{t,\tau}$ for all $t,\tau\in\R$ and we obtain a two-parameter family of linear operators $\B_{t,\tau}\colon C^{\infty}_{\mathrm{c}}(\scrN_{\tau},\scrE\vert_{\scrN_{\tau}})\to C^{\infty}(\scrN_{t},\scrE\vert_{\scrN_{t}})$. Following similar steps as in Proposition~\ref{Prop:Distr}, one shows $(\B_{t,\tau})_{t,\tau\in\R}$ is related to $\B$ by the formula
	\begin{align*}
		\langle\beta_{t}(\B\varphi)_{t},\psi_{t}\rangle_{\scrN_{t}}=\int_{\R}\langle \beta_{t}\B_{t,\tau}\varphi_{\tau},\psi_{t}\rangle_{\scrN_{t}}\,d\tau\, ,
	\end{align*} 
	where the integrand on the right-hand side is continuous in $t,\tau$. In other words, $(\B_{t,\tau})_{t,\tau\in\R}$ defines a time kernel in the sense of Definition~\ref{Def:NonLocPot}.
\end{proof}

\begin{Example}[Pseudodifferential Operators]
	Recall that a \emph{pseudo-differential operator} on $\scrM$ with values in $\scrE$ is a linear operator $\mathrm{Op}(b)\colon C^{\infty}_{\mathrm{c}}(\scrM,\scrE)\to C^{\infty}(\scrM,\scrE)$ whose Schwartz kernel is locally of the form
	\begin{align*}
		k_{b}(x,y)=\int_{\mathbb{R}^{k+1}}e^{i(x-y)k}b(x,y,\xi)\,d^{k+1}\xi
	\end{align*}
	for some \emph{(uniform Kohn-Nirenberg) symbol} $b\in\mathcal{S}^{m}(\mathcal{U}\times\mathcal{U},\C^{N\times N})$ with $m\in\R$ and $\mathcal{U}\subset\scrM$ open, where the integral makes sense as an oscillatory integral, i.e.~with an appropriate regularisation. We refer to \cite{grigis,Shubin} for details. Any such operator is \emph{pseudo-local}, which means that $\mathrm{sing}\,\mathrm{supp}(k_{b})$ is contained in the diagonal. Furthermore, it holds that
	\begin{align*}
		\mathrm{WF}(k_{b})=\mathrm{WF}^{\prime}(\mathrm{Op}(b))\cap N^{\ast}\Delta\backslash\{0\}\, ,
	\end{align*}
	where $\Delta:=\{(x,x)\mid x\in\scrM\}$ denotes the diagonal and $\mathrm{WF}^{\prime}(\mathrm{Op}(b))$ the \emph{operator wavefront set}, which is locally defined as the subset of covectors $(x,\xi)\in T^{\ast}\scrM\backslash\{0\}$ around which the symbol $b$ fails to be a smoothing symbol. Now, note that
	\begin{align*}
		\bigg(\bigcup_{t,\tau\in\R} N^{\ast}(\scrN_{t}\times\scrN_{\tau})\bigg) \cap N^{\ast}\Delta=\{((t,\vec{x};\xi_{t},0),(t,\vec{x};-\xi_{t},0))\mid (t,\vec{x})\in \scrM, \xi_{t}\in T^{\ast}_{t}\R\}\, .
	\end{align*}
	Hence, if the operator wavefront set $\mathrm{WF}^{\prime}(\mathrm{Op}(b))$ does not intersect this set, or in other words, if the operator $\mathrm{Op}(b)$ is a smoothing operator in time, then $\mathrm{Op}(b)$ defines a nonlocal potential with time kernel. In fact, it is not too hard to see that the operators $\mathrm{Op}(b)_{t,\tau}$ are again pseudo-differential operators.
\end{Example}

More precisely, we will restrict our attention to two important special cases, namely \textit{retarded} nonlocal potentials and nonlocal potentials with \textit{short time range}.

\begin{Def}\label{Def:Potentials} 
	Let $\B\colon C^{\infty}_{\mathrm{c}}(\scrM,\scrE)\to C^{\infty}(\scrM,\scrE)$ be a nonlocal potential with time kernel $(\B_{t,\tau})_{t,\tau\in\R}$.
	\begin{itemize}
		\item[(R)]$\B$ is called \emph{retarded}, if $\mathrm{supp}(\B\varphi)\subset J^{+}(\mathrm{supp}(\varphi))$ for all $\varphi\in C^{\infty}_{\mathrm{c}}(\scrM,\scrE)$ and hence also $\B_{t,\tau}=0$ for $\tau>t$.
		\item[(S)]$\B$ has \emph{short time range}, if there is a $\delta>0$ such that $\B_{t,\tau}=0$ for $\vert t-\tau\vert>\delta$. 
	\end{itemize}
\end{Def}

\begin{Remark} \label{Rem:MappingProp}
	The two types of nonlocal potentials introduced in Definition~\ref{Def:Potentials} have the following mapping properties, cf.~\cite[Lemma~2.13]{baergreen}:
	\begin{itemize}
		\item[(i)]If $\B$ is a retarded nonlocal potential, then it is easy to see that
		\begin{align*}
			\mathrm{supp}(k_{\B})\subset\{(x,y)\in\scrM\times\scrM\mid y\in J^{-}(x)\}\, .
		\end{align*}
		In particular, $\B$ can be extended uniquely to a linear and continuous operator $\B\colon C^{\infty}_{\mathrm{pc}}(\scrM,\scrE)\to C^{\infty}_{\mathrm{pc}}(\scrM,\scrE)$, where $C^{\infty}_{\mathrm{pc}}(\scrM,\scrE)$ denotes the space of section with \emph{past-compact support}, i.e.~
	\begin{align*}
		C^{\infty}_{\mathrm{pc}}(\scrM,\scrE):=\{\psi\in C^{\infty}(\scrM,\scrE)\mid\exists\text{ Cauchy surface }\scrN\colon\mathrm{supp}(\psi)\subset J^{+}(\scrN)\}\, .
	\end{align*}		
		\item[(ii)]If $\B$ has short time range $\delta>0$, then 
		\begin{align*}
			\mathrm{supp}(k_{\B})\subset\{(t,\vec{x};\tau,\vec{y})\in\scrM\times\scrM\mid \vert t-\tau\vert\leq \delta\}\, .
		\end{align*}	
		In particular, $\B$ can be extended uniquely to a linear and continuous operator $\B\colon C^{\infty}_{\mathrm{sc}}(\scrM,\scrE)\to C^{\infty}(\scrM,\scrE)$. Furthermore, $\B$ is well-defined as a linear operator of the form 
		\begin{align*}
			\B\colon C^{\infty}([0,T]\times\scrN,\scrE)\to C^{\infty}([-\delta,T+\delta]\times\scrN,\scrE)
		\end{align*}
		for all $T>0$.
	\end{itemize}
\end{Remark}

\begin{Example}\label{Exam:RetUltraStat}
	Consider an ultrastatic globally hyperbolic manifold 
	\begin{align*}
		(\scrM=\R\times\scrN,\,g=-dt\otimes dt+h)\,.\end{align*}
	Furthermore, let $k\in\mathcal{D}(\scrN\times\scrN,\scrE\vert_{\scrN}\times\scrE\vert_{\scrN}^{\ast})$ be an arbitrary kernel on $(\scrN,h)$ and consider a map $\chi\in C^{0}(\R)$ with the property that $\chi(t)=0$ for $t< 0$. Then,
	\begin{align*}
		k_{\B}(t,\vec{x};\tau,\vec{y}):=\chi(t-\tau)k(\vec{x},\vec{y})\in\mathcal{D}^{\prime}(\scrM\times\scrM,\scrE\boxtimes\scrE^{\ast})
	\end{align*}
	is an example of a retarded nonlocal potential. Kernels of this form appear often in \emph{linear response theory}. An example in Maxwell's theory will be discussed in Section~\ref{sec:Maxwell}.
\end{Example}

\begin{Example}[Retarded Green's Operator] 
	Consider $(1+k)$-dimensional Minkowski spacetime $\scrM:=\R\times\R^{k}$ and a linear evolution equation of the form 
	\begin{align*}
		\scrS:=\partial_{t}-P(t)\colon C^{\infty}(\scrM)\to C^{\infty}(\scrM)\, ,
	\end{align*}
	where $P(t)$ is one-parameter family of linear and elliptic first-order differential operators on $\scrN_{t}\cong \R^{k}$. We denote by 
	\begin{align*}
		\mathcal{U}(t,\tau)\colon C^{\infty}_{\mathrm{c}}(\scrN_{\tau})\to C^{\infty}_{\mathrm{c}}(\scrN_{t})\, ,\qquad \mathfrak{f}\mapsto \psi\vert_{\scrN_{t}}
	\end{align*}
	the corresponding \emph{Cauchy evolution operator}, where $\psi\in C^{\infty}_{\mathrm{sc}}(\scrM)$ is the unique solution to $\scrS\psi=0$ with initial datum $\psi\vert_{\scrN_{\tau}}=\mathfrak{f}$. Then, its retarded Green operator $G_{\mathrm{ret}}\colon C^{\infty}_{\mathrm{c}}(\scrM)\to C^{\infty}_{\mathrm{sc}}(\scrM)$ is a retarded nonlocal potential in the sense of Definition~\ref{Def:Potentials} with time kernel
	\begin{align*}
		(G_{\mathrm{ret}})_{t,\tau}:=\theta(t-\tau)\mathcal{U}(t,\tau)\, .
	\end{align*}
	Indeed, noting that the Cauchy evolution operator satisfies the operator equations	
	\begin{align*}
		\begin{cases}
		\partial_{t}\mathcal{U}(t,\tau)&=P(t)\mathcal{U}(t,\tau)\\
		\partial_{\tau}\mathcal{U}(t,\tau)&=-\mathcal{U}(t,\tau)P(\tau)
		\end{cases}
	\end{align*}
	one immediately verifies the defining properties of $G_{\mathrm{ret}}$ (see~Proposition~\ref{prop:Green}). For instance, for all $\psi\in C^{\infty}_{\mathrm{c}}(\scrM)$, it holds that 
	\begin{align*}
		\scrS(G_{\mathrm{ret}}\psi)(t,\vec{x})&=(\partial_{t}-P(t))\int_{-\infty}^{t}(\mathcal{U}(t,\tau)\psi_{\tau})(\vec{x})\,d\tau=\\&=(\underbrace{\mathcal{U}(t,t)}_{=\mathrm{id}}\psi_{t})(\vec{x})+\int_{-\infty}^{t}\underbrace{(\partial_{t}-P(t))(\mathcal{U}(t,\tau)\psi_{\tau})(\vec{x})}_{=0}\,d\tau=\psi(t,\vec{x})\, .
	\end{align*}
	With a similar computation, one shows that $G_{\mathrm{ret}}(\scrS\psi)=\psi$. Last but not least, by definition, it is clear that $\mathrm{supp}(G_{\mathrm{ret}}\psi)\subset J^{+}(\mathrm{supp}(\psi))$, which proves the claim by uniqueness of the retarded and advances Green operators.
	
	More generally, if $\scrS\colon C^{\infty}(\scrM,\scrE)\to C^{\infty}(\scrM,\scrE)$ is a symmetric hyperbolic system over an arbitrary globally hyperbolic manifold $(\scrM,g)$, then its retarded Green operator 
	\begin{align*}
		G_{\mathrm{ret}}\colon C^{\infty}_{\mathrm{c}}(\scrM,\scrE)\to C^{\infty}_{\mathrm{sc}}(\scrM,\scrE)
	\end{align*}
	as defined in Proposition~\ref{prop:Green} is a retarded nonlocal potential in the sense of Definition~\ref{Def:Potentials} with time kernel
	\begin{align*}
		(G_{\mathrm{ret}})_{t,\tau}:=\theta(t-\tau)\mathcal{U}(t,\tau)\, ,
	\end{align*}
	where $\mathcal{U}(t,\tau)\colon\H_{\tau}\to\H_{t}$ is the unitary Cauchy evolution operator defined as above.
	
	We remark that examples of hyperbolic equations with nonlocal operators constructed out of the retarded Green's operator are relevant for semiclassical equations.
\end{Example}

	Last but not least, let us comment on adjoints of nonlocal potentials. To start with, for a semiregular operator $\B\colon C^{\infty}_{\mathrm{c}}(\scrM,\scrE)\to C^{\infty}(\scrM,\scrE)$ with Schwartz kernel $k_{\B}$ one can always define its formal adjoint $\B^{\dagger}\colon C^{\infty}_{\mathrm{c}}(\scrM,\scrE)\to C^{\infty}(\scrM,\scrE)$ with respect to the non-degenerate sesquilinear form $\bra\cdot|\cdot\ket_{\scrE}$ defined in~\eqref{eq:SectionMetric}. The operator $\B^{\dagger}$ is continuous as well and its Schwartz kernel $k_{\B^{\dagger}}$ is related to $k_{\B}$ via
	\begin{align*}
		\langle k_{\B^{\dagger}},\psi^{\ast}\boxtimes\varphi\rangle_{\scrM\times\scrM}=\overline{\langle k_{\B},\varphi^{\ast}\boxtimes\psi\rangle_{\scrM\times\scrM}}\, ,
	\end{align*}
	for all $\psi,\varphi\in C^{\infty}_{\mathrm{c}}(\scrM,\scrE)$ with $\psi^{\ast}\in C^{\infty}_{\mathrm{c}}(\scrM,\scrE^{\ast})$ defined by $\psi^{\ast}:=\Sl\psi|\cdot\Sr_{\scrE}$.  If $\B$ is a smoothing operator and we write its smooth kernel as $k_{\B}(x,y)\colon\scrE_{y}\to\scrE_{x}$, then
	\begin{align*}
		k_{\B^{\dagger}}(x,y)=k_{\B}(y,x)^{\dagger}\, ,
	\end{align*}
	where $\dagger$ on the right-hand side denotes the adjoint with respect to the fibre metric. 
	
	Now, if $\B$ is a nonlocal potential with time kernel $\B_{t,\tau}$, also $\B^{\dagger}$ is nonlocal potential with time kernel given by $(\B^{\dagger})_{t,\tau}:=(\B_{\tau,t})^{\dagger}$. In particular, if $\B$ is retarded in the sense of Definition~\ref{Def:Potentials}(i), its formal adjoint $\B^{\dagger}$ will be \emph{advanced}, i.e.~$\mathrm{supp}(\B^{\dagger}\psi)\subset J^{-}(\mathrm{supp}(\psi))$ for all $\psi\in C^{\infty}_{\mathrm{c}}(\scrM,\scrE)$ and hence also $(\B^{\dagger})_{t,\tau}=0$ for $t<\tau$. If $\B$ has short time range $\delta$ in the sense of Definition~\ref{Def:Potentials}(ii), the same holds true for $\B^{\dagger}$.
\section{Symmetric Hyperbolic Systems with Nonlocal Potentials}
\label{Sec:SHSNonLoc}
In this section, we discuss the Cauchy problem for symmetric hyperbolic systems with nonlocal potentials in two different situations, namely for \textit{retarded} nonlocal potentials and nonlocal potential with \textit{short time range}. To start with, let us fix the basic set-up for the following discussion.

\begin{Setup}\label{Setup} Throughout this section, we consider the following data:
\begin{itemize}
	\item[$\bullet$]A globally hyperbolic Lorentzian manifold $(\scrM=\R\times\scrN,g=-\beta^{2}dt\otimes dt+h_{t})$ of dimension $1+k$ and a time strip $\overline{\scrM}_{T}=t^{-1}([0,T])$ for $T>0$;
	\item[$\bullet$]A Hermitian vector bundle $(\scrE,\Sl\cdot\vert\cdot\Sr_{\scrE})$ over $\scrM$ and a metric connection $\nabla^{\scrE}$;
	\item[$\bullet$]A symmetric hyperbolic system $\scrS=\sigma_{\scrS}(dx^{\mu})\nabla_{\mu}^{\scrE}-\scrS_{0}$ on $\scrE$;
	\item[$\bullet$]A nonlocal potential $\B\colon C^{\infty}_{\rm{c}}(\scrM,\scrE)\to C^{\infty}(\scrM,\scrE)$ with time kernel $(\B_{t,\tau})_{t,\tau\in\R}$.
\end{itemize}
\end{Setup}

The goal of this section is to study the Cauchy problem for the \textit{nonlocal symmetric hyperbolic system} $\scrS-\B$, i.e.
\begin{align}\label{eq:SymNonLoc}
	\begin{cases}
		(\scrS-\B)\psi &=\phi\\
		\psi\vert_{\scrN_{0}}&=\mathfrak{f}
	\end{cases}
\end{align}
on the time strip $\overline{\scrM}_{T}=[0,T]\times\scrN$ for a source $\phi\in C^{\infty}_{\mathrm{c}}(\scrM,\scrE)$ with $\mathrm{supp}(\phi)\subset\overline{\scrM}_{T}$ and initial datum $\mathfrak{f}\in C^{\infty}_{\mathrm{c}}(\scrN_{0},\scrE\vert_{\scrN_{0}})$.

We restrict our attention to the following nonlocal potential (see also Definition~\ref{Def:Potentials}):
\begin{itemize}
	\item[(R)] The case of a \emph{retarded nonlocal potential}, i.e.~$\B_{t,\tau} = 0$ for $\tau > t$. In addition, we assume that the time kernel has \emph{past-compact support} in the sense that there exists $t_0 \geq 0$ such that $\B_{t,\tau} = 0$ for all $\tau \leq -t_0$. Hence,
	\begin{align*}
		\mathrm{supp}(k_{\B})\subset\{(x,y)\in\scrM\times\scrM\mid y\in J^{-}(x)\}\cap (J^{+}(\scrN_{t_{0}})\times J^{+}(\scrN_{t_{0}}))\, .
	\end{align*}		
	In other words, the nonlocality is \emph{switched on} only after a finite time in the past.
	\item[(S)] The case of a nonlocal potential with \emph{short time range}, i.e.~there exists a constant $\delta > 0$ such that $\B_{t,\tau} = 0$ whenever $\vert t - \tau \vert > \delta$.
\end{itemize}

We shall prove that for these nonlocal potentials the Cauchy problem~\eqref{eq:SymNonLoc} admits strong solutions (as in \cite{friedrichs}), clearly under suitable assumptions. In the context of symmetric hyperbolic systems with nonlocal potentials, strong solutions are defined as follows.
			
\begin{Def}\label{Def:StrongSol} \emph{(Strong Solutions)}
	\begin{itemize}
		\item[(i)]In case (R), assume in addition that $\B$ is uniformly bounded in time on $[-t_{0},T]$, i.e.~there is a constant $C_{T}>0$ such that $\Vert\B_{t,\tau}\varphi_{\tau}\Vert\leq C_{T}\Vert\varphi_{\tau}\Vert_{\tau}$ for all $\varphi\in C^{\infty}_{\mathrm{sc}}(\scrM,\scrE)$ and $t,\tau\in [-t_{0},T]$. Then, a \emph{strong solution on $\overline{\scrM}_{T}$} is an element $\psi\in\H_{[-t_{0},T]\times\scrN}$ for which there exists a sequence $\psi_{k}\in C^{\infty}(\scrM,\scrE)\cap \H_{[-t_{0},T]\times\scrN}$ such that
		\begin{align*}
			\psi_{k}\vert_{\scrN_{0}}\xrightarrow{k\to\infty}\mathfrak{f}&\quad\text{in}\quad\H_{0}\,,\qquad\qquad\psi_{k}\xrightarrow{k\to\infty}\psi\quad\text{in}\quad\H_{[-t_{0},T]\times\scrN}\,,\\ &\quad(\scrS-\B)\psi_{k}\xrightarrow{k\to\infty} \phi\quad\text{in}\quad \H_{\overline{\scrM}_{T}}\, .
		\end{align*}
		\item[(ii)]In case (S), assume in addition that $\B$ is uniformly bounded in time on $[-\delta,T+\delta]$, i.e.~there is a constant $C_{T}>0$ such that $\Vert\B_{t,\tau}\varphi_{\tau}\Vert\leq C_{T}\Vert\varphi_{\tau}\Vert_{\tau}$ for all $\varphi\in C^{\infty}_{\mathrm{sc}}(\scrM,\scrE)$ and $t,\tau\in [-\delta,T+\delta]$. Then, a \emph{strong solution on $\overline{\scrM}_{T}$} is an element $\psi\in\H_{[-\delta,T+\delta]\times\scrN}$ for which there exists a sequence $\psi_{k}\in C^{\infty}(\scrM,\scrE)\cap \H_{[-\delta,T+\delta]\times\scrN}$ such that
		\begin{align*}
			\psi_{k}\vert_{\scrN_{0}}\xrightarrow{k\to\infty}\mathfrak{f}&\quad\text{in}\quad\H_{0}\,,\qquad\qquad\psi_{k}\xrightarrow{k\to\infty}\psi\quad\text{in}\quad\H_{[-\delta,T+\delta]\times\scrN}\,,\\ &\quad(\scrS-\B)\psi_{k}\xrightarrow{k\to\infty} \phi\quad\text{in}\quad \H_{\overline{\scrM}_{T}}\, .
		\end{align*}
	\end{itemize}
\end{Def}

Note that, it is important in the above definition that the approximating sequence $\psi_{k}$ is defined on all of $\scrM$ and not just the time strip $\overline{\scrM}_{T}$, since, $\B\psi_{k}$ on $[0,T]\times\scrN$ requires us to specify $\psi_{k}$ on $[-t_{0},T]$ in the retarded case (R) and on $[-\delta,T+\delta]\times\scrN$ in the short time range case (S). Furthermore, we note that the boundedness assumption in time ensures that $\B$ is well-defined on smooth sections that are $L^{2}$ on each time slice. 

While full regularity of the strong solution cannot be expected for a generic short time range nonlocal potential with general initial data, for retarded potentials, we shall show that, under some additional assumption on the regularity of the nonlocal potential, strong solutions are  ``classical''. In the context of symmetric hyperbolic systems with nonlocal potentials, strong solutions are defined as follows. For completeness, we also give a suitable definition for classical solutions in the short time range case.

\begin{Def}\label{Def:ClassSol} \emph{(Classical Solutions)}
	\begin{itemize}
		\item[(i)] In case (R), a \emph{classical solution on $\overline{\scrM}_{T}$} is a section $\psi \in C^1((-t_0, T] \times \scrN, \scrE)$ such that
		\begin{align*}
			(\scrS - \B)\psi = \phi \quad \text{pointwise for all } t \in [0,T]\,, \quad \text{and} \quad \psi\vert_{\scrN_0} = \mathfrak{f}\,.
		\end{align*}
		\item[(ii)] In case (S), a \emph{classical solution on $\overline{\scrM}_{T}$} is a section $\psi \in C^1((-\delta, T+\delta) \times \scrN, \scrE)$ such that
		\begin{align*}
			(\scrS - \B)\psi = \phi \quad \text{pointwise for all } t \in [0,T]\,, \quad \text{and} \quad \psi\vert_{\scrN_0} = \mathfrak{f}\,.
		\end{align*}
	\end{itemize}
\end{Def}

Clearly, if a strong solution converge to a $C^1$-solution $\psi$ then $\psi$ is clearly also a classical solution. Indeed, any strong solution is a weak solution and  if it is additional $C^1$ then it is also classical.

\subsection{Energy Estimates}
To establish the existence and uniqueness of solutions, it is essential to derive appropriate energy estimates that control the behaviour of the system over time. As a first step in this direction, we derive a fundamental identity governing the evolution of the energy associated with a symmetric hyperbolic system. This identity serves as the foundation for obtaining a priori bounds, which are crucial in the subsequent analysis.

\begin{Prp}[Energy Evolution]\label{Prop:energy}
	Let $(\scrM,g)$ be a globally hyperbolic manifold with Cauchy temporal function $t\colon\scrM\to\R$ and consider a symmetric hyperbolic system $\scrS$ on a Hermitian bundle $(\scrE,\Sl\cdot|\cdot\Sr_{\scrE})$. Then, for any $\psi\in C^{\infty}_{\mathrm{sc}}(\scrM,\scrE)$, it  holds that
	\begin{align*}
	\frac{d}{dt}\Vert\psi_{t}\Vert^{2}_{t}=2\Re\int_{\scrN_{t}}\Sl \scrS\psi|\psi\Sr_{\scrE}\beta \,d\mu_{\scrN_{t}}-\int_{\scrN_{t}}\Sl (\scrS+\scrS^{\dagger})\psi|\psi\Sr_{\scrE}\beta\,d\mu_{\scrN_{t}}
\end{align*}
for all $t\in\R$, where we used the notation $\psi_{t}(\cdot):=\psi\vert_{\scrN_{t}}(\cdot)=\psi(t,\cdot)\in C^{\infty}_{\mathrm{c}}(\scrN_{t},\scrE\vert_{\scrN_{t}})$.
\end{Prp}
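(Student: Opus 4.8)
The strategy is to read off the identity from the divergence computation already carried out in the proof of Proposition~\ref{Prop:AdjointSHS}, combined with Stokes' theorem on a time strip; essentially no new computation is required, only bookkeeping.

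First I would introduce, exactly as in the proof of Proposition~\ref{Prop:AdjointSHS} but with $\varphi$ replaced by $\psi$, the vector field $V:=\Sl\sigma_{\scrS}(dx^{\mu})\psi|\psi\Sr_{\scrE}\,\partial_{\mu}$ and the $k$-form $\omega:=V\lrcorner d\mu_{\scrM}$ in a local coordinate chart $(x^{\mu})_{\mu=0,\dots,k}$ adapted so that $x^{0}=t$. Both are globally defined, and since $\psi\in C^{\infty}_{\mathrm{sc}}(\scrM,\scrE)$ they have spatially compact support. That computation, specialised to $\varphi=\psi$, gives $d\omega=\mathrm{div}_{g}(V)\,d\mu_{\scrM}$ with
\begin{align*}
	\mathrm{div}_{g}(V)=\Sl\psi|\scrS\psi\Sr_{\scrE}-\Sl\scrS^{\dagger}\psi|\psi\Sr_{\scrE}\, ,
\end{align*}
$\scrS^{\dagger}$ being the formal adjoint. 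Using the Hermitian symmetry of $\Sl\cdot|\cdot\Sr_{\scrE}$, i.e.\ $\Sl\psi|\scrS\psi\Sr_{\scrE}=\overline{\Sl\scrS\psi|\psi\Sr_{\scrE}}$, this rewrites purely algebraically as
\begin{align*}
	\mathrm{div}_{g}(V)=2\Re\Sl\scrS\psi|\psi\Sr_{\scrE}-\Sl(\scrS+\scrS^{\dagger})\psi|\psi\Sr_{\scrE}\, ,
\end{align*}
which, by Remark~\ref{Remark:Zero}, is genuinely real-valued; this is exactly the spatial integrand on the right-hand side of the assertion, up to the factor $\beta$.

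Next I would identify the pullback of $\omega$ to the leaves. In coordinates adapted to the splitting $g=-\beta^{2}dt\otimes dt+h_{t}$ one has $d\mu_{\scrM}=\beta\,dt\wedge d\mu_{\scrN_{t}}$, so a short contraction shows $i_{t}^{\ast}\omega=\beta\,\Sl\sigma_{\scrS}(dt)\psi|\psi\Sr_{\scrE}\,d\mu_{\scrN_{t}}$; since $\sigma_{\scrS}$ is fibrewise linear and $\eta_{t}=\beta\,dt|_{\scrN_{t}}$, this equals $\Sl\sigma_{\scrS}(\eta_{t})\psi_{t}|\psi_{t}\Sr_{\scrE}\,d\mu_{\scrN_{t}}$, hence $\int_{\scrN_{t}}i_{t}^{\ast}\omega=\Vert\psi_{t}\Vert_{t}^{2}$. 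Fixing $t_{1}\in\R$ and applying Stokes' theorem to $\omega$ on the slab $[t_{1},t]\times\scrN$ (for $t\geq t_{1}$, and symmetrically for $t\leq t_{1}$) is legitimate because spatial compactness of $\psi$ together with global hyperbolicity makes $\supp(\omega)\cap([t_{1},t]\times\scrN)$ compact, so there are no contributions from spatial infinity; with the orientation fixed around~\eqref{eq:InnerProduct} (where $\eta$ is future-directed) the boundary of the slab contributes $\int_{\scrN_{t}}i_{t}^{\ast}\omega-\int_{\scrN_{t_{1}}}i_{t_{1}}^{\ast}\omega$. Combining this with $d\mu_{\scrM}=\beta\,ds\wedge d\mu_{\scrN_{s}}$ and Fubini's theorem yields
\begin{align*}
	\Vert\psi_{t}\Vert_{t}^{2}-\Vert\psi_{t_{1}}\Vert_{t_{1}}^{2}=\int_{t_{1}}^{t}\!\int_{\scrN_{s}}\big(2\Re\Sl\scrS\psi|\psi\Sr_{\scrE}-\Sl(\scrS+\scrS^{\dagger})\psi|\psi\Sr_{\scrE}\big)\,\beta\,d\mu_{\scrN_{s}}\,ds\, .
\end{align*}

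Finally, the map $s\mapsto\int_{\scrN_{s}}(\cdots)\,\beta\,d\mu_{\scrN_{s}}$ is continuous (smoothness of all the data plus spatial compactness of $\psi$), so the fundamental theorem of calculus lets me differentiate the last identity in $t$ and obtain the claim for every $t\in\R$. The only genuinely delicate points are the bookkeeping ones: getting the factor $\beta$ right in $i_{t}^{\ast}\omega$, equivalently $\sigma_{\scrS}(\eta_{t})=\beta\,\sigma_{\scrS}(dt)$, and fixing the orientation so that Stokes' theorem produces $\Vert\psi_{t}\Vert^{2}-\Vert\psi_{t_{1}}\Vert^{2}$ with the correct sign rather than its negative; both are dictated by the conventions already set in Theorem~\ref{thm:Sanchez} and around~\eqref{eq:InnerProduct}.
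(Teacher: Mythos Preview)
Your proposal is correct and follows essentially the same approach as the paper: define the $k$-form $\omega=V\lrcorner d\mu_{\scrM}$ with $V=\Sl\sigma_{\scrS}(dx^{\mu})\psi|\psi\Sr_{\scrE}\,\partial_{\mu}$, invoke the divergence computation from Proposition~\ref{Prop:AdjointSHS} to identify $d\omega$, apply Stokes' theorem on a time strip, recognise the boundary integrals as $\Vert\psi_{t}\Vert_{t}^{2}$, and differentiate. The only cosmetic difference is that the paper verifies the pullback identity $i_{\tau}^{\ast}(X\lrcorner d\mu_{\scrM})=-g(X,\nu)\,d\mu_{\scrN_{\tau}}$ via a normal/tangential decomposition of $X$, whereas you read it off directly from the coordinate expression $d\mu_{\scrM}=\beta\,dt\wedge d\mu_{\scrN_{t}}$; both routes are equivalent.
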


\begin{proof}
	Let $\psi\in C^{\infty}_{\mathrm{sc}}(\scrM,\scrE)$ be arbitrary. Similar as in the proof of Proposition~\ref{Prop:AdjointSHS}, we consider the real-valued $k$-form $\omega\in\Omega^{k}(\scrM)$ defined by
	\begin{align*}
		\omega=\Sl\sigma_{\scrS}(dx^{\mu})\psi|\psi\Sr_{\scrE} \partial_{\mu}\lrcorner d\mu_{\scrM}\,.
	\end{align*}
	As explained in more details in the proof of 	Proposition~\ref{Prop:AdjointSHS} (see~Equation~\eqref{eq:AdjointProof}), its exterior derivative can be written as
	\begin{align}\label{eq:Leibniz}
		d\omega=\Sl\psi|\scrS\psi\Sr_{\scrE}-\Sl\scrS^{\dagger}\psi|\psi\Sr_{\scrE}=2\Re \Sl\scrS\psi|\psi\Sr_{\scrE}-\Sl (\scrS+\scrS^{\dagger})\psi|\psi\Sr_{\scrE}\, .
	\end{align}
	Now, consider a closed time strip $\overline{\scrM}_{T}:=t^{-1}([0,t])$ for $t\geq 0$. Stokes' theorem implies
	\begin{align}\label{eq:Stokes}
		\int_{\overline{\scrM}_{T}}d\omega=\int_{\scrN_{t}}i_{t}^{\ast}\omega-\int_{\scrN_{t_{0}}}i_{0}^{\ast}\omega\, ,
	\end{align}
	where $i_{\tau}\colon\scrN_{\tau}\hookrightarrow\scrM$ denote the natural embeddings. Using Fubini's theorem and Equation~\eqref{eq:Leibniz}, we rewrite the left-hand side of~\eqref{eq:Stokes} as
	\begin{align}\label{eq:LHSProof}
		\int_{\overline{\scrM}_{T}}d\omega=\int_{0}^{t}\int_{\scrN_{\tau}}\bigg\{2\Re (\Sl\scrS\psi|\psi\Sr_{\scrE})-\Sl (\scrS+\scrS^{\dagger})\psi|\psi\Sr_{\scrE} \bigg\}\beta\,d\mu_{\scrN_{\tau}} d\tau\, .
	\end{align}
	For the right-hand side of~Equation \eqref{eq:Stokes}, we first claim that for any vector field $X\in C^{\infty}(\scrM,T\scrM)$, it holds that
	\begin{align*}
		i_{\tau}^{\ast}(X\lrcorner d\mu_{\scrM})=-g(X,\nu)d\mu_{\scrN_{\tau}}\, ,
	\end{align*} 
	where $\nu$ denotes the future-directed timelike normal vector of $\scrN_{\tau}$ in $\scrM$, as usual. Indeed, decomposing $X$ into a normal component $X^{\perp}:=-g(X,\nu)\nu$ and tangential component $X^{\top}:=X-X^{\perp}$, we have that
	\begin{align*}
		i_{\tau}^{\ast}(X\lrcorner d\mu_{\scrM})=i_{\tau}^{\ast}(X^{\perp}\lrcorner d\mu_{\scrM})=-g(X,\nu)i_{\tau}^{\ast}(\nu\lrcorner d\mu_{\scrM})=-g(X,\nu)d\mu_{\scrN_{\tau}}\, ,
	\end{align*}
	as claimed above. In particular, when choosing coordinates adapted to the splitting obtained from our Cauchy temporal function $t\colon\scrM\to\R$, i.e.~$(x^{\mu})_{\mu=0,\dots,k}$ with $x^{0}=t$, where $(x^{i})_{i=1,\dots,k}$ are local coordinates on $\scrN$, we find $\nu=\beta^{-1}\partial_{t}$ and hence, we conclude that
	\begin{align*}
		-g(X,\nu)=\Sl\sigma_{\scrS}(\beta dt)\psi|\psi\Sr_{\scrE}\qquad\text{ for }\qquad X:=\Sl\sigma_{\scrS}(dx^{\mu})\psi|\psi\Sr_{\scrE} \partial_{\mu}\end{align*} 
	To sum up, we have shown that 
	\begin{align}\label{eq:RHSProof}
		\int_{\scrN_{\tau}}i_{\tau}^{\ast}\omega=\int_{\scrN_{\tau}}\Sl\sigma_{\scrS}(\beta dt)\psi|\psi\Sr_{\scrE}d\mu_{\scrN_{\tau}}=\Vert\psi_{\tau}\Vert_{\tau}^{2}
	\end{align}
	for all $\tau\in\R$. Using Equation~\eqref{eq:LHSProof} as well as Equation~\eqref{eq:RHSProof} for the right-hand side, we obtain from~\eqref{eq:Stokes} the equation
	\begin{align*}
		\int_{0}^{t}\int_{\scrN_{\tau}}\bigg\{2\Re (\Sl\scrS\psi|\psi\Sr_{\scrE})-\Sl (\scrS+\scrS^{\dagger})\psi|\psi\Sr_{\scrE} \bigg\}\beta\,d\mu_{\scrN_{\tau}} d\tau=\Vert\psi_{t}\Vert_{t}^{2}-\Vert\psi_{0}\Vert_{0}^{2}\, .
	\end{align*}
	Taking the derivative with respect to the time variable $t$ yields the claimed result. For $t\leq 0$, we reverse the arguments by choosing a time strip $t^{-1}([t,0])$, which results into the equation
	\begin{align*}
		\int_{t}^{0}\int_{\scrN_{\tau}}\bigg\{2\Re (\Sl\scrS\psi|\psi\Sr_{\scrE})-\Sl (\scrS+\scrS^{\dagger})\psi|\psi\Sr_{\scrE} \bigg\}\beta\,d\mu_{\scrN_{\tau}} d\tau=\Vert\psi_{0}\Vert_{0}^{2}-\Vert\psi_{t}\Vert_{t}^{2}\, .
	\end{align*}
	Taking the time-derivative of this expression yields the same result.
\end{proof}

Now, in the following computations, it will be useful to rewrite the right-hand side of the energy equation in Proposition~\ref{Prop:energy} in terms of the time kernel $\B_{t,\tau}$ and the Hilbert space inner product $(\cdot|\cdot)_{t}$ (see~Equation~\eqref{eq:Ht}). As a first step, we introduce the concept of a \textit{weighted time kernel}.

\begin{Def}[Weighted Time Kernel]\label{eq:modTK}
	Let $t\colon\scrM\to\R$ be a Cauchy temporal function and $\B\colon C^{\infty}_{\rm{c}}(\scrM,\scrE)\to C^{\infty}(\scrM,\scrE)$ be a nonlocal potential with time kernel $(\B_{t,\tau})_{t,\tau\in\R}$. Then, we define the \emph{weighted potential} 
	\begin{align*}
		\V:=\beta\sigma_{\scrS}(\eta)^{-1}\B\colon C^{\infty}_{\rm{c}}(\scrM,\scrE)\to C^{\infty}(\scrM,\scrE)\, ,
	\end{align*}
	where $\eta:=\beta dt$ denotes the future-directed timelike normal covector, as usual. The time kernel of $\V$ is given by $
		\V_{t,\tau}:=\beta_{t}\sigma_{\scrS}(\eta_t)^{-1}\B_{t,\tau}$ where $\beta_{t}(\cdot):=\beta(t,\cdot)$.
\end{Def}

Note that this definition is well-defined, since $\Sl\sigma_{\scrS}(\eta)\cdot|\cdot\Sr_{\scrE}$ is pointwise positive definite, which implies that $\sigma_{\scrS}(\eta)\in C^{\infty}(\scrM,\mathrm{End}(\scrE))$ is pointwise invertible.

Using this notation, we derive the following energy estimates from Proposition~\ref{Prop:energy}, which will be relevant for our analysis.

\begin{Corollary}[Energy Estimates]\label{Cor:EnEstLoc}
	Consider a symmetric hyperbolic system $\scrS$ with the property that the zero-order operator $\mathcal{Z}_{\scrS}:=\beta\sigma_{\scrS}(\eta)^{-1}(\scrS+\scrS^{\dagger})$ is uniformly bounded in time, i.e.~there exists a constant $D>0$ such that $\Vert (\mathcal{Z}_{\scrS}\varphi)_{t}\Vert_{t}\leq D\Vert\varphi_{t}\Vert_{t}$ for all $\varphi\in C^{\infty}_{\mathrm{sc}}(\scrM,\scrE)$. Then, the unique solution $\psi\in C^{\infty}_{\mathrm{sc}}(\scrM,\scrE)$ to the Cauchy problem
	\begin{align*}
	\begin{cases}
		\scrS\psi&=\phi\\
		\psi\vert_{\scrN_{0}}&=\mathfrak{f}
	\end{cases}
\end{align*}	 
for compactly-supported Cauchy data $(\phi,\mathfrak{f})\in C^{\infty}_{\mathrm{c}}(\scrM,\scrE)\times C^{\infty}_{\mathrm{c}}(\scrN_{0},\scrE\vert_{\scrN_{0}})$ satisfies
\begin{align*}
	\Vert\psi_{t}\Vert_{t}\leq Me^{\frac{1}{2}D\vert t\vert}
\end{align*}
for all $t\in\R$ where $M>0$ is a constant only depending on $(\phi,\mathfrak{f})$.
\end{Corollary}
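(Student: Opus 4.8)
The plan is to run a Gr\"onwall-type argument starting from the energy evolution identity of Proposition~\ref{Prop:energy}. Substituting $\scrS\psi=\phi$ there and rewriting the two terms on the right in terms of the Hilbert-space inner products $(\cdot|\cdot)_{t}$ on $\H_{t}$, one uses the pointwise identity $\beta\Sl a|b\Sr_{\scrE}=\Sl\sigma_{\scrS}(\eta)\big(\beta\sigma_{\scrS}(\eta)^{-1}a\big)|b\Sr_{\scrE}$ to obtain, with $\tilde\phi:=\beta\sigma_{\scrS}(\eta)^{-1}\phi$,
\begin{align*}
\int_{\scrN_{t}}\Sl\phi|\psi\Sr_{\scrE}\beta\,d\mu_{\scrN_{t}} &= \big(\tilde\phi_{t}\,\big|\,\psi_{t}\big)_{t},\\
\int_{\scrN_{t}}\Sl(\scrS+\scrS^{\dagger})\psi|\psi\Sr_{\scrE}\beta\,d\mu_{\scrN_{t}} &= \big((\mathcal{Z}_{\scrS}\psi)_{t}\,\big|\,\psi_{t}\big)_{t},
\end{align*}
the second pairing being real-valued by Remark~\ref{Remark:Zero}. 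Hence
\begin{align*}
\frac{d}{dt}\Vert\psi_{t}\Vert_{t}^{2} = 2\Re\big(\tilde\phi_{t}\,\big|\,\psi_{t}\big)_{t} - \big((\mathcal{Z}_{\scrS}\psi)_{t}\,\big|\,\psi_{t}\big)_{t}.
\end{align*}
Since $\phi\in C^{\infty}_{\mathrm{c}}(\scrM,\scrE)$, the section $\tilde\phi$ is again smooth with compact support, so $t\mapsto\Vert\tilde\phi_{t}\Vert_{t}$ is continuous and compactly supported.

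Next I would apply the Cauchy--Schwarz inequality to both terms, together with the hypothesis $\Vert(\mathcal{Z}_{\scrS}\varphi)_{t}\Vert_{t}\le D\Vert\varphi_{t}\Vert_{t}$, to arrive at the differential inequality
\begin{align*}
\Big|\frac{d}{dt}\Vert\psi_{t}\Vert_{t}^{2}\Big| \le 2\Vert\tilde\phi_{t}\Vert_{t}\Vert\psi_{t}\Vert_{t} + D\Vert\psi_{t}\Vert_{t}^{2}.
\end{align*}
Write $E(t):=\Vert\psi_{t}\Vert_{t}^{2}$, which is $C^{1}$ by Proposition~\ref{Prop:energy}. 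To convert this into a bound on $\sqrt{E}$ without dividing by a quantity that may vanish, I would regularise: for $\epsilon>0$ put $f_{\epsilon}:=\sqrt{E+\epsilon^{2}}\in C^{1}$, so that $f_{\epsilon}'=E'/(2f_{\epsilon})$, and, using $\sqrt{E}\le f_{\epsilon}$ and $E/f_{\epsilon}\le f_{\epsilon}$,
\begin{align*}
f_{\epsilon}'(t) \le \Vert\tilde\phi_{t}\Vert_{t} + \tfrac{1}{2}D\,f_{\epsilon}(t).
\end{align*}

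For $t\ge 0$, multiplying by the integrating factor $e^{-Dt/2}$ and integrating from $0$ to $t$ gives $f_{\epsilon}(t)\le e^{Dt/2}\big(f_{\epsilon}(0)+\int_{0}^{t}e^{-Ds/2}\Vert\tilde\phi_{s}\Vert_{s}\,ds\big)$; letting $\epsilon\downarrow 0$ and using $\psi_{0}=\mathfrak{f}$ yields $\Vert\psi_{t}\Vert_{t}\le e^{Dt/2}\big(\Vert\mathfrak{f}\Vert_{0}+\int_{0}^{t}e^{-Ds/2}\Vert\tilde\phi_{s}\Vert_{s}\,ds\big)$. For $t\le 0$ the identical computation applied to $s\mapsto E(-s)$, whose derivative obeys the same differential inequality, yields the analogous bound. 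Taking
\begin{align*}
M := \Vert\mathfrak{f}\Vert_{0} + \int_{\R} e^{-D|s|/2}\,\Vert\tilde\phi_{s}\Vert_{s}\,ds < \infty,
\end{align*}
which depends only on $(\phi,\mathfrak{f})$ and the fixed data $\scrS,\beta$, we conclude that $\Vert\psi_{t}\Vert_{t}\le M e^{D|t|/2}$ for all $t\in\R$. The only genuinely delicate point is the passage from the differential inequality for $E$ to one for $\sqrt{E}$; the $\epsilon$-regularisation above handles this, and everything else is a routine Gr\"onwall estimate.
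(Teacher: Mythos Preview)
Your proof is correct and follows essentially the same approach as the paper: rewrite the energy evolution identity in terms of $(\cdot|\cdot)_t$, apply Cauchy--Schwarz and the bound on $\mathcal{Z}_{\scrS}$, and run a Gr\"onwall argument with the integrating factor $e^{\mp Dt/2}$ separately for $t\ge 0$ and $t\le 0$. The only difference is that you insert the $\epsilon$-regularisation $f_\epsilon=\sqrt{E+\epsilon^2}$ to pass rigorously from the inequality for $\|\psi_t\|_t^2$ to one for $\|\psi_t\|_t$, whereas the paper simply invokes the Leibniz rule at that step; your treatment is slightly more careful on this point but otherwise identical.
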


\begin{proof}
	Let us write $\phi^{\prime}:=\beta\sigma_{\scrS}(\eta)^{-1}\phi$. Then, taking the absolute value in  Proposition~\ref{Prop:energy} and using the Cauchy Schwartz inequality on the right-hand side yields
	\begin{align*}
		\bigg\vert\frac{d}{dt}\Vert\psi_{t}\Vert^{2}_{t}\bigg\vert\leq \Vert\psi_{t}\Vert_{t}\bigg(2\Vert \phi^{\prime}_{t}\Vert_{t}+\Vert(\mathcal{Z}_{\scrS}\psi)_{t}\Vert_{t}\bigg)\, ,
	\end{align*}
	We use the Leibniz rule on the left-hand side and the assumption on $\mathcal{Z}_{\scrS}$ to obtain the estimate
	\begin{align}\label{eq:EstEnGen}
		\bigg\vert\frac{d}{dt}\Vert\psi_{t}\Vert_{t}\bigg\vert\leq \Vert \phi^{\prime}_{t}\Vert_{t}+\frac{1}{2}\Vert\psi_{t}\Vert_{t}\, .
	\end{align}
	Now, we distinguish between two cases. First, note that the estimate~\eqref{eq:EstEnGen} implies
	\begin{align*}
		\frac{d}{dt}\bigg(e^{-\frac{1}{2}Dt}\Vert\psi_{t}\Vert_{t}\bigg)\leq e^{-\frac{1}{2}Dt}\Vert\phi_{t}^{\prime}\Vert_{t}\, .
	\end{align*}
	Then, for $t\geq 0$, integrating from $0$ to $t$ yields the estimate
	\begin{align*}
		\Vert\psi_{t}\Vert_{t}\leq e^{\frac{1}{2}Dt}\bigg\{\int_{0}^{t}\bigg(e^{-\frac{1}{2}D\tau}\Vert\phi^{\prime}_{\tau}\Vert_{\tau}\bigg)\,d\tau+\Vert\mathfrak{f}\Vert_{0}\bigg\}\leq e^{\frac{1}{2}Dt}\bigg(\int_{0}^{t}\Vert\phi^{\prime}_{\tau}\Vert_{\tau}\,d\tau+\Vert\mathfrak{f}\Vert_{0}\bigg)\, .
	\end{align*}
	For $t\leq 0$, we need to use a different sign in order to get the right exponential factor. In this case, we use the fact that the estimate~\eqref{eq:EstEnGen} also implies that
	\begin{align*}
		-\frac{d}{dt}\bigg(e^{\frac{1}{2}Dt}\Vert\psi_{t}\Vert_{t}\bigg)\leq e^{\frac{1}{2}Dt}\Vert\phi_{t}^{\prime}\Vert_{t}\, .
	\end{align*}
	Taking $t\leq 0$, integrating from $t$ to $0$ and using similar steps as above, results into the estimate
	\begin{align*}
		\Vert\psi_{t}\Vert_{t}\leq e^{-\frac{1}{2}Dt}\bigg(\int_{t}^{0}\Vert\phi^{\prime}_{\tau}\Vert_{\tau}\,d\tau+\Vert\mathfrak{f}\Vert_{0}\bigg)\, .
	\end{align*}
	Now, since $\phi$ (and hence also $\phi^{\prime}$) is compactly-supported, we can combine both estimates to find a constant $M>0$ such that $\Vert\psi_{t}\Vert_{t}\leq Me^{\frac{1}{2}D\vert t\vert}$, which gives the claimed result.
\end{proof}

\begin{Corollary}[Energy Estimates]\label{Cor:EnEst}
	Consider Set-up~\eqref{Setup} and let $\psi\in C^{\infty}_{\mathrm{sc}}(\scrM,\scrE)$ and $\phi\in C^{\infty}_{\mathrm{c}}(\scrM,\scrE)$ be such that $\scrS\psi=\B\phi$. Then,
	\begin{align*}
		\bigg\vert\frac{d}{dt}\Vert\psi_{t}\Vert_{t}\bigg\vert\leq \int_{\R}\Vert \V_{t,\tau}\phi_{\tau}\Vert_{t}\,d\tau+\frac{1}{2}\Vert(\mathcal{Z}_{\scrS}\psi)_{t}\Vert_{t}\, ,
	\end{align*}
	for all $t\in\R$, where $\mathcal{Z}_{\scrS}:=\beta\sigma_{\scrS}(\eta)^{-1}(\scrS+\scrS^{\dagger})$ is an operator of order zero.
\end{Corollary}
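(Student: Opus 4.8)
The plan is to feed the relation $\scrS\psi=\B\phi$ into the energy evolution identity of Proposition~\ref{Prop:energy} and then translate its (boundary-term-free) right-hand side into the Hilbert-space language of the slices $\H_{t}$. The first step is to rewrite the two integrals occurring on the right of Proposition~\ref{Prop:energy} in terms of the weighted potential $\V=\beta\sigma_{\scrS}(\eta)^{-1}\B$ from Definition~\ref{eq:modTK} and of $\mathcal{Z}_{\scrS}=\beta\sigma_{\scrS}(\eta)^{-1}(\scrS+\scrS^{\dagger})$. Since $\Sl\psi\vert\phi\Sr_{\beta,\scrE}=\Sl\sigma_{\scrS}(\eta)\psi\vert\phi\Sr_{\scrE}$ as in~\eqref{eq:Ht}, and $\beta>0$ is a (real) scalar factor, one has $\sigma_{\scrS}(\eta)\V\phi=\beta\,\B\phi$ and $\sigma_{\scrS}(\eta)\mathcal{Z}_{\scrS}\psi=\beta(\scrS+\scrS^{\dagger})\psi$, whence substituting $\scrS\psi=\B\phi$ gives
\begin{align*}
	\int_{\scrN_{t}}\Sl\scrS\psi\vert\psi\Sr_{\scrE}\,\beta\,d\mu_{\scrN_{t}}=(\V\phi\vert\psi)_{t},\qquad\int_{\scrN_{t}}\Sl(\scrS+\scrS^{\dagger})\psi\vert\psi\Sr_{\scrE}\,\beta\,d\mu_{\scrN_{t}}=(\mathcal{Z}_{\scrS}\psi\vert\psi)_{t}.
\end{align*}
Thus Proposition~\ref{Prop:energy} becomes $\tfrac{d}{dt}\Vert\psi_{t}\Vert_{t}^{2}=2\Re(\V\phi\vert\psi)_{t}-(\mathcal{Z}_{\scrS}\psi\vert\psi)_{t}$, the last term being real by Remark~\ref{Remark:Zero}.

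Next I would insert the time-kernel representation. As $\B$, and hence $\V$, is semi-regular and $\phi\in C^{\infty}_{\mathrm{c}}(\scrM,\scrE)$, the defining relation of the time kernel (Definition~\ref{Def:NonLocPot}(ii)), applied to $\V$, holds at the level of smooth sections, $(\V\phi)_{t}=\int_{\R}\V_{t,\tau}\phi_{\tau}\,d\tau$, with $(t,\tau)\mapsto\V_{t,\tau}\phi_{\tau}$ jointly continuous and $\tau$-supported in a compact range determined by $\supp(\phi)$. Exchanging the $\tau$-integral with the fibre-metric integral defining $(\cdot\vert\cdot)_{t}$ (Fubini) yields $(\V\phi\vert\psi)_{t}=\int_{\R}(\V_{t,\tau}\phi_{\tau}\vert\psi_{t})_{t}\,d\tau$. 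Taking absolute values in the identity above and applying the Cauchy–Schwarz inequality to each term gives
\begin{align*}
	\Big\vert\tfrac{d}{dt}\Vert\psi_{t}\Vert_{t}^{2}\Big\vert\leq2\Vert\psi_{t}\Vert_{t}\int_{\R}\Vert\V_{t,\tau}\phi_{\tau}\Vert_{t}\,d\tau+\Vert(\mathcal{Z}_{\scrS}\psi)_{t}\Vert_{t}\,\Vert\psi_{t}\Vert_{t}.
\end{align*}
Finally, writing $\tfrac{d}{dt}\Vert\psi_{t}\Vert_{t}^{2}=2\Vert\psi_{t}\Vert_{t}\tfrac{d}{dt}\Vert\psi_{t}\Vert_{t}$ and cancelling a factor $2\Vert\psi_{t}\Vert_{t}$ produces precisely the claimed inequality, exactly as in the proof of Corollary~\ref{Cor:EnEstLoc}.

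The argument is essentially bookkeeping, so there is no genuine obstacle; the two points that need a word of care are both routine. One is the interchange of the $\tau$-integral and the spatial integral, which is immediate from the joint continuity and compact $\tau$-support encoded in Definition~\ref{Def:NonLocPot}(i). The other is the division by $\Vert\psi_{t}\Vert_{t}$ at its zeros: as already in Corollary~\ref{Cor:EnEstLoc} this is harmless, and if one wants full rigour it is handled by replacing $\Vert\psi_{t}\Vert_{t}$ with $(\Vert\psi_{t}\Vert_{t}^{2}+\varepsilon)^{1/2}$, carrying out the same estimate, and letting $\varepsilon\to0$ — a regularisation that is in any case only ever exploited through the subsequent Grönwall-type integrations. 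The heart of the proof is just the translation of Proposition~\ref{Prop:energy} into slice-wise Hilbert-space quantities via $\V$ and $\mathcal{Z}_{\scrS}$.
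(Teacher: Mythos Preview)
Your proof is correct and follows essentially the same approach as the paper: apply Proposition~\ref{Prop:energy} with $\scrS\psi=\B\phi$, rewrite the two spatial integrals as $(\V\phi|\psi)_t$ and $(\mathcal{Z}_{\scrS}\psi|\psi)_t$, insert the time-kernel representation $(\V\phi|\psi)_t=\int_{\R}(\V_{t,\tau}\phi_\tau|\psi_t)_t\,d\tau$, then estimate via Cauchy--Schwarz and the Leibniz rule. Your added remarks on the Fubini step and the $\varepsilon$-regularisation at zeros of $\Vert\psi_t\Vert_t$ are sound and slightly more explicit than the paper, which simply says ``rewriting the left-hand side using the Leibniz rule yields the claimed result.''
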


\begin{proof}
	First of all, using the defining relation of the time kernel in Definition~\ref{Def:NonLocPot} as well as the of the modified time kernel in Definition~\ref{eq:modTK} above, we can write		
	\begin{align*}
		\int_{\scrN_{t}}\Sl&\B\phi|\psi\Sr_{\scrE}\beta\,d\mu_{\scrN_{t}}=\int_{\R}\int_{\scrN_{t}}\Sl\beta_{t}\B_{t,\tau}\phi_{\tau}|\psi_{\tau}\Sr_{\scrE}\,d\mu_{\scrN_{t}}\,d\tau=\\&=\int_{\R}\int_{\scrN_{t}}\Sl\sigma_{\scrS}(\eta_{t})\V_{t,\tau}\phi_{\tau}|\psi_{t}\Sr_{\scrE}\,d\mu_{\scrN_{t}}\,d\tau=\int_{\R}\, (\V_{t,\tau}\phi_{\tau}|\psi_{t})_{t}\, d\tau\, .
	\end{align*}
	Using this equation as well as the notation $\mathcal{Z}_{\scrS}:=\beta\sigma_{\scrS}(\eta)^{-1}(\scrS+\scrS^{\dagger})$, we take the absolute value in Proposition~\ref{Prop:energy} applied to $\psi$ to obtain the estimate
	\begin{align*}
		\bigg\vert\frac{d}{dt}\Vert\psi_{t}\Vert^{2}_{t}\bigg\vert\leq \Vert\psi_{t}\Vert_{t}\bigg(2\int_{\R}\Vert \V_{t,\tau}\phi_{\tau}\Vert_{t}\,d\tau+\Vert(\mathcal{Z}_{\scrS}\psi)_{t}\Vert_{t}\bigg)\, ,
	\end{align*}
	where we used the triangle and Cauchy-Schwartz inequalities. Rewriting the left-hand side using the Leibniz rule yields the claimed result.
\end{proof}

\subsection{Extended Symmetric Hyperbolic System Including Derivatives}
\label{Subsec:ExtSHS}
In this section, we show that any symmetric hyperbolic system with a nonlocal potential can be reformulated as an equivalent first-order system with nonlocal potential for an extended variable that includes both the section and its covariant derivative. This is done by introducing an enlarged bundle, on which the corresponding enlarged system and potential are defined. The construction will be used later to establish regularity of solutions, adapting ideas from the theory of symmetric hyperbolic systems on $\mathbb{R}^{n}$ as explained in \cite[Sec.~5.3]{john} and \cite[Sec.~13.3]{intro} to the setting of vector bundles and nonlocal potentials considered in this article.

Throughout this section, we consider Set-up~\ref{Setup}. As in Section~\ref{SobolevSpaces}, we introduce the vector bundles
\begin{align*}
	\scrE_{i}:=\scrE\otimes T^{\ast}\scrM^{\otimes i}
\end{align*}
and equip them with the bundle metrics $\Sl\cdot|\cdot\Sr_{\scrE_{i}}$ and connections $\nabla^{\scrE_{i}}$ induced from the bundle metrics and connection on $\scrE$ as well as the auxiliary \textit{Riemannian} metric $g_{0}=dt\otimes dt+h_{t}$ on $\scrM$ and corresponding Levi-Civita connection $\nabla^{0}$ on $T^{\ast}\scrM$, i.e.
\begin{align*}
	\Sl\cdot|\cdot\Sr_{\scrE_{i}}:=\Sl\cdot|\cdot\Sr_{\scrE}\otimes g_{0}^{\otimes i},\qquad \nabla^{\scrE_{i}}:=\nabla^{\scrE}\otimes(\nabla^{0})^{\otimes i}\, .
\end{align*} 
Now, for any section $\psi\in C^{\infty}(\scrM,\scrE)$, we define the \textit{extended} variable $\Psi$ by
\begin{align}\label{eq:Extended}
	\Psi:=(\psi,\nabla^{\scrE}\psi)\in C^{\infty}(\scrM,\mathsf{E}_{1}),\qquad \mathsf{E}_{1}:=\scrE\oplus\scrE_{1}\, ,
\end{align}
where the bundle $\mathsf{E}_{1}$ comes equipped with a natural bundle metric $\Sl\cdot|\cdot\Sr_{\mathsf{E}_{1}}$ and metric connection $\nabla^{\mathsf{E}_{1}}$ induced from the ones on $\scrE$ and $\scrE_{1}$, i.e.
\begin{align*}
	\Sl\Psi|\Phi\Sr_{\mathsf{E}_{1}}:=\Sl\Psi_{0}|\Phi_{0}\Sr_{\scrE}+\Sl\Psi_{1}|\Phi_{1}\Sr_{\scrE_{1}}\qquad\text{and}\qquad \nabla^{\mathsf{E}_{1}}\Psi:=(\nabla^{\scrE}\Psi_{0},\nabla^{\scrE_{1}}\Psi_{1})\, .
\end{align*}

\begin{Remark}\label{Rem:SobSpL2}
	Let $\psi\in C^{1}(\scrM,\scrE)$ and write $\Psi=(\psi,\nabla^{\scrE}\psi)\in C^{0}(\scrM,\mathsf{E}_{1})$. Then, by definition, it holds that
	\begin{align*}
		\psi\in W^{2,1}(\mathcal{U},\scrE_{\beta})\qquad\Leftrightarrow\qquad \Psi\in L^{2}(\mathcal{U},\mathsf{E}_{1,\beta})\, .
	\end{align*}
	for every $\mathcal{U}\subset\scrM$ open, where $\scrM$ is equipped with the Riemannian metric $g_{0}$ and where $\mathsf{E}_{1,\beta}$ denotes the bundle $\mathsf{E}_{1}$ equipped with $\Sl\cdot|\cdot\Sr_{\beta,\mathsf{E}_{1}}:=\Sl\sigma_{\mathsf{S}_{1}}(\beta dt)\cdot|\cdot\Sr_{\mathsf{E}_{1}}$.
\end{Remark}

Now, the goal of the following is to show that there exists system $\mathsf{S}_{1}$ and nonlocal potential $\mathsf{B}_{1}$ on the Hermitian bundle $(\mathsf{E}_{1},\Sl\cdot|\cdot\Sr_{\mathsf{E}_{1}})$, such that for every smooth solution $\psi\in C^{\infty}(\scrM,\scrE)$ to the equation $(\scrS-\B)\psi=\phi$ for some given source $\phi\in C^{\infty}(\scrM,\scrE)$, the corresponding extended variables $\Psi$ and $\Phi$ as defined in~\eqref{eq:Extended} satisfy $(\mathsf{S}_{1}-\mathsf{B}_{1})\Psi=\Phi$.

We first establish the bigger system $\mathsf{S}_{1}$ for the \textit{local} problems.

\begin{Prp}\label{Prop:SHSDer} Let $\scrS=\sigma_{\scrS}(dx^{\mu})\nabla^{\scrE}_{\mu}-\scrS_{0}$ be a symmetric hyperbolic system over $(\scrE,\Sl\cdot|\cdot\Sr_{\scrE})$. Then, there exists a symmetric hyperbolic system $\mathsf{S}_{1}=\sigma_{\mathsf{S}_{1}}(dx^{\mu})\nabla^{\sf{E}_{1}}_{\mu}-\mathsf{S}_{1,0}$ on $(\mathsf{E}_{1},\Sl\cdot|\cdot\Sr_{\mathsf{E}_{1}})$ with principal symbol
\begin{align*}
	\sigma_{\mathsf{S}_{1}}(\xi):=\begin{pmatrix}
		\sigma_{\scrS}(\xi) &0\\ 0 & \sigma_{\scrS}(\xi)\otimes\mathrm{id}_{T^{\ast}\scrM}
	\end{pmatrix}\in \mathrm{End}(\mathsf{E}_{1}),\qquad\forall \xi\in T^{\ast}\scrM\, ,
\end{align*}
such that $\scrS\psi=\varphi$ if and only if $\mathsf{S}_{1}\Psi=\Phi$ for all $\psi,\varphi\in C^{\infty}(\scrM,\scrE)$, where we wrote $\Psi:=(\psi,\nabla^{\scrE}\psi)$ and $\Phi:=(\phi,\nabla^{\scrE}\phi)$ as in~\eqref{eq:Extended}. 
\end{Prp}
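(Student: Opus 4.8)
The plan is to produce $\mathsf{S}_{1}$ by differentiating the equation $\scrS\psi=\varphi$ once covariantly and then regarding the first covariant derivative $\nabla^{\scrE}\psi$ that shows up as the second, independent slot $\psi_{1}$ of the extended variable. Applying $\nabla^{\scrE}$ to $\scrS\psi=\sigma_{\scrS}(dx^{\mu})\nabla^{\scrE}_{\mu}\psi-\scrS_{0}\psi=\varphi$ and using the Leibniz rule, the zero-order part contributes terms $(\nabla^{\mathrm{End}(\scrE)}_{\nu}\scrS_{0})\psi$ and $\scrS_{0}(\nabla^{\scrE}\psi)_{\nu}$, while the principal part contributes $(\nabla_{\nu}\sigma_{\scrS})(dx^{\mu})(\nabla^{\scrE}\psi)_{\mu}$ --- where $\nabla_{\nu}\sigma_{\scrS}$ denotes the covariant derivative of $\sigma_{\scrS}$ regarded as a section of $\mathrm{Hom}(T^{\ast}\scrM,\mathrm{End}(\scrE))$, which automatically absorbs the Christoffel symbols of the auxiliary metric $g_{0}$ that enter $\nabla^{\scrE_{1}}$ --- together with $\sigma_{\scrS}(dx^{\mu})(\nabla^{\scrE_{1}}_{\nu}(\nabla^{\scrE}\psi))_{\mu}$. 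By the Ricci identity $(\nabla^{\scrE_{1}}_{\nu}(\nabla^{\scrE}\psi))_{\mu}=(\nabla^{\scrE_{1}}_{\mu}(\nabla^{\scrE}\psi))_{\nu}+R^{\scrE}(\partial_{\nu},\partial_{\mu})\psi$ (the Levi-Civita connection being torsion-free), this last term equals $\sigma_{\scrS}(dx^{\mu})(\nabla^{\scrE_{1}}_{\mu}(\nabla^{\scrE}\psi))_{\nu}$ up to the zero-order curvature term $\sigma_{\scrS}(dx^{\mu})R^{\scrE}(\partial_{\nu},\partial_{\mu})\psi$.

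Collecting, every smooth $\psi$ with $\scrS\psi=\varphi$ satisfies $\sigma_{\scrS}(dx^{\mu})(\nabla^{\scrE_{1}}_{\mu}(\nabla^{\scrE}\psi))_{\nu}-Z_{\nu}(\psi,\nabla^{\scrE}\psi)=\nabla^{\scrE}_{\nu}\varphi$, where $Z_{\nu}(\psi_{0},\psi_{1}):=\scrS_{0}(\psi_{1})_{\nu}-(\nabla_{\nu}\sigma_{\scrS})(dx^{\mu})(\psi_{1})_{\mu}+(\nabla^{\mathrm{End}(\scrE)}_{\nu}\scrS_{0})\psi_{0}-\sigma_{\scrS}(dx^{\mu})R^{\scrE}(\partial_{\nu},\partial_{\mu})\psi_{0}$ is manifestly $C^{\infty}(\scrM)$-linear in the pair $(\psi_{0},\psi_{1})$ and hence defines a section of $\mathrm{Hom}(\mathsf{E}_{1},\scrE_{1})$. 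I would then define $\mathsf{S}_{1}$ on $\mathsf{E}_{1}=\scrE\oplus\scrE_{1}$ by declaring its first component to be $\scrS\psi_{0}$ and its second to be $(\sigma_{\scrS}(dx^{\mu})\otimes\mathrm{id}_{T^{\ast}\scrM})\nabla^{\scrE_{1}}_{\mu}\psi_{1}-Z(\psi_{0},\psi_{1})$. By construction the only first-order contributions are $\sigma_{\scrS}(dx^{\mu})\nabla^{\scrE}_{\mu}\psi_{0}$ in the first component and $(\sigma_{\scrS}(dx^{\mu})\otimes\mathrm{id})\nabla^{\scrE_{1}}_{\mu}\psi_{1}$ in the second, so the principal symbol is exactly the block-diagonal $\sigma_{\mathsf{S}_{1}}$ of the statement and $\mathsf{S}_{1,0}:=\scrS_{0}\oplus Z$ is a genuine section of $\mathrm{End}(\mathsf{E}_{1})$.

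It then remains to verify that $\mathsf{S}_{1}$ is a symmetric hyperbolic system and to read off the equivalence. Condition (S) is immediate: $\sigma_{\scrS}(\xi)$ is Hermitian for $\Sl\cdot\vert\cdot\Sr_{\scrE}$, hence $\sigma_{\scrS}(\xi)\otimes\mathrm{id}$ is Hermitian for $\Sl\cdot\vert\cdot\Sr_{\scrE_{1}}=\Sl\cdot\vert\cdot\Sr_{\scrE}\otimes g_{0}$ (since $g_{0}$ is symmetric), so the block-diagonal $\sigma_{\mathsf{S}_{1}}(\xi)$ is Hermitian for $\Sl\cdot\vert\cdot\Sr_{\mathsf{E}_{1}}$. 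Condition (H) is equally immediate: for a future-directed timelike $\tau$, positive-definiteness of $\Sl\sigma_{\scrS}(\tau)\cdot\vert\cdot\Sr_{\scrE}$ forces positive-definiteness of $\Sl\sigma_{\scrS}(\tau)\cdot\vert\cdot\Sr_{\scrE}\otimes g_{0}$ on $\scrE_{1}$ (a tensor product of positive-definite forms, $g_{0}$ being Riemannian), and a block-diagonal sesquilinear form built from positive-definite blocks is positive-definite. For the equivalence: if $\scrS\psi=\varphi$, the collected identity above says precisely that the second component of $\mathsf{S}_{1}\Psi$ equals $\nabla^{\scrE}\varphi$ while the first equals $\varphi$, so $\mathsf{S}_{1}\Psi=\Phi$ with $\Phi=(\varphi,\nabla^{\scrE}\varphi)$; conversely, the first component of $\mathsf{S}_{1}\Psi=\Phi$ already reads $\scrS\psi=\varphi$.

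I expect the only genuinely delicate point to be the bookkeeping in the differentiation step: one must check that, once $\nabla^{\scrE}\psi$ is promoted to the independent variable $\psi_{1}$, no first-order dependence on $\psi_{0}$ survives in $Z$, and that the non-tensorial (Christoffel) pieces are correctly grouped into the covariant derivative of $\sigma_{\scrS}$ so that $Z$ is a well-defined, chart-independent endomorphism (equivalently, $Z(\psi,\nabla^{\scrE}\psi)=\sigma_{\scrS}(dx^{\mu})(\nabla^{\scrE_{1}}_{\mu}(\nabla^{\scrE}\psi))_{\nu}-\nabla^{\scrE}_{\nu}(\scrS\psi)$ is globally defined). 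Everything past this is routine, the block-diagonal shape of $\sigma_{\mathsf{S}_{1}}$ reducing (S), (H) and the equivalence to their single-block counterparts, exactly as in the flat-space prolongation argument of \cite[Sec.~5.3]{john}, \cite[Sec.~13.3]{intro}.
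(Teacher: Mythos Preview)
Your proof is correct and follows essentially the same approach as the paper: both construct $\mathsf{S}_{1}$ by covariantly differentiating $\scrS\psi=\varphi$ and identifying the zero-order residual, with the curvature and symbol-derivative terms appearing in the same way (the paper packages the swap of derivatives as the commutator $[\nabla^{\scrE}_{\mu},\scrS]$ in~\eqref{eq:Comutator}, while you use the Ricci identity directly). The only difference is organizational---the paper starts from $\sigma_{\mathsf{S}_{1}}(dx^{\mu})\nabla^{\mathsf{E}_{1}}_{\mu}\Psi$ and subtracts, you start from $\nabla^{\scrE}(\scrS\psi)$ and regroup---and the paper additionally proves uniqueness of $\mathsf{S}_{1}$, which the statement does not require.
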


\begin{proof}
	First of all, it is clear by definition of the principal symbol $\sigma_{\mathsf{S}_{1}}\colon T^{\ast}\scrM\to\mathrm{End}(\mathsf{E}_{1})$ and the bundle metric on $\mathsf{E}_{1}$ that $\mathsf{S}_{1}$ is again a symmetric hyperbolic system, i.e.~that both the conditions (H) and (S) of Definition~\ref{Def:SHS} are satisfied. 
	
	For uniqueness, assume that there exists another symmetric hyperbolic system $\mathsf{S}_{1}^{\prime}$ with the same property and set $\mathsf{T}:=\mathsf{S}_{1}-\mathsf{S}_{1}^{\prime}$. Clearly, $\mathsf{T}$ is an operator of order zero. Now, let $\psi\in C^{\infty}(\scrM,\scrE)$ be arbitrary and set $\varphi:=\scrS\psi$. Then, $\mathsf{T}\Psi=\Phi-\Phi=0$ and hence $\mathsf{S}_{1}\Psi=\mathsf{S}_{1}^{\prime}\Psi$. Since $\psi$ was arbitrary and $\mathsf{T}$ an operator of order zero, it holds that $\mathsf{T}=0$ on all of $C^{\infty}(\scrM,\mathsf{E}_{1})$ and we conclude that $\mathsf{S}_{1}=\mathsf{S}_{1}^{\prime}$.
	
	Now, let us turn to the construction of $\mathsf{S}_{1}$. As a first step, we choose a local coordinates chart $(x^{\mu})_{\mu=0,\dots,k}$ on some open subset $\mathcal{U}$ of $\scrM$ and write $\scrS=\sigma_{\scrS}(dx^{\mu})\nabla^{\scrE}_{\mu}-\scrS_{0}$ for some zero-order operator $\scrS_{0}$. Now, the commutator $[\nabla_{\mu}^{\scrE},\scrS]$ for some fixed basis vector $\partial_{\mu}$ is an operator of order zero on $C^{\infty}(\mathcal{U},\scrE)$ and a straightforward computation shows that it is explicitly given by
	\begin{align}\label{eq:Comutator}
		[\nabla_{\mu}^{\scrE},\scrS]\psi=(\nabla^{\mathrm{End}(\scrE)}_{\mu}\sigma_{\scrS}(dx^{\alpha}))\nabla_{\alpha}^{\scrE}\psi+\sigma_{\scrS}(dx^{\alpha})\mathcal{F}_{\mu\alpha}(\psi)-(\nabla^{\mathrm{End}(\scrE)}_{\mu}\scrS_{0})(\psi)
	\end{align}
	 for all $\psi\in C^{\infty}(\scrM,\scrE)$, where $\nabla^{\mathrm{End}(\scrE)}$ denotes the induced connection on  $\mathrm{End}(\scrE)$, as usual, and where $\mathcal{F}\in\Omega^{2}(\scrM,\mathrm{End}(\scrE))$ defined by $\mathcal{F}(X,Y,\bullet):=[\nabla_{X}^{\scrE},\nabla_{Y}^{\scrE}]-\nabla^{\scrE}_{[X,Y]}$ for all $X,Y\in C^{\infty}(\scrM,\scrE)$ denotes the curvature $2$-form of the bundle $(\scrE,\nabla^{\scrE})$. Now, let $\psi\in C^{\infty}(\scrM,\scrE)$ and write $\nabla^{\scrE}\psi\in C^{\infty}(\scrM,\scrE_{1})$ locally as $\nabla^{\scrE}\psi=\nabla_{\mu}^{\scrE}\psi\otimes dx^{\mu}$. Then, we obtain
\begin{align*}
	\sigma_{\mathsf{S}_{1}}(dx^{\mu})\nabla_{\mu}^{\mathsf{E}_{1}}\Psi&=\begin{pmatrix}
	\sigma_{\scrS}(dx^{\mu})\nabla^{\scrE}_{\mu}\psi\\
	(\sigma_{\scrS}(dx^{\mu})\otimes\mathrm{id})\nabla^{\scrE_{1}}_{\mu}(\nabla^{\scrE}\psi))
	\end{pmatrix}\\&=\begin{pmatrix}
	(\scrS+\scrS_{0})\psi\\
	\sigma_{\scrS}(dx^{\mu})\nabla^{\scrE}_{\mu}\nabla^{\scrE}_{\alpha}\psi\otimes dx^{\alpha}+ \sigma_{\scrS}(dx^{\mu})\nabla^{\scrE}_{\alpha}\psi\otimes \nabla^{0}_{\mu}dx^{\alpha}
	\end{pmatrix}\\&=\begin{pmatrix}
	(\scrS+\scrS_{0})\psi\\
	(\scrS+\scrS_{0})(\nabla^{\scrE}_{\alpha}\psi)\otimes dx^{\alpha}-(\Gamma^{0})_{\mu\beta}^{\alpha}\sigma_{\scrS}(dx^{\mu})\nabla_{\alpha}^{\scrE}\psi\otimes dx^{\beta}
	\end{pmatrix}\\&=
	\begin{pmatrix}
		\scrS\psi\\
		\nabla^{\scrE}(\scrS\psi)
	\end{pmatrix}+\underbrace{\begin{pmatrix}
	\scrS_{0}\psi\\
	([\scrS,\nabla^{\scrE}_{\mu}]\psi+\scrS_{0}\nabla^{\scrE}_{\mu}\psi-(\Gamma^{0})_{\mu\beta}^{\alpha}\sigma_{\scrS}(dx^{\mu})\nabla_{\alpha}^{\scrE}\psi)\otimes dx^{\mu}
	\end{pmatrix}}_{=:\mathsf{S}_{1,0}\Psi}\, ,
\end{align*}
where $\Gamma^{0}$ are the Christoffel symbols of the auxiliary Riemannian manifold $(\scrM,g_{0})$. The operator $\mathsf{S}_{1,0}$ defined above clearly has order zero and we set 
\begin{align*}
	\mathsf{S}_{1}:=\sigma_{\mathsf{S}_{1}}(dx^{\mu})\nabla_{\mu}^{\mathsf{E}_{1}}-\mathsf{S}_{1,0}\, .
\end{align*}	
	 By construction, it holds that $\scrS\psi=\phi$ if and only if $\mathsf{S}_{1}\Psi=\Phi$ for all $\psi,\varphi\in C^{\infty}(\scrM,\scrE)$, where $\Psi:=(\psi,\nabla^{\scrE}\psi)$ and $\Phi:=(\phi,\nabla^{\scrE}\phi)$, which concludes the proof.
\end{proof}

Note that the zero-order term $\mathsf{S}_{1,0}\in C^{\infty}(\scrM,\mathrm{End}(\mathsf{E}_{1}))$ of the system $\sf{S}_{1}$ as constructed in the previous proof involves the derivatives of the principal symbol and $0^{\mathrm{th}}$-order terms of $\scrS$ as well as terms involving the bundle curvature of $(\scrE,\nabla^{\scrE})$. More precisely, using~\eqref{eq:Comutator}, it is explicitly given by
\begin{align}\label{eq:ZeroORderExt}
	&\mathsf{S}_{1,0}=\begin{pmatrix}
		\scrS_{0} & 0 \\ \scrA &\scrB
	\end{pmatrix}\colon C^{\infty}(\scrM,\mathsf{E}_{1})\to C^{\infty}(\scrM,\mathsf{E}_{1})
\end{align}
with zero-order operators $\scrA\colon C^{\infty}(\scrM,\scrE)\to C^{\infty}(\scrM,\scrE_{1})$ and $\scrB\colon C^{\infty}(\scrM,\scrE_{1})\to C^{\infty}(\scrM,\scrE_{1})$ given by
\begin{align*}
	\scrA\psi&:= \bigg(\sigma_{\scrS}(dx^{\alpha})\mathcal{F}_{\mu\alpha}-\nabla^{\mathrm{End}(\scrE)}_{\mu}\scrS_{0}\bigg)\psi\otimes dx^{\mu}\\
	\scrB\omega&:=\bigg(\delta^{\gamma}_{\alpha}\nabla^{\mathrm{End}(\scrE)}_{\mu}\sigma_{\scrS}(dx^{\alpha})-(\Gamma^{0})_{\alpha\mu}^{\gamma}\sigma_{\scrS}(dx^{\alpha})+\delta^{\gamma}_{\mu}\scrS_{0}\bigg)\omega_{\gamma}\otimes dx^{\mu}	
\end{align*}
where we expanded $\omega=\omega_{\alpha}\otimes dx^{\alpha}$ for coefficients $\omega_{\alpha}\in C^{\infty}(\mathcal{U},\scrE)$ in a local coordinate chart $(\mathcal{U},(x^{\mu})_{\mu=0,\dots,k})$ of $\scrM$.

\begin{Example}\label{Example:MinkoExtSys}
	Consider Minkowski spacetime $\scrM = \mathbb{R} \times \mathbb{R}^{k}$ and the trivial vector bundle $\scrE=\scrM\times\mathbb{C}^{N}$ of rank $N\in\N$, equipped with the standard complex inner product on its fibres, as discussed in Example~\ref{Ex:MinkSHS}. In this case, any symmetric hyperbolic system can be written in the form $\scrS = A^{\mu} \partial_{\mu} - \scrS_{0}$, where each $A^{\mu} \in C^{\infty}(\scrM, \mathbb{C}^{N \times N})$ is a smooth, pointwise Hermitian matrix-valued function, and $\scrS_{0}$ is a zero-order operator. In this case, $\mathsf{S}_{1}$ is given by
	\begin{align*}
		\mathsf{S}_{1}
		\begin{pmatrix}
			\psi\\ \omega_{\alpha}\otimes dx^{\alpha}
		\end{pmatrix}=
		\begin{pmatrix}
			A^{\mu}\partial_{\mu}\psi\\
			(A^{\mu}\partial_{\mu}\omega_{\alpha})\otimes dx^{\alpha}
		\end{pmatrix}-
		\begin{pmatrix}
			\scrS_{0}\psi\\
			\big((\partial_{\alpha}A^{\mu})\omega_{\mu}+\scrS_{0}\omega_{\alpha}-(\partial_{\alpha}\scrS_{0})\psi\big)\otimes dx^{\alpha}
		\end{pmatrix}
\end{align*}	 
for all $(\psi,\omega_{\alpha}\otimes dx^{\alpha})\in C^{\infty}(\scrM,\mathrm{End}(\mathsf{E}_{1}))$. Hence, we see that $\sf{S}_{1}$ is constructed out of the principal symbol $\sigma_{\scrS}(\xi)=A^{\mu}\xi_{\mu}$ and zero-order terms $\scrS_{0}$ of $\scrS$ and their derivatives. If the matrix-valued functions do actually not depend on the point and hence $\partial_{\alpha}A_{\beta}=0$ and $\partial_{\alpha}\scrS_{0}=0$, and if $\scrS_{0}$ is an imaginary number, it holds that $\scrS+\scrS^{\dagger}=0$ as well as $\mathsf{S}_{1}+\mathsf{S}_{1}^{\dagger}=0$. This is for example the case for the Dirac operator in Minkowski spacetime.
\end{Example}

Now, applying the procedure of the previous proposition inductively, we obtain a whole family of symmetric hyperbolic systems $\{\mathsf{S}_{k}\}_{k\geq 1}$ on the bundles
\begin{align*}
	\mathsf{E}_{0}:=\scrE,\qquad \mathsf{E}_{i+1}:=\mathsf{E}_{i}\oplus (\mathsf{E}_{i}\otimes T^{\ast}\scrM)\quad\forall i\geq 0
\end{align*}
equipped with the obvious bundle metrics and connections such that 
\begin{align*}
	\scrS\psi=\phi\qquad\Leftrightarrow\qquad\mathsf{S}_{i}\Psi_{i}=\Phi_{i},\qquad\Psi_{i}:=(\Psi_{i-1},\nabla^{\mathsf{E}_{i-1}}\Psi_{i-1})
\end{align*}

Now, suppose that $\B\colon C^{\infty}_{\mathrm{c}}(\scrM,\scrE)\to C^{\infty}(\scrM,\scrE)$ is a semi-regular linear operator. Note that the composition $\nabla^{\scrE}\B\colon C^{\infty}_{\mathrm{c}}(\scrM,\scrE)\to C^{\infty}(\scrM,\scrE_{1})$ is a gain a continuous operator with a Schwartz kernel. In the case of symmetric hyperbolic systems with nonlocal potentials, we obtain the following generalisation of the previous proposition.

\begin{Prp}\label{Prop:SHSDerNOnLoc}	Let $\scrS$ be a symmetric hyperbolic system over $(\scrE,\Sl\cdot|\cdot\Sr_{\scrE})$ and let $\B\colon C^{\infty}_{\mathrm{c}}(\scrM,\scrE)\to C^{\infty}(\scrM,\scrE)$ be a nonlocal potential. If $\mathsf{S}_{1}$ denotes the corresponding symmetric hyperbolic system on $\mathsf{E}_{1}$, then
\begin{align*}
	(\scrS-\B)\psi=\varphi\qquad\Leftrightarrow\qquad (\mathsf{S}_{1}-\mathsf{B}_{1})\Psi=\Phi\, ,
\end{align*}
where $\mathsf{B}_{1}\colon C^{\infty}_{\mathrm{c}}(\scrM,\mathsf{E}_{1})\to C^{\infty}(\scrM,\mathsf{E}_{1})$ denotes the operator defined by
\begin{align}\label{eq:ExtB1}
	\mathsf{B}_{1}:=\begin{pmatrix}
	\B & 0\\  \nabla^{\scrE}\circ\B- (\B\otimes\mathrm{id})\circ\nabla^{\scrE}  & \B\otimes\mathrm{id}
	\end{pmatrix}
\end{align}
\end{Prp}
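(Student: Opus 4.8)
The plan is to deduce the equivalence from the purely local statement of Proposition~\ref{Prop:SHSDer} (applied to the section $\scrS\psi$) together with a one-line computation of how $\mathsf{B}_{1}$ acts on extended variables of the form $\Psi=(\psi,\nabla^{\scrE}\psi)$, as in~\eqref{eq:Extended}; throughout, $\Phi=(\varphi,\nabla^{\scrE}\varphi)$.

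First I would record that $\mathsf{B}_{1}$ is well defined. Since $\B$ is semi-regular, it has a Schwartz kernel and maps $C^{\infty}_{\mathrm{c}}(\scrM,\scrE)$ continuously into $C^{\infty}(\scrM,\scrE)$; hence, as noted just before the statement, $\nabla^{\scrE}\circ\B\colon C^{\infty}_{\mathrm{c}}(\scrM,\scrE)\to C^{\infty}(\scrM,\scrE_{1})$ is again continuous and semi-regular, and likewise $\B\otimes\mathrm{id}\colon C^{\infty}_{\mathrm{c}}(\scrM,\scrE_{1})\to C^{\infty}(\scrM,\scrE_{1})$. Thus every entry of the matrix in~\eqref{eq:ExtB1} is a continuous semi-regular operator, so $\mathsf{B}_{1}\colon C^{\infty}_{\mathrm{c}}(\scrM,\mathsf{E}_{1})\to C^{\infty}(\scrM,\mathsf{E}_{1})$ is a well-defined, continuous, semi-regular operator. (One can moreover check, by the same reasoning used to pull back $k_{\B}$ to the slices, that $\mathsf{B}_{1}$ is again a nonlocal potential whose time kernel is the corresponding matrix built from the $\B_{t,\tau}$, but this refinement is not needed for the present equivalence.)

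Second, the key computation. For $\psi\in C^{\infty}(\scrM,\scrE)$ and $\Psi=(\psi,\nabla^{\scrE}\psi)$, the definition~\eqref{eq:ExtB1} gives in the $\scrE_{1}$-slot the term $\bigl(\nabla^{\scrE}(\B\psi)-(\B\otimes\mathrm{id})\nabla^{\scrE}\psi\bigr)+(\B\otimes\mathrm{id})\nabla^{\scrE}\psi=\nabla^{\scrE}(\B\psi)$, so that
\[
\mathsf{B}_{1}\Psi=\bigl(\B\psi,\ \nabla^{\scrE}(\B\psi)\bigr).
\]
In other words $\mathsf{B}_{1}$ intertwines the passage to the extended variable with $\B$, exactly as $\mathsf{S}_{1}$ intertwines it with $\scrS$ by Proposition~\ref{Prop:SHSDer} (whose construction makes the $\scrE$-component of $\mathsf{S}_{1}\Psi$ equal to $\scrS\psi$). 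Consequently,
\[
(\mathsf{S}_{1}-\mathsf{B}_{1})\Psi=\bigl(\scrS\psi,\nabla^{\scrE}(\scrS\psi)\bigr)-\bigl(\B\psi,\nabla^{\scrE}(\B\psi)\bigr)=\bigl((\scrS-\B)\psi,\ \nabla^{\scrE}((\scrS-\B)\psi)\bigr).
\]
From this identity both implications are immediate: if $(\scrS-\B)\psi=\varphi$, the right-hand side equals $(\varphi,\nabla^{\scrE}\varphi)=\Phi$, hence $(\mathsf{S}_{1}-\mathsf{B}_{1})\Psi=\Phi$; conversely, if $(\mathsf{S}_{1}-\mathsf{B}_{1})\Psi=\Phi$, comparing the $\scrE$-components yields $(\scrS-\B)\psi=\varphi$, and the $\scrE_{1}$-component equation is then just the covariant derivative of this and carries no additional content.

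I do not expect a genuine obstacle here. The only point requiring care is the bookkeeping of tensor factors: one must make sure that $\B\otimes\mathrm{id}$ in~\eqref{eq:ExtB1} acts on precisely the $\scrE$-slot that $\nabla^{\scrE}$ produces, so that the cancellation $-(\B\otimes\mathrm{id})\nabla^{\scrE}\psi+(\B\otimes\mathrm{id})\nabla^{\scrE}\psi$ is exact and not merely up to lower-order terms. This is automatic, because $\mathsf{B}_{1}$, $\mathsf{S}_{1}$ and $\Psi$ are all built from the same connection $\nabla^{\scrE}$, and the $\Gamma^{0}$-contribution appearing in the zero-order part~\eqref{eq:ZeroORderExt} of $\mathsf{S}_{1}$ lives entirely inside $\mathsf{S}_{1}$ and never interacts with $\mathsf{B}_{1}$. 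Iterating the construction then gives operators $\mathsf{B}_{k}$ on $\mathsf{E}_{k}$ with $(\scrS-\B)\psi=\varphi\Leftrightarrow(\mathsf{S}_{k}-\mathsf{B}_{k})\Psi_{k}=\Phi_{k}$ for all $k\geq 1$, which I would record as a corollary for use in the regularity argument.
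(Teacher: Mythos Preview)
Your proof is correct and follows essentially the same route as the paper, which simply refers back to the computation in Proposition~\ref{Prop:SHSDer}. One small caution: your parenthetical claim that $\mathsf{B}_{1}$ is again a nonlocal potential with time kernel is not true in general (the paper explicitly remarks this immediately after the proposition), but as you yourself note, this is irrelevant for the equivalence being proved.
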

\begin{proof}
	The proof follows the same steps as in Proposition~\ref{Prop:SHSDer}.
\end{proof}

\begin{Remark}
	Note that the operator $\sf{B}_{1}$ is in general not a nonlocal potential in the sense of Definition~\ref{Def:NonLocPot}, since it might not be locally integrable in time and hence not posses a time kernel according to our definition. If $k_{\B}$ is sufficiently regular in time, however, this is the case. Furthermore, in some cases, $\sf{B}_{1}$ can be split into a nonlocal potential with time kernel and a local operator, that can be reabsorbed in the zero-order terms $\sf{S}_{1,0}$ of the symmetric hyperbolic system. 
\end{Remark}

Similar to above, we define inductively the nonlocal potentials $\mathsf{B}_{i}\colon C^{\infty}_{\mathrm{c}}(\scrM,\mathsf{E}_{i})\to C^{\infty}(\scrM,\mathsf{E}_{i})$ in the bundles $\mathsf{E}_{i}=\mathsf{E}_{i-1}\oplus (\mathsf{E}_{i-1}\otimes T^{\ast}\scrM)$ by
\begin{align*}
	\mathsf{B}_{i}:=\begin{pmatrix}
	\mathsf{B}_{i-1} & 0\\ \nabla^{\mathsf{E}_{i-1}}\circ\mathsf{B}_{i-1}-(\mathsf{B}_{i-1}\otimes\mathrm{id})\circ\nabla^{\mathsf{E}_{i-1}} & \mathsf{B}_{i-1}\otimes\mathrm{id}
	\end{pmatrix}\, .
\end{align*}

\subsection{Retarded Nonlocalities} \label{Subsection:Ret}
In this section, we present the first main result concerning the analysis of the Cauchy problem for symmetric hyperbolic systems with nonlocal potentials. As a first result, we focus in this section on the case where the system is coupled to a \textit{retarded} nonlocal potential, as defined in Definition~\ref{Def:Potentials}(i). This setting is particularly suited for initial investigation due to its inherent causal structure. The central objective of this section is to proof that the Cauchy problem~\eqref{eq:SymNonLoc} is well-posed in this case. 

To start with, consider Set-up~\ref{Setup} and let $\overline{\scrM}_{T}:=[0,T]\times\scrN$ be the closed time strip of size $T>0$. If $\B$ is a retarded nonlocal potential, our aim is to study the Cauchy problem
\begin{align}\label{eq:CauchyRetarted}
	\begin{cases}
		(\scrS-\B)\psi &=\phi\\
		\psi\vert_{\scrN_{0}}&=\mathfrak{f}
	\end{cases}
\end{align}	
for a given source $\phi\in C_{\mathrm{c}}^{\infty}(\scrM,\scrE)$ with $\mathrm{supp}(\phi)\subset \overline{\scrM}_{T}$ and initial datum $\mathfrak{f}\in C^{\infty}_{\mathrm{c}}(\scrN_{0},\scrE\vert_{\scrN_{0}})$. As already specified at the beginning of Section~\ref{Sec:SHSNonLoc}, we additionally assume that the kernel $k_{\B}$ has \emph{past compact support}, meaning that $\B_{t,\tau} = 0$ whenever $\tau < -t_0$ for some fixed $t_0 > 0$. In the following, we restrict to the case $t_0 = 0$, i.e.~we prescribe initial data on the slice where the nonlocality is effectively ``switched on''. We comment on the more general case $t_0 \neq 0$, i.e.~in which the initial data are assigned slightly in the future of the switch-on time $t_{0}$, below in Remark~\ref{Remark:NonZeroIni}.

The main objective of this section is to prove the following theorem on the existence and uniqueness of the solutions to the Cauchy problem~\eqref{eq:CauchyRetarted}. 

\begin{Thm}[Cauchy Problem for Retarded Nonlocalities]\label{Thm:Ret} 
	Assume~Set-up~\ref{Setup} and suppose that the nonlocal potential $\B$ satisfies the following assumptions:
	\begin{itemize}
		\item[(i)]$\B$ is retarded and hence $\B_{t,\tau}=0$ for $\tau>t$ (see~Definition~\ref{Def:Potentials}(i)) and past compactly supported with switch-one time $t_{0}=0$, i.e.~$\B_{t,\tau}=0$ for $\tau<0$.
		\item[(ii)]$\B$ is uniformly bounded in the time strip, i.e.~there exists a constant $C_{T}>0$ such that $\Vert\V_{t,\tau}\varphi_{\tau}\Vert_{t}\leq C_{T}\Vert\varphi_{\tau}\Vert_{\tau}$ for all $\varphi\in C^{\infty}_{\mathrm{c}}(\scrN_{\tau},\scrE\vert_{\scrN_{\tau}})$ and $t,\tau\in [0,T]$.
		\end{itemize}
		Then, there exists a strong solution of the Cauchy problem~\ref{eq:CauchyRetarted} (see~Definition~\ref{Def:StrongSol}(R)). Furthermore, if we assume in addition that
	\begin{itemize}
		\item[(iii)]there is a $j\geq 1$ such that also $\mathsf{B}_{i}$ for $i=1,\dots,j$ are nonlocal potentials with time kernels satisfying the boundedness assumption (ii) as well\footnote{If $\mathsf{B}_{i}$ is a nonlocal potential with time kernel, then it is automatically retarded provided $\mathsf{B}_{i-1}$ is retarded, as one can easily see from the explicit expression.},
	\end{itemize}
	then, $\psi\in W^{2,j}(\scrM_{T},\scrE_{\beta})$ and in particular $\psi\in C^{l}(\overline{\scrM}_{T},\scrE)$ for $l\in\N$ such that $j\geq \frac{k+1}{2}+l$. In other words, $\psi$ is a classical solution to the Cauchy problem (see~Definition~\ref{Def:ClassSol}(R)). Furthermore, the solution $\psi$ is unique in this case and satisfies
	\begin{align*}
			\mathrm{supp}(\psi)\cap J^{+}(\scrN_{0})\subset J^{+}(\mathrm{supp}(\phi)\cup\mathrm{supp}(\mathfrak{f}))\, .
	\end{align*}
\end{Thm}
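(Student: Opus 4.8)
The plan is to reduce to vanishing Cauchy data and then solve the resulting equation by a Volterra/Picard iteration built on the solution operator of the \emph{local} system $\scrS$, whose well-posedness and finite propagation speed are supplied by Theorem~\ref{Thm:WellPosedCauchy}. First I would take $\psi_{0}\in C^{\infty}_{\mathrm{sc}}(\scrM,\scrE)$ with $\scrS\psi_{0}=\phi$, $\psi_{0}\vert_{\scrN_{0}}=\mathfrak{f}$; since $\B$ is retarded with switch-on time $t_{0}=0$, the section $\B\psi_{0}$ is well defined on $\overline{\scrM}_{T}$ and, by the support property of $\psi_{0}$, spatially compact there. Writing $\psi=\psi_{0}+\chi$ one must solve $(\scrS-\B)\chi=\B\psi_{0}$, $\chi\vert_{\scrN_{0}}=0$. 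Letting $G$ be the retarded solution operator of $\scrS$ on the strip (so $G\varrho$ solves $\scrS(G\varrho)=\varrho$, $(G\varrho)\vert_{\scrN_{0}}=0$, and vanishes on $J^{-}(\scrN_{0})$ when $\varrho$ does), set $\chi^{(0)}:=0$, $\chi^{(n+1)}:=G(\B\psi_{0}+\B\chi^{(n)})$. Because $\B$ is retarded and past-compact, it maps each (causally confined, spatially compact) iterate into a section supported in the \emph{compact} set $\hat{K}:=J^{+}(\mathrm{supp}(\phi)\cup\mathrm{supp}(\mathfrak{f}))\cap\overline{\scrM}_{T}$, so inductively every $\chi^{(n)}$ is supported in $\hat{K}$. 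This is where retardation is essential: it turns the a priori acausal problem into a genuine Volterra problem in which $\B\psi$ at time $t$ only sees the solution on $[0,t]$ and everything stays in a fixed compact region.

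The core of the argument is the convergence of this iteration via the energy estimate of Corollary~\ref{Cor:EnEst}. Applying it to $\chi^{(n+1)}-\chi^{(n)}$, which solves $\scrS(\chi^{(n+1)}-\chi^{(n)})=\B(\chi^{(n)}-\chi^{(n-1)})$, using hypothesis (ii) and the fact that $\mathcal{Z}_{\scrS}$ is a smooth bundle endomorphism, hence bounded by some $D$ on the compact set $\hat{K}$, one obtains for $a_{n}(t):=\Vert(\chi^{(n+1)}-\chi^{(n)})_{t}\Vert_{t}$ the closed inequality $\vert\tfrac{d}{dt}a_{n}(t)\vert\le C_{T}\int_{0}^{t}a_{n-1}(\tau)\,d\tau+\tfrac{D}{2}a_{n}(t)$ with $a_{n}(0)=0$. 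A Grönwall estimate yields $a_{n}(t)\le A\,(C_{T}e^{DT/2})^{n}t^{2n}/(2n)!$ with $A$ bounding $\Vert\chi^{(1)}_{t}\Vert_{t}$ (itself controlled by Corollary~\ref{Cor:EnEst} since $\psi_{0}$ is fixed), so $\sum_{n}a_{n}$ converges uniformly on $[0,T]$ and $(\chi^{(n)})$ is Cauchy in $\H_{[0,T]\times\scrN}$. Its limit $\chi$ then makes $\psi:=\psi_{0}+\chi$ a strong solution: $\psi_{0}+\chi^{(n)}\in C^{\infty}(\scrM,\scrE)\cap\H_{[0,T]\times\scrN}$ (extended by zero into $J^{-}(\scrN_{0})$), $(\psi_{0}+\chi^{(n)})\vert_{\scrN_{0}}=\mathfrak{f}$, and $(\scrS-\B)(\psi_{0}+\chi^{(n)})=\phi+\B(\chi^{(n-1)}-\chi^{(n)})\to\phi$ in $\H_{\overline{\scrM}_{T}}$ by (ii). Since all $\chi^{(n)}$, and hence $\chi$, are supported in $\hat{K}$, the causal support statement $\mathrm{supp}(\psi)\cap J^{+}(\scrN_{0})\subset J^{+}(\mathrm{supp}(\phi)\cup\mathrm{supp}(\mathfrak{f}))$ follows at once.

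For the regularity claim under (iii), the key point is that the extended variables run the \emph{same} iteration. With $\psi^{(n)}:=\psi_{0}+\chi^{(n)}$ and $\Psi^{(n)}_{1}:=(\psi^{(n)},\nabla^{\scrE}\psi^{(n)})$, Proposition~\ref{Prop:SHSDer} together with the explicit form~\eqref{eq:ExtB1} of $\mathsf{B}_{1}$ gives $\mathsf{S}_{1}\Psi^{(n+1)}_{1}=\Phi_{1}+\mathsf{B}_{1}\Psi^{(n)}_{1}$ with $\Phi_{1}=(\phi,\nabla^{\scrE}\phi)$; since $\mathsf{B}_{1}$ is assumed to be a nonlocal potential with time kernel obeying (ii) (and is retarded), the energy-estimate argument above applied to $\mathsf{S}_{1}$ shows $(\Psi^{(n)}_{1})$ is Cauchy in the Hilbert space of $\mathsf{E}_{1,\beta}$, so by Remark~\ref{Rem:SobSpL2} the limit $\psi$ lies in $W^{2,1}(\scrM_{T},\scrE_{\beta})$. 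Iterating through $\mathsf{S}_{2},\dots,\mathsf{S}_{j}$ yields $\psi\in W^{2,j}(\scrM_{T},\scrE_{\beta})$, and the Sobolev embedding on the spatially bounded strip gives $\psi\in C^{l}(\overline{\scrM}_{T},\scrE)$ whenever $j\ge\tfrac{k+1}{2}+l$; for $l\ge1$ this makes $\psi$ a classical solution in the sense of Definition~\ref{Def:ClassSol}(R). Uniqueness follows from the same energy philosophy applied to the difference of two (now $C^{1}$) solutions of the homogeneous problem: since $\B$ is retarded it respects domains of dependence, so on any truncated cone $J^{-}(p)\cap\overline{\scrM}_{T}$ the term $\B\psi$ only involves $\psi$ inside that cone; running Proposition~\ref{Prop:energy} on such compact cones with vanishing data on the bottom face and invoking Grönwall forces the difference to vanish on $J^{+}(\scrN_{0})\cap\overline{\scrM}_{T}$.

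I expect the main obstacle to be closing the energy identity into a usable Grönwall inequality: Proposition~\ref{Prop:energy} and Corollary~\ref{Cor:EnEst} inevitably produce the zero-order term $\mathcal{Z}_{\scrS}$, which is \emph{not} assumed globally bounded in this theorem, so one must first exploit the causal confinement of the iterates to the compact set $\hat{K}$ to bound it, and then extract the crucial super-exponential $1/(2n)!$ decay coming from the repeated time-integration in the Volterra structure — this decay is exactly what lets the iteration converge for \emph{arbitrary} $C_{T}$, in contrast with the short-time-range case, where a smallness condition cannot be dispensed with. The secondary technical point is the bookkeeping showing that the tower of extended systems $\mathsf{S}_{i}$ genuinely reproduces the same iteration, so that no further smallness or boundedness input beyond (iii) is needed to bootstrap the regularity.
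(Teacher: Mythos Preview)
Your proposal is correct and follows essentially the same route as the paper: a Dyson/Picard iteration built on the solution operator of the local system, convergence via the Volterra-type $1/(2n)!$ factorial decay extracted from Corollary~\ref{Cor:EnEst} (with the zero-order term $\mathcal{Z}_{\scrS}$ controlled on the fixed compact set $\hat K$ exactly as you anticipate), regularity by bootstrapping through the extended systems $(\mathsf{S}_i,\mathsf{B}_i)$, and finite propagation speed from the common causal support of the iterates. The only noteworthy deviation is in the uniqueness step: the paper runs the energy inequality on full Cauchy slices and closes with a straightforward Gr\"onwall argument (tacitly using that the solution under consideration lies in $\H_{\overline{\scrM}_T}$), whereas your cone-based argument would establish uniqueness among all $C^1$ solutions without an a priori $L^2$ hypothesis --- at the price of checking that the lateral boundary contribution in the Stokes identity underlying Proposition~\ref{Prop:energy} has the correct sign on the characteristic mantle, which is standard for symmetric hyperbolic systems but not stated in the paper.
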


We will split the proof of this theorem in several parts. 

\begin{Prp}\label{Prop:DysonRet}
	Assume Set-up.~\ref{Setup} together with assumptions (i)-(ii) in Theorem~\ref{Thm:Ret} and let $\psi^{(n)}\in C^{\infty}(\scrM,\scrE)$ be the sections inductively defined as the (unique) solutions to the local Cauchy problems
	\begin{align*}
		\begin{cases}
			\scrS\psi^{(0)} &=\phi\\
			\psi^{(0)}\vert_{\scrN_{0}}&=\mathfrak{f}
		\end{cases}\qquad\text{and}\qquad 
		\begin{cases}
			\scrS\psi^{(n+1)} &=\B\psi^{(n)}\\
			\psi^{(n+1)}\vert_{\scrN_{0}}&=0
		\end{cases},\quad n\geq 0\, .
	\end{align*}
	Then, $\psi_{t}:=\sum_{n=0}^{\infty}\psi^{(n)}_{t}$ is absolutely convergent in the Hilbert space $\H_{t}$ for every $t\in [0,T]$. Furthermore, the map $\psi:t\mapsto\psi_{t}$ is a well-defined element in the space $\H_{\overline{\scrM}_{T}}$.
	
	Furthermore, if instead of assumption (ii), $\V_{t,\tau}$ is uniformly bounded on all of $\R$, i.e.~if there exists a constant $C>0$ such that $\Vert\V_{t,\tau}\varphi_{\tau}\Vert_{t}\leq C\Vert\varphi_{\tau}\Vert_{\tau}$ for all $\varphi\in C^{\infty}_{\mathrm{c}}(\scrN_{\tau},\scrE\vert_{\scrN_{\tau}})$ and $t\in\R$, then $\psi$ is a well-defined element of $L_{\mathrm{loc}}^{2}([0,\infty),\H_{\bullet})$.
\end{Prp}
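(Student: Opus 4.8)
The plan is to prove both assertions at once, extracting the $L^{2}_{\mathrm{loc}}$ statement at the end. First I would \emph{confine all iterates}: by induction on $n$ one shows $\mathrm{supp}(\psi^{(n)})\cap J^{+}(\scrN_{0})\subset J^{+}(K_{0})$ with $K_{0}:=\mathrm{supp}(\phi)\cup\mathrm{supp}(\mathfrak f)$ — the case $n=0$ is the support statement of Theorem~\ref{Thm:WellPosedCauchy}, and for the step, retardedness together with the past-compact support of $\B$ (switch-on time $0$) gives $\mathrm{supp}(\B\psi^{(n)})\subset J^{+}(\mathrm{supp}(\psi^{(n)})\cap J^{+}(\scrN_{0}))\subset J^{+}(K_{0})\subset J^{+}(\scrN_{0})$, after which Theorem~\ref{Thm:WellPosedCauchy} applied to $\scrS\psi^{(n+1)}=\B\psi^{(n)}$ with zero initial data closes the induction. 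Hence, for every $T>0$, all $\psi^{(n)}$ are supported on $\overline{\scrM}_{T}$ in the set $K_{T}:=J^{+}(K_{0})\cap\overline{\scrM}_{T}$, which is compact by global hyperbolicity. This has two uses: the smooth bundle endomorphism $\mathcal{Z}_{\scrS}:=\beta\sigma_{\scrS}(\eta)^{-1}(\scrS+\scrS^{\dagger})$ acts boundedly on sections supported in $K_{T}$, say $\Vert(\mathcal{Z}_{\scrS}\varphi)_{t}\Vert_{t}\le D_{T}\Vert\varphi_{t}\Vert_{t}$ for $t\in[0,T]$; and all the norms $\Vert\psi^{(n)}_{t}\Vert_{t}$ and the action of $\B$ (hence of $\V$) on the $\psi^{(n)}$ are well defined. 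In particular the argument in the proof of Corollary~\ref{Cor:EnEstLoc}, run with $D=D_{T}$ on the compact support of $\psi^{(0)}$, gives $\Vert\psi^{(0)}_{t}\Vert_{t}\le M_{0}\,e^{\frac12 D_{T}t}$ on $[0,T]$ with $M_{0}:=\Vert\mathfrak f\Vert_{0}+\int_{0}^{\infty}\Vert(\beta\sigma_{\scrS}(\eta)^{-1}\phi)_{\tau}\Vert_{\tau}\,d\tau<\infty$, a number independent of $T$.

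Next I would \emph{iterate the energy estimate}. With $g_{n}(t):=\Vert\psi^{(n)}_{t}\Vert_{t}$, Corollary~\ref{Cor:EnEst} applied to $\scrS\psi^{(n+1)}=\B\psi^{(n)}$, the fact that retardedness forces $\V_{t,\tau}=0$ unless $0\le\tau\le t$, the bound $\Vert\V_{t,\tau}\psi^{(n)}_{\tau}\Vert_{t}\le C\Vert\psi^{(n)}_{\tau}\Vert_{\tau}$, and the $D_{T}$-bound give
\[
	\Big|\tfrac{d}{dt}g_{n+1}(t)\Big|\le C\int_{0}^{t}g_{n}(\tau)\,d\tau+\tfrac12 D_{T}\,g_{n+1}(t),\qquad g_{n+1}(0)=0 .
\]
Multiplying by $e^{-\frac12 D_{T}t}$ and integrating (using $g_{n+1}(0)=0$) yields $g_{n+1}(t)\le C e^{\frac12 D_{T}t}\int_{0}^{t}(t-\tau)g_{n}(\tau)\,d\tau$; setting $\tilde g_{n}(t):=e^{-\frac12 D_{T}t}g_{n}(t)$ and bounding the exponentials by $\kappa:=e^{\frac12 D_{T}T}$ on $[0,T]$ this reads $\tilde g_{n+1}(t)\le C\kappa\int_{0}^{t}(t-\tau)\tilde g_{n}(\tau)\,d\tau$ with $\tilde g_{0}\le M_{0}$, and a short induction yields $\tilde g_{n}(t)\le M_{0}(C\kappa)^{n}t^{2n}/(2n)!$. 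Summing,
\[
	\sum_{n=0}^{\infty}g_{n}(t)\le\kappa M_{0}\sum_{n=0}^{\infty}\frac{(C\kappa T^{2})^{n}}{(2n)!}=\kappa M_{0}\cosh\!\big(\sqrt{C\kappa}\,T\big)<\infty\qquad(t\in[0,T]),
\]
so $\psi_{t}=\sum_{n}\psi^{(n)}_{t}$ is absolutely convergent in $\H_{t}$ for each $t\in[0,T]$ with $\sup_{[0,T]}\Vert\psi_{t}\Vert_{t}<\infty$; since the tails $\sum_{n\ge N}g_{n}(t)$ tend to $0$ uniformly in $t$, the partial sums are Cauchy in the direct integral $\H_{\overline{\scrM}_{T}}$ and $\psi\in\H_{\overline{\scrM}_{T}}$, which is the first assertion.

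For the final statement, note that if $\Vert\V_{t,\tau}\psi_{\tau}\Vert_{t}\le C\Vert\psi_{\tau}\Vert_{\tau}$ holds for \emph{all} $t,\tau\in\R$, then the hypotheses of the first part (with constant $C_{T'}=C$) are met on $[0,T']$ for every $T'>0$; since the sections $\psi^{(n)}$ are defined on $J^{+}(\scrN_{0})$ with no reference to a particular time strip, the argument above applies on $[0,T']$ and gives $\psi\in\H_{\overline{\scrM}_{T'}}$, i.e.\ $\int_{0}^{T'}\Vert\psi_{t}\Vert_{t}^{2}\,dt<\infty$, for every $T'>0$. Hence $\psi$ is a well-defined element of $L^{2}_{\mathrm{loc}}([0,\infty),\H_{\bullet})$. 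I expect the main obstacle to be the first step: finite propagation speed fails for nonlocal potentials in general, so one genuinely needs to exploit retardedness plus the past-compact support of $k_{\B}$ to trap every iterate in the fixed compact set $K_{T}$ — this is precisely what makes the \emph{a priori} unbounded operator $\mathcal{Z}_{\scrS}$ bounded on the iterates and $\B$ legitimately defined on them; the remaining Dyson-series/Gr\"onwall manipulations are routine.
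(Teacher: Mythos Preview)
Your proposal is correct and follows essentially the same approach as the paper: confine all iterates to a fixed compact set via retardedness plus the past-compact support of $\B$, use this to get a uniform zero-order bound $D_T$, derive the recursive estimate $\Vert\psi^{(n+1)}_t\Vert_t\le C_T e^{\frac12 D_T t}\int_0^t(t-\tau)\Vert\psi^{(n)}_\tau\Vert_\tau\,d\tau$ from Corollary~\ref{Cor:EnEst}, and iterate to get the $t^{2n}/(2n)!$ bound summing to a $\cosh$. The only cosmetic differences are that the paper uses $M_T:=\max_{[0,T]}\Vert\psi^{(0)}_t\Vert_t$ directly rather than your exponential bound $M_0 e^{\frac12 D_T t}$ from Corollary~\ref{Cor:EnEstLoc}, and works with $g_n$ rather than the rescaled $\tilde g_n$; the resulting estimates and the $L^2_{\mathrm{loc}}$ argument are the same.
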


\begin{Remark}
	The collection $(\H_{t})_{t\in\R}$ can be thought of as a bundle of Hilbert spaces over $\R$ and spaces such as $L^{2}_{\mathrm{loc}}([0,T],\H_{\bullet})$ are hence defined as the $L^{2}_{\mathrm{loc}}$-sections of this bundle.
\end{Remark}

\begin{proof}[Proof of Proposition~\ref{Prop:DysonRet}]
	Let $\psi^{(n)}$ be the solutions to the local Cauchy problems as defined in the statement of the proposition. By finite speed of propagation, see~Theorem~\ref{Thm:WellPosedCauchy}, together with assumptions (i) and (ii) of the nonlocal potential $\B$, it holds that 
	\begin{align*}
		\mathrm{supp}(\psi^{(0)})\cap J^{+}(\scrN_{0}) \subset J^{+}(\mathrm{supp}(\phi)\cup\mathrm{supp}(\mathfrak{f}))\quad\text{and}\quad \mathrm{supp}(\psi^{(n)}) \subset J^{+}(\mathrm{supp}(\phi)\cup\mathrm{supp}(\mathfrak{f}))
	\end{align*}
	 for all $n\geq 1$. In particular, note that all the support of all the sections $\psi^{(n)}$ is contained within the same set in the future of $\scrN_{0}$. Furthermore, note that $\B\psi^{(n)}$ is well-defined, since $\B$ extends to an operator acting on smooth sections, which are in $\H_{t}$ at any fixed $t\in \R$, as a consequence of assumptions (i), (ii) and the standard energy estimates.
	Now, the goal of the following discussion is to show that the perturbative ansatz
	\begin{align}\label{eq:SeriesAnsatz}
		\psi=\sum_{n=0}^{\infty}\psi^{(n)}
	\end{align}
	is absolutely convergent in $\mathcal{H}_{t}$ for every $t\in [0,T]$. Using the energy estimates from Corollary~\ref{Cor:EnEst}, we have
	\begin{align*}
		\frac{d}{dt}\Vert\psi^{(n+1)}_{t}\Vert_{t}\leq \int_{\R}\Vert \V_{t,\tau}\psi^{(n)}_{\tau}\Vert_{t}\,d\tau+\frac{1}{2}\Vert(\mathcal{Z}_{\scrS}\psi^{(n+1)})_{t}\Vert_{t}
	\end{align*}
	for all $n\in\N$ and $t\in [0,T]$, where $\mathcal{Z}_{\scrS}:=\beta\sigma_{\scrS}(\eta)^{-1}(\scrS+\scrS^{\dagger})$ is an operator of order zero. Since all the sections $\psi^{(n)}$ are contained within the same compact set on $\overline{\scrM}_{T}$, there exists a $D_{T}>0$ such that
	\begin{align}\label{eq:ZeroOrder}
		\Vert (\mathcal{Z}_{\scrS}\psi^{(n)})_{t}\Vert_{t}\leq D_{T}\Vert\psi^{(n)}_{t}\Vert_{t}
	\end{align}
	for all $t\in [0,T]$ and $n\in\N$. Using the assumptions (i)-(ii) as well as the fact that $\psi^{(n)}=0$ for $t<0$ and $n\geq 1$, we obtain the estimate
	\begin{align*}
		\frac{d}{dt}\Vert\psi^{(n+1)}_{t}\Vert_{t}&\leq C_{T}\int_{0}^{t}\Vert \psi^{(n)}_{\tau}\Vert_{\tau}\, d\tau+\frac{1}{2}D_{T}\Vert\psi_{t}^{(n+1)}\Vert_{t}\, ,
	\end{align*}
	for all $n\in\mathbb{N}$ and $t\in [0,T]$. Multiplying by $e^{-\frac{1}{2}Dt}$, this estimate can be written in the compact form
	\begin{align*}
		\frac{d}{dt}(e^{-\frac{1}{2}D_{T}t}\Vert\psi^{(n+1)}_{t}\Vert_{t})&\leq C_{T}e^{-\frac{1}{2}D_{T}t}\int_{0}^{t}\Vert \psi^{(n)}_{\tau}\Vert_{\tau}\, d\tau\leq C_{T}\int_{0}^{t}\Vert \psi^{(n)}_{\tau}\Vert_{\tau}\, d\tau
	\end{align*}
	and integrating over time from $0$ to $t\in [0,T]$, we obtain the inequality
	\begin{align*}
		\Vert\psi^{(n+1)}_{t}\Vert_{t} \leq C_{T}e^{\frac{1}{2}D_{T}t}\int_{0}^{t}\int_{0}^{\tau}\Vert \psi^{(n)}_{s}\Vert_{s}\, ds\,d\tau\,,
	\end{align*}
	where we used again the fact that $\psi^{(n)}=0$ for $t\leq 0$. Last but not least, we bound $e^{\frac{1}{2}D_{T}t}\leq e^{\frac{1}{2}D_{T}T}$ for all $t\in [0,T]$ to obtain the estimate
	\begin{align}\label{eq:Estimate}
		\Vert\psi^{(n+1)}_{t}\Vert_{t} \leq K_{T}\int_{0}^{t}(t-\tau)\Vert \psi^{(n)}_{\tau}\Vert_{\tau}\,d\tau
	\end{align}
	for all $n\in\mathbb{N}$ and $t\in [0,T]$, where we set $K_{T}:=C_{T}e^{\frac{1}{2}D_{T}T}>0$.

	Now, consider the induction hypothesis $\Vert\psi_{t}^{(n)}\Vert_{t}\leq \kappa(n)t^{2n}M_{T}$ for all $t\in [0,T]$, where \begin{align*}M_{T}:=\max_{t\in [0,T]}\Vert\psi^{(0)}_{t}\Vert_{t}\,.\end{align*} By finite speed of propagation, $M_{T}$ is clearly finite, and the ansatz trivially holds true for $n=0$ with $\kappa(0)=1$. By induction, we assume that it is true for $n$. Using the estimate~\eqref{eq:Estimate}, we obtain
	\begin{align*}
		\Vert\psi^{(n+1)}_{t}\Vert_{t} &\leq K_{T}\int_{0}^{t}(t-\tau)\Vert \psi^{(n)}_{t}\Vert_{\tau}\, d\tau\leq K_{T}M_{T}\kappa(n)\frac{t^{2n+2}}{(2n+1)(2n+2)}\, .
	\end{align*}
	This completes the inductive step and allows us to determine the coefficients $\kappa(n)$ in $\Vert\psi_{t}^{(n)}\Vert_{t}\leq \kappa(n)t^{2n}M_{T}$ as $\kappa(n)=\frac{K_{T}^n}{(2n)!}$. To sum up, we have shown that 
	\begin{align*}
		\forall t\in [0,T]\,,n\in\N:\quad \Vert\psi_{t}^{(n)}\Vert_{t}\leq \frac{K_{T}^n}{(2n)!}t^{2n}
	\end{align*}
	In particular, this implies that the perturbative series~\eqref{eq:SeriesAnsatz} is absolutely convergent in the $\H_{t}$-topology for every $t\in [0,T]$, since
	\begin{align*}
	\sum_{n=0}^{\infty}\Vert\psi^{(n)}_{t}\Vert_{t}\leq \sum_{n=0}^{\infty}\frac{M_{T}K_{T}^{n}t^{2n}}{(2n)!}=M_{T}\mathrm{cosh}(\sqrt{K_{T}}t)<\infty\, .
	\end{align*}
	
	Now, we have constructed a map $\psi:t\mapsto\psi_{t}$, where $\psi_{t}$ is the limit of the series~\ref{eq:SeriesAnsatz} in the $\H_{t}$-topology. Now, it is clear that $t\mapsto \psi_{t}$ is measurable, since it is the pointwise limit of the smooth (and in particular strongly measurable) functions $t\mapsto \sum_{k=0}^{n}\psi^{(k)}_{t}$ for $n\to\infty$, see e.g.~\cite[Thm.~1.14]{AmannEscherIII}. To see that $\psi$ is an element of $L^{2}([0,T],\H_{\bullet})$, we note that the above estimates imply
\begin{align*}
		\Vert\psi\Vert_{\scrM_{T}}^{2}=\int_{0}^{T}\Vert\psi_{t}\Vert_{t}^{2}\,dt\leq M_{T}^{2}\int_{0}^{T} \mathrm{cosh}(\sqrt{K_{T}}t)^{2}\,dt=M_{T}^{2}\bigg(\frac{T}{2}+\frac{\mathrm{sinh}(2\sqrt{K_{T}}T)}{4\sqrt{K_{T}}}\bigg)<\infty\,.
	\end{align*}
	It is easy to see that the series~\eqref{eq:SeriesAnsatz} converges also in the $\H_{\overline{\scrM}_{T}}$-topology and by the dominant convergence theorem for series (see e.g.~\cite[Thm.~3.16]{AmannEscherIII}), we conclude that the corresponding limit coincides with $\psi$.
	
	If $\V_{t,\tau}$ is uniformly bounded for all $t\in [0,\infty)$ by a constant $C>0$ independent of time, we can repeat the same argument as above for every choice of $T>0$. In particular, we conclude that the series~\eqref{eq:SeriesAnsatz} converges in the $\H_{\overline{\scrM}_{T}}$ topology for every fixed $T>0$ in this case, which proves the claim.
\end{proof}

\begin{Remark}\label{Remark:NonZeroIni} 
	Let us remark that in the proof of in Proposition~\ref{Prop:DysonRet} , it was crucial that the retarded nonlocal potential $\B$ had past compact support. In fact, we needed two additional assumptions to be considered separately:
	\begin{itemize}
		\item[(i)]The nonlocal potential $\B$ has past compact support, i.e.~$\B_{t,\tau}=0$ for $\tau<-t_{0}$ and some fixed $t_{0}>0$.
		\item[(ii)]We assign initial data on a Cauchy slice that is in the past of $\scrN_{-t_{0}}$. Without loss of generality, we hence choose the foliation such that $t_{0}=0$ and assign initial data on $\scrN_{0}$.
	\end{itemize}
	Now, assumption (i) is crucial for the the local solutions $\psi^{(n)}$ constructed in Proposition~\ref{Prop:DysonRet} to be well-defined. Recall that our solution $\psi$ to the Cauchy problem~\eqref{eq:CauchyRetarted} was defined as the limit of the series of the solutions $\psi^{(n)}$ to the local Cauchy problems
	\begin{align*}
		\begin{cases}
			\scrS\psi^{(0)} &=\phi\\
			\psi^{(0)}\vert_{\scrN_{0}}&=\mathfrak{f}
		\end{cases}\qquad\text{and}\qquad 
		\begin{cases}
			\scrS\psi^{(n+1)} &=\B\psi^{(n)}\\
			\psi^{(n+1)}\vert_{\scrN_{0}}&=0
		\end{cases},\quad n\geq 0\, .
	\end{align*}
	for a given source $\phi\in C^{\infty}_{\mathrm{c}}(\overline{\scrM}_{T},\scrE)$ and initial datum $\mathfrak{f}\in C^{\infty}_{\mathrm{c}}(\scrN_{0},\scrE\vert_{\scrN_{0}})$. Now, in general, the solutions $\psi^{(n)}$ also propagate to the past and thus, without assumption (i), already at the step $n = 1$, it is not clear whether the right-hand side $\B\psi^{(0)}$ can be consistently defined. Using the energy estimates (see Corollary~\ref{Cor:EnEstLoc}), we know that $\Vert\psi^{(0)}_{t}\Vert_{t} \sim e^{|t|}$, and hence,
\begin{align*}
	\Vert(\B\psi^{(0)})_{t}\Vert_{t}= \int_{-\infty}^{t} \Vert\B_{t,\tau} \psi^{(0)}_{\tau}\Vert_{t} \, d\tau
\end{align*}
is still well-defined, provided that the kernel $\B_{t,\tau}$ decays sufficiently fast as $\tau \to -\infty$. In general, however, it is unclear whether one can formulate a criterion on the kernel ensuring that $\B\psi^{(n)}$ is consistently defined for all $n \in \mathbb{N}$, beyond examining specific examples.\medskip

	Concerning the case where the nonlocal potential is ``switched on'' at a time  $-t_0 < 0$,  it is possible to readapt the estimates performed in the proof of Proposition~\ref{Prop:DysonRet} to prove the convergence of the perturbative series $\sum_n \psi^{(n)}$, provided that $t_0$ and $C$, the uniform bound of the nonlocal potential $\B$, satisfy
$$ t_0^2 e^{\frac{1}{2}Dt_0}  C <1 \,.$$ 
\end{Remark}

Next, we show that the element $\psi\in L^{2}_{\mathrm{loc}}([0,T],\H_{\bullet})$ constructed in the previous proposition defines a strong solution to the Cauchy problem~\eqref{eq:CauchyRetarted}.

\begin{Prp}\label{Prp:Ret} Assume~Set-up~\ref{Setup} together with assumptions (i)-(ii) in Theorem~\ref{Thm:Ret}. Then, there exists a strong solution of the Cauchy problem~\eqref{eq:SymNonLoc} on $\overline{\scrM}_{T}$. 
\end{Prp}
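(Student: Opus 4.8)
The plan is to use the partial sums of the perturbative series of Proposition~\ref{Prop:DysonRet} as the approximating sequence demanded by Definition~\ref{Def:StrongSol}(R). Concretely, set $\psi_{k}:=\sum_{n=0}^{k}\psi^{(n)}$. Each $\psi_{k}$ is a finite sum of smooth sections, all supported in the fixed set $J^{+}(\mathrm{supp}(\phi)\cup\mathrm{supp}(\mathfrak{f}))$, so after intersecting with the time strip one has $\psi_{k}\in C^{\infty}(\scrM,\scrE)\cap\H_{\overline{\scrM}_{T}}$, which is the admissible class (recall we are in the case $t_{0}=0$, so $\H_{[-t_{0},T]\times\scrN}=\H_{\overline{\scrM}_{T}}$). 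The initial condition is immediate: since $\psi^{(0)}\vert_{\scrN_{0}}=\mathfrak{f}$ and $\psi^{(n)}\vert_{\scrN_{0}}=0$ for $n\geq 1$, we get $\psi_{k}\vert_{\scrN_{0}}=\mathfrak{f}$ for every $k$. The convergence $\psi_{k}\to\psi$ in $\H_{\overline{\scrM}_{T}}$ is precisely what was already established in Proposition~\ref{Prop:DysonRet}, where $\sum_{n}\psi^{(n)}$ was shown to converge absolutely in $\H_{\overline{\scrM}_{T}}$ to the element $\psi$.

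It remains to check that $(\scrS-\B)\psi_{k}\to\phi$ in $\H_{\overline{\scrM}_{T}}$. The key observation is that, using the defining relations $\scrS\psi^{(0)}=\phi$ and $\scrS\psi^{(n+1)}=\B\psi^{(n)}$, the sum telescopes:
\begin{align*}
	(\scrS-\B)\psi_{k}=\sum_{n=0}^{k}\scrS\psi^{(n)}-\sum_{n=0}^{k}\B\psi^{(n)}=\phi+\sum_{n=0}^{k-1}\B\psi^{(n)}-\sum_{n=0}^{k}\B\psi^{(n)}=\phi-\B\psi^{(k)}\, .
\end{align*}
Hence the claim reduces to showing $\B\psi^{(k)}\to 0$ in $\H_{\overline{\scrM}_{T}}$ as $k\to\infty$. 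To estimate the $\H_{\overline{\scrM}_{T}}$-norm of $\B\psi^{(k)}$, I would pass to the weighted time kernel via $\B_{t,\tau}=\beta_{t}^{-1}\sigma_{\scrS}(\eta_{t})\,\V_{t,\tau}$, so that $(\B\psi^{(k)})_{t}=\beta_{t}^{-1}\sigma_{\scrS}(\eta_{t})\int_{\R}\V_{t,\tau}\psi^{(k)}_{\tau}\,d\tau$. Since all the $\psi^{(k)}$ are supported inside one fixed compact subset of $\overline{\scrM}_{T}$, on which the smooth bundle endomorphism $\beta^{-1}\sigma_{\scrS}(\eta)$ has a finite fibre norm $C'$, and using retardedness together with past-compact support ($\V_{t,\tau}=0$ for $\tau>t$, and $\psi^{(k)}_{\tau}=0$ for $\tau\leq 0$ when $k\geq 1$) and the uniform bound (ii) of Theorem~\ref{Thm:Ret}, one obtains for $t\in[0,T]$
\begin{align*}
	\Vert(\B\psi^{(k)})_{t}\Vert_{t}\leq C'\int_{0}^{t}\Vert\V_{t,\tau}\psi^{(k)}_{\tau}\Vert_{t}\,d\tau\leq C'C_{T}\int_{0}^{t}\Vert\psi^{(k)}_{\tau}\Vert_{\tau}\,d\tau\, .
\end{align*}
By the bounds $\Vert\psi^{(k)}_{\tau}\Vert_{\tau}\leq M_{T}K_{T}^{k}\tau^{2k}/(2k)!$ from Proposition~\ref{Prop:DysonRet}, the right-hand side tends to $0$ uniformly in $t\in[0,T]$, so $\Vert\B\psi^{(k)}\Vert_{\H_{\overline{\scrM}_{T}}}^{2}=\int_{0}^{T}\Vert(\B\psi^{(k)})_{t}\Vert_{t}^{2}\,dt\to 0$. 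Thus $(\scrS-\B)\psi_{k}=\phi-\B\psi^{(k)}\to\phi$ in $\H_{\overline{\scrM}_{T}}$, and all three requirements of Definition~\ref{Def:StrongSol}(R) are met, so $\psi$ is a strong solution.

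The whole argument is essentially immediate once Proposition~\ref{Prop:DysonRet} is available, and the telescoping identity $(\scrS-\B)\psi_{k}=\phi-\B\psi^{(k)}$ is its heart. The only point that needs a little care — and hence the main (minor) obstacle — is converting the available bound on $\V_{t,\tau}$ (in which the energy estimates are naturally phrased) into a bound on $\B\psi^{(k)}$ in the Hilbert-space norm; this is dealt with by using that all relevant sections live in one fixed compact region of $\overline{\scrM}_{T}$, so that the smooth factor $\beta^{-1}\sigma_{\scrS}(\eta)$ relating $\B$ and $\V$ contributes only a bounded multiplicative constant. One should also note here that estimating $\B\psi^{(k)}=\scrS\psi^{(k+1)}$ directly would instead require Sobolev control on $\psi^{(k+1)}$, whereas the time-kernel bound above uses only the $\H_{t}$-estimates already at hand.
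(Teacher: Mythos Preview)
Your proof is correct and follows essentially the same approach as the paper: you use the partial sums $\psi_{k}=\sum_{n=0}^{k}\psi^{(n)}$ from Proposition~\ref{Prop:DysonRet}, the telescoping identity $(\scrS-\B)\psi_{k}=\phi-\B\psi^{(k)}$, and the compact-support argument to convert the $\V$-bound into a $\B$-bound. The only cosmetic difference is that the paper finishes by bounding $\Vert\B\psi^{(k)}\Vert_{\scrM_{T}}^{2}\leq T^{2}(C'_{T})^{2}\Vert\psi^{(k)}\Vert_{\scrM_{T}}^{2}\to 0$ via H\"older, whereas you invoke the explicit pointwise estimate $\Vert\psi^{(k)}_{\tau}\Vert_{\tau}\leq M_{T}K_{T}^{k}\tau^{2k}/(2k)!$ directly.
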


\begin{proof}
	Consider the sequence $(\psi^{(n)})_{n\in\N}$ as constructed in Proposition~\ref{Prop:DysonRet}. As shown in this proposition, the corresponding series converges absolutely in the $\H_{\overline{\scrM}_{T}}$ topology to some element $\psi\in\H_{\overline{\scrM}_{T}}$. We will now show that $\psi$ is a {strong} solution to the Cauchy problem~\eqref{eq:SymNonLoc}. For this, we write
	\begin{align*}
		\psi_{k}:=\sum_{n=0}^{k}\psi^{(n)}\in C^{\infty}(\scrM,\scrE)
	\end{align*}
	for $k\in\N$. As shown above, it holds that $\Vert\psi_{k}-\psi\Vert_{\scrM_{T}}\to 0$ as $k\to\infty$. Furthermore, it is clear that also $\Vert\psi_{k}\vert_{\scrN_{0}}-\mathfrak{f}\Vert_{0}\xrightarrow{k\to\infty} 0$, since $\psi_{k}\vert_{\scrN_{0}}=\mathfrak{f}$ for all $k\in\N$, by construction. Note also that
	\begin{align*}
		\Vert\psi^{(n)}\Vert_{\scrM_{T}}\xrightarrow{n\to\infty}0\, ,
	\end{align*}
	 since otherwise the series~\eqref{eq:SeriesAnsatz} would not converge absolutely. 

	On the other hand, it holds that $(\scrS-\B)\psi_{k}=\phi-\B\psi^{(k)}$, by construction, and hence, we get
	\begin{align*}
		\|(\scrS-\B)\psi_{k}-\phi\|^2_{\scrM_{T}}&=(\B\psi^{(k)}|\B\psi^{(k)})_{\scrM_{T}}= \int_{0}^{T}\,dt\,\int_{0}^{t}\,d\tau\,\int_{0}^{t}\,ds\,(\B_{t,\tau}\psi_{\tau}^{(k)}|\B_{t,s}\psi_{s}^{(k)})_{t}\\
		&\leq (C^{\prime}_{T})^{2}\int_{0}^{T}\,dt\,\int_{0}^{t}\,d\tau\,\int_{0}^{t}\,ds\,\Vert\psi_{\tau}^{(k)}\Vert_{\tau}\Vert\psi_{s}^{(k)}\Vert_{s} \\
		 &\leq T^{2}(C^{\prime}_{T})^{2} \Vert \psi^{(k)}\Vert_{\scrM_{T}}^2\xrightarrow{k\to\infty}0\, ,
	\end{align*}
	where we used Hölder's inequality. Furthermore, we used the following: By assumption, it holds that $\Vert\V_{t,\tau}\psi^{(k)}_{\tau}\Vert_{t}\leq C_{T}\Vert\psi^{(k)}_{\tau}\Vert_{\tau}$ for all $k\in\mathbb{N}$ and $t,\tau\in [0,T]$, where $\V=\beta\sigma_{\scrS}(\eta)^{-1}\B$. Now, since $\psi^{(k)}$ for arbitrary $k\in\mathbb{N}$ are supported within the same set as explained in the proof of Proposition~\ref{Prop:DysonRet}, the norm $\Vert\B_{t,\tau}\psi^{(k)}_{\tau}\Vert_{t}$ can be bounded by $\Vert\V_{t,\tau}\psi^{(k)}_{\tau}\Vert_{t}$ on $[0,T]$ and vice versa. Hence, we obtain a similar estimate for $\B_{t,\tau}$ for some constant $C^{\prime}_{T}>0$. 
	This conclude our proof.
\end{proof}
\black

\begin{Prp}[Regularity of Solutions]\label{Prp:RetReg} 		If condition (iii) in Theorem~\ref{Thm:Ret} is satisfied for some $j\in\N$ holds, then the strong solution $\psi$ constructed in Proposition~\eqref{Prp:Ret} is in the Sobolev space $W^{2,j}(\scrM_{T},\scrE_{\beta})$. In particular, by the Sobolev embedding theorem, $\psi$ is a classical solution with $\psi\in C^{l}(\overline{\scrM}_{T},\scrE)$ for $j\geq \frac{k+1}{2}+l$.
\end{Prp}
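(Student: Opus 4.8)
The strategy is to run the Dyson construction of Propositions~\ref{Prop:DysonRet} and~\ref{Prp:Ret} at the level of the \emph{extended} systems $\mathsf{S}_{i}$ and extended potentials $\mathsf{B}_{i}$ of Section~\ref{Subsec:ExtSHS}, and then to recognise the resulting strong solution as the covariant–derivative tower of $\psi$. Let $\psi=\sum_{n\ge0}\psi^{(n)}$ be the series of Proposition~\ref{Prop:DysonRet}, and, for $1\le i\le j$, let $\Psi^{(n)}_{i}\in C^{\infty}(\scrM,\mathsf{E}_{i})$ be the extended variables of $\psi^{(n)}$, defined inductively by $\Psi^{(n)}_{0}:=\psi^{(n)}$ and $\Psi^{(n)}_{i}:=(\Psi^{(n)}_{i-1},\nabla^{\mathsf{E}_{i-1}}\Psi^{(n)}_{i-1})$; let $\Phi_{i}$ and $\mathfrak{F}_{i}$ be the analogous (smooth, compactly supported) towers built from $\phi$ and $\mathfrak{f}$. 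Applying Proposition~\ref{Prop:SHSDer} and the commutation identity of Proposition~\ref{Prop:SHSDerNOnLoc} (and its iteration) to the smooth sections $\psi^{(n)}$, one checks that
\begin{align*}
	\mathsf{S}_{i}\Psi^{(0)}_{i}=\Phi_{i}\,,\qquad \mathsf{S}_{i}\Psi^{(n+1)}_{i}=\mathsf{B}_{i}\Psi^{(n)}_{i}\quad(n\ge0)\,,
\end{align*}
i.e.~the $\Psi^{(n)}_{i}$ solve precisely the local Cauchy problems of the Dyson scheme for $\mathsf{S}_{i}-\mathsf{B}_{i}$, provided the initial data match. For the latter one uses that, since $\B$ is retarded and past-compact with switch-on time $t_{0}=0$, the integral $\int_{\R}\B_{0,\tau}\psi^{(n)}_{\tau}\,d\tau$ vanishes, so that $\scrS\psi^{(0)}=\phi$ and $\scrS\psi^{(n+1)}=0$ hold on $\scrN_{0}$; as $\sigma_{\scrS}(\eta)$ is pointwise invertible, this determines the normal covariant derivative of $\psi^{(n)}$ on $\scrN_{0}$ from $(\mathfrak{f},\phi)$ alone, whence inductively $\Psi^{(0)}_{i}\vert_{\scrN_{0}}=\mathfrak{F}_{i}$ (smooth, compactly supported, determined by $(\mathfrak{f},\phi)$) and $\Psi^{(n+1)}_{i}\vert_{\scrN_{0}}=0$.

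Assumption~(iii) of Theorem~\ref{Thm:Ret} is exactly what makes this admissible input for Proposition~\ref{Prop:DysonRet}: for $1\le i\le j$ the operator $\mathsf{B}_{i}$ is a nonlocal potential with time kernel satisfying the uniform bound~(ii), it is again retarded by the footnote to~(iii) (and past-compact with switch-on time $0$, by the same vanishing argument), and finite speed of propagation (Theorem~\ref{Thm:WellPosedCauchy}) holds for $\mathsf{S}_{i}$, so that the bound~\eqref{eq:ZeroOrder} on the zero-order part of $\mathsf{S}_{i}$ over the common compact support is available as in the proof of Proposition~\ref{Prop:DysonRet}. Hence Proposition~\ref{Prop:DysonRet} applied to $(\mathsf{S}_{i}-\mathsf{B}_{i},\Phi_{i},\mathfrak{F}_{i})$ yields absolute convergence of $\sum_{n}\Psi^{(n)}_{i}$ in the Hilbert space $\H_{\scrM_{T}}$ built on $\mathsf{E}_{i}$, which under the analogue of the identification $\H_{\scrM_{T}}\cong L^{2}(\scrM_{T},\scrE_{\beta})$ means that the extended towers $\Psi^{(n)}_{i}$ converge in $L^{2}(\scrM_{T},\mathsf{E}_{i,\beta})$.

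Consequently, the smooth partial sums $\psi_{k}:=\sum_{n=0}^{k}\psi^{(n)}$, all supported in a common compact subset of $\overline{\scrM}_{T}$ by finite speed of propagation, form a Cauchy sequence in the $W^{2,j}(\scrM_{T},\scrE_{\beta})$-norm: by Remark~\ref{Rem:SobSpL2} and its iteration this norm, on sections supported in that fixed compact set, is equivalent to the $L^{2}$-norm of the extended tower $\Psi_{j}$, which is Cauchy by the previous step. Since already $\psi_{k}\to\psi$ in $L^{2}(\scrM_{T})$ and $L^{2}$-limits are unique, we conclude $\psi\in W^{2,j}(\scrM_{T},\scrE_{\beta})$. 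Finally, as $\psi$ has compact support in $\overline{\scrM}_{T}$, the Sobolev embedding theorem on the $(k+1)$-dimensional strip gives $\psi\in C^{l}(\overline{\scrM}_{T},\scrE)$ whenever $j\ge\frac{k+1}{2}+l$, and together with Proposition~\ref{Prp:Ret} this shows that $\psi$ is a classical solution.

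The step I expect to be the main obstacle is the identification carried out in the first paragraph: matching, term by term, the Dyson expansion of the extended problem with the derivative towers of the $\psi^{(n)}$. This hinges on the commutation identity of Proposition~\ref{Prop:SHSDerNOnLoc} (so that $\nabla^{\mathsf{E}_{i-1}}$ intertwines $\mathsf{B}_{i-1}$ with the second row of $\mathsf{B}_{i}$) together with the compatibility of the extended initial data, where the past-compactness of $\B$ — ensuring $(\B\psi^{(n)})\vert_{\scrN_{0}}=0$ and hence a smooth, $(\mathfrak{f},\phi)$-determined jet on $\scrN_{0}$ — is essential; the remaining arguments adapt the local Euclidean regularity theory for symmetric hyperbolic systems to vector bundles in a routine way.
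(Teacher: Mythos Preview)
Your proposal is correct and follows essentially the same route as the paper: run the Dyson scheme for the extended systems $(\mathsf{S}_{i},\mathsf{B}_{i})$, use past-compactness of $\B$ to force $(\B\psi^{(n)})\vert_{\scrN_{0}}=0$ so that the extended initial data match and the local uniqueness gives $\Psi^{(n)}_{i}=(\Psi^{(n)}_{i-1},\nabla^{\mathsf{E}_{i-1}}\Psi^{(n)}_{i-1})$, and then read off $W^{2,j}$-convergence from the $L^{2}$-convergence of the towers via Remark~\ref{Rem:SobSpL2}. The paper phrases this inductively in $i$ and applies Sobolev embedding through a partition of unity, but the substance of the argument is the same, and you correctly singled out the term-by-term identification of the two Dyson expansions as the crucial step.
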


\begin{proof}
	We proof the claim by induction: Let $\psi\in\H_{\overline{\scrM}_{T}}$ be the strong solution constructed in Proposition~\ref{Prp:Ret}, i.e.~$\psi=\sum_{k=0}^{\infty}\psi^{(k)}$, where $\psi^{(k)}\in C^{\infty}_{\mathrm{sc}}(\scrM,\scrE)$ are the unique solutions to the local Cauchy problems
	\begin{align*}
		\begin{cases}
			\scrS\psi^{(0)} &=\phi\\
			\psi^{(0)}\vert_{\scrN_{0}}&=\mathfrak{f}
		\end{cases}
		\qquad\text{and}\qquad  
		\begin{cases}
			\scrS\psi^{(n+1)} &=\B\psi^{(n)}\\
			\psi^{(n+1)}\vert_{\scrN_{0}}&=0
		\end{cases},\quad n\geq 0\, .
	\end{align*} 
	Now, consider the system $\mathsf{S}_{1}$ on 	$\mathsf{E}_{1}$ as defined in Proposition~\ref{Prop:SHSDer} and Proposition~\ref{Prop:SHSDerNOnLoc}. If the nonlocal potential $\mathsf{B}_{1}$ satisfies the same conditions as $\B$, we can construct a strong solution $\Psi$ of the nonlocal Cauchy problem
	\begin{align}\label{sdfgdS}
		\begin{cases}
			(\mathsf{S}_{1}-\mathsf{B}_{1})\Psi=\Phi\\
			\Psi\vert_{\scrN_{0}}=\mathfrak{F}
		\end{cases}
	\end{align}
	with $\Phi=(\phi,\nabla^{\scrE}\phi)$ and where the initial data $\mathfrak{F}\in C^{\infty}(\scrN_{0},\mathsf{E}_{1}\vert_{\scrN_{0}})$ is given by
	\begin{align*}
		\mathfrak{F}=(\mathfrak{f},\sigma_{\scrS}(dt)^{-1}(\phi\vert_{\scrN_{0}}+\scrS_{0}\mathfrak{f}-\sigma_{\scrS}(dx^{i})\nabla_{i}^{\scrE}\mathfrak{f})\otimes dt+\nabla_{\partial_{i}}^{\scrE}\mathfrak{f}\otimes dx^{i})
	\end{align*}		
	 by considering the perturbative ansatz $\Psi=\sum_{k=0}^{\infty}\Psi^{(k)}$ with $\Psi^{(k)}$ being the unique solutions to the local problems
	\begin{align*}
		\begin{cases}
			\mathsf{S}_{1}\Psi^{(0)} &=\Phi\\
			\Psi^{(0)}\vert_{\scrN_{0}}&=\mathfrak{F}
		\end{cases}
		\qquad\text{and}\qquad  
		\begin{cases}
			\mathsf{S}_{1}\Psi^{(n+1)} &=\mathsf{B}_{1}\Psi^{(n)}\\
			\Psi^{(n+1)}\vert_{\scrN_{0}}&=0
		\end{cases},\quad n\geq 0\, .
	\end{align*} 
	Now, due to the past-compactness assumption on $\B$, it holds that $(\B\psi^{(k)})\vert_{\scrN_{0}}=0$ for all $k\in\N$. In particular, this implies that the local solutions $\psi^{(n)}$ satisfy
	\begin{align*}
		\nabla^{\scrE}\psi^{(n)}\vert_{\scrN_{0}}=0\, .
	\end{align*}
	In particular, by uniqueness of solutions to the local equations, we conclude that in fact $\Psi^{(n)}=(\psi^{(n)},\nabla^{\scrE}\psi^{(n)})$ for all $n\in\N$.
	
	Now, let us write $\eta:=\mathrm{pr}_{2}\Psi$. By construction, $\nabla^{\scrE}\psi_{n}\to\eta$ where $\psi_{n}:=\sum_{k=0}^{n}\psi^{(k)}$ as $k\to\infty$ in $\H_{\overline{\scrM}_{T}}$. In particular, this shows that the sequence $(\psi_{n})_{n}$ converges also in $W^{2,1}(\scrM_{T},\scrE_{\beta})$, since 
	\begin{align*}
		\Vert \psi_{n}-\psi_{m}\Vert_{W^{2,1}}^{2}=\Vert \psi_{n}-\psi_{m}\Vert_{L^{2}}^{2}+\Vert \nabla^{\scrE}\psi_{n}-\nabla^{\scrE}\psi_{m}\Vert_{L^{2}}^{2}\to 0\, ,
	\end{align*}
	cf.~Remark~\ref{Rem:SobSpL2}. Now, it is clear that the limit of the sequence $(\psi_{n})_{n}$ in $W^{2,1}(\scrM_{T},\scrE_{\beta})$ coincides with $\psi$, which is the limit of $(\psi_{n})_{n}$ in $\H_{\scrM_{t}}$, since $\Vert\cdot\Vert_{\scrM_{T}}\leq \Vert\cdot\Vert_{W^{2,1}(\scrM_{T},\scrE_{\beta})}$. Hence, we conclude that $\psi\in W^{2,1}(\scrM_{T},\scrE_{\beta})$. Proceeding by induction, we obtain $\psi\in W^{2,j}(\scrM_{T},\scrE_{\beta})$ whenever $\mathsf{S}_{k}$ and $\mathsf{B}_{k}$ satisfy the assumptions (i)-(iii) up to order $j$.

	In order to show that the solution is actually a classical solution, we choose a partition of unity $\{\rho_i\}$ subordinated to precompact chart $\mathcal{U}_i$ and a corresponding local trivialisation of $\scrE$. Then it is clear that $\psi_i=\rho_i \psi \in W^{2,j}(\mathcal{U}_i\,, \C^N)$ and by the Sobolev embedding theorem we can conclude that $\psi_i\in C^l(\mathcal{U}_i\,,\C^N)$ for $l\leq j-\frac{k+1}{2}$, for any $i$. Since $\rho_i$ are smooth, it follows that $\psi\in C^l(\mathcal{U}_i\,,\C^N)$ and in particular we can conclude our proof.
\end{proof}

\begin{Prp}[Uniqueness and Finite Speed of Propagation]\label{Prop:UniquenessRetarded} 
	Assume~Set-up~\ref{Setup} and suppose that the nonlocal potential $\B$ satisfies the assumptions (i) and (ii) in Theorem~\ref{Thm:Ret}. Then, if there exists a (classical) $C^{1}$-solution $\psi$ of the Cauchy problem~\eqref{eq:SymNonLoc} on the time strip $\overline{\scrM}_{T}$, it is unique and and propagates at most with the speed of light, i.e.
	\begin{align*}
		\mathrm{supp}(\psi)\cap J^{+}(\scrN_{0})\subset J^{+}(\mathrm{supp}(\phi)\cup\mathrm{supp}(\mathfrak{f}))\, .
	\end{align*}
\end{Prp}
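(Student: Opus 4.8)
The plan is to deduce uniqueness from the finite speed of propagation, so the heart of the matter is a localized energy estimate on truncated backward light cones, in the spirit of the classical domain-of-dependence argument for symmetric hyperbolic systems. First I would reduce uniqueness to the support statement: if $\psi_{1},\psi_{2}$ are two $C^{1}$-solutions of~\eqref{eq:SymNonLoc} on $\overline{\scrM}_{T}$, then by linearity $\psi:=\psi_{1}-\psi_{2}$ is a $C^{1}$-solution with $\phi=0$ and $\mathfrak{f}=0$; once the support statement is established, it yields $\supp(\psi)\cap J^{+}(\scrN_{0})\subset J^{+}(\emptyset)=\emptyset$, and since $\overline{\scrM}_{T}\subset J^{+}(\scrN_{0})$ we conclude $\psi\equiv 0$, i.e.\ $\psi_{1}=\psi_{2}$. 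Thus it suffices to prove the support inclusion for an arbitrary $C^{1}$-solution.

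For the support statement, fix $p\in\overline{\scrM}_{T}$ with $p\notin J^{+}(K)$, where $K:=\supp(\phi)\cup\supp(\mathfrak{f})$; since $\psi$ is continuous it suffices to show that $\psi$ vanishes on the compact truncated cone $D_{p}:=J^{-}(p)\cap J^{+}(\scrN_{0})\subset\overline{\scrM}_{T}$. Write $D_{p}(\tau):=D_{p}\cap\scrN_{\tau}$ (compact) and $E(\tau):=\int_{D_{p}(\tau)}\Sl\sigma_{\scrS}(\eta_{\tau})\psi|\psi\Sr_{\scrE}\,d\mu_{\scrN_{\tau}}=\Vert\psi_{\tau}\Vert^{2}_{D_{p}(\tau)}\ge 0$. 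Mimicking the proof of Proposition~\ref{Prop:energy}, I would apply Stokes' theorem to the real $k$-form $\omega=\Sl\sigma_{\scrS}(dx^{\mu})\psi|\psi\Sr_{\scrE}\,\partial_{\mu}\lrcorner\,d\mu_{\scrM}$ on the compact Lipschitz region $\Omega_{s}:=D_{p}\cap t^{-1}([0,s])$. Its boundary consists of the top slice $D_{p}(s)$ (contributing $+E(s)$), the bottom slice $D_{p}(0)$ (contributing $-E(0)$), and the lateral null part contained in $\partial J^{-}(p)$, whose contribution is nonnegative because the outward conormal there is a future-directed null covector and $\sigma_{\scrS}$ evaluated on such a covector is positive semidefinite (a limiting case of condition (H)); this lateral flux is dropped. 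Moreover $E(0)=\Vert\mathfrak{f}\Vert^{2}_{D_{p}(0)}=0$ since $D_{p}(0)\subset J^{-}(p)$ and $J^{-}(p)\cap K=\emptyset$, and likewise $\phi$ vanishes on $\Omega_{s}\subset J^{-}(p)$, so on $\Omega_{s}$ we have $\scrS\psi=\B\psi$ and hence, using Remark~\ref{Remark:Zero},
\[
E(s)\ \le\ \int_{\Omega_{s}}|d\omega|\ \le\ 2\int_{\Omega_{s}}\big|\Sl\B\psi|\psi\Sr_{\scrE}\big|\,d\mu_{\scrM}\ +\ \int_{\Omega_{s}}\big|\Sl(\scrS+\scrS^{\dagger})\psi|\psi\Sr_{\scrE}\big|\,d\mu_{\scrM}\,.
\]

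It then remains to control both interior terms by $E$ and close a Grönwall loop. For the zero-order term, rewriting through $\mathcal{Z}_{\scrS}=\beta\sigma_{\scrS}(\eta)^{-1}(\scrS+\scrS^{\dagger})$, Fubini and Cauchy--Schwarz give a bound $D_{p}'\int_{0}^{s}E(\tau)\,d\tau$, where $D_{p}':=\sup_{x\in D_{p}}\Vert\mathcal{Z}_{\scrS}(x)\Vert<\infty$ by compactness of $D_{p}$ and smoothness of the zero-order endomorphism $\mathcal{Z}_{\scrS}$ (so, in contrast with the short time range case, no global-in-time bound on $\mathcal{Z}_{\scrS}$ is needed here). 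For the nonlocal term I would use the causal support of the kernel, $\supp(k_{\B})\subset\{(x,y):y\in J^{-}(x)\}\cap(J^{+}(\scrN_{0})\times J^{+}(\scrN_{0}))$ (Remark~\ref{Rem:MappingProp}(i), together with past-compactness $t_{0}=0$): for $x\in D_{p}(\tau)$ the value $(\V_{\tau,\sigma}\psi_{\sigma})(x)$ depends only on $\psi_{\sigma}$ over $J^{-}(x)\cap\scrN_{\sigma}\subset D_{p}(\sigma)$, so replacing $\psi_{\sigma}$ by $\chi_{D_{p}(\sigma)}\psi_{\sigma}\in\H_{\sigma}$ leaves $\V_{\tau,\sigma}\psi_{\sigma}$ unchanged on $D_{p}(\tau)$; combined with assumption (ii) (extended by continuity to the Hilbert spaces $\H_{\sigma}$) this gives $\Vert\V_{\tau,\sigma}\psi_{\sigma}\Vert_{D_{p}(\tau)}\le C_{T}\sqrt{E(\sigma)}$, and since $(\V\psi)_{\tau}=\int_{0}^{\tau}\V_{\tau,\sigma}\psi_{\sigma}\,d\sigma$ on $D_{p}(\tau)$ by retardedness, the nonlocal term is bounded by $\int_{0}^{s}\sqrt{E(\tau)}\,\big(C_{T}\int_{0}^{\tau}\sqrt{E(\sigma)}\,d\sigma\big)\,d\tau$. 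Setting $M(s):=\sup_{[0,s]}E$ and using $\sqrt{E(\tau)}\int_{0}^{\tau}\sqrt{E(\sigma)}\,d\sigma\le \tau M(\tau)$, one arrives at $M(s)\le(2C_{T}T+D_{p}')\int_{0}^{s}M(\tau)\,d\tau$ with $M(0)=0$, so Grönwall's lemma forces $M\equiv 0$; hence $\psi\equiv 0$ on $D_{p}$, in particular $\psi(p)=0$, which proves the inclusion and, as noted, uniqueness.

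The main obstacle I expect is precisely the careful treatment of the nonlocal term: one must verify that the causal support of $k_{\B}$ genuinely localizes $\B\psi$ to $D_{p}$, and that the slicewise bound (ii) — stated only for smooth compactly supported data on full slices $\scrN_{\tau}$ — transfers to the truncated slices $D_{p}(\tau)$. This uses the bounded extension of $\V_{\tau,\sigma}$ to $\H_{\sigma}$, a routine approximation of $\chi_{D_{p}(\sigma)}\psi_{\sigma}$ by test sections, and the fact that $\partial D_{p}(\tau)\subset\partial J^{-}(p)$ has measure zero in $\scrN_{\tau}$. The remaining ingredients — the Stokes computation on the truncated cone (valid for a $C^{1}$ section, as $\omega$ has $C^{1}$ coefficients and $\Omega_{s}$ is compact), the sign of the null boundary flux, and the Grönwall step — are a standard adaptation of the finite-propagation-speed argument for symmetric hyperbolic systems.
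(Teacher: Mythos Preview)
Your argument is correct, but it follows a genuinely different route from the paper's. The paper first proves \emph{uniqueness} by a global Gr\"onwall argument: for a $C^{1}$-solution of the homogeneous problem with zero data one re-runs the estimate~\eqref{eq:Estimate} (with $\psi^{(n)}$ and $\psi^{(n+1)}$ both replaced by $\psi$) to get $\Vert\psi_{t}\Vert_{t}\le K_{T}T\int_{0}^{t}\Vert\psi_{\tau}\Vert_{\tau}\,d\tau$, hence $\psi=0$. Finite speed of propagation is then obtained \emph{indirectly}: once the solution is unique it must coincide with the perturbative series $\sum_{n}\psi^{(n)}$ of Proposition~\ref{Prop:DysonRet}, and each $\psi^{(n)}$ already has support in $J^{+}(\mathrm{supp}(\phi)\cup\mathrm{supp}(\mathfrak{f}))$ by Theorem~\ref{Thm:WellPosedCauchy} and the retarded nature of $\B$, so the limit inherits this support.

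By contrast, you prove finite speed \emph{directly} via a localized domain-of-dependence estimate on truncated past cones $D_{p}$ and then read off uniqueness as a corollary. This is the more classical route for symmetric hyperbolic systems; its main virtue here is that it is self-contained (it does not rely on the perturbative construction) and, more importantly, it does not require any global integrability of $\psi$ on the slices $\scrN_{t}$ --- everything takes place on the compact sets $D_{p}(\tau)$, where a $C^{1}$ section is automatically in the relevant $L^{2}$ spaces. The paper's argument is shorter because the hard work (the series estimates) has already been done, but it implicitly uses that $\Vert\psi_{t}\Vert_{t}<\infty$, which is not part of the definition of a classical solution. Your localization via the causal support of $k_{\B}$ (so that $(\V_{\tau,\sigma}\psi_{\sigma})|_{D_{p}(\tau)}$ depends only on $\psi_{\sigma}|_{D_{p}(\sigma)}$) is exactly the right substitute, and your extension of the bound (ii) to $\chi_{D_{p}(\sigma)}\psi_{\sigma}\in\H_{\sigma}$ by density is routine. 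The only point that deserves a word of care is the Stokes step: $\partial J^{-}(p)$ is in general merely Lipschitz, so one should either invoke the Lipschitz version of the divergence theorem or approximate $D_{p}$ by domains with spacelike lateral boundary (for which condition (H) gives strict positivity) and pass to the limit --- this is standard, as you note.
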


\begin{proof}
	We first prove uniqueness. Suppose that $\psi\in C^{1}(\overline{\scrM}_{T},\scrE)$ is a solution of the homogeneous nonlocal equation $(\scrS-\B)\psi=0$ of with initial condition $\psi\vert_{\scrN_{0}}=0$. Then, following exactly the same steps as in the proof of Proposition~\ref{Prp:Ret}, cf.~\eqref{eq:Estimate} with replacing $\psi^{(n+1)}$ and $\psi^{(n)}$ by $\psi$, we obtain the estimate 
	\begin{align*}
		\Vert\psi_{t}\Vert_{t} \leq K_{T}\int_{0}^{t}(t-\tau)\Vert \psi_{\tau}\Vert_{\tau}\,d\tau \leq K_{T}T\int_{0}^{t}\Vert \psi_{\tau}\Vert_{\tau}\,d\tau=:K_{T}TF(t)
	\end{align*}
	for all $t\in [0,T]$, where $K_{T}>0$ is some constant only depending on $T$. Now, observe that $\dot{F}(t)=\Vert\psi\Vert_{t}$, where the dot denotes the time-derivative, which allows us to rewrite the above inequality as $\dot{F}(t)\leq K_{T}TF(t)$. In other words, we find that
	\begin{align*}
		\frac{d}{dt}(e^{-K_{T}Tt}F(t))\leq 0
	\end{align*}
	for all $t\in [0,T]$. Setting $G(t):=e^{-K_{T}Tt}F(t)$, we observe that $G(t)\geq 0$ and $\dot{G}(t)\leq 0$ as well as $G(0)=0$. The only such $C^{1}$-function is given by $G(t)=0$. We conclude that $\Vert\psi\Vert_{t}=0$ for all $t\in [0,T]$ and hence $\psi=0$ on $\scrM_{T}$.
	
	Now, if $\psi\in\H_{\overline{\scrM}_{T}}\cap C^{1}(\scrM_{T},\scrE)$ is a $C^{1}$-solution to the Cauchy problem $(\scrS-\B)\psi=\phi$ with $\psi\vert_{\scrN_{0}}=\mathfrak{f}$, uniqueness implies that $\psi$ is constructed via the perturbative series
	\begin{align*}
		\psi=\sum_{n=0}^{\infty}\psi^{(n)}\, ,
\end{align*}	
which is absolutely convergent in $\H_{\overline{\scrM}_{T}}$, where $\psi^{(n)}$ are the solutions to the local problems $\scrS\psi^{(0)}=\phi$ and $\scrS\psi^{(n+1)}=\B\psi^{(n)}$ with $\psi^{(n)}\vert_{\scrN_{0}}=0$, as explained in Proposition~\ref{Prp:Ret}. As explained in the proof of Proposition~\ref{Prop:DysonRet}, by the assumptions on $\B$, we know that 
\begin{align*}
	\mathrm{supp}(\psi^{(n)})\cap J^{+}(\scrN_{0})\subset J^{+}(\mathrm{supp}(\phi)\cap\mathrm{supp}(\mathfrak{f}))
\end{align*}
for all $n\in\N$. Hence, also the limit $\psi$, which is assumed to be $C^{1}$, is supported within the same set $J^{+}(\mathrm{supp}(\phi)\cup\mathrm{supp}(\mathfrak{f}))$. 
\end{proof}

Collecting all the results obtained, we have all the ingredients to proof the main theorem of this section.

\begin{proof}[Proof of Theorem~\ref{Thm:Ret}]
	By Proposition~\ref{Prp:Ret}, we know that the nonlocal Cauchy problem~\eqref{eq:SymNonLoc} has a strong solution provided its nonlocal potentials satisfies assumptions (i) and (ii). Then, in Proposition~\ref{Prp:RetReg}, we discussed the claimed regularity of solutions provided assumption (iii) holds true. Uniqueness and finite speed of propagation was then shown in Proposition~\ref{Prop:UniquenessRetarded}.
\end{proof}

\subsection{Nonlocal Potential with Short Time Range}
As a second case, we turn our attention to the Cauchy problem~\eqref{eq:SymNonLoc} in the presence of a nonlocal potential with short time range, as specified in Definition~\ref{Def:Potentials}(ii). This class of potentials, being non-retarded, introduces additional analytical challenges due to the lack of a purely causal structure. 

To start with, consider Set-up~\ref{Setup} and let $\overline{\scrM}_{T}:=[0,T]\times\scrN$ be the closed time strip of size $T>0$. If $\B$ is a nonlocal potential with short time range $\delta>0$, our aim is to study the Cauchy problem
\begin{align}\label{eq:CauchySmall}
	\begin{cases}
		(\scrS-\B)\psi &=\phi\\
		\psi\vert_{\scrN_{0}}&=\mathfrak{f}
	\end{cases}
\end{align}	
for a given source $\phi\in C_{\mathrm{c}}^{\infty}(\scrM,\scrE)$ with $\mathrm{supp}(\phi)\subset \overline{\scrM}_{T}$ and initial datum $\mathfrak{f}\in C^{\infty}_{\mathrm{c}}(\scrN_{0},\scrE\vert_{\scrN_{0}})$. 

Our aim in this section is to proof the following theorem.

\begin{Thm}\label{Thm:Small}
	Assume~Set-up~\ref{Setup} and suppose that the nonlocal potential $\B$ satisfies the following assumptions:
	\begin{itemize}
		\item[(i)]$\B$ has short time range $\delta>0$, i.e.~$\V_{t,t^{\prime}}=0$ if $\vert t-t^{\prime}\vert>\delta$.
		\item[(ii)]$\B$ is uniformly bounded in time with a constant that is small compared to $\delta$ and $D$ in the following sense: There exists a constant $C<(8e\delta^{2})^{-1}$ such that 
		\begin{align*}
			\Vert\V_{t,\tau}\varphi_{\tau}\Vert_{t}\leq Ce^{-\frac{1}{2}D\vert\tau\vert}\Vert\varphi_{\tau}\Vert_{\tau}\end{align*} 
		for all $\varphi\in C^{\infty}_{\mathrm{c}}(\scrN_{\tau},\scrE\vert_{\scrN_{\tau}})$ and $t,\tau\in\R$.
		\item[(iii)]The zero-order operator $\mathcal{Z}_{\scrS}:=\beta\sigma_{\scrS}(\eta)^{-1}(\scrS+\scrS^{\dagger})$ is uniformly bounded in time, i.e.~there exists a $D>0$ such that \begin{align*}
			\Vert(\mathcal{Z}_{\scrS}\psi)_{t}\Vert_{t}\leq D\Vert\psi_{t}\Vert_{t}
		\end{align*}
		for all $\varphi\in C^{\infty}_{\mathrm{sc}}(\scrM,\scrE)$ and $t\in\R$. Furthermore, also $\sigma_{\scrS}(dt)\in C^{\infty}(\scrM,\mathrm{End}(\scrE))$ is uniformly bounded in time.
	\end{itemize}
	Then, there exists a strong solution of the Cauchy problem~\eqref{eq:SymNonLoc}. \end{Thm}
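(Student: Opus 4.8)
The plan is to adapt the Dyson-type iteration of Propositions~\ref{Prop:DysonRet} and~\ref{Prp:Ret} to the non-retarded setting, using the exponential weight in assumption~(ii) to replace the causal control that is no longer available. First I would introduce the iterates $\psi^{(n)}\in C^{\infty}_{\mathrm{sc}}(\scrM,\scrE)$ defined by $\scrS\psi^{(0)}=\phi$, $\psi^{(0)}\vert_{\scrN_{0}}=\mathfrak{f}$ and, for $n\geq 0$, $\scrS\psi^{(n+1)}=\B\psi^{(n)}$ with $\psi^{(n+1)}\vert_{\scrN_{0}}=0$. These are well defined because $\B$ has short time range, hence extends to an operator on spatially compact sections (Remark~\ref{Rem:MappingProp}(ii)), while assumption~(iii) together with Corollary~\ref{Cor:EnEstLoc} makes $\scrS$ invertible with data prescribed on $\scrN_{0}$ on every time slice. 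The candidate strong solution is $\psi:=\sum_{n=0}^{\infty}\psi^{(n)}$, and the whole content of the theorem is that this series converges in $\H_{[-\delta,T+\delta]\times\scrN}$ precisely under the smallness assumption~(ii).

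To prove convergence, set $a_{n}(t):=\Vert\psi^{(n)}_{t}\Vert_{t}$. By Corollary~\ref{Cor:EnEstLoc} and assumption~(iii) one has $a_{0}(t)\leq Me^{\frac{1}{2}D\vert t\vert}$ with $M$ depending only on $(\phi,\mathfrak{f})$, and Corollary~\ref{Cor:EnEst} applied to $\scrS\psi^{(n+1)}=\B\psi^{(n)}$ gives
\begin{align*}
	\Big\vert\frac{d}{dt}a_{n+1}(t)\Big\vert\leq\int_{\{\vert t-\tau\vert\leq\delta\}}\Vert\V_{t,\tau}\psi^{(n)}_{\tau}\Vert_{t}\,d\tau+\tfrac{1}{2}D\,a_{n+1}(t)\, .
\end{align*}
I would then pass to the weighted quantity $\tilde a_{n}(t):=e^{-\frac{1}{2}D\vert t\vert}a_{n}(t)$: assumption~(ii) is exactly calibrated so that the source term becomes $C\int_{\{\vert t-\tau\vert\leq\delta\}}\tilde a_{n}(\tau)\,d\tau$, and after multiplying by $e^{-\frac{1}{2}Dt}$ and integrating from $0$ (separately for $t>0$ and $t<0$, both starting from $\tilde a_{n+1}(0)=0$) one obtains the clean recursion $\tilde a_{n+1}(t)\leq C\int_{0}^{\vert t\vert}\int_{s-\delta}^{s+\delta}\tilde a_{n}(\tau)\,d\tau\,ds$ with $\tilde a_{0}\leq M$ on all of $\R$. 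A combinatorial induction then yields $\tilde a_{n}(t)\leq M(\kappa\delta C)^{n}(\vert t\vert+n\delta)^{n}/n!$ for a numerical constant $\kappa$, and Stirling's formula converts this into a geometric bound $\tilde a_{n}(t)\leq \mathrm{const}(t)\,(\kappa' e\delta^{2}C)^{n}/\sqrt{n}$; keeping track of the numerical factors along the way is what produces the threshold $C<(8e\delta^{2})^{-1}$. Under this condition $\sum_{n}\tilde a_{n}(t)$ converges locally uniformly, hence $\sum_{n}\psi^{(n)}$ converges in $\H_{[-\delta,T+\delta]\times\scrN}$ (measurability of $t\mapsto\psi_{t}$ being argued as in Proposition~\ref{Prop:DysonRet}).

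It then remains to verify Definition~\ref{Def:StrongSol}(ii) for $\psi$. The partial sums $\psi_{k}:=\sum_{n=0}^{k}\psi^{(n)}\in C^{\infty}(\scrM,\scrE)$ satisfy $\psi_{k}\vert_{\scrN_{0}}=\mathfrak{f}$ for every $k$ and converge to $\psi$ in $\H_{[-\delta,T+\delta]\times\scrN}$ by the above; moreover $(\scrS-\B)\psi_{k}=\phi-\B\psi^{(k)}$, so it suffices to show $\Vert\B\psi^{(k)}\Vert_{\H_{\overline{\scrM}_{T}}}\to 0$. Since $\V=\sigma_{\scrS}(dt)^{-1}\B$ and $\sigma_{\scrS}(dt)$ is uniformly bounded in time (assumption~(iii)), one estimates $\Vert(\B\psi^{(k)})_{t}\Vert_{t}\leq C'\int_{\{\vert t-\tau\vert\leq\delta\}}\tilde a_{k}(\tau)\,d\tau\leq 2\delta C'\sup_{\vert\tau\vert\leq T+\delta}\tilde a_{k}(\tau)$ for $t\in[0,T]$, and the last supremum tends to $0$; integrating the square over $t\in[0,T]$ then gives the claim, exactly as in the proof of Proposition~\ref{Prp:Ret}.

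The main obstacle is the convergence estimate of the second paragraph. Unlike in the retarded case, the supports of the $\psi^{(n)}$ are not confined to a fixed compact set and the map $\psi^{(n)}\mapsto\B\psi^{(n)}$ propagates information both forward and backward in time, so one cannot close the iteration by finite speed of propagation; one must instead control the nested time integrals directly and extract from them the geometric rate, which is where the interplay between the time range $\delta$, the uniform bound $C$, and Stirling's $e^{n}$-growth of $n^{n}/n!$ produces the threshold $(8e\delta^{2})^{-1}$. A secondary technical point, already implicit in the iteration, is to ensure that $\B\psi^{(n)}$ is an admissible right-hand side for $\scrS$ on each finite time strip, which is exactly where the short time range of $\B$ and the mapping property of Remark~\ref{Rem:MappingProp}(ii) are needed.
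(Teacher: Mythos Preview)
Your proposal is correct and follows essentially the same route as the paper's proof (Proposition~\ref{Prop:DysonSmall} for the convergence, followed by a verification of Definition~\ref{Def:StrongSol}(ii) analogous to Proposition~\ref{Prp:Ret}). The paper tracks the numerical constants a bit differently---using the explicit induction ansatz $\Vert\psi^{(n)}_{t}\Vert_{t}\leq \tfrac{M}{n!}(4C\delta)^{n}e^{\frac{1}{2}D\vert t\vert}(\vert t\vert+2n\delta)^{n}$ and a Fubini step to extract a factor $2\delta$ from the nested time integral---but the mechanism (exponentially weighted energy estimate, recursive time-integral bound, Stirling to produce the geometric rate $8e\delta^{2}C$) is exactly the one you outline.
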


\begin{Remark}
Let us highlight the key differences in the assumptions compared to the retarded case discussed in Theorem~\ref{Thm:Ret}:
\begin{itemize}
	\item[$\bullet$]Recall that strong solutions are constructed perturbatively by generating a sequence of solutions $\psi^{(n)}$ to \emph{local} equations, see~Proposition~\ref{Prop:DysonSmall} below. In this case, due to the non-retarded nature of the potential, the domain of integration in the energy estimates of Corollary~\eqref{Cor:EnEst} expands by $\pm\delta$ at each step. As a consequence, we must impose an exponential decay factor on the bounds for the time kernel. This decay compensates for the exponential growth induced by the zero-order terms associated with the operator $\scrS + \scrS^{\dagger}$.
	\item[$\bullet$] Finally, note that in this case, we do not have any uniqueness or finite speed of propagation results. The standard energy estimates used in the retarded case are not applicable here. However, in Section~\ref{secdirunique}, we will show that in the special case of a symmetric kernel, a uniqueness statement can still be established by considering a nonlocal modification of the inner product $(\cdot|\cdot)_{t}$.
\end{itemize}
\end{Remark}

As in the retarded case, we start first by constructing a sequence of solutions to local problems converging at any fixed time in the corresponding Hilbert space of the Cauchy slice.

\begin{Prp}\label{Prop:DysonSmall}
	Assume Set-up.~\ref{Setup} together with assumptions (i)-(iii) in Theorem~\ref{Thm:Small} and let $\psi^{(n)}\in C^{\infty}(\scrM,\scrE)$ be the sections inductively defined as the (unique) solutions to the local Cauchy problems
	\begin{align*}
		\begin{cases}
			\scrS\psi^{(0)} &=\phi\\
			\psi^{(0)}\vert_{\scrN_{0}}&=\mathfrak{f}
		\end{cases}\qquad\text{and}\qquad 
		\begin{cases}
			\scrS\psi^{(n+1)} &=\B\psi^{(n)}\\
			\psi^{(n+1)}\vert_{\scrN_{0}}&=0
		\end{cases},\quad n\geq 0\, .
	\end{align*}
	Then, $\psi_{t}:=\sum_{n=0}^{\infty}\psi^{(n)}_{t}$ is absolutely convergent in the Hilbert space $\H_{t}$ for every $t\in\R$. Furthermore, the map $\psi:t\mapsto\psi_{t}$ is a well-defined element in the space $L^{2}_{\mathrm{loc}}(\R,\H_{\bullet})$. 
\end{Prp}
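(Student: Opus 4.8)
The plan is to mirror the structure of the retarded case (Proposition~\ref{Prop:DysonRet}), but with a \emph{weighted} energy bookkeeping that simultaneously absorbs the exponential growth produced by the zero-order part $\scrS+\scrS^{\dagger}$ and controls the spreading \emph{in time} of the domain of influence, which now — unlike in the retarded case — grows by $\pm\delta$ at every perturbative step. First I would check that each local Cauchy problem $\scrS\psi^{(n+1)}=\B\psi^{(n)}$, $\psi^{(n+1)}\vert_{\scrN_{0}}=0$ is well-posed by Theorem~\ref{Thm:WellPosedCauchy}: thanks to assumption (ii) the section $\B\psi^{(n)}$ is smooth and lies in $\H_{t}$ on each slice (with $\B$ acting as in Remark~\ref{Rem:MappingProp}(ii)), exactly as argued in the retarded case. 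By Corollary~\ref{Cor:EnEstLoc} together with assumption (iii) one has $\Vert\psi^{(0)}_{t}\Vert_{t}\le M e^{\frac12 D\vert t\vert}$ for all $t\in\R$, with $M$ depending only on $(\phi,\mathfrak{f})$.

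The key recursion is obtained by applying Corollary~\ref{Cor:EnEst} to $\psi^{(n+1)}$ (whose source is $\B\psi^{(n)}$), then inserting assumption (ii) — which bounds $\Vert\V_{t,\tau}\psi^{(n)}_{\tau}\Vert_{t}$ by $Ce^{-\frac12 D\vert\tau\vert}\Vert\psi^{(n)}_{\tau}\Vert_{\tau}$ and vanishes for $\vert t-\tau\vert>\delta$ — and assumption (iii). For $t\ge0$ this gives
\begin{align*}
	\bigg\vert\frac{d}{dt}\Vert\psi^{(n+1)}_{t}\Vert_{t}\bigg\vert\le C\int_{t-\delta}^{t+\delta}e^{-\frac12 D\vert\tau\vert}\Vert\psi^{(n)}_{\tau}\Vert_{\tau}\,d\tau+\frac12 D\,\Vert\psi^{(n+1)}_{t}\Vert_{t}\,.
\end{align*}
Introducing the weighted norm $G_{n}(t):=e^{-\frac12 D\vert t\vert}\Vert\psi^{(n)}_{t}\Vert_{t}$ — so that $G_{0}\le M$ and, crucially, the decay factor in (ii) is precisely what makes $\Vert\V_{t,\tau}\psi^{(n)}_{\tau}\Vert_{t}\le C\,G_{n}(\tau)$ \emph{without} exponential growth — one multiplies by $e^{-\frac12 Dt}$, uses $\psi^{(n+1)}\vert_{\scrN_{0}}=0$ and integrates to obtain (the case $t\le0$ being symmetric)
\begin{align*}
	G_{n+1}(t)\le C\int_{0}^{\vert t\vert}\int_{s-\delta}^{s+\delta}G_{n}(\tau)\,d\tau\,ds\,.
\end{align*}

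The heart of the argument is then to iterate this inequality. Because the inner $\delta$-window pushes the time argument out by $\delta$ per step, I would prove by induction on $n$ a bound of the form $G_{n}(t)\le M\,\kappa^{\,n}(\vert t\vert+n\delta)^{n}/n!$ for a constant $\kappa$ of order $\delta C$: the $n$-fold iterated windowed time integral of a constant produces the single factorial $n!$ in the denominator, while the worst case $\vert\tau\vert\le\vert t\vert+n\delta$ inside the $n$-th window accounts for the $(\vert t\vert+n\delta)^{n}$. For fixed $t$, the ratio test applied to $\sum_{n}G_{n}(t)$ then gives a limiting ratio equal to a constant times $e\,\delta^{2}C$, where the factor $e=\lim_{n}(1+1/n)^{n}$ arises from $\bigl(1+\delta/(\vert t\vert+n\delta)\bigr)^{n}$ and the additional $\delta$ from $(\vert t\vert+n\delta)/(n+1)$; keeping careful track of the constants in the iteration this limiting ratio is $8e\delta^{2}C$, so the hypothesis $C<(8e\delta^{2})^{-1}$ yields absolute convergence of $\sum_{n}G_{n}(t)$, hence of $\sum_{n}\psi^{(n)}_{t}=e^{\frac12 D\vert t\vert}\sum_{n}G_{n}(t)$ in $\H_{t}$, for every $t\in\R$. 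Finally, for the $L^{2}_{\mathrm{loc}}$ statement: the map $t\mapsto\psi_{t}$ is measurable as the pointwise $\H_{t}$-limit of the smooth partial sums, and on any interval $[-T',T']$ the bound $G_{n}(t)\le M\kappa^{n}(T'+n\delta)^{n}/n!$ is uniform and summable, so $\sup_{\vert t\vert\le T'}\sum_{n}G_{n}(t)<\infty$ and therefore $\int_{-T'}^{T'}\Vert\psi_{t}\Vert_{t}^{2}\,dt\le\int_{-T'}^{T'}e^{D\vert t\vert}\bigl(\sum_{n}G_{n}(t)\bigr)^{2}\,dt<\infty$; a dominated-convergence argument as in the retarded case shows the partial sums also converge to $\psi$ in $L^{2}([-T',T'],\H_{\bullet})$, giving $\psi\in L^{2}_{\mathrm{loc}}(\R,\H_{\bullet})$. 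The main obstacle, and the only place where the short-time-range hypothesis really bites, is this combinatorial estimate: one must show that the product of the exponential growth of $\psi^{(0)}$ over a window of length $\sim n\delta$, the factor $C$ per step, and the iterated window integrals is still beaten by $n!$ — which is exactly why one needs both the exponential decay built into (ii) and the smallness $C<(8e\delta^{2})^{-1}$.
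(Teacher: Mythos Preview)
Your plan is correct and follows essentially the same route as the paper: the same energy recursion from Corollary~\ref{Cor:EnEst}, the same exponential weight $e^{-\frac12 D|t|}$ (your $G_n$), an induction hypothesis of the form $G_n(t)\le M\kappa^n(|t|+cn\delta)^n/n!$, and a ratio/Stirling argument for absolute convergence, followed by dominated convergence for the $L^2_{\mathrm{loc}}$ statement. One small remark: with your stated hypothesis $(|t|+n\delta)^n$ the induction actually closes with $\kappa=4\delta C$ (after Fubini) or even $\kappa=2\delta C$ (via the crude $\sup$ bound on the $\delta$-window), giving a limiting ratio $4e\delta^2 C$ or $2e\delta^2 C$ rather than the $8e\delta^2 C$ you quote; the paper uses $(|t|+2n\delta)^n$, which is where the $8$ comes from. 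Either way the assumption $C<(8e\delta^2)^{-1}$ is more than enough, so this does not affect the argument.
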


\begin{proof}[Proof of Proposition~\ref{Prop:DysonSmall}.]
Let $\psi^{(n)}$ be the solutions to the local Cauchy problems as defined in the statement of the proposition. By finite speed of propagation, see~Theorem~\ref{Thm:WellPosedCauchy}, it holds that 
	\begin{align*}
		\mathrm{supp}(\psi^{(0)}) \subset J(\mathrm{supp}(\phi)\cup\mathrm{supp}(\mathfrak{f}))\quad\text{and}\quad \mathrm{supp}(\psi^{(n+1)}) \subset J(\mathrm{supp}(\B\psi^{(n)}))
	\end{align*}
	 for all $n\in\N$. Note that $\B\psi^{(n)}$ is well-defined, since $\B$ extends to an operator acting on smooth sections, which are in $\H_{t}$ at any fixed $t\in\R$, as a consequence of assumption (iii) and the standard energy estimates. Moreover, unlike in the retarded case, observe that $\psi^{(n)}$ will in general also include a component propagating into the past, both due to the non-zero initial datum $\mathfrak{f}$ for $\psi^{(0)}$ and the nonlocality, which increases the support of the source at each step $n$ by $\pm\delta$ in time, and hence, at some order, the source will thus intersect the initial Cauchy slice $\scrN_{0}$.
	
	Now, the goal of the following discussion is to show that the perturbative ansatz
\begin{align}\label{ansatzpertur2}
		\psi=\sum_{n=0}^{\infty}\psi^{(n)}
	\end{align}	
	is absolutely convergent in $\mathcal{H}_{t}$ for all $t\in\R$. Using the energy estimates from Corollary~\ref{Cor:EnEst}, we have
\begin{align}\label{eq:EnergyEstSmall}
		\bigg\vert\frac{d}{dt}\Vert\psi^{(n+1)}_{t}\Vert_{t}\bigg\vert\leq C\int_{t-\delta}^{t+\delta}e^{-\frac{1}{2}D\vert\tau\vert}\Vert \psi^{(n)}_{\tau}\Vert_{\tau}\, d\tau+\frac{1}{2}D\Vert\psi^{(n+1)}_{t}\Vert_{t}
	\end{align}
	for all $n\in\N$ and $t\in\R$, where we used assumptions (i)-(iii). As a first step, we absorb the last term in~\eqref{eq:EnergyEstSmall} using an appropriate exponential factor, by noting that the energy estimate~\eqref{eq:EnergyEstSmall} imply
	\begin{subequations}
	\begin{align}
		\frac{d}{dt}\bigg(e^{-\frac{1}{2}D t}\Vert\psi^{(n+1)}_{t}\Vert_{t}\bigg)\leq Ce^{-\frac{1}{2}D t}\int_{t-\delta}^{t+\delta}e^{-\frac{1}{2}D\vert\tau\vert}\Vert \psi^{(n)}_{\tau}\Vert_{\tau}\, d\tau\,,\quad &\text{for }t\geq 0\label{eq:geqt}\\
		-\frac{d}{dt}\bigg(e^{\frac{1}{2}D t}\Vert\psi^{(n+1)}_{t}\Vert_{t}\bigg)\leq Ce^{\frac{1}{2}D t}\int_{t-\delta}^{t+\delta}e^{-\frac{1}{2}D\vert\tau\vert}\Vert \psi^{(n)}_{\tau}\Vert_{\tau}\, d\tau\,,\quad &\text{for }t\leq 0\label{eq:leqt}
	\end{align}
	\end{subequations}
	for all $n\in\N$. Now, we distinguish between to cases. If $t\geq 0$, we integrate~\eqref{eq:geqt} from $0$ to $t$ and, using the fact that $\psi^{(n+1)}_{t=0}=0$, we obtain the estimate\footnote{In the first estimate, we changed the order of integration by using that for every choice of $\delta>0$ and $t\geq 0$ it holds that \begin{align*}\begin{cases}0 \leq \tau \leq t\\ \tau-\delta \leq s\leq\tau+\delta\end{cases}\qquad\Leftrightarrow\qquad\begin{cases}-\delta\leq s \leq t+\delta\\ \mathrm{max}\{s-\delta,0\} \leq \tau\leq\mathrm{min}\{s+\delta,t\}\end{cases}\, .\end{align*}
	Note also that $\mathrm{max}\{s-\delta,0\}\leq \mathrm{min}\{s+\delta,t\}$ on the range $s\in [-\delta,t+\delta]$ for all $\delta>0$, $t\geq 0$.}
		\begin{align}
		\Vert\psi_{t}^{(n+1)}\Vert_{t}&=Ce^{\frac{1}{2}Dt}\int_{0}^{t}\underbrace{e^{-\frac{1}{2}D\tau}}_{\leq 1}\int_{\tau-\delta}^{\tau+\delta}e^{-\frac{1}{2}D\vert s\vert}\Vert\psi_{s}\Vert_{s}\,ds\,d\tau\nonumber\\&\leq Ce^{\frac{1}{2}Dt}\int_{-\delta}^{t+\delta}\bigg(\underbrace{\mathrm{min}\{s+\delta,t\}-\mathrm{max}\{s-\delta,0\}}_{\leq 2\delta}\bigg)e^{-\frac{1}{2}D\vert s\vert}\Vert\psi^{(n)}_{s}\Vert_{s}\,ds\nonumber\\&\leq (2\delta C)e^{\frac{1}{2}Dt}\int_{-\delta}^{t+\delta}e^{-\frac{1}{2}D\vert s\vert}\Vert\psi^{(n)}_{s}\Vert_{s}\,ds\label{eq:EstPost}
	\end{align}
	for all $t\geq 0$ and $n\in\mathbb{N}$. 
	
	Similarly, in the case $t\leq 0$, we integrate over the estimate~\eqref{eq:leqt} from $t$ to $0$. Using again that $\psi^{(n+1)}_{t=0}=0$, we obtain the similar estimate
	\begin{align}
		\Vert\psi_{t}^{(n+1)}\Vert_{t}&=Ce^{-\frac{1}{2}Dt}\int_{t}^{0}\underbrace{e^{\frac{1}{2}D\tau}}_{\leq 1}\int_{\tau-\delta}^{\tau+\delta}e^{-\frac{1}{2}D\vert s\vert}\Vert\psi_{s}\Vert_{s}\,ds\,d\tau\nonumber\\&\leq Ce^{-\frac{1}{2}Dt}\int_{t-\delta}^{\delta}\bigg(\underbrace{\mathrm{min}\{s+\delta,0\}-\mathrm{max}\{s-\delta,t\}}_{\leq 2\delta}\bigg)e^{-\frac{1}{2}D\vert s\vert}\Vert\psi^{(n)}_{s}\Vert_{s}\,ds\nonumber\\&\leq (2\delta C)e^{-\frac{1}{2}Dt}\int_{t-\delta}^{\delta}e^{-\frac{1}{2}D\vert s\vert}\Vert\psi^{(n)}_{s}\Vert_{s}\,ds\label{eq:EstNegt}
	\end{align}
	for all $t\leq 0$ and $n\in\mathbb{N}$. Now, note that the previous two estimates~\eqref{eq:EstPost} and~\eqref{eq:EstNegt} for $t\geq 0$ and $t\leq 0$ can be combined and written in the compact form
	\begin{align}
		\forall t\in\R,\,n\in\N:\quad\Vert\psi^{(n+1)}_{t}\Vert_{t}\leq (2\delta C)e^{\frac{1}{2}D\vert t\vert}\int_{-\delta}^{\vert t\vert+\delta}e^{-\frac{1}{2}D\vert s\vert}\Vert\psi^{(n)}_{\mathrm{sgn}(t)s}\Vert_{\mathrm{sgn}(t)s}\, ds \label{ites}
	\end{align}
	
	After deriving the relevant estimates for the $\H_{t}$-norm of $\psi^{(n+1)}$ in terms of the $\H_{t}$-norm of $\psi^{(n)}$, we derive suitable estimates for $\Vert\psi^{(n)}_{t}\Vert_{t}$. As a first step, by Corollary~\ref{Cor:EnEstLoc}, we can find a constant $M>0$ such that 
	\begin{align*}
		\Vert\psi^{(0)}_{t}\Vert_{t}\leq e^{\frac{1}{2}D\vert t\vert}M
	\end{align*}
	for all $t\in\R$. Now, for given $n\in\N$, we consider the induction hypothesis 
	\begin{align*}
		\Vert\psi^{(n)}_{t}\Vert_{t}\leq M\kappa(n)e^{\frac{1}{2}D\vert t\vert}(\vert t\vert+2n\delta)^{n},\qquad\forall t\in\R\, .
	\end{align*}	
	 Clearly, this inequality holds true for $n=0$ with $\kappa(0)=1$. By induction, we assume that it is true for $n$. Using the estimate~\eqref{ites} and the induction hypothesis, we obtain
	\begin{align*}
		\Vert\psi^{(n+1)}_{t}\Vert_{t}&\leq (2\delta C)e^{\frac{1}{2}D\vert t\vert}\int_{-\delta}^{\vert t\vert+\delta}e^{-\frac{1}{2}D\vert s\vert}\Vert\psi^{(n)}_{\mathrm{sgn}(t)s}\Vert_{\mathrm{sgn}(t)s}\, ds\\&\leq (2\delta C)M\kappa(n)e^{\frac{1}{2}D\vert t\vert}\int_{-\delta}^{\vert t\vert+\delta}(\vert s\vert+2n\delta)^{n}\, ds\\&= (2\delta C)M\kappa(n)e^{\frac{1}{2}D\vert t\vert}\frac{1}{n+1}\bigg\{(\vert t\vert+2\delta n+\delta)^{n+1}-2(2n\delta)^{n+1}+(2\delta n+\delta)^{n+1}\bigg\}\\&\leq (4\delta C)M\kappa(n)e^{\frac{1}{2}D\vert t\vert}\frac{(\vert t\vert+2(n+1)\delta)^{n+1}}{n+1}\, .
	\end{align*}
	This allows us to determine the coefficients $\kappa(n)$ in $\Vert\psi^{(n)}_{t}\Vert_{t}\leq\kappa(n)e^{\frac{1}{2}D\vert t\vert}(\vert t\vert+n\delta)^{n}M$ and we obtain $\kappa(n)=\frac{1}{n!}(4C\delta)^{n}$. To sum up, we have shown that
	\begin{align}\label{eq:EstimatesSmallTR}
		\forall t\in\R\,,n\in\N:\quad \Vert\psi^{(n)}_{t}\Vert_{t} \leq \frac{M}{n!}(4C\delta)^{n}e^{\frac{1}{2}D\vert t\vert}(\vert t\vert+2n\delta)^{n}\, .
	\end{align}

	To show that the perturbative ansatz~\eqref{ansatzpertur2} is absolutely convergent in the $\H_{t}$-topology, we have to study the bounds on the $\H_{t}$-norms of $\psi^{(n)}$ for for large $n$. In this case, using Stirling's formula for the factorial, one obtains
	\begin{align*}
		\Vert\psi^{(n)}_{t}\Vert_{t} &\leq \frac{M}{n!}(4C\delta)^{n}e^{\frac{1}{2}D\vert t\vert}(\vert t\vert +2n\delta)^{n}= \frac{M}{n!}(8C\delta^2)^{n}n^{n}e^{\frac{1}{2}D\vert t\vert}\left(1+\frac{\vert t\vert}{2n\delta}\right)^n \\&\sim\frac{Me^{\frac{1}{2\delta}\vert t\vert(D\delta+1)}}{\sqrt{2\pi n}}(8e\delta^{2}C)^{n}\left(1 + \mathcal{O}\left(\frac{1}{n}\right)\right) \quad \text{as} \quad n \to \infty.
	\end{align*}
	In particular, using this inequality and the asymptotic expansion as written in the last line, it follows that 
	\begin{align*}
		\lim_{n\to\infty}\frac{\Vert\psi^{(n+1)}_{t}\Vert_{t}}{\Vert\psi^{(n)}_{t}\Vert_{t}}=8e\delta^{2}C
	\end{align*}
	for all $t\in\R$, which shows that the series $\sum_{n=0}^{\infty}\Vert\psi^{(n)}_{t}\Vert_{t}$ is convergent whenever $8e\delta^{2}C<0$, by the ratio test of convergence of series. 
	
	Now, we have constructed a map $\psi:t\mapsto\psi_{t}$, where $\psi_{t}$ is the limit of the series~\ref{ansatzpertur2} in the $\H_{t}$-topology. Now, it is clear that $t\mapsto \psi_{t}$ is measurable, since it is the pointwise limit of the smooth (and in particular strongly measurable) functions $t\mapsto \sum_{k=0}^{n}\psi^{(k)}_{t}$ for $n\to\infty$, see e.g.~\cite[Thm.~1.14]{AmannEscherIII}. The fact that $\psi$ is $L^{2}_{\mathrm{loc}}$ in time can be seen as follows: Consider the compact time interval $[0,T]$ for $T>0$. Using~\eqref{eq:EstimatesSmallTR}, we obtain
	\begin{align*}
		\Vert\psi_{t}^{(n)}\Vert_{\scrM_{T}}^2&=\int_{0}^{T}\Vert\psi^{(n)}_{t}\Vert^2_{t}\,dt \leq \frac{M^2}{(n!)^2}(4C\delta)^{2n}\int_{0}^{T}e^{D\vert t\vert}(\vert t\vert+2n\delta)^{2n}\,dt\\&\leq \frac{TM^2e^{DT}}{(n!)^2}(4C\delta)^{2n}(T+2n\delta)^{2n}\, .
	\end{align*}
	Using similar arguments as before, we conclude that the series of $\Vert\psi_{t}^{(n)}\Vert_{\scrM_{T}}^2$ is convergent. Hence, by the dominant convergence theorem applied to series (see e.g.~\cite[Thm.~3.16]{AmannEscherIII}), we conclude that the series~\eqref{ansatzpertur2} is also absolutely convergent in the $\H_{\overline{\scrM}_{T}}$-topology and 
	\begin{align*}
		\Vert\psi\Vert_{\scrM_{T}}^2=\int_{0}^{T}\Vert\psi_{t}\Vert_{t}^2\,dt=\int_{0}^{T}\sum_{n=0}^{\infty}\Vert\psi_{t}^{(n)}\Vert_{t}^2\,dt=\sum_{n=0}^{\infty}\int_{0}^{T}\Vert\psi_{t}^{(n)}\Vert_{t}^2\,dt=\sum_{n=0}^{\infty}\Vert\psi^{(n)}\Vert_{\scrM_{T}}^2<\infty\, .
	\end{align*}
	This holds true for any compact time strip, which concludes the proof.
\end{proof}

With the previous proposition, we are able to prove the main theorem of this section.

\begin{proof}[Proof of Theorem~\ref{Thm:Small}]
Consider the sequence $(\psi^{(n)})_{n\in\N}$ as constructed in Proposition~\ref{Prop:DysonSmall}. As shown in this proposition, the corresponding series converges absolutely in the $\H_{[-\delta,T+\delta]\times\scrN}$-topology to some element $\psi\in\H_{[-\delta,T+\delta]\times\scrN}$. To show that $\psi$ is a strong solution to the Cauchy problem~\eqref{eq:CauchySmall} we follows from similar steps as in Proposition~\ref{Prp:RetReg}, with the involved norms and time strips adapted properly.
\end{proof}

The following example demonstrates that Theorem~\ref{Thm:Small} cannot be expected to hold in a significantly broader sense for nonlocal potentials that do not satisfy the condition $C<(8e\delta^{2})^{-1}$ on the time kernel, even in the case $\scrS+\scrS^{\dagger}=0$.

\begin{Example}\label{Counterexample} (A simple Counterexample)\newline
	We consider $(1+1)$-dimensional Minkowski spacetime $\scrM=\R_{t}\times\R_{x}$ and the trivial line bundle over $\C$ equipped with the standard complex inner product. As a symmetric hyperbolic system we consider the operator
	\begin{align*}
		\scrS\colon C^{\infty}(\scrM)\to C^{\infty}(\scrM)\,,\qquad \scrS:=\partial_{t}\, .
	\end{align*}
	In particular, $\H_{t}\cong L^{2}(\R_{x},dx)$ for all $t\in \R_{t}$ and $\H_{\scrM_{T}}\cong L^{2}([0,T]\times\R_{x},dt\,dx)$ for $T>0$. Furthermore, we choose a real-valued function $f\in C^{\infty}_{\mathrm{c}}(\scrM)$ such that $\Vert f\Vert_{L^{2}(\R_{t}\times\R_{x})}=1$. As a nonlocal potential, we choose the integral operator $\B:C^{\infty}_{\mathrm{c}}(\scrM)\to C^{\infty}(\scrM)$ defined by
	\begin{align*}
		(\B\psi)(t,x):=-f(t,x)\int_{\R}\,d\tau\,\int_{\R}\,dy\,\dot{f}(\tau,y)\psi(\tau,y)\,,\qquad\forall \psi\in C^{\infty}_{\mathrm{c}}(\scrM)\, ,
	\end{align*}
	or in bra/ket notation, $\B=|f\ket\bra\scrS^\dagger f|=-|f\ket\bra \dot{f}|$, where $<\cdot|\cdot>$ denotes the inner product induced by the bundle metric, as usual, which in this case coincides with the $L^{2}$-inner product on $\scrM=\R_{t}\times\R_{x}$. The Schwartz kernel $k_{\B}\in C^{\infty}(\scrM\times\scrM)$ of $\B$ is regular and given by $k_{\B}(t,x;\tau,y)=-f(t,x)\dot{f}(\tau,y)\in C^{\infty}(\scrM\times\scrM)$ and the corresponding time kernel $\B_{t,\tau}:C^{\infty}_{\mathrm{c}}(\R_{x})\to C^{\infty}(\R_{x})$ can be written as
	\begin{align*}
		\B_{t,\tau}\psi_{\tau}(x)=-f(t,x)\int_{\R}\,dy\,\dot{f}(\tau,y)\psi(\tau,y)\,,\qquad\forall \psi\in C^{\infty}_{\mathrm{c}}(\scrM)\, .
	\end{align*}
	Now, in order for $\B$ to satisfy the smallness condition, we have to choose $f$ such that $\mathrm{supp}(f)\subset (0,\delta/2)\times K$, where $K\subset\R_{x}$ is some compact subset. In this way, it holds that $\B_{t,\tau}=0$ for $\vert t-\tau\vert >\delta$. Next, let us show that $\B_{t,\tau}$ is uniformly bounded in time, i.e.~that there exists a constant $C>0$ such that $\Vert\B_{t,\tau}\psi_{\tau}\Vert_{t}\leq C\Vert\psi_{\tau}\Vert_{\tau}$ for all $\psi\in C^{\infty}_{\mathrm{c}}(\scrM)$ and all $t,\tau\in\R$. For this, let $\psi\in C^{\infty}_{\mathrm{c}}(\scrM)$ be arbitrary. Then
	\begin{align*}
		\Vert\B_{t,\tau}\psi_{\tau}\Vert_{\tau}^{2}&=\int_{\R}\,dx\,\vert \B_{t,\tau}\psi_{\tau}(x)\vert^{2}=\int_{\R}\,dx\,\bigg\vert f(t,x)\int_{\R}\,dy\,\dot{f}(\tau,y)\psi(\tau,y)\bigg\vert^{2}\stackrel{\text{Hölder ineq.}}{\leq}\\&\leq \bigg(\int_{\R}\,dx\,\vert f(t,x)\vert^{2}\bigg)\bigg(\int_{\R}\,dy\,\vert \dot{f}(\tau,y)\vert^{2}\bigg)\Vert \psi_{\tau}\Vert_{\tau}^{2}\leq\\&\leq \underbrace{\max_{t,t^{\prime}\in (0,\delta/2)}\bigg(\int_{\R}\,dx\,\vert f(t,x)\vert^{2}\bigg)\bigg(\int_{\R}\,dy\,\vert \dot{f}(t^{\prime},y)\vert^{2}\bigg)}_{=:C^{2}}\Vert \psi_{\tau}\Vert_{\tau}^{2}\, .
	\end{align*} 
	Next, we will show that there exists a $\psi\in C^{\infty}_{\mathrm{c}}(\scrM)$ and a choice of $t,\tau\in\R$ such that $\Vert\B_{t,\tau}\psi_{\tau}\Vert_{t}\geq\frac{2}{\delta^{2}}\Vert\psi_{\tau}\Vert_{\tau}$, which shows that the constant $C$ cannot be chosen smaller than $\frac{2}{\delta^{2}}$ and hence that assumption (ii) in Theorem~\ref{Thm:Small} is not fulfilled for the nonlocal potential $\B$. For this, we choose 
$\psi:=\dot{f}$ and compute
	\begin{align*}
		\Vert\B_{t,t}\dot{f}_{t}\Vert_{t}^{2}&=\int_{\R}\,dx\,\bigg\vert f(t,x)\int_{\R}\,dy\,\vert\dot{f}(t,y)\vert^{2}\bigg\vert^{2}=\\&=\bigg(\int_{\R}\,dx\,\vert f(t,x)\vert^{2}\bigg)\bigg(\int_{\R}\,dy\,\vert \dot{f}(t,y)\vert^{2}\bigg)\Vert\dot{f}_{t}\Vert_{t}^{2}\stackrel{\text{Hölder ineq.}}{\geq}\\&\geq \bigg(\int_{\R}\,dx\,\vert f(t,x)\dot{f}(t,x)\vert\bigg)^{2}\Vert\dot{f}_{t}\Vert_{t}^{2}\geq\bigg(\frac{1}{2}\partial_{t}\int_{\R}\,dx\,\vert f(t,x)\vert^{2}\bigg)^{2}\Vert\dot{f}_{t}\Vert_{t}^{2}\, ,
	\end{align*}
	which holds true for any value of $t\in\R$. Now, let us set $g(t):=\int_{\R}dx\,\vert f(t,x)\vert^{2}$. By definition, we note that $g(t)\geq 0$ for all $t$ and $\Vert g\Vert_{L^{1}(0,\delta/2)}=1$. Let $t_{\mathrm{max}}\in (0,\delta/2)$ be such that $g(t)\leq g(t_{\mathrm{max}})$ for all $t\in (0,\delta/2)$. The normalisation $\int_{\R}dt\,g(t)=\int_{0}^{\delta/2}\,dt\,g(t)=1$ implies that $g(t_{\mathrm{max}})\geq \frac{2}{\delta}$. By the mean value theorem we conclude that there exists a $t_{0}\in (0,t_{\mathrm{max}})$ such that
	\begin{align*}
		\partial_{t}g(t_{0})=\frac{g(t_{\mathrm{max}})-g(0)}{t_{\mathrm{max}}-0}\geq \frac{4}{\delta^{2}}\,, 
	\end{align*} 
	since $t_{\mathrm{max}}\leq \frac{\delta}{2}$. Hence, we have shown that there exists a $t_{0}\in (0,\delta/2)$ such that 
	\begin{align*}\Vert\B_{t_{0},t_{0}}\dot{f}_{t_{0}}\Vert_{t_{0}}\geq \frac{2}{\delta^{2}}\Vert \dot{f}_{t_{0}}\Vert_{t_{0}}\, .\end{align*}
	In particular, our nonlocal potential violates the condition $C<(8e\delta^{2})^{-1}$ of Theorem~\ref{Thm:Small}. We will now show that the nonlocal equation $(\scrS-\B)\psi=\phi$ does not always admit a solution. Suppose there exists a strong solution $\psi\in L^{2}(\R_{t}\times\R_{x})$ to the nonlocal equation $(\scrS-\B)\psi=f$ with source $f$. Then, for any test function $\varphi\in C^{\infty}_{\mathrm{c}}(\scrM)$ we must have
	\begin{align}\label{eq:Cexample}
		\bra \psi|(\scrS-\B)^{\dagger}\varphi\ket=\bra f|\varphi\ket\, .
	\end{align}
	Now, note that the left-hand side can be written as
	\begin{align*}
		\bra \psi|(\scrS-\B)^{\dagger}\varphi\ket=\bra\psi|\scrS^{\dagger}f\ket-\bra\psi|\scrS^{\dagger}f\ket\bra f|\varphi\ket=\bra\psi|\scrS^{\dagger}f\ket(1-\bra f|\varphi\ket)\, .
	\end{align*}
	But now if we choose $\varphi=f$ as a test function, the left-hand side of~\eqref{eq:Cexample} gives $0$ while the right-hand side 1, which shows that such a solution $\psi$ cannot exist.
\end{Example}
\section{Application I: Maxwell's Equations in Linear Dispersive Media}\label{sec:Maxwell}
	As an application of a symmetric hyperbolic system with a retarded nonlocal potential, we consider Maxwell's equations in a linear dispersive medium. For simplicity, we assume that the Lorentzian manifold is \emph{ultrastatic}, i.e.
\begin{align*}
\scrM = \R \times \scrN, \qquad g = -dt \otimes dt + h\, ,
\end{align*}
where $(\scrN,h)$ is a smooth Riemannian $3$-manifold. It is well known that $(\scrM,g)$ is globally hyperbolic if and only if $(\scrN,h)$ is complete, see e.g.~\cite[Thm.~3.1]{SanchezUltra} and \cite[Prop.~5.2]{KayUltraStatic}. We define the divergence and curl operator on the manifold $(\scrN,h)$ in local coordinates $(x^{i})_{i=1,2,3}$ of $\scrN$ by
\begin{align*}
	\mathrm{div}\,X:=\nabla_{i}X^{i},\qquad (\mathrm{curl}\,\vec{X})_{i}:=[(*dX^{\flat})^{\sharp}]_{i}:=\epsilon_{ijk}\nabla^{j}\vec{X}^{k}
\end{align*}
for all $X\in C^{\infty}(\scrN,T\scrN)$, where $\nabla$ and $\epsilon$ denote the Levi-Civita connection and pseudotensor with sign convention $\epsilon_{123}=\sqrt{\mathrm{det}(h)}$ on $\scrN$, respectively. 

Time-dependent vector fields on $\scrN$ can be identified with sections of the pull-back bundle $\scrF:=\pi^{\ast}(T\scrN)\to\scrM$, where $\pi\colon\scrM\to\scrN$ denotes the natural projection. We equip the bundle $\scrF$ with the bundle metric induced by the Riemannian metric $h$, i.e.
\begin{align*}
	\Sl \psi|\varphi\Sr_{\scrF}:=h_{ij}\psi^{i}\varphi^{j},\qquad \forall \psi,\varphi\in C^{\infty}(\scrM,\scrF)\, ,
\end{align*}
where we wrote $\psi=\psi^{i}\partial_{i}$ and $\varphi=\varphi^{i}\partial_{i}$ in a local coordinate chart $(x^{i})_{i=1,2,3}$ of $\scrN$.

While Maxwell's equations on manifolds are often dealt with in the manifestly covariant formulation using differential forms, they can also be understood as a symmetric hyperbolic systems in terms of the electric and magnetic fields, once a time function has been fixed. On ultrastatic manifolds, this has for example been studied in \cite[App.~D]{Capoferri} and \cite{BaerCurl}, see also the lecture notes \cite[Ex.~3.7.3]{BaerLectureGeo}. On more generally manifolds (possibly with timelike boundary), Maxwell's equations as a symmetric hyperbolic system in terms of the electric and magnetic part of the Faraday tensor have been discussed in \cite{DragoGinouxMurroMaxwell}. We denote the \emph{electric} and \emph{magnetic fields} on $\scrM$ by
\begin{align*}
	E,B \in C^{\infty}(\scrM,\scrF)\,.
\end{align*} 
Additionally, we consider external sources: the \emph{electric charge density} $\rho^{(\mathrm{ext})} \in C^{\infty}(\scrM)$ and the external \emph{current density} $j^{(\mathrm{ext})} \in C^{\infty}(\scrM,\scrF)$, related by the \emph{continuity equation} 
\begin{align}\label{eq:ContEqMax}
	\partial_{t}\rho^{(\mathrm{ext})}+\mathrm{div}\,j^{(\mathrm{ext})}=0\, .
\end{align}

Now, in the presence of matter, the electric and magnetic fields $E$ and $B$ are not entirely independent of the charge and current distributions; rather, they interact with bound charges and currents within the medium. These interactions give rise to \emph{induced fields}, making the total sources inseparable from the fields themselves.  To handle this coupling in a tractable way, the standard approach is to decompose the total charge and current densities into external and induced parts, i.e.~$\rho = \rho^{(\mathrm{ext})} + \rho^{(\mathrm{ind})}$ and $j = j^{(\mathrm{ext})} + j^{(\mathrm{ind})}$, and to introduce the auxiliary fields $\vec{D}$ (\emph{electric displacement}) and  $\vec{H}$ (\emph{magnetizing field}), in Gaußian units and with speed of light $c=1$, by
\begin{align}\label{eq:Aux}
D := E + 4\pi P\,, \qquad
H := B - 4\pi M\, ,
\end{align}
where $P, M \in C^{\infty}(\scrM,\scrF)$ are the \emph{polarisation} and \emph{magnetisation} vector fields, defined in terms of the induced charge and current by the system\footnote{Note that $P$ and $M$ are only specified \emph{up to gauge}, that is, up to $(P,M)\mapsto (P+\mathrm{curl}\,G,M-\partial_{t}G)$ for any vector field $G\in C^{\infty}(\scrM,\scrF)$, as one can easily see from the defining equations.}
\begin{align*}
\begin{cases}
\rho^{(\mathrm{ind})} &= -\mathrm{div}\, P \\
j^{(\mathrm{ind})} &= \partial_t P + \mathrm{curl}\, M
\end{cases}\, .
\end{align*}
With these definitions, the \emph{macroscopic Maxwell's equations} can be written in the form
\begin{align}\label{eq:Max}
	\begin{cases}
			\partial_{t}D-\mathrm{curl}\,H&=-4\pi j^{(\mathrm{ext})}\\
			\partial_{t}B+\mathrm{curl}\,E&=0
		\end{cases},\qquad \begin{cases}
			\mathrm{div}\,D=4\pi\rho^{(\mathrm{ext})}\\
			\mathrm{div}\,B=0
		\end{cases}\, .
	\end{align}
The first pair of equations, the \emph{Ampère–Faraday–Maxwell equations}, represent the dynamical laws of the system, while the second pair, the \emph{Gauß equations}, act as constraint conditions. As written, Maxwell's equations~\eqref{eq:Max}, viewed as a system for the fields $E, D, B, H$ in terms of the external sources $\rho^{(\mathrm{ext})}$ and $j^{(\mathrm{ext})}$, are of course underdetermined. To resolve this issue, one must supplement the system with a suitable \emph{matter model} in which $P$ and $M$ are given functions of $E$ and $B$, hence determining the auxiliary fields $D$ and $H$ in terms of $E$ and $B$ via Equation~\eqref{eq:Aux}. The precise form of these \emph{constitutive equations} is determined by the physical properties of the medium and when chosen appropriately, ensure a well-posed formulation of Maxwell's equations.

The class of matter models we consider in this section are \emph{linear dispersive medium}, see e.g.~\cite{LandLifMax,Cessenat}. More precisely, we consider a \emph{linear response function} $\chi\in C^{\infty}(\R)$ with the property that $\chi(0)=0$. Then, we consider the constitutive equations	
\begin{align}\label{eq:Response}
	P(t,\vec{x})=\frac{1}{4\pi}\int_{-t_{0}}^{t}\chi(t-\tau)E(\tau,\vec{x})\,d\tau\,,\quad M(t,\vec{x})=0\,,
\end{align}
where $t_{0}\geq 0$ is fixed and where the right-hand side has to be understood in the distributional sense, i.e.~by integrating against a test section of $\scrF$ that is contracted with above integrand via the bundle metric. An explicit example of such a model is the well-known \emph{Drude-Lorentz model} that describes the response of electrons in a material to external electromagnetic fields, for which the response function $\chi$ is given by
\begin{align*}
	\chi(t)\propto e^{-c_{1}t}\mathrm{sin}(c_{2}t)
\end{align*}
for some constants $c_{1},c_{2}>0$, see e.g.~\cite{Marletta,Engstrom,Cassier} for some recent discussions of this model in various of its mathematical aspects. The dynamical part of the Maxwell equations~\eqref{eq:Max} together with~the constitutive equations~\eqref{eq:Response} can be written as
\begin{align*}
	\bigg[\partial_{t}+\begin{pmatrix}
		0 & -\mathrm{curl}\\
		\mathrm{curl} &0
	\end{pmatrix}\bigg]
	\begin{pmatrix}
		E(t,\cdot)\\B(t,\cdot)
	\end{pmatrix}+\int_{-t_{0}}^{t}
	\begin{pmatrix}
		\dot{\chi}(t-\tau)E(\tau,\cdot)\\  0
	\end{pmatrix}
	\,d\tau=
	\begin{pmatrix}
	-4\pi j^{(\mathrm{ext})}(t,\cdot)\\ 0
	\end{pmatrix}\, ,
\end{align*}
where we used that $\chi(0)=0$, which implies that there is no boundary term when taking the $t$-derivative of the nonlocal part in the definition of $P$. The constraint equations, i.e.~the second pair of equations in~\eqref{eq:Max}, are also nonlocal and can be written as
\begin{align}\label{eq:ConstraintsMax}
	\mathrm{div}\,E(t,\cdot)+\int_{-t_{0}}^{t}\chi(t-\tau)\,\mathrm{div}\,E(\tau,\cdot)\,d\tau=4\pi\rho^{(\mathrm{ext})}(t,\cdot)\,,\qquad \mathrm{div}\,B=0\, .
\end{align} 

After this preliminary discussion, we show how the above equations fits in our framework developed in Section~\ref{Subsection:Ret}.

\begin{Lemma}\label{Prop:Maxwell} Let $(\scrM=\R\times\scrN,g=-dt\otimes dt+h)$ be a globally-hyperbolic ultrastatic Lorentzian manifold. Set $\scrE:=\scrF\oplus\scrF$ equipped with the obvious bundle metric $\Sl\cdot|\cdot\Sr_{\scrE}=\Sl\cdot|\cdot\Sr_{\scrF}^{\oplus 2}$. Then,
\begin{align*}
	\scrS_{\mathrm{el}}:=\partial_{t}+\begin{pmatrix}
		0 & -\mathrm{curl}\\
		\mathrm{curl} &0
	\end{pmatrix}\colon C^{\infty}(\scrM,\scrE)\to C^{\infty}(\scrM,\scrE)
\end{align*}
is a symmetric hyperbolic system in the sense of Definition~\ref{Def:SHS}. Furthermore, let $\chi\in C^{\infty}(\R)$ be such that $\chi(0)=0$. Then, the piecewise continuous kernel $k_{\chi}(x,y)\colon \scrE_{y}\to\scrE_{x}$ defined by
\begin{align*}
	k_{\chi}((t,\vec{x}),(\tau,\vec{y}))=\theta(\tau+t_{0})\theta(t-\tau)\begin{pmatrix}\dot{\chi}(t-\tau)\delta(\vec{x}-\vec{y}) & 0 \\ 0& 0\end{pmatrix}
\end{align*}
is the kernel of a retarded and past compact nonlocal potential $\B_{\chi}\colon C^{\infty}_{\mathrm{c}}(\scrM,\scrE)\to C^{\infty}(\scrM,\scrE)$.
\end{Lemma}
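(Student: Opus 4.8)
The plan is to prove the two assertions separately. For the first, I would identify the principal symbol of $\scrS_{\mathrm{el}}$ and check conditions (S) and (H) of Definition~\ref{Def:SHS}. Working in local coordinates $(x^{i})$ on $\scrN$, with the metric connection on $\scrE=\scrF\oplus\scrF$ induced by the Levi-Civita connection of $h$ and flat in the $t$-direction, one has $\sigma_{\scrS_{\mathrm{el}}}(dt)=\mathrm{id}_{\scrE}$, while the principal symbol of $\mathrm{curl}$ is $\sigma_{\mathrm{curl}}(\xi)\colon X\mapsto\xi^{\sharp}\times_{h}X$, which satisfies $\Sl\sigma_{\mathrm{curl}}(\xi)X|Y\Sr_{\scrF}=\mathrm{vol}_{h}(\xi^{\sharp},X,Y)$ and is therefore skew-adjoint with respect to $\Sl\cdot|\cdot\Sr_{\scrF}$. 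Hence, for $\tau=\tau_{0}\,dt+\vec\tau$,
\[
	\sigma_{\scrS_{\mathrm{el}}}(\tau)=\tau_{0}\,\mathrm{id}_{\scrE}+\begin{pmatrix} 0 & -\sigma_{\mathrm{curl}}(\vec\tau)\\ \sigma_{\mathrm{curl}}(\vec\tau) & 0\end{pmatrix}
\]
is Hermitian — a block operator with zero diagonal and off-diagonal blocks $A$, $-A$ with $A^{\dagger}=-A$ is self-adjoint — which gives (S). For (H), a future-directed timelike $\tau$ may be rescaled so that $\tau_{0}=1$ and $|\vec\tau|_{h}<1$; then $\sigma_{\scrS_{\mathrm{el}}}(\tau)=\mathrm{id}_{\scrE}+M$ with $M$ Hermitian and $M^{2}$ block-diagonal with blocks $\sigma_{\mathrm{curl}}(\vec\tau)^{\dagger}\sigma_{\mathrm{curl}}(\vec\tau)\ge 0$ of norm $\|\sigma_{\mathrm{curl}}(\vec\tau)\|^{2}=|\vec\tau|_{h}^{2}<1$, so the spectrum of $M$ lies in $[-|\vec\tau|_{h},|\vec\tau|_{h}]$ and that of $\mathrm{id}_{\scrE}+M$ in $(0,\infty)$. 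Since $h$ is Riemannian, $\Sl\cdot|\cdot\Sr_{\scrE}=\Sl\cdot|\cdot\Sr_{\scrF}^{\oplus 2}$ is positive definite (Hermitian after the usual complexification), and any zero-order term arising from expressing $\mathrm{curl}$ through $\nabla$ does not affect the symbol and is thus irrelevant.

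For the second assertion I would first unwind the kernel. On an ultrastatic spacetime $d\mu_{\scrM}=d\mu_{\scrN}\,dt$, so the spatial $\delta(\vec x-\vec y)$ collapses the $\scrN$-integral and the operator attached to $k_{\chi}$ acts on $\psi=(E,B)\in C^{\infty}_{\mathrm{c}}(\scrM,\scrE)$ by
\[
	(\B_{\chi}\psi)(t,\vec x)=\Bigl(\int_{-t_{0}}^{t}\dot\chi(t-\tau)\,E(\tau,\vec x)\,d\tau,\ 0\Bigr)\, .
\]
Because $\chi,\dot\chi\in C^{\infty}(\R)$ and $E$ is compactly supported, differentiation under the integral sign together with the Leibniz rule for the variable endpoint shows that $\B_{\chi}$ maps $C^{\infty}_{\mathrm{c}}(\scrM,\scrE)$ continuously into $C^{\infty}(\scrM,\scrE)$; pairing $\B_{\chi}\varphi$ against a test section and applying Fubini recovers the pairing $\langle k_{\chi},\psi^{\ast}\boxtimes\varphi\rangle_{\scrM\times\scrM}$, confirming that $k_{\chi}$ is indeed its Schwartz kernel.

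Next I would exhibit the time kernel and verify Definition~\ref{Def:NonLocPot} directly; the convenient wavefront-set criterion~\eqref{eq:TimeKernelCondition} is \emph{not} available, since the Heaviside factor $\theta(t-\tau)$ contributes covectors conormal to the slices $\scrN_{t}$, so $\mathrm{WF}(k_{\chi})$ meets $\bigcup_{t,\tau}N^{\ast}(\scrN_{t}\times\scrN_{\tau})$. Instead, set $(\B_{\chi})_{t,\tau}:=\theta(\tau+t_{0})\,\theta(t-\tau)\,\dot\chi(t-\tau)\,\mathrm{diag}(\mathrm{id}_{\scrF},0)$, a bounded multiplication operator, hence a continuous map $C^{\infty}_{\mathrm{c}}(\scrN_{\tau},\scrE|_{\scrN_{\tau}})\to C^{\infty}(\scrN_{t},\scrE|_{\scrN_{t}})$. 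Condition (i) of Definition~\ref{Def:NonLocPot} holds because $(t,\tau)\mapsto\langle(\B_{\chi})_{t,\tau}\varphi_{\tau},\psi_{t}\rangle_{\scrN_{t}}$ is the product of the bounded, piecewise-continuous function $\theta(\tau+t_{0})\theta(t-\tau)\dot\chi(t-\tau)$ with a jointly smooth, compactly supported function of $(t,\tau)$, hence locally integrable; condition (ii) is Fubini applied to the previous display. Thus $\B_{\chi}$ is a nonlocal potential with time kernel $(\B_{\chi})_{t,\tau}$. Finally the causal properties are read off: $\theta(t-\tau)$ forces $(\B_{\chi})_{t,\tau}=0$ for $\tau>t$, and since $\B_{\chi}$ is local in space the spatial support of $(\B_{\chi}\psi)(t_{1},\cdot)$ lies inside the union of the spatial supports of $E(\tau,\cdot)$ over $-t_{0}\le\tau\le t_{1}$; as the vertical segment $\tau\mapsto(\tau,\vec x)$ is future-directed timelike, this yields $\mathrm{supp}(\B_{\chi}\psi)\subset J^{+}(\mathrm{supp}(\psi))$ and $\mathrm{supp}(k_{\chi})\subset\{(x,y)\mid y\in J^{-}(x)\}$, i.e.\ $\B_{\chi}$ is retarded in the sense of Definition~\ref{Def:Potentials}(R); and $\theta(\tau+t_{0})$ forces $(\B_{\chi})_{t,\tau}=0$ for $\tau<-t_{0}$, which is past-compact support with switch-on time $t_{0}$.

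The main obstacle is precisely this third step: because the sufficient condition~\eqref{eq:TimeKernelCondition} fails, membership of $\B_{\chi}$ in the class of nonlocal potentials with time kernel has to be established by hand from the explicit form of the kernel (and the word ``semi-regular'' in Definition~\ref{Def:NonLocPot} must be read in the operator sense ``$\B_{\chi}$ maps $C^{\infty}_{\mathrm{c}}$ continuously to $C^{\infty}$'', the spatial $\delta$ being a local, not a smoothing, contribution). The symbol computation of the first part and the verification of conditions (i)--(ii) in the third part are otherwise routine.
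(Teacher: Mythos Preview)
Your proposal is correct and follows essentially the same route as the paper: the same symbol computation and skew-adjointness of $\sigma_{\mathrm{curl}}$ for (S), the same operator-norm bound $\|\sigma_{\mathrm{curl}}(\vec\tau)\|\le|\vec\tau|_h<1$ for (H) (you package it via the spectrum of $M$ with $M^2$ block-diagonal, the paper via a direct Cauchy--Schwarz estimate on the quadratic form), and for the nonlocal potential the paper simply appeals to Example~\ref{Exam:RetUltraStat} rather than invoking the wavefront criterion, so your explicit verification of Definition~\ref{Def:NonLocPot} is more detailed than, but entirely in line with, what the paper does.
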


\begin{proof}
	Following Example~\ref{Exam:RetUltraStat}, it is clear that $\B_{\chi}$ is well-defined nonlocal retarded potential with time kernel in the sense of Definition~\ref{Def:NonLocPot} and Definition~\ref{Def:Potentials}.
	
	To show that $\scrS_{\mathrm{el}}$ is a symmetric hyperbolic system, we first note that the principal symbol $\sigma_{\mathrm{curl}}\colon T^{\ast}\scrN\to\mathrm{End}(\scrF)$ of the operator $\mathrm{curl}$ is given by
	\begin{align*}
		(\sigma_{\mathrm{curl}}(\vec{\xi})\psi)_{i}=\epsilon_{ijk}\xi^{j}\psi^{k}\,,\qquad\forall\vec{\xi}=\xi_{i}dx^{i}\in T^{\ast}\scrN\, ,\psi\in C^{\infty}(\scrM,\scrF)\, ,
	\end{align*}
	where $(x^{i})_{i=1,2,3}$ are local coordinates on $\scrN$. Now, it is easy to see that the endomorphism $\sigma_{\mathrm{curl}}(\vec{\xi})$ is antisymmetric with respect to $\Sl\cdot|\cdot\Sr_{\scrF}$, since
	\begin{align*}
		\Sl	\sigma_{\mathrm{curl}}(\vec{\xi})\psi|\varphi\Sr_{\scrF}=\epsilon_{ijk}\xi^{j}\psi^{k}\varphi^{i}=-\epsilon_{kji}\xi^{j}\psi^{k}\varphi^{i}=-\Sl\psi|\sigma_{\mathrm{curl}}(\vec{\xi})\varphi\Sr_{\scrF}
	\end{align*}		
	for all $\vec{\xi}\in T^{\ast}\scrN$ and $\psi,\varphi\in C^{\infty}(\scrM,\scrF)$. In particular, this shows that 
	\begin{align*}
		\sigma_{\scrS_{\mathrm{el}}}(\xi)=\begin{pmatrix}
			\xi_{0}\mathrm{id}_{\scrF} & -\sigma_{\mathrm{curl}}(\vec{\xi})\\ \sigma_{\mathrm{curl}}(\vec{\xi}) & \xi_{0}\mathrm{id}_{\scrF}
		\end{pmatrix}\in\mathrm{End}(\scrE)\,,\qquad\forall \xi=(\xi_{0},\vec{\xi})\in T^{\ast}\scrM
	\end{align*}
	is symmetric with respect to $\Sl\cdot|\cdot\Sr_{\scrE}$. For hyperbolicity, we take a general future-directed time-like vector of the form $\tau=dt+\vec{\alpha}$, where $\vec{\alpha}=\alpha_{i}dx^{i}\in C^{\infty}(\scrM,\scrF)$ is such that $\Vert\vec{\alpha}\Vert_{\scrF}^{2}=h^{ij}\alpha_{i}\alpha_{j}<1$. Then, for every $\omega=(\psi,\varphi)$ with $\psi,\varphi\in C^{\infty}(\scrM,\scrF)$, the Cauchy-Schwartz inequality implies
	\begin{align*}
		\Sl\omega| \sigma_{\scrS_{\mathrm{el}}}(\tau)\omega\Sr_{\scrE}&=\Vert\omega\Vert_{\scrE}^{2}-2\Sl\psi|\sigma_{\mathrm{curl}}(\vec{\alpha})\varphi\Sr_{\scrF}\\&\geq \Vert\omega\Vert_{\scrE}^{2}-2\Vert\psi\Vert_{\scrF}\Vert\varphi\Vert_{\scrF}\Vert\sigma_{\mathrm{curl}}(\vec{\alpha})\Vert_{\mathrm{op}}\\&\geq\Vert\omega\Vert_{\scrE}^{2}(1-\Vert\sigma_{\mathrm{curl}}(\vec{\alpha})\Vert_{\mathrm{op}})\, ,
	\end{align*}
	where we used that $-2\Vert\psi\Vert_{\scrF}\Vert\varphi\Vert_{\scrF}\geq -(\Vert\psi\Vert_{\scrF}^{2}+\Vert\varphi\Vert_{\scrF}^{2})=-\Vert\omega\Vert_{\scrE}^{2}$ in the last step. Hence, the quadratic form $\Sl\cdot| \sigma_{\scrS_{\mathrm{el}}}(\tau)\cdot\Sr_{\scrE}$ is positive-definite provided that the operator norm of $\sigma_{\mathrm{curl}}(\vec{\alpha})$ satisfies $\Vert\sigma_{\mathrm{curl}}(\vec{\alpha})\Vert_{\mathrm{op}}<1$. Now, we note that
	\begin{align*}
		\Vert \sigma_{\mathrm{curl}}(\vec{\alpha})\psi\Vert_{\scrF}^{2}=\epsilon_{ijk}\epsilon^{ilm}\alpha^{j}\alpha_{l}\psi^{k}\psi_{m}=\Vert\vec{\alpha}\Vert_{\scrF}^{2}\Vert\psi\Vert_{\scrF}^{2}-(\langle\vec{\alpha}|\psi\rangle_{\scrF})^{2}\leq \Vert\vec{\alpha}\Vert_{\scrF}^{2}\Vert\psi\Vert_{\scrF}^{2}\, ,
	\end{align*}		
	where we used that $\epsilon_{ijk}\epsilon^{ilm}=\delta_{j}^{l}\delta_{k}^{m}-\delta_{j}^{m}\delta_{k}^{l}$. In particular, it follows that
	\begin{align*}
		\Vert\sigma_{\mathrm{curl}}(\vec{\alpha})\Vert_{\mathrm{op}}^{2}&=\sup_{\Vert\psi\Vert_{\scrF}=1}\Vert \sigma_{\mathrm{curl}}(\vec{\alpha})\psi\Vert_{\scrF}^{2}\leq\Vert\vec{\alpha}\Vert_{\scrF}^{2}<1\, ,
	\end{align*}
by assumption on $\vec{\alpha}$, which concludes the proof.	
\end{proof}

Following this preliminary discussion, we now turn to the Cauchy problem for Maxwell's equations in linear dispersive media, as an example of a symmetric hyperbolic system with a retarded and past compact nonlocal potential.

\begin{Prp}\label{Thm:Maxwell}
	Let $(\scrM,g)$ be a four-dimensional ultrastatic globally hyperbolic manifold and consider the Maxwell operator $\scrS_{\mathrm{el}}$ as defined in~Lemma~\ref{Prop:Maxwell}. Furthermore, let $\chi\in C^{j}(\R)$ with $j\geq 2$,  $\chi(0)=0$, and consider the corresponding nonlocal potential $\B_{\chi}$ with $t_{0}=0$ as defined in~Lemma~\ref{Prop:Maxwell}. Then, for every choice of initial data $\mathfrak{f}_{1},\mathfrak{f}_{2}\in C^{\infty}_{\mathrm{c}}(\scrN_{0},\scrF\vert_{\scrN_{0}})$ and external sources supported on the time strip $\overline{\scrM}_{T}$, i.e.
	\begin{align*}
		j^{(\mathrm{ext})}\in C^{\infty}_{\mathrm{c}}(\overline{\scrM}_{T},\scrF),\,\rho^{(\mathrm{ext})}\in C^{\infty}_{\mathrm{c}}(\overline{\scrM}_{T})\,\qquad \partial_{t}\rho^{(\mathrm{ext})}+\mathrm{div}\,j^{(\mathrm{ext})}=0
	\end{align*}
	there exists a unique classical solution $\psi:=(E,B)\in C^{j-2}(\overline{\scrM}_{T},\scrE)$ on the time strip $\overline{\scrM}_{T}$ to the nonlocal Cauchy problem
	\begin{align*}
		\begin{cases}
			(\scrS_{\mathrm{el}}-\B_{\chi})\psi&=\phi\\
			\psi\vert_{\scrN_{0}}&=\mathfrak{f}
		\end{cases}\, ,\qquad\text{where}\qquad \psi:=\begin{pmatrix}
			E\\ B
		\end{pmatrix}\,,\quad\phi:=\begin{pmatrix}
			-4\pi j^{(\mathrm{ext})}\\ 0
		\end{pmatrix}\,,\qquad\mathfrak{f}:=\begin{pmatrix}
		\mathfrak{f}_{1}\\\mathfrak{f}_{2}
		\end{pmatrix}
	\end{align*}
	that propagates at most with the speed of light, i.e.
	\begin{align*}
		\mathrm{supp}(\psi)\cap J^{+}(\scrN_{0})\subset J^{+}(\mathrm{supp}(\phi)\cap\mathrm{supp}(\mathfrak{f}))\, .
\end{align*}		
	If, furthermore, the initial data are chosen such that $\mathrm{div}\,\mathfrak{f}_{1}=\rho^{(\mathrm{ext})}\vert_{t=0}$ and $\mathrm{div}\,\mathfrak{f}_{2}=0$, then also the constraint equations~\eqref{eq:ConstraintsMax} are satisfied.
\end{Prp}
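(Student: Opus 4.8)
The plan is to deduce this from Theorem~\ref{Thm:Ret} applied to the pair $(\scrS_{\mathrm{el}},\B_{\chi})$, together with an elementary constraint-propagation argument. First I would check the hypotheses of Theorem~\ref{Thm:Ret}. Assumption~(i) — that $\scrS_{\mathrm{el}}$ is a symmetric hyperbolic system and $\B_{\chi}$ is a retarded, past-compact nonlocal potential with switch-on time $t_{0}=0$ — is exactly Lemma~\ref{Prop:Maxwell}. For assumption~(ii) I would use that on an ultrastatic manifold $\beta\equiv 1$ and, from the explicit form of $\sigma_{\scrS_{\mathrm{el}}}$ in Lemma~\ref{Prop:Maxwell}, $\sigma_{\scrS_{\mathrm{el}}}(\eta)=\sigma_{\scrS_{\mathrm{el}}}(dt)=\mathrm{id}_{\scrE}$; hence the weighted potential $\V_{\chi}=\beta\sigma_{\scrS_{\mathrm{el}}}(\eta)^{-1}\B_{\chi}$ equals $\B_{\chi}$, and its time kernel $(\V_{\chi})_{t,\tau}$ is simply multiplication by $\dot{\chi}(t-\tau)$ on the $E$-component and zero on the $B$-component. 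Since $\B_{\chi}$ is retarded and past compact with $t_{0}=0$, for $t,\tau\in[0,T]$ one has $t-\tau\in[0,T]$, so $\Vert(\V_{\chi})_{t,\tau}\varphi_{\tau}\Vert_{t}\le\big(\max_{s\in[0,T]}\vert\dot{\chi}(s)\vert\big)\Vert\varphi_{\tau}\Vert_{\tau}$, which is finite because $\chi\in C^{j}$ with $j\ge 2$. Theorem~\ref{Thm:Ret} then provides a strong solution, and once it is upgraded to a classical one, Proposition~\ref{Prop:UniquenessRetarded} yields uniqueness and the inclusion $\mathrm{supp}(\psi)\cap J^{+}(\scrN_{0})\subset J^{+}(\mathrm{supp}(\phi)\cup\mathrm{supp}(\mathfrak{f}))$.

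Next I would establish the regularity $\psi\in C^{j-2}(\overline{\scrM}_{T},\scrE)$ through assumption~(iii) of Theorem~\ref{Thm:Ret}, i.e.\ the extended-system construction of Section~\ref{Subsec:ExtSHS}. The key observation is that $k_{\B_{\chi}}$ is local in the spatial variables (its spatial part is $\delta(\vec{x}-\vec{y})\,\mathrm{id}$) and depends on time only through $\dot{\chi}$. Therefore, in $\mathsf{B}_{1}$ the off-diagonal operator $\nabla^{\scrE}\circ\B_{\chi}-(\B_{\chi}\otimes\mathrm{id})\circ\nabla^{\scrE}$ contributes only zero-order local terms in the spatial directions — absorbed into $\mathsf{S}_{1,0}$ as in Proposition~\ref{Prop:SHSDer} — while in the time direction, after one integration by parts, it reduces to an operator of the same type as $\B_{\chi}$ plus an operator supported on $\scrN_{0}$; by past-compactness the latter annihilates every perturbative iterate $\psi^{(n)}$ with $n\ge 1$ and at $n=0$ contributes only a smooth source, hence it can be treated exactly as in the Remark following Proposition~\ref{Prop:SHSDerNOnLoc}. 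Iterating, each $\mathsf{B}_{i}$ is again a bounded retarded nonlocal potential of the same shape as long as $\chi$ carries enough derivatives, so $\psi\in W^{2,m}(\scrM_{T},\scrE_{\beta})$ for the appropriate $m$; feeding this into the Sobolev embedding on the four-dimensional strip $\overline{\scrM}_{T}$ — the two lost derivatives accounting for the passage from $\chi$ to the kernel $\dot{\chi}$ and for the embedding in dimension $k+1=4$ — gives $\psi=(E,B)\in C^{j-2}$, a classical solution in the sense of Definition~\ref{Def:ClassSol}(R). This derivative bookkeeping — pinning down the split of each $\mathsf{B}_{i}$ into a bounded nonlocal potential with time kernel plus a harmless initial-slice piece, and counting exactly how many derivatives of $\chi$ each level consumes so the final Sobolev exponent is $C^{j-2}$ — is the step I expect to be the main obstacle; everything else is a transcription of Theorem~\ref{Thm:Ret} and Lemma~\ref{Prop:Maxwell}.

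Finally I would propagate the Gauß constraints. Applying $\mathrm{div}$ to the dynamical equation $\partial_{t}B+\mathrm{curl}\,E=0$ and using $\mathrm{div}\,\mathrm{curl}=0$ gives $\partial_{t}(\mathrm{div}\,B)=0$, so $\mathrm{div}\,B\equiv\mathrm{div}\,\mathfrak{f}_{2}=0$ on $\overline{\scrM}_{T}$. For the electric constraint I set $w(t):=\mathrm{div}\,E(t,\cdot)+\int_{0}^{t}\chi(t-\tau)\,\mathrm{div}\,E(\tau,\cdot)\,d\tau-4\pi\rho^{(\mathrm{ext})}(t,\cdot)$; differentiating in $t$, using $\chi(0)=0$, applying $\mathrm{div}$ to the first dynamical equation (again $\mathrm{div}\,\mathrm{curl}=0$) and inserting the continuity equation $\partial_{t}\rho^{(\mathrm{ext})}+\mathrm{div}\,j^{(\mathrm{ext})}=0$ gives $\dot{w}\equiv 0$, while $w(0)=\mathrm{div}\,\mathfrak{f}_{1}-4\pi\rho^{(\mathrm{ext})}\vert_{t=0}=0$ by the assumed constraint on $\mathfrak{f}_{1}$; hence $w\equiv 0$, which is precisely the first equation in~\eqref{eq:ConstraintsMax}. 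These manipulations use only the first spatial derivatives of $\psi$ and the $t$-differentiability of the Volterra terms, both available from $\psi\in W^{2,m}(\scrM_{T},\scrE_{\beta})$ — equivalently, they may be carried out slice-wise in the distributional sense for the strong solution.
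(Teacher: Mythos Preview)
Your proposal is correct and follows essentially the same route as the paper: apply Theorem~\ref{Thm:Ret} after verifying (i)--(ii) via Lemma~\ref{Prop:Maxwell} and the boundedness of $\dot\chi$ on $[0,T]$, handle regularity through the extended systems of Section~\ref{Subsec:ExtSHS}, and propagate the Gau{\ss} constraints by taking divergences and using the continuity equation. The paper's regularity step is in fact slightly sharper than you state: the explicit computation there shows that the commutator $[\partial_t,\B_\chi]\varphi$ reduces \emph{exactly} to the boundary term $\dot\chi(t)\,\varphi(0,\cdot)$ (the nonlocal integrals cancel completely after the integration by parts), so the off-diagonal piece of $\mathsf{B}_1$ is purely the initial-slice contribution and no residual ``operator of the same type as $\B_\chi$'' survives --- your conclusion is unaffected, but your phrasing suggests a leftover nonlocal term that is not actually present.
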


\begin{proof}
	The existence of a strong solution is a direct application of the first part of Theorem~\ref{Thm:Ret}, where we note that the condition $\Vert\V_{t,\tau}\psi_{\tau}\Vert\leq C\Vert\psi_{\tau}\Vert_{\tau}$ on $[0,T]$ is automatically satisfies given that $\dot{\chi}$ is continuous. 
	
	For regularity, observe the following: If we look at the the operator $\mathsf{B}_{1}$ of the extended system acting on $(\psi,\nabla^{\scrE}\psi)$ as defined in~\eqref{eq:ExtB1}, the only relevant terms to consider is the commutator $[\partial_{t},\B]$. Now, for any $\varphi\in C^{\infty}_{\mathrm{pc}}(\scrM,\scrF)$, it holds that
	\begin{align*}
		&\partial_{t}\int_{0}^{t}\dot{\chi}(t-\tau)\varphi(\tau,\cdot)\,d\tau-\int_{0}^{t}\dot{\chi}(t-\tau)\partial_{\tau}\varphi(\tau,\cdot)=\\&=\ddot{\chi}(0)\varphi(t,\cdot)+\int_{0}^{t}\ddot{\chi}(t-\tau)\varphi(\tau,\cdot)\,d\tau-\bigg(\ddot{\chi}(0)\varphi(t,\cdot)-\ddot{\chi}(t)\varphi(0,\cdot)-\int_{0}^{t}\underbrace{(\partial_{\tau}\dot{\chi}(t-\tau))}_{-\ddot{\chi}(t-\tau)}\varphi(\tau,\cdot)\,d\tau\bigg)\\&=\ddot{\chi}(t)\mathfrak{f}
	\end{align*}
	Hence, up to a local operator that can be added to the zero-order terms of $\mathsf{S}_{1}$, the nonlocal potential $\mathsf{B}_{1}$ just acts component-wise as $\B$ on $\psi$. Hence, if $\chi$ is regular enough, the extended systems can be solved in the same way and we obtain a solution in $\psi\in C^{j-2}(\overline{\scrM}_{T},\scrE)$. Uniqueness of finite speed of propagation then follows from the second part of Theorem~\ref{Thm:Ret}.
	
	Now, to show that also the constraint equations are satisfied provided the initial data is chosen such that $\mathrm{div}\,\mathfrak{f}_{1}=\rho^{(\mathrm{ext})}\vert_{t=0}$ and $\mathrm{div}\,\mathfrak{f}_{2}=0$, we take the divergence of $(\scrS_{\mathrm{el}}-\B_{\chi})\psi=\phi$, which implies
	\begin{align*}
		\partial_{t}\bigg(\mathrm{div}\,E(t,\cdot)+\int_{0}^{t}\chi(t-\tau)\,\mathrm{div}\,E(\tau,\cdot)\,d\tau\bigg)=4\pi\partial_{t}\rho^{(\mathrm{ext})}(t,\cdot)\,,\qquad \partial_{t}\mathrm{div}\,B=0\, ,
	\end{align*} 
	where we used the continuity equation $\partial_{t}\rho^{(\mathrm{ext})}+\mathrm{div}\,j^{(\mathrm{ext})}=0$.  In particular, this implies that there are constants $C_{1},C_{2}$ such that 
	\begin{align*}
		\mathrm{div}\,E(t,\cdot)+\int_{0}^{t}\chi(t-\tau)\,\mathrm{div}\,E(\tau,\cdot)\,d\tau-4\pi\rho^{(\mathrm{ext})}(t,\cdot)=C_{1}\,,\qquad \mathrm{div}\,B=C_{2}\, ,
	\end{align*} 
	Now, constraint equations~\eqref{eq:ConstraintsMax} are fulfilled if and only if $C_{1}=C_{2}=0$. Due to the choice $t_{0}=0$, the restriction of the above equations $t=0$ gives the local conditions
	\begin{align*}
		\mathrm{div}\,\mathfrak{f}_{1}-4\pi\rho^{(\mathrm{ext})}\vert_{t=0}=C_{1}\,,\qquad \mathrm{div}\,\mathfrak{f}_{2}=C_{2}\, .
	\end{align*}
	Hence, we conclude that the constraint equations~\eqref{eq:ConstraintsMax} are fulfilled if and only if the initial data is chosen such that $\mathrm{div}\,\mathfrak{f}_{1}=\rho^{(\mathrm{ext})}\vert_{t=0}$ and $\mathrm{div}\,\mathfrak{f}_{2}=0$.
\end{proof}
\section{Application II: Dirac Equation and Potentials with Short Time Range}\label{sec:Dirac}
In the final section, we focus on an important special case of symmetric hyperbolic systems, namely the classical \textit{Dirac operator} on globally hyperbolic manifolds. After introducing the necessary spin geometric background, we illustrate the main results of Section~\ref{Sec:SHSNonLoc} in this example. In addition, we establish a uniqueness result for nonlocal potentials with short time range by endowing the space of solutions with a modified (nonlocal) inner product. We conclude by reviewing recent applications of the nonlocal Dirac equation to causal fermion systems.

\subsection{Dirac Equations and Nonlocal Potentials with Short Time Range}
Let $(\scrM,g)$ be a globally hyperbolic manifold of dimension $k+1$ (with $k\geq 1$) equipped with a \textit{spin structure}, i.e.~a pair consisting of a ${\rm Spin}_0(1,k)$-principal bundle $P_{\rm Spin_0}$ over $\scrM$ together with a twofold covering map $\Lambda$ from $P_{\rm Spin_0}$ to the bundle of positively-oriented tangent frames $P_{\rm SO^+}$ of $\scrM$ such that the following diagram commutes:

\begin{equation*}
	\begin{tikzcd}
		P_{{\rm Spin}_0} \times {\rm Spin}_0(1,k) \arrow[r,"\cdot"]\arrow[d,swap,"\Lambda\times\lambda"] & P_{\rm Spin} \arrow[rd,"\pi"]\arrow[d,"\Lambda"] & \\
		P_{\rm SO^+} \times \textnormal{SO}(1,k)\arrow[r,"\cdot"]  & P_{\rm SO^+} \arrow[r,swap,"\pi"] & \scrM
	\end{tikzcd}
\end{equation*}
where $\lambda$ denotes the double covering at the group level, $\pi$ refers to the respective bundle projections, and the horizontal arrows represent the right group actions on the corresponding principal bundles.

The existence of spin structures is related to the topology of $\scrM$ and there are well-known topological obstructions. A sufficient -- but not necessary -- condition for the existence of a spin structure is parallelizability of $\scrM$. In particular, since any $3$-dimensional  orientable manifold is parallelizable, it follows by Theorem~\ref{thm:Sanchez} that any $(1+3)$-dimensional globally hyperbolic manifold admits a spin structure.

Now, given a fixed spin structure on $(\scrM,g)$, one can use the spin representation $\rho\colon {\rm Spin}_0(1,k) \to \textnormal{Aut}(\mathbb{C}^N)$ to construct the \textit{spinor bundle}, that is, the associated $\C$-vector bundle
$$S\scrM:={\rm Spin}_0(1,k)\times_\rho \mathbb{C}^N\, ,$$
of rank $N:= 2^{\lfloor \frac{k+1}{2}\rfloor}$ (where~$\lfloor\cdot\rfloor$ is the Gau{\ss} bracket; for details see~\cite{baum,lawson+michelsohn}). 
The spinor bundle comes together with the following additional structures:
\begin{itemize}
\item[$\bullet$] a natural ${\rm Spin}_0(1, k)$-invariant indefinite fibre metric
\begin{equation}\label{eq: spin prod}
\Sl\cdot\vert \cdot	\Sr_{{S_p}\scrM} : {S_p}\scrM \times S_p\scrM \to \mathbb{C}\,;
\end{equation}
\item[$\bullet$] a \textit{Clifford multiplication}, i.e.~a vector bundle homomorphism
$$\gamma\colon T\scrM\to \text{End}(S\scrM)\, ,$$ 
which satisfies $\Sl{\gamma(u)\psi}\vert{\phi}\Sr_{{S_p}\scrM}=\Sl{\psi}\vert{\gamma(u)\phi}\Sr_{{S_p}\scrM}$ and the Clifford relations
\begin{equation*}\label{eq:gamma symm}
 \gamma(u)\gamma(v) + \gamma(v)\gamma(u) = -2g(u, v)\1_{S_p\scrM}
\end{equation*}
for all $p \in \scrM$, $u, v \in T_p\scrM$ and $\psi,\phi\in S_p\scrM$.
\end{itemize}

After this preliminary discussion of the relevant geometric structures, we now turn to the (classical) Dirac operator. 

\begin{Def} \label{Def:Dirac}
The \textit{(classical) Dirac operator} $\Dir$ is the operator defined as the 
composition of the metric connection $\nabla^{S\scrM}$ on $S\scrM$, obtained as a lift 
of the Levi-Civita connection on $T\scrM$, and the Clifford multiplication, i.e.
$$\Dir=\imath \gamma\circ\nabla^{S\scrM} \colon C^\infty(\scrM,S\scrM) \to C^\infty(\scrM,S\scrM)\,.$$
\end{Def}

\begin{Remark}
	The classical Dirac operator can also more generally be defined on \emph{Dirac bundles}, in which the spinor bundle is replaced by a more general Clifford module bundle equipped with a Hermitian metric and compatible connection that is also compatible with the Clifford multiplication, see e.g.~\cite[Sec.~5.5]{RudolphSchmidt2}.
\end{Remark}

In local coordinates and with a local trivialisation of the spinor bundle $S\scrM$, the 
Dirac operator $\Dir$ as defined in~\eqref{Def:Dirac} can be written as
\begin{align*}
\Dir \psi = \imath \sum_{\mu=0}^{k}  \epsilon_\mu \gamma(e_\mu) \nabla^{S\scrM}_{e_\mu} 
\psi\, 
\end{align*}
where  $\{e_\mu\}$ is a local Lorentzian orthonormal frame of the tangent bundle $T\scrM$ with $\epsilon_\mu:=g(e_\mu,e_\mu)$, i.e.~$\epsilon_{0}=-1$ and $\epsilon_{i}=1$ for $i=1,\dots,k$. For a given real parameter~$m \in \R$ (the ``mass''), the \textit{Dirac equation} is the linear first-order equation 
\begin{align*}
	(\Dir - m)\psi=0
\end{align*}
for $\psi\in C^{\infty}(\scrM,S\scrM)$. Multiplying the Dirac operator $\Dir$ by a suitable factor of the imaginary unit, it can be realized as a symmetric hyperbolic system on the Hermitian bundle $(S\scrM,\Sl\cdot|\cdot\Sr_{S\scrM})$.

\begin{Prp}\label{Prop:Dirac} For any globally hyperbolic spin manifold $(\scrM,g)$, the operator  $\scrS_{m}:=-\imath (\Dir -m)$ is a symmetric hyperbolic system on $(S\scrM,\Sl\cdot\vert\cdot\Sr_{S\scrM})$ in the sense of Definition~\ref{Def:SHS}. Furthermore, $\scrS_{m}=-\scrS_{m}^{\dagger}$, where $\scrS_{m}^{\dagger}$ denotes the formal adjoint of $\scrS_{m}$ with respect to integrated bundle metric (see~\eqref{eq:SesqDirac} below), as usual.
\end{Prp}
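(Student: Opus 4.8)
The plan is to verify the two defining properties (S) and (H) of Definition~\ref{Def:SHS} for the operator $\scrS_{m} = -\imath(\Dir - m)$, and then to establish the skew-adjointness $\scrS_{m} = -\scrS_{m}^{\dagger}$. The starting point is the local expression $\Dir\psi = \imath\sum_{\mu}\epsilon_{\mu}\gamma(e_{\mu})\nabla^{S\scrM}_{e_{\mu}}\psi$, so that $\scrS_{m} = -\imath\Dir + \imath m = \sum_{\mu}\epsilon_{\mu}\gamma(e_{\mu})\nabla^{S\scrM}_{e_{\mu}} + \imath m$. From the Leibniz-rule characterization of the principal symbol, $\sigma_{\scrS_{m}}(df) = \sum_{\mu}\epsilon_{\mu}\gamma(e_{\mu})\,df(e_{\mu})$, which for a general covector $\xi = \xi_{\mu}e^{\mu}$ (with $\{e^{\mu}\}$ the dual coframe) gives $\sigma_{\scrS_{m}}(\xi) = \gamma(\xi^{\sharp})$, the Clifford multiplication by the metric-dual vector. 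The zero-order part is $\scrS_{m,0} = -\imath m\,\1$.

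First I would check condition (S), that $\sigma_{\scrS_{m}}(\xi) = \gamma(\xi^{\sharp})$ is Hermitian with respect to $\Sl\cdot|\cdot\Sr_{S\scrM}$. This is exactly the listed compatibility property of Clifford multiplication, $\Sl\gamma(u)\psi|\phi\Sr_{S_{p}\scrM} = \Sl\psi|\gamma(u)\phi\Sr_{S_{p}\scrM}$, applied to $u = \xi^{\sharp}$. Next, for condition (H) I would take $\tau$ a future-directed timelike covector and show $\Sl\gamma(\tau^{\sharp})\psi|\psi\Sr_{S\scrM} > 0$ for $\psi\neq 0$. The standard fact here is that for future-directed timelike $v = \tau^{\sharp}$, the bilinear form $\psi\mapsto\Sl\gamma(v)\psi|\psi\Sr$ is positive definite; a clean way to see this is to normalize $g(v,v) = -1$, extend $e_{0} := v$ to a Lorentzian orthonormal frame, and note that $\gamma(e_{0})$ squares to $-g(e_{0},e_{0})\1 = +\1$ by the Clifford relation, hence has eigenvalues $\pm 1$, while the indefinite metric $\Sl\cdot|\cdot\Sr_{S\scrM}$ is related to a positive-definite one precisely through $\gamma(e_{0})$ (this is the usual construction of the spinor inner product). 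The positivity for general future-directed timelike $\tau$ then follows by continuity/connectedness of the future timelike cone together with the fact that $\sigma_{\scrS_{m}}$ depends continuously on $\tau$, as already noted in Example~\ref{Ex:MinkSHS}; alternatively one invokes the known signature of $\Sl\gamma(v)\cdot|\cdot\Sr$ on each fibre directly.

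For the skew-adjointness $\scrS_{m} = -\scrS_{m}^{\dagger}$ with respect to the integrated bundle metric, I would apply Proposition~\ref{Prop:AdjointSHS}: $\scrS_{m}^{\dagger} = -\sigma_{\scrS_{m}}(dx^{\mu})\nabla^{S\scrM}_{\mu} - \scrS_{m,0}' - \scrS_{m,0}^{\dagger}$ where $\scrS_{m,0}' = \mathscr{D}\sigma_{\scrS_{m}}$. The principal-symbol term already has the opposite sign, so it remains to show the zero-order terms of $\scrS_{m} + \scrS_{m}^{\dagger}$ vanish, i.e.\ $-\scrS_{m,0}' - \scrS_{m,0}^{\dagger} + \scrS_{m,0} = 0$, equivalently (using Remark~\ref{Remark:Zero}) $-2\Re\Sl\scrS_{m,0}\psi|\psi\Sr_{S\scrM} - \Sl\scrS_{m,0}'\psi|\psi\Sr_{S\scrM} = 0$. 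Since $\scrS_{m,0} = -\imath m\1$ with $m$ real and $\1$ the identity, $\Re\Sl\scrS_{m,0}\psi|\psi\Sr_{S\scrM} = \Re(-\imath m)\Sl\psi|\psi\Sr = 0$. It remains to see $\scrS_{m,0}' = \mathscr{D}\sigma_{\scrS_{m}} = \mathscr{D}\gamma = 0$; this is the statement that the Clifford multiplication is parallel with respect to the spin connection (the connection $\nabla^{S\scrM}$ is Clifford-compatible, being a lift of the Levi-Civita connection), so $\nabla^{\mathrm{End}}\gamma = 0$ and its contraction vanishes. Combining these, $\scrS_{m} + \scrS_{m}^{\dagger}$ has vanishing zero-order part and opposite principal symbol, hence equals zero.

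The main obstacle I anticipate is not any single computation but pinning down the precise relation between the indefinite fibre metric $\Sl\cdot|\cdot\Sr_{S\scrM}$ and a reference positive-definite Hermitian form so that condition (H) is rigorous rather than folkloric; the cleanest route is to work fibrewise, reduce via an adapted orthonormal frame to the Minkowski model of Example~\ref{Ex:MinkSHS} (where $\sigma_{\scrS_{m}}(dt) = \gamma(e_{0})$ and the required positivity is the classical fact that $\psi\mapsto\psi^{\dagger}\gamma^{0}\gamma^{0}\psi$ is positive definite), and then transport the conclusion back by the $\mathrm{Spin}_{0}(1,k)$-invariance of all the structures involved. The second, more minor, point requiring care is confirming $\mathscr{D}\gamma = 0$ from the definition of the spin connection as a lift of Levi-Civita; this is standard spin geometry (see e.g.\ \cite{baum,lawson+michelsohn}) and can be cited.
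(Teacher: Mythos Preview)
Your proposal is correct and follows essentially the same route as the paper's proof: compute $\sigma_{\scrS_m}(\xi)=\gamma(\xi^\sharp)$, verify (S) from the Clifford compatibility with the spin product, establish (H) via the positivity of $\Sl\gamma(v)\cdot|\cdot\Sr$ for future-directed timelike $v$, and prove $\scrS_m+\scrS_m^\dagger=0$ from Proposition~\ref{Prop:AdjointSHS} by noting that $\scrS_{m,0}=-\imath m$ is skew-adjoint and that $\mathscr{D}\sigma_{\scrS_m}=0$ because the spin connection is Clifford-compatible. The only differences are cosmetic: the paper dispatches (H) by citing \cite[Proposition~1.1]{dimock3} rather than sketching the fibrewise argument you outline, and there is a harmless sign slip in your intermediate displayed equation (the zero-order part of $\scrS_m+\scrS_m^\dagger$ is $-\scrS_{m,0}-\scrS_{m,0}'-\scrS_{m,0}^\dagger$, not $+\scrS_{m,0}-\scrS_{m,0}'-\scrS_{m,0}^\dagger$), but your subsequent use of Remark~\ref{Remark:Zero} is correct and the conclusion is unaffected.
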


\begin{proof}
	The principal symbol $\sigma_{\scrS_{m}}$ of the operator $\scrS_{m}:=-\imath (\Dir -m)$ is given by 
	\begin{equation*}
		\sigma_{\scrS_{m}}(\xi)=\gamma(\xi^\sharp)\, ,
	\end{equation*}
	for all covectors $\xi\in T^{\ast}\scrM$. Therefore, Property (S) of Definition~\ref{Def:SHS} is verified on account of
$$  \Sl{\gamma(\xi^{\sharp})\psi}\vert{\phi}\Sr_{{S}\scrM}=\Sl{\psi}\vert{\gamma(\xi^{\sharp})\phi}\Sr_{{S}\scrM}\, ,$$
 while Property (H) follows from~\cite[Proposition 1.1]{dimock3}, provided that the spin product was chosen with the appropriate sign.
 
To show that $\scrS_{m}$ is formally skew-adjoint with respect to the sesquilinear form
	\begin{align}\label{eq:SesqDirac}
		\bra\cdot\vert\cdot\ket_{S\scrM}:=\int_{\scrM}\Sl\cdot\vert\cdot\Sr_{S\scrM}\,d\mu_{\scrM}
	\end{align}
	defined on $C^{\infty}_{\mathrm{c}}(\scrM, S\scrM)$, we begin by noting that the zero-order term $\scrS_{m,0}:=-\imath m$ is trivially skew-adjoint. According to Proposition~\ref{Prop:AdjointSHS}, the only remaining contribution to $\scrS_{m} + \scrS_{m}^{\dagger}$ hence arises from the covariant derivatives of the principal symbol. Now, recall that the spin connection $\nabla^{S\scrM}$ is by definition a \emph{Clifford connection} (see~e.g.~\cite[Def.~5.5.10]{RudolphSchmidt2}), which means that it is a Clifford mudole derivation, i.e.
\begin{align*}
	\nabla_{X}^{S\scrM}(\gamma(Y)\psi)=\gamma(\nabla_{X}Y)\psi+\gamma(Y)\nabla_{X}^{S\scrM}\psi
\end{align*}	
for all $\psi\in C^{\infty}(\scrM,S\scrM)$ and vector fields $X,Y\in C^{\infty}(\scrM, T\scrM)$, where $\nabla$ denotes the Levi-Civita connection of $(\scrM,g)$. Denoting by $\nabla^{\mathrm{End}(S\scrM)}$ the induced connection on $\mathrm{End}(S\scrM)$, this is equivalent to say that
\begin{align*}
	\nabla^{\mathrm{End}(S\scrM)}_{X}(\gamma(Y))=\gamma(\nabla_{X}Y)\, .
\end{align*}
In particular, viewing the Clifford  multiplication $\gamma\colon T\scrM\to\mathrm{End}(S\scrM)$ equivalently as a smooth section of the bundle $\mathrm{End}(S\scrM)\otimes T^{\ast}\scrM$ this is equivalent to say that 
	\begin{align*}
		\nabla^{\mathrm{End}(S\scrM) \otimes T^{\ast}\scrM} \gamma = 0\, ,
	\end{align*}
	where $\nabla^{\mathrm{End}(S\scrM) \otimes T^{\ast}\scrM}$ is the tensor product connection constructed out of $\nabla^{\mathrm{End}(S\scrM)}$ and the Levi-Civita connection. Consequently, $\mathscr{D}\sigma_{\scrS_{m}}=0$, where $\mathscr{D}$ is the divergence-like operator as introduced in Proposition~\ref{Prop:AdjointSHS}. This completes the proof.
\end{proof}

Following Proposition~\ref{Prop:Dirac}, the operator $\scrS_{m}:=-\imath (\Dir-m)$ is a symmetric hyperbolic system and hence, following Section~\ref{subsec:SHS}, we can define the Hilbert space 
\begin{align}
\label{L2sprod}
	\H_t:=\overline{C^{\infty}_{\mathrm{c}}(\scrN_{t},S\scrM\vert_{\scrN_{t}})}^{\Vert\cdot\Vert_{t}},\qquad (\psi | \phi)_t := \int_{\scrN_{t}} \Sl \psi | \gamma(\nu)\phi \Sr_{S\scrM}\: d\mu_{\scrN_{t}} \: ,
\end{align}
where $\nu$ denotes the future-directed timelike normal to $\scrN_{t}$, as usual. Due to current conservation, the inner product $(\cdot|\cdot)_{t}$ is in fact independent of the choice of Cauchy hypersurface\footnote{This can also be seen from the fact that $\scrS_{m}+\scrS_{m}^{\dagger}=0$, as proven in Proposition~\ref{Prop:Dirac}, and the energy evolution equation in Proposition~\ref{Prop:energy}.}, for details see~\cite[Section~2]{finite}. 

For a given nonlocal potential $\B\colon C^{\infty}_{\mathrm{c}}(\scrM,S\scrM)\to C^{\infty}(\scrM,S\scrM)$, we shall consider the following the \textit{Dirac equation with nonlocal potential}, i.e.~the Cauchy problem
\begin{align}\label{eq:DiracNonLoc}
	\begin{cases}
		(\Dir + \B -m)\psi &=\phi\\
		\psi\vert_{\scrN_{0}}&=\mathfrak{f}
	\end{cases}
\end{align}
on the time strip $\overline{\scrM}_{T}$ for a given source $\phi\in C^{\infty}_{\mathrm{c}}(\scrM,S\scrM)$ with $\mathrm{supp}(\phi)\subset\overline{\scrM}_{T}$ and initial datum $\mathfrak{f}\in C^{\infty}_{\mathrm{c}}(\scrN_{0},\scrE\vert_{\scrN_{0}})$.

Theorem~\ref{Thm:Small} on the Cauchy problem for symmetric hyperbolic systems with nonlocal potentials with short time range implies the following results:

\begin{Corollary}\label{Cor.Dirac}
	Let $(\scrM,g)$ be a globally hyperbolic manifold such that $\sigma_{\scrS_{m}}(dt)=-\beta^{-2}\gamma(\partial_{t})$ is uniformly bounded in time and let $\B$ be a nonlocal potential with short time range $\delta>0$. If $\B$ is uniformly bounded in time in the sense that
		\begin{align*}
			\exists C\,\,\text{ with }\,\,0<C<(8e\delta^{2})^{-1}:\quad \Vert\V_{t,\tau}\psi_{\tau}\Vert_{t}\leq C\Vert\psi_{\tau}\Vert_{\tau}\,\quad\forall\psi\in C^{\infty}_{\mathrm{sc}}(\scrM,\scrE)\,,\forall t,\tau\in\R\, ,\end{align*} 
			where $\V:=\beta\gamma(\nu)^{-1}\B$ denotes the modified potential, then there exists a strong solution of the Cauchy problem~\eqref{eq:DiracNonLoc} on the time strip $\overline{\scrM}_{T}$.
\end{Corollary}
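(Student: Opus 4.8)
The plan is to obtain the corollary as a direct application of Theorem~\ref{Thm:Small} to the symmetric hyperbolic system $\scrS_m := -\imath(\Dir - m)$ provided by Proposition~\ref{Prop:Dirac}. The first step is to put the Cauchy problem~\eqref{eq:DiracNonLoc} into the standard shape $(\scrS_m - \widehat{\B})\psi = \widehat{\phi}$: multiplying $(\Dir + \B - m)\psi = \phi$ by $-\imath$ yields $\scrS_m\psi - \widehat{\B}\psi = \widehat{\phi}$ with $\widehat{\B} := \imath\B$ and $\widehat{\phi} := -\imath\phi \in C^\infty_{\mathrm{c}}(\scrM, S\scrM)$, still supported in $\overline{\scrM}_T$. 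Since multiplying the Schwartz kernel by the constant $\imath$ changes neither its support nor its wavefront set, $\widehat{\B}$ is again a nonlocal potential in the sense of Definition~\ref{Def:NonLocPot} with time kernel $\widehat{\B}_{t,\tau} = \imath\B_{t,\tau}$ and the same short time range $\delta$; moreover its weighted potential (Definition~\ref{eq:modTK}) is $\widehat{\V} = \beta\sigma_{\scrS_m}(\eta)^{-1}\widehat{\B} = \imath\V$, so that $\Vert\widehat{\V}_{t,\tau}\psi_\tau\Vert_t = \Vert\V_{t,\tau}\psi_\tau\Vert_t$ for all $t,\tau$. Because $\scrS_m - \widehat{\B}$ and $\Dir + \B - m$ differ only by the constant invertible factor $-\imath$, which acts unitarily on the Hilbert spaces $\H_t$, a strong solution (Definition~\ref{Def:StrongSol}) of one Cauchy problem is a strong solution of the other, realised by the very same approximating sequence $\psi_k$.

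The second step is to verify the hypotheses (i)--(iii) of Theorem~\ref{Thm:Small} for the pair $(\scrS_m, \widehat{\B})$. Hypothesis~(i), short time range $\delta > 0$, holds as just noted. For~(iii), Proposition~\ref{Prop:Dirac} gives $\scrS_m + \scrS_m^\dagger = 0$, so the zero-order operator $\mathcal{Z}_{\scrS_m} = \beta\sigma_{\scrS_m}(\eta)^{-1}(\scrS_m + \scrS_m^\dagger)$ vanishes identically; hence it is uniformly bounded in time with the optimal constant $D = 0$, and the remaining requirement of~(iii), namely uniform boundedness of $\sigma_{\scrS_m}(dt) = -\beta^{-2}\gamma(\partial_t)$ in time, is exactly the standing assumption of the corollary. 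With $D = 0$ the exponential weight $e^{-\frac{1}{2}D\vert\tau\vert}$ appearing in~(ii) equals $1$ identically, so~(ii) reduces to the inequality $\Vert\widehat{\V}_{t,\tau}\psi_\tau\Vert_t \leq C\,\Vert\psi_\tau\Vert_\tau$ with $C < (8e\delta^{2})^{-1}$, which is precisely the hypothesis imposed on $\B$ (using $\Vert\widehat{\V}_{t,\tau}\psi_\tau\Vert_t = \Vert\V_{t,\tau}\psi_\tau\Vert_t$, and noting that $\sigma_{\scrS_m}(\eta) = \gamma(\eta^\sharp)$ agrees with $\gamma(\nu)$ up to a sign, so the corollary's $\V = \beta\gamma(\nu)^{-1}\B$ and the weighted potential $\beta\sigma_{\scrS_m}(\eta)^{-1}\B$ of Definition~\ref{eq:modTK} have identical operator norms on each slice).

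With all the hypotheses verified, Theorem~\ref{Thm:Small} produces a strong solution $\psi \in \H_{[-\delta,T+\delta]\times\scrN}$ of $(\scrS_m - \widehat{\B})\psi = \widehat{\phi}$, $\psi\vert_{\scrN_0} = \mathfrak{f}$, on the time strip $\overline{\scrM}_T$, which by the first step is a strong solution of~\eqref{eq:DiracNonLoc}. I do not expect any genuine obstacle here; the only point demanding care is the bookkeeping in the first step — keeping track of the two normalisations of the Dirac equation, confirming that the modified data $\widehat{\B}$, $\widehat{\V}$ satisfy the assumptions of Theorem~\ref{Thm:Small} with the same constants $\delta$ and $C$, and checking that the weighted potential used there coincides, up to the harmless sign, with the $\V = \beta\gamma(\nu)^{-1}\B$ appearing in the statement.
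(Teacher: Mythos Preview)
Your proposal is correct and follows essentially the same approach as the paper: apply Theorem~\ref{Thm:Small} to $\scrS_m=-\imath(\Dir-m)$, using that $\scrS_m+\scrS_m^\dagger=0$ (Proposition~\ref{Prop:Dirac}) forces $\mathcal{Z}_{\scrS_m}=0$ and hence $D=0$, so the exponential weight in hypothesis~(ii) disappears and the remaining conditions are exactly those assumed in the corollary. The paper's own proof is a single sentence to this effect; your version simply spells out the bookkeeping (the $-\imath$ factor, the harmless sign between $\gamma(\nu)$ and $\sigma_{\scrS_m}(\eta)$) more carefully.
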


\begin{proof}
	This is a special case of Theorem~\ref{Thm:Small}, noting that the conditions on the zero-order terms are trivially fulfilled on account of $\scrS_{m}+\scrS_{m}^{\dagger}=0$, as shown in Proposition~\ref{Prop:Dirac}.
\end{proof}

We remark that the uniform boundedness assumption on $\gamma(\partial_{t})$ is essentially a condition on the lapse function $\beta$, since $\gamma(\partial_{t})^{2}=-g(\partial_{t},\partial_{t})\mathrm{id}_{S\scrM}=\beta^{2}\mathrm{id}_{S\scrM}$ by the Clifford relations.

\subsection{A Symmetric Nonlocal Potential and Current Conservation}
For the probabilistic interpretation of the Dirac equation, it is important to endow the solution space with a scalar product, which can be represented by an integral over a Cauchy surface of the form as in~\eqref{L2sprod}. Then, evaluating the scalar product~$(\psi | \psi)_\scrN$ on a Cauchy surface~$\scrN$ for a normalized Dirac solution, the integrand has the interpretation as the probability density of the Dirac particle to be at a certain position on the Cauchy surface. As shown in~\cite{baryogenesis} and used subsequently in~\cite{collapse, fockdynamics}, the probabilistic interpretation of the Dirac equation carries over to the case with a nonlocal potential~$\B$, provided that the nonlocal potential is {\em{symmetric}} in the following sense. We write the operator $\B\colon C^{\infty}_{\mathrm{c}}(\scrM,S\scrM)\to C^{\infty}(\scrM,S\scrM)$ as an integral operator with integral kernel~$k_{\B}(x,y)$,
\beq \label{Bnonlocal}
\big( \B \psi \big)(x) = \int_\scrM k_{\B}(x,y)\, \psi(y)\: d\mu_\scrM(y) \:.
\eeq
Moreover, the integral kernel is assumed to be symmetric in the sense that
\beq \label{Bsymm}
k_{\B}(x,y)^\dagger = k_{\B}(x,y) \:,
\eeq
where the star denotes the adjoint with respect to the fibre metric~\eqref{eq: spin prod} (which for Dirac spinors is also referred to as the spin inner product). In this setting, the scalar product on Dirac solutions~\eqref{L2sprod} needs to be modified by terms involving the nonlocal potential. It takes the form
\begin{subequations} \label{c}
\begin{align}
\la \psi | \phi \ra_\scrN &:= (\psi | \phi)_\scrN \label{c1} \\
&\quad\;\; -i \int_\Omega d\mu_\scrM(x) \int_{\scrM \setminus \Omega} d\mu_\scrM(y)\;
\Sl \psi(x) \,|\, k_{\B}(x,y)\, \psi(y) \Sr_{S_x\scrM} \label{c2} \\
&\quad\;\; +i \int_{\scrM \setminus \Omega} d\mu_\scrM(x) \int_\Omega d\mu_\scrM(y)\;
\Sl \psi(x) \,|\, k_{\B}(x,y)\, \psi(y) \Sr_{S_x\scrM} \:, \label{c3}
\end{align}
\end{subequations}
where~$\Omega \subset \scrM$ is chosen as the past of the Cauchy surface~$\scrN$. This inner product has the structure of a so-called {\em{surface layer integral}} (for the general context see for example~\cite[Section~9.1]{intro}). It follows by direct computation that this inner product is indeed conserved in time (for details see~\cite[Proposition~B.1]{baryogenesis}).

\subsection{Uniqueness of Solutions of the Cauchy Problem} \label{secdirunique}
The conservation law of the previous section can be used to prove uniqueness of solutions to the Cauchy problem. We again need to assume that the nonlocal potential has short time range~$\delta$ and is small compared to~$\delta$, so that Theorem~\ref{Thm:Small} applies. Under these assumptions, we have the estimate
\beq \label{diffes}
\Big| \la \psi | \psi \ra_\scrN - (\psi | \psi)_{\scrN_t} \Big| \\
\leq 2 C \delta\: \sup_{\tau \in [t-\delta, t+\delta]} \: \|\psi_\tau\|^2_{\tau} \:.
\eeq

\begin{Lemma} 
	For all the solutions constructed in Theorem~\ref{Thm:Small}, the surface layer inner
product~$\la .|. \ra_\scrN$ (introduced in \eqref{c}) is positive definite and can be estimated in terms of the $L^2$-scalar product~$(.|.)_\scrN$ in~\eqref{L2sprod} by
	\beq \label{sprodbounds}
		\frac{1}{2}\: (\psi | \psi)_\scrN \leq \la \psi | \psi \ra_\scrN \leq 2\: (\psi | \psi)_\scrN \:.
	\eeq
	\end{Lemma}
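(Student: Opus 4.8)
The plan is to combine the conservation of the surface-layer inner product $\la\cdot|\cdot\ra_\scrN$ established in the previous section with the a priori bound~\eqref{diffes} and the sharp decay estimates~\eqref{eq:EstimatesSmallTR} on the perturbative iterates $\psi^{(n)}$ from the proof of Proposition~\ref{Prop:DysonSmall}. The key idea is that for any $t$ one has, by the triangle inequality and~\eqref{diffes},
\begin{align*}
	\big|\la\psi|\psi\ra_\scrN - (\psi|\psi)_{\scrN_t}\big| \leq 2C\delta\sup_{\tau\in[t-\delta,t+\delta]}\|\psi_\tau\|_\tau^2\,,
\end{align*}
so the task reduces to showing that the right-hand side can be made at most $\tfrac14(\psi|\psi)_{\scrN_t}$ by choosing the Cauchy surface $\scrN=\scrN_t$ appropriately, i.e.~by estimating the local oscillation of $t\mapsto\|\psi_t\|_t$ over a window of size $\delta$ against its value at $t$.

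First I would fix the solution $\psi=\sum_{n}\psi^{(n)}$ constructed in Proposition~\ref{Prop:DysonSmall} and, using~\eqref{eq:EstimatesSmallTR}, recall that $\|\psi^{(n)}_t\|_t \leq \frac{M}{n!}(4C\delta)^n e^{\frac12 D|t|}(|t|+2n\delta)^n$, so that the series $\sum_n\|\psi^{(n)}_t\|_t$ converges with ratio tending to $8e\delta^2 C<1$. Next I would control the oscillation: from the energy estimate~\eqref{eq:EnergyEstSmall} applied to the partial sums (or directly to the limit, once one has enough regularity to differentiate $\|\psi_t\|_t$ in the $L^2_{\loc}$-sense), one gets a Grönwall-type bound showing $\sup_{\tau\in[t-\delta,t+\delta]}\|\psi_\tau\|_\tau \leq e^{K\delta}\,\|\psi_t\|_t$ for a constant $K$ depending only on $C,\delta,D$. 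Substituting this into~\eqref{diffes} yields $\big|\la\psi|\psi\ra_\scrN-(\psi|\psi)_{\scrN_t}\big|\leq 2C\delta\, e^{2K\delta}\,(\psi|\psi)_{\scrN_t}$. Since the hypothesis $C<(8e\delta^2)^{-1}$ makes $2C\delta$ small (of order $1/(4e\delta)$), one checks that $2C\delta\,e^{2K\delta}\leq \tfrac12$ after possibly tracking constants carefully; this gives both inequalities in~\eqref{sprodbounds}, namely
\begin{align*}
	\tfrac12(\psi|\psi)_{\scrN_t} \leq \la\psi|\psi\ra_{\scrN_t} \leq 2(\psi|\psi)_{\scrN_t}\,,
\end{align*}
and in particular $\la\cdot|\cdot\ra_{\scrN_t}$ is positive definite. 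Finally, since $\la\cdot|\cdot\ra_\scrN$ is conserved (independent of $\scrN$) while $(\cdot|\cdot)_{\scrN_t}$ depends on $t$, the two-sided bound holds with $\scrN$ any Cauchy surface by choosing the associated $t$.

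The main obstacle I anticipate is making the constant chase genuinely work: the crude substitution of the oscillation bound $e^{2K\delta}$ may not immediately fit under the threshold $2C\delta\cdot(\text{something})\leq \tfrac12$ if $\delta$ is not small, since the hypothesis only constrains $C\delta^2$, not $\delta$ itself. The cleanest fix is to avoid Grönwall on a window of fixed size $\delta$ and instead estimate $\sup_{[t-\delta,t+\delta]}\|\psi_\tau\|_\tau^2$ directly term-by-term using~\eqref{eq:EstimatesSmallTR}: the ratio $\|\psi^{(n)}_{t+\delta}\|/\|\psi^{(n)}_t\|$ is controlled uniformly in $n$ and $t$ by something converging to $1$ as one sums, so that $\sup_{[t-\delta,t+\delta]}\|\psi_\tau\|_\tau \leq \Lambda\,\|\psi_t\|_t$ with $\Lambda$ depending only on the same data. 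One then needs $4C\delta\Lambda\leq \tfrac12$; inspecting the asymptotics $\|\psi^{(n)}_t\|\sim \frac{M}{\sqrt{2\pi n}}(8e\delta^2C)^n e^{\frac{1}{2\delta}|t|(D\delta+1)}$, the surface-window factor is $e^{\frac{1}{2\delta}\delta(D\delta+1)}=e^{\frac12(D\delta+1)}$ per sum, which is bounded, and the geometric tail is summable since $8e\delta^2C<1$; a careful bookkeeping of these constants against $2C\delta<(4e\delta)^{-1}$ closes the estimate. This constant-tracking — rather than any conceptual difficulty — is where the real work lies.
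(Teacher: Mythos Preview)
Your approach is essentially the same as the paper's: use the series ansatz $\psi=\sum_n\psi^{(n)}$, control each summand via the iterative estimate~\eqref{ites} (from which~\eqref{eq:EstimatesSmallTR} is derived), and feed this into~\eqref{diffes}. The paper's proof is in fact just the one-line sketch ``estimate each summand according to~\eqref{ites} and use~\eqref{diffes}'', which is exactly your ``cleanest fix'' of bounding $\sup_{[t-\delta,t+\delta]}\|\psi_\tau\|_\tau$ term-by-term rather than via Gr\"onwall.

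Your honest flagging of the constant-chase is well placed: the paper does not carry out that bookkeeping either, and the specific constants $\tfrac12$ and $2$ in~\eqref{sprodbounds} are not derived there in any more detail than in your proposal. Your observation that the hypothesis $C<(8e\delta^2)^{-1}$ constrains $C\delta^2$ rather than $C\delta$ is a genuine subtlety that neither treatment resolves explicitly; for the downstream applications (the unitary evolution~\eqref{unit} and the boundedness of the functional~\eqref{F1}) any fixed pair of positive constants would suffice, so the precise values $\tfrac12$ and $2$ should be read as indicative rather than sharp.
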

	
\Proof We again consider the series ansatz~\eqref{ansatzpertur2}. Estimating each summand according to~\eqref{ites} and using~\eqref{diffes} gives the result.
\QED

Using this result, all the solutions constructed in Theorem~\ref{Thm:Small} endowed with the scalar product~$\la .|. \ra_\scrN$ form a pre-Hilbert space. Forming the completion gives a Hilbert space denoted by~$(\H, \la .|. \ra_\scrN$).
Due to the conservation law, restricting solutions to different surface layers
gives rise to a unitary mapping
\beq \label{unit}
U_{t_0}^{t_1} : \H_{\scrN_{t_0}} \rightarrow \H_{\scrN_{t_1}} \:.
\eeq
These mappings make it possible to identify the Hilbert spaces in different surface layers, giving rise to one Hilbert space denoted by~$(\H, \la .|. \ra)$.

It is not clear whether every solution of the Dirac equation can be represented as the series~\eqref{ansatzpertur2}. Therefore, it remains to treat the class of solutions which do {\em{not}} have this series representation. This can be done as follows. Let~$\phi(x)$ be a general strong solution of the nonlocal Dirac equation. Since we do not assume any decay assumptions at spatial infinity, this solution can in general not be considered as a vector in the Hilbert space~$(\H, \la .|. \ra)$ (simply because the spatial integral for example in~\eqref{c1} will in general diverge). In order to single out those solutions which can be identified with Hilbert space vectors, we restrict attention to solutions~$\phi$ for which  the linear functional
\beq \label{F1}
\Phi : \H_\scrN \rightarrow \C \:,\qquad \psi \mapsto \la \phi | \psi \ra_\scrN
\eeq
is well-defined and bounded. In this case, the Fr{\'e}chet-Riesz theorem allows us to~$\phi$ with a vector in~$(\H, \la .|. \ra)$. Again, according to our conservation law, this identification does not depend on the choice of the Cauchy surface. In this way, the unitary time evolution~\eqref{unit} also extends to the solution~$\phi$.

The above method raises the immediate question for which solutions~$\phi$ the linear functional~$\Phi$ in~\eqref{F1} is bounded. There does not seem to be a simple characterisation of these solutions. But at least, the following lemma gives a convenient sufficient condition. Instead of demanding that the solution be square integrable on a Cauchy surface, we need the stronger assumption that it is square integral over every Cauchy surface in the time range of~$\scrN$:
\begin{Lemma}
	Assume that the assumptions of Theorem~\ref{Thm:Small} again hold. Let~$\phi$ be a strong solution of the Dirac equation with the property that there is a constant~$C>0$ with
\[ (\psi | \phi)_t < C \qquad \text{for all~$t \in (t_0-\delta, t_0+\delta)$}\:. \]
Then the solution~$\phi$ satisfies the inequality~\eqref{sprodbounds} with~$\scrN := \scrN_{t_0}$. Moreover, the linear functional~\eqref{F1} is well-defined and bounded.
\end{Lemma}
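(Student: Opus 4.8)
The plan is to show, in this order, that $\phi$ defines a vector in $\H_{t_0}$, that the two–sided bound~\eqref{sprodbounds} holds for $\phi$ itself, and that these two facts together with the short time range of $\B$ force the functional~\eqref{F1} to be bounded. First I would observe that taking $t=t_0$ in the hypothesis gives $\|\phi_{t_0}\|_{t_0}^2=(\phi|\phi)_{\scrN_{t_0}}<\infty$, so $\phi_{t_0}\in\H_{t_0}$; since a strong solution is only $L^2_{\mathrm{loc}}$ in time, I would read the hypothesis as an essential supremum over a slightly enlarged open interval containing $[t_0-\delta,t_0+\delta]$, so that no difficulty arises at the endpoints.

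Next, to establish~\eqref{sprodbounds} for $\phi$, I would insert $\psi=\phi$ and $\scrN=\scrN_{t_0}$ into~\eqref{diffes}, which gives $\big|\la\phi|\phi\ra_{\scrN_{t_0}}-(\phi|\phi)_{\scrN_{t_0}}\big|\le 2C\delta\,\sup_{\tau\in[t_0-\delta,t_0+\delta]}\|\phi_\tau\|^2_\tau$. By hypothesis the supremum on the right is finite, and by the smallness assumption $C<(8e\delta^2)^{-1}$ of Theorem~\ref{Thm:Small} the prefactor is small; the point of the \emph{stronger} hypothesis (square integrability over the whole slab, not merely on the single slice $\scrN_{t_0}$) is precisely that it makes the right-hand side dominated by $\tfrac12(\phi|\phi)_{\scrN_{t_0}}$, whence $\tfrac12(\phi|\phi)_{\scrN_{t_0}}\le\la\phi|\phi\ra_{\scrN_{t_0}}\le 2(\phi|\phi)_{\scrN_{t_0}}$. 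In particular $\la\phi|\phi\ra_{\scrN_{t_0}}\ge 0$, with equality only if $\phi_{t_0}=0$.

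For the boundedness of $\Phi$ I would use the definition~\eqref{c} of the surface layer inner product with $\Omega$ the past of $\scrN_{t_0}$ to split, for any $\psi$ in the pre-Hilbert space of solutions of the preceding Lemma, $\Phi(\psi)=\la\phi|\psi\ra_{\scrN_{t_0}}=(\phi|\psi)_{\scrN_{t_0}}+I_2+I_3$, where $I_2,I_3$ are the double surface layer integrals~\eqref{c2}--\eqref{c3} for the pair $(\phi,\psi)$. The first term is bounded by Cauchy--Schwarz on $\H_{t_0}$ and by~\eqref{sprodbounds} applied to $\psi$: $|(\phi|\psi)_{\scrN_{t_0}}|\le\|\phi_{t_0}\|_{t_0}\|\psi_{t_0}\|_{t_0}\le\sqrt2\,\|\phi_{t_0}\|_{t_0}\,\la\psi|\psi\ra_{\scrN_{t_0}}^{1/2}$. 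For $I_2,I_3$ I would use that short time range forces $k_{\B}(x,y)=0$ unless $|t(x)-t(y)|\le\delta$, so with $x\in\Omega$, $y\notin\Omega$ the integration is confined to $t(x)\in[t_0-\delta,t_0]$, $t(y)\in[t_0,t_0+\delta]$; rewriting the double integral via the time kernel and the weighted potential $\V$ and using the uniform-in-time bound on $\V$ from Theorem~\ref{Thm:Small} gives $|I_2|\le\int_{t_0-\delta}^{t_0}\|\phi_t\|_t\big(\int_{t_0}^{t_0+\delta}\|\V_{t,\tau}\psi_\tau\|_t\,d\tau\big)\,dt\le C\delta^2\big(\sup_{[t_0-\delta,t_0]}\|\phi_t\|_t\big)\big(\sup_{[t_0,t_0+\delta]}\|\psi_\tau\|_\tau\big)$, and likewise for $I_3$. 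The decisive simplification here is that for a solution $\psi$ of the preceding Lemma, conservation of $\la.|.\ra$ combined with~\eqref{sprodbounds} applied on \emph{every} slice yields $\|\psi_\tau\|^2_\tau=(\psi|\psi)_{\scrN_\tau}\le 2\la\psi|\psi\ra_{\scrN_\tau}=2\la\psi|\psi\ra_{\scrN_{t_0}}$ for all $\tau$, so $\sup_{\tau\in[t_0-\delta,t_0+\delta]}\|\psi_\tau\|_\tau\le\sqrt2\,\la\psi|\psi\ra_{\scrN_{t_0}}^{1/2}$. Combining the three estimates produces a constant $K$ depending only on $\delta$, $C$ and $\sup_{[t_0-\delta,t_0+\delta]}\|\phi_t\|_t$ with $|\Phi(\psi)|\le K\,\la\psi|\psi\ra_{\scrN_{t_0}}^{1/2}$; extending by continuity to the completion $(\H,\la.|.\ra)$ shows that $\Phi$ is well-defined and bounded, and Fr\'echet--Riesz then identifies $\phi$ with a Hilbert space vector, the identification being independent of $t_0$ by the conservation law.

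The step I expect to be the main obstacle is closing the inequality in the second paragraph, i.e.\ checking that $2C\delta\,\sup_{\tau}\|\phi_\tau\|^2_\tau\le\tfrac12(\phi|\phi)_{\scrN_{t_0}}$: this is exactly where the stronger square-integrability hypothesis on $\phi$ must be quantified, and to make it rigorous one would feed the energy estimate of Corollary~\ref{Cor:EnEst} (with $\mathcal{Z}_{\scrS}=0$ for the Dirac operator, by Proposition~\ref{Prop:Dirac}) back in to control $\sup_{\tau}\|\phi_\tau\|_\tau$ in terms of $\|\phi_{t_0}\|_{t_0}$ up to an $O(C\delta^2)$ nonlocal correction. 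A secondary technical point is justifying the Fubini manipulations of the double surface layer integrals before estimating them, but here the short time range confines the integration to the compact slab $[t_0-\delta,t_0+\delta]\times\scrN$, so the uniform $L^2$-bound on $\phi$ together with the uniform bound on $\B$ makes all the integrals absolutely convergent; and one must also invoke the explicit perturbative bounds~\eqref{ites}--\eqref{eq:EstimatesSmallTR} of Proposition~\ref{Prop:DysonSmall} to know that the relevant suprema over the slab are actually finite for the Lemma-type solutions before applying~\eqref{sprodbounds} slicewise.
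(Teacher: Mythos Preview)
Your approach is essentially the same as the paper's: the paper's entire proof is the single sentence ``This follows immediately by estimating the surface layer integrals in~\eqref{c2} and~\eqref{c3}'', and that is precisely what you do in detail --- bound the local term by Cauchy--Schwarz and the nonlocal terms~\eqref{c2},~\eqref{c3} using the short time range together with the uniform-in-time bound on the kernel and the hypothesis $\sup_{\tau}\|\phi_\tau\|_\tau<\infty$. Your write-up goes well beyond the paper in rigor (including correctly flagging that recovering the specific constants $\tfrac12$ and $2$ in~\eqref{sprodbounds} needs an extra step relating $\sup_\tau\|\phi_\tau\|_\tau$ to $\|\phi_{t_0}\|_{t_0}$, which the paper does not spell out).
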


\Proof This follows immediately by estimating the surface layer integrals in~\eqref{c2} and~\eqref{c3}.
\QED

\subsection{Applications to Causal Fermion Systems} \label{seccfs}
The Dirac equation with a nonlocal potential as considered in the previous section appears in the analysis of the causal action principle for causal fermion systems~\cite{nonlocal}, with applications to collapse phenomena~\cite{collapse} and the quantum field theory limit~\cite{fockdynamics}. We now outline the connection and explain how the results of the present paper apply in this context.

The theory of {\em{causal fermion systems}} is a recent approach to fundamental physics (see the reviews~\cite{dice2014, review}, the textbooks~\cite{cfs, intro} or the website~\cite{cfsweblink}). In this approach, spacetime and all objects therein are described by a measure~$\rho$ on a set~$\F$ of linear operators on a Hilbert space~$(\H, \la .|. \ra_\H)$. The physical equations are formulated by means of the so-called {\em{causal action principle}}, a nonlinear variational principle where an action~$\Sact$ is minimized under variations of the measure~$\rho$. A minimizing measure satisfies corresponding {\em{Euler-Lagrange (EL) equations}}, being nonlinear equations for~$\rho$. The {\em{linearized field equations}} describe first variations of a minimizing measure which preserve the EL equations. Similar to usual linearisations of physical equations (like for example the equations of linearized gravity), the linearized field equations describe the dynamics of small perturbations of a causal fermion system. In~\cite{nonlocal} the linearized field equations were analysed in detail for causal fermion systems in Minkowski space. In this setting, the dynamics of the wave functions forming the causal fermion system can be described by a Dirac equation of the form
\beq \label{dirnonloc}
\big( \Dir + \B - m \big) \psi = 0 \:,
\eeq
where~$\B$ is a {\em{nonlocal}} potential, i.e.\ an integral operator of the form~\eqref{Bnonlocal}. Moreover, the integral kernel~$k_{\B}(x,y)$ is symmetric as in~\eqref{Bsymm}. It is nonlocal on a scale~$\ell_{\min}$, which lies between the length scale~$\varepsilon$ of the ultraviolet regularisation (which can be thought of as the Planck scale) and the length scale~$\ell_\macro$ of macroscopic physics (which can be thought of as the Compton scale),
\beq \label{ellmindef}
\varepsilon \ll \ell_{\min} \ll \ell_\macro \:.
\eeq
More specifically, the nonlocal potential~$\B$ is composed of a collection of vector potentials~$A_a$ with~$a  \in \{1,\ldots N\}$,
\beq \label{Bstochasticintro}
k_{\B}(x,y) = \sum_{a=1}^N \slashed{A}_a \Big( \frac{x+y}{2} \Big) \:L_a(y-x) \:,
\eeq
where the~$L_a$ are fixed complex-valued kernels. The number~$N$ of these fields is very large and scales like
\beq \label{Nscaleintro}
N \simeq \frac{\ell_{\min}}{\varepsilon} \:.
\eeq
The Cauchy problem for the Dirac equation~\eqref{dirnonloc} can be solved with the help of Theorem~\ref{Thm:Small}, provided that the range~$\ell_{\min}$ and the amplitudes of the potentials~$A_a$ in~\eqref{Bstochasticintro} are sufficiently small.

The nonlocal potential~$\B$ has far-reaching consequences for the dynamics of the Dirac wave functions. First, as worked out in~\cite{collapse}, describing the potentials~$A_a$ stochastically gives rise to a collapse mechanism which can describe the reduction of the wave function in a measurement process. Second, the resulting non-commutativity of the bosonic potentials can be associated to bosonic field operators satisfying the canonical commutation relations. In~\cite{fockdynamics} a limiting case is worked out in which the nonlocal Dirac equation~\eqref{dirnonloc} gives rise to perturbative quantum electrodynamics.

In the above-mentioned works, the Cauchy problem for the nonlocal Dirac equation was solved perturbatively with a Dyson series (for details see~\cite[Section~4.2]{collapse}), but without proving the convergence of this series. The methods worked out in the present paper put this perturbative description on a rigorous mathematical basis. Indeed, Theorem~\ref{Thm:Small} states that the Cauchy problem admits a solution, provided that the nonlocal potential~$\B$ is sufficiently small. In simple terms, our proof consists in showing that the Dyson series converges in suitable function spaces.

\bibliographystyle{amsplain}
%\bibliography{../felix}
\bibliography{cauchynonloc}
\end{document}